\documentclass[12pt]{amsart}
\usepackage{amssymb}
\usepackage{amsmath,amscd}
\usepackage{amsfonts,latexsym,verbatim,amscd,mathrsfs,color,array}
\usepackage[all,cmtip]{xy}
\usepackage{mathrsfs}
\usepackage{multirow}
\usepackage{graphicx}
\usepackage{amsfonts, amsthm}
\usepackage{bbm}
\usepackage[hmargin=1in,vmargin=1in]{geometry}
\usepackage{mathdots}
\usepackage{stmaryrd}
\usepackage{MnSymbol}
\usepackage{bbm}
\usepackage[hidelinks]{hyperref}
\usepackage{enumitem}
\usepackage[all]{xy}
\usepackage{tikz-cd}
\usepackage{cancel}
\usepackage{tensor}

\allowdisplaybreaks

\def\XXint#1#2#3{{\setbox0=\hbox{$#1{#2#3}{\int}$ }
		\vcenter{\hbox{$#2#3$ }}\kern-.6\wd0}}

\renewcommand{\Im}{\operatorname{Im}}

\newcommand{\calM}{\mathcal{M}}

\newcommand{\transint}{\cap\kern-0.63em|\kern0.7em}

\DeclareMathSymbol{\intprod}{\mathbin}{MnSyC}{'270}

\newcommand{\LB}{\left[}
\newcommand{\RB}{\right]}

\newcommand{\LA}{\left\langle}
\newcommand{\RA}{\right\rangle}

\newcommand{\p}{{ \partial}}

\newcommand{\N}{{\mathbb N}}
\newcommand{\C}{{\mathbb C}}
\newcommand{\calL}{{\mathcal L}}

\newcommand{\R}{{\mathbb R}}

\newcommand{\Rm}{\mathrm{Rm}}
\newcommand{\Ric}{\mathrm{Ric}}

\renewcommand{\div}{{\operatorname{div} }}

\newcommand{\scrL}{{\mathscr L}}

\newcommand{\dist}{{\operatorname{dist}}}

\newcommand{\tr}{{\operatorname{tr}}}

\renewcommand{\p}{{\partial}}
\newcommand{\eps}{{\varepsilon}}

\newtheorem{thm}{Theorem}[section]
\newtheorem{lemma}[thm]{Lemma}
\newtheorem*{lemma*}{Lemma}
\newtheorem{prop}[thm]{Proposition}

\newtheorem{cor}[thm]{Corollary}

\newtheorem*{conj*}{Conjecture}

\newenvironment{claim}{\par\medskip\noindent\textit{Claim.}\space}{\par\medskip}
\newenvironment{claimproof}{\par\noindent\textit{Proof of claim.}\space}{\hfill$\diamond$\medskip\par}

   \newtheoremstyle{others}
     {3pt}
     {2pt}
     {}
     {}
     {\bf}
     {.}
     {.5em}
     {}

\theoremstyle{others}
\newtheorem{rmk}[thm]{Remark}
\newtheorem*{rmk*}{Remark}
\newtheorem{defn}[thm]{Definition}

\newtheorem*{question*}{Question}

\numberwithin{equation}{section}

\title{Integrable deformations and stability of the Ricci flow}
\author{Maxwell Stolarski and Alex Waldron}
\date{\today}

\begin{document}

\begin{abstract}
We provide a comparatively simple proof of the dynamical stability of Ricci flow near a linearly stable Ricci-flat ALE metric with integrable deformations. 
Our proof relies on the equivalence between integrability and an ``almost-orthogonality'' property of the Ricci-DeTurck tensor, allowing us to analyze the latter directly. 
We obtain our main results in weighted H\"older spaces and then show how to recover the $L^p$-stability theorems of Deruelle-Kr\"oncke \cite{DeruelleKroncke20} and Kr\"oncke-Petersen \cite{KronckePetersen20}.
\end{abstract}

\maketitle

\tableofcontents

\thispagestyle{empty}

\section{Introduction}

A collection of Riemannian metrics $(g(t))_{t \in [0, T)}$ on a manifold $M$ is said to evolve by \emph{Ricci flow} if
    $$\frac{\partial g}{\partial t} = - 2 \Ric(g).$$
Ricci-flat metrics correspond to fixed points of the Ricci flow.
As with any dynamical system, it is natural to ask which Ricci-flat metrics are \emph{stable} fixed points for the Ricci flow.
In other words, do Ricci flows starting near a given Ricci-flat metric remain close to that 
metric for all time?

There is a collection of important and beautiful results on the stability of Ricci flows.
{\v S}e{\v s}um proved that compact, linearly stable, Ricci-flat manifolds with integrable deformations are dynamically stable under Ricci flow \cite{Sesum06} (see also earlier results for flat metrics \cite{GIK02}).
For such compact manifolds $(M, g_0)$, \cite{Sesum06} uses the integrability assumption to remove kernel directions (with respect to a Lichnerowicz Laplacian) for $g(t) - g_0$ along the flow.
After removing kernel directions, the dynamics are governed by strictly stable directions, which enables \cite{Sesum06} to deduce stability.
Cheeger-Tian previously applied a similar argument to prove the uniqueness of tangent cones for Ricci-flat manifolds \cite{CheegerTian94}, which was in turn inspired by related results in minimal surfaces \cite{AllardAlmgren81, Simon83}.
Later, in \cite{HM14}, Haslhofer-Buzano developed a new approach that removed the integrability assumption from \cite{Sesum06}.
The argument in \cite{HM14} is based on a \L ojasiewicz–Simon inequality for Perelman's $\lambda$-functional, and therefore relies heavily on the fact that $M$ is compact.
\cite{SunWang15} uses a similar approach with Perelman's $\mu$-functional to deduce a related stability result for Fano K{\"a}hler-Einstein metrics.

In the non-compact case, Perelman's functionals are no longer directly available (see however \cite{DeruelleOzuch20, DeruelleOzuch21}).
Despite this difficulty, Deruelle-Kr{\" o}ncke proved that linearly stable Ricci-flat ALE manifolds $(M,g_0)$ with integrable deformations are dynamically stable in $L^2 \cap L^\infty$ \cite{DeruelleKroncke20}.
Their argument relies on a choice of dynamically changing reference Ricci-flat metrics $h(t)$ for flows $g(t)$ near $g_0$, and $L^2$ estimates on the difference $g(t) - h(t)$ from these moving reference metrics.
Kr{\" o}ncke-Petersen improved this result to show that 
Ricci-flat ALE manifolds with a parallel spinor and integrable deformations are dynamically stable in $L^p \cap L^\infty$ for $1 < p < n$ \cite{KronckePetersen20}.
Their argument uses a dynamic decomposition of metrics near $g_0$ and heat-kernel estimates from \cite{KronckePetersen22} to obtain stability through a sophisticated fixed-point argument.
Note that both \cite{DeruelleKroncke20, KronckePetersen20} rely on constructing a family of time-dependent reference Ricci-flat metrics, which broadly resembles the method in \cite{Sesum06} of subtracting kernel directions.
Prior stability results were also obtained for Euclidean space and noncompact symmetric spaces in \cite{SSS08, Bamler15}.


In this paper, we develop an alternative approach that yields a new and relatively simple proof of stability in the ALE case. 

\begin{thm} \label{main thm weighted Schauder spaces}
    Let $(M^n, g_0)$ be a smooth, complete, Ricci-flat ALE manifold of dimension $n \geq 4.$
    Let $\gamma \in (0,1)$, $\mu \in \{n-1,n\},$ $x_* \in M$, and $ n- \mu + 1 < \ell < n-2.$ 
    For weighted H{\"o}lder spaces as in Definition \ref{defn weighted Holder}, assume
    \begin{enumerate}
        \item  \label{main thm weighted Schauder spaces, hypothesis stable}
        $g_0$ is linearly stable (Definition \ref{defn linearly stable}),
        \item \label{main thm weighted Schauder spaces, hypothesis 2}
        the Lichnerowicz Laplacian $L = L_{g_0}$ associated to $g_0$ satisfies 
            $$\ker_{L^2} L \subseteq C^{2, \gamma}_{-\mu}(S^2 M, g_0),$$
        and
        \item \label{main thm weighted Schauder spaces, hypothesis integrable}
        $g_0$ has integrable deformations in $C^{2, \gamma}_{-\ell} ( M, g_0)$ (Definition \ref{defn integrable ALE}).
    \end{enumerate}
    
    If $\|g(0) - g_0 \|_{C^{2, \gamma}_{-\ell}(M, g_0, x_*)} \leq \eps$ is sufficiently small,  
    then the (complete, bounded-curvature) Ricci-DeTurck flow $g(t)$ (with respect to $g_0$) starting from $g(0)$ is defined for all $t \in [0, \infty),$ with
    $$\sup_{t \in [0, \infty)} \|g(t) - g_0 \|_{C^{2, \gamma}_{-\ell}(M, g_0, x_*)} \lesssim \eps$$
    and 
        $$g(t) \xrightarrow[t\to \infty]{C^\infty(M, g_0)} g_\infty$$
    for some smooth, gauged Ricci-flat ALE metric $g_\infty$ on $M$.

    There further exists a smoothly convergent family $\theta_t$ of diffeomorphisms 
    such that $\theta_t^* g(t)$ solves the Ricci flow and $\theta_t^*g(t) \to g_{\infty}'$ in $C^\infty_{loc}$
    for a Ricci-flat metric $g'_\infty$ diffeomorphic to $g_\infty,$ with $\|g'_\infty - g_0 \|_{C^{0,\gamma}_{-\ell}} \lesssim \eps.$
\end{thm}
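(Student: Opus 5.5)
We would write the Ricci--DeTurck flow as $g(t) = g_0 + h(t)$, so that $\partial_t h = -L h + Q(h)$. Here $L$ is the Lichnerowicz Laplacian of $g_0$, and $Q(h)$ is quadratic, schematically $Q = h * \nabla^2 h + \nabla h * \nabla h + \Rm * h * h$ (with coefficients depending smoothly on $h$). The abstract suggests the main tool: integrability should be equivalent to an almost-orthogonality property of the Ricci--DeTurck tensor. Concretely, let $\Pi$ be the $L^2$-projection onto $\ker_{L^2} L$, which is finite-dimensional and, by hypothesis (\ref{main thm weighted Schauder spaces, hypothesis 2}), lies in $C^{2,\gamma}_{-\mu}$. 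For $h$ small we expect the estimate
$$\bigl|\Pi\bigl(-Lh + Q(h)\bigr)\bigr| \lesssim \|h\|\,\|(1-\Pi)(-Lh+Q(h))\|$$
in appropriate weighted norms. To obtain it, we use integrability to build a smooth local parametrization $\kappa \mapsto g_\kappa$ of gauged Ricci-flat metrics tangent to $\ker L$. We then decompose $h = (g_\kappa - g_0) + \tilde h$ with $\Pi_{g_\kappa}\tilde h = 0$, and Taylor expand the Ricci--DeTurck operator about $g_\kappa$. The kernel component of the velocity comes out quadratically small relative to the transverse part.

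The proof then runs as a continuity/bootstrap argument in the norm $\|\cdot\|_{C^{2,\gamma}_{-\ell}}$ together with a time-decay norm. On the complement of $\ker L$, linear stability (\ref{main thm weighted Schauder spaces, hypothesis stable}) and weighted Schauder theory give parabolic estimates for the linear heat flow $e^{-tL}(1-\Pi)$. The decay rate of $L$-caloric tensors in weighted spaces, roughly $t^{-\delta}$ for some $\delta>0$ coming from the weight range $n-\mu+1<\ell<n-2$, should make $\|\partial_t h\|$ integrable in time. The lower bound $\ell > n-\mu+1$ is what lets the $\Pi$-component, which decays like $r^{-\mu}$, be controlled in the $r^{-\ell}$ norm. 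The upper bound $\ell<n-2$ keeps us below the Green's-function decay, so that $L$ is invertible modulo the kernel on these spaces. The Duhamel formula controls $Q(h)$ because $Q$ is quadratic in the weighted norm. The kernel part of $h$ is treated via almost-orthogonality: $\frac{d}{dt}\Pi h$ is bounded by $\|h\|$ times the transverse velocity, whose time integral is already finite. Hence the kernel part moves only $O(\eps)$ in total.

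The bootstrap closes with $\sup_t \|h(t)\|_{C^{2,\gamma}_{-\ell}}\lesssim \eps$ and $\int_0^\infty \|\partial_t g\|\,dt<\infty$. From the integrability of $\|\partial_t g\|$ we conclude that $g(t)$ converges; smoothing estimates upgrade this to $C^\infty$ convergence. Since the limit satisfies $\partial_t g = 0$, it is a gauged Ricci-flat metric $g_\infty$, and it is ALE because it stays in the weighted space. For the final assertion we integrate the DeTurck vector field $W(t)$ to obtain diffeomorphisms $\theta_t$ with $\theta_t^* g(t)$ solving Ricci flow. Since $|W| \lesssim |\nabla h|$ decays at a time-integrable rate, $\theta_t$ converges smoothly. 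The limit $g'_\infty = \theta_\infty^* g_\infty$ then satisfies the $C^{0,\gamma}_{-\ell}$ bound, at the loss of derivatives coming from $\theta_\infty - \mathrm{id}$.

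The main obstacle is getting time decay from the linear flow on $(\ker L)^\perp$ in weighted H\"older spaces while the nonlinearity and the kernel projection interact. In particular, $\Pi$ does not commute with the weighted norms, and decay on an ALE end is only polynomial, so the almost-orthogonality bound must be proved with exactly matching weights. Our first attempt would be to prove almost-orthogonality as a static lemma, using the implicit function theorem on the map $(\kappa,\tilde h)\mapsto g_\kappa+\tilde h$. We would then feed it into an ODE inequality for $|\Pi h|$, and use a weighted $L^\infty$ heat-kernel bound for $e^{-tL}(1-\Pi)$ derived from the Fredholm theory of $L$.
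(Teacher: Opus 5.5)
\textbf{Main gap: time decay on the complement of the kernel.} Your plan hinges on a weighted time-decay estimate for $e^{tL}(1-\Pi)$, and the source you propose for it does not supply one. Fredholm theory for $L : C^{2,\gamma}_{-\delta}\to C^{0,\gamma}_{-\delta-2}$ is a static statement and does not by itself give parabolic decay. On an ALE manifold $0$ is the bottom of the essential spectrum of $-L$, and linear stability only gives $-L\ge 0$ on $L^2$. The available heat-kernel estimates for $L$, those of \cite{KronckePetersen22} used by Kr\"oncke--Petersen, rely on a parallel spinor, which is not assumed here.

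This is exactly the step the paper is designed to avoid. It never builds a semigroup. It works with $\mathcal M(g(t))=\partial_t g$, which solves the \emph{linear} equation (\ref{Mevolution}), and proves $|\mathcal M(x,t)|\lesssim\eps\,\rho^{-\ell'}(\rho^2+t)^{-\frac{\ell-\ell'}{2}-1}$ by contradiction (Theorem \ref{thm:Mestimate}). At the first time this bound fails, there are two cases.
\begin{itemize}
\item If the failure occurs at bounded points, rescaling $\mathcal M$ by its value there gives a nonzero ancient solution of $\partial_t f=Lf$ with $|f|\lesssim\rho^{-\ell'}$ that is orthogonal to $\ker L$. This contradicts the Brendle--Kapouleas Liouville Theorem \ref{thm:liouville}, which is where linear stability enters.
\item If the failure escapes to infinity, the explicit barriers of Lemma \ref{lemma:comparisonprinciple} rule it out.
\end{itemize}
The orthogonality of that limit needs the quadratic bound $(\mathcal M(g),e_1)_{g_1}\le C\|\mathcal M(g)\|^2_{C^{0,\gamma}_{-\delta-2}}$ of Theorem \ref{thm:ALEalmostorthog}, taken against $\ker L_{g_1}$ for the nearby Ricci-flat $g_1$. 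A bound of the form $\|h\|\cdot\|\mathcal M\|$ against $\ker L_{g_0}$, as in your version, is too weak there. The reason is that $\|\mathcal M\|_{C^{0,\gamma}_{-\delta-2}}$ decays more slowly in time than $\mathcal M$ does at bounded points.

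Relatedly, a Duhamel formula for $h$ with forcing $Q(h)$ cannot give $\int_0^\infty\|\partial_t h\|\,dt<\infty$. The term $Q(\Pi h)$ is $O(\eps^2)$ but does not decay in time. Any linear decay theory has to be applied to $\partial_t h$ itself, or about moving Ricci-flat reference metrics.

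\textbf{Second gap: passage to Ricci flow.} Here there is a concrete error: $|\nabla h|$ does not decay, since $h(t)\to g_\infty-g_0\neq 0$. Using $V(g_\infty,g_0)=0$ you can bound $|V(g(t),g_0)|\lesssim|\nabla(g(t)-g_\infty)|$. But integrating the $\mathcal M$-bound only gives decay like $t^{-\frac{\ell-\ell'}{2}}$ at fixed $x$, which is not time-integrable in general, for instance when $n=4$, where $\ell<2$.

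The paper gets the missing decay from the elliptic identity $\Delta V+\Ric(g)(V)=(\div-\frac12\nabla\tr)\mathcal M$ and a maximum-principle argument on $B_{\sqrt t}$ (Proposition \ref{prop:ellipticestonV}). This yields $\|V(t)\|_{C^{2,\gamma}_0(B_{\sqrt t})}\lesssim\eps\,t^{-\min\{\frac{\ell+1}{2},\frac{\ell-\ell'}{2}+1\}}$, which is integrable exactly because $\ell>1$.

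\textbf{Role of the lower bound on $\ell$.} The hypothesis $\ell>n-\mu+1$ does two jobs in the paper:
\begin{itemize}
\item it gives $\ell>1$, needed for the $V$-decay above;
\item it allows a choice $\delta\in(n-\mu,\frac{\ell+\ell'}{2}-1)$, so that the decomposition of Lemma \ref{lemma:orthogonalellipticest1} is defined ($\delta+\mu>n$) while $2(\ell-\delta)>\ell-\ell'+2$ makes the almost-orthogonality error negligible.
\end{itemize}
It is not about fitting kernel elements into the $\rho^{-\ell}$ norm; that is automatic, since $\mu\ge n-1>\ell$.
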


\begin{rmk}
    The hypotheses of Theorem \ref{main thm weighted Schauder spaces} hold in several important cases (see Subsections \ref{subsect Lich Laplacian and stability}--\ref{subsect integrability} for further details):

    \begin{itemize}


        \item The decay hypothesis (\ref{main thm weighted Schauder spaces, hypothesis 2}) is true with $\mu = n-1$  for general Ricci-flat ALE spaces \cite[Theorem 2.7, Remark 2.10]{DeruelleKroncke20} and with $\mu = n$  in the case that $n=4$ \cite{BKN89} or $(M^n, g_0)$ is spin with a nonzero parallel spinor \cite{KronckePetersen22}.

        \vspace{2mm}

        \vspace{2mm}

        \item If $(M^n, g_0)$ is spin and carries a nonzero parallel spinor, then the stability hypothesis (\ref{main thm weighted Schauder spaces, hypothesis stable}) is satisfied \cite{DaiWangWei05} and the decay hypothesis (\ref{main thm weighted Schauder spaces, hypothesis 2}) is satisfied with $\mu = n$ \cite{KronckePetersen22}.  
        The integrability hypothesis (\ref{main thm weighted Schauder spaces, hypothesis integrable}) also holds, unless perhaps the holonomy is $\text{Hol}(M, g_0) = \text{Spin}(7)$ \cite[Theorem 2.18]{DeruelleKroncke20}.
        In the case of $\text{Spin}(7)$ holonomy, integrability is nonetheless widely expected to be true.

        \begin{itemize}
            \item All known Ricci-flat ALE manifolds, aside from Euclidean space, have special holonomy 
            ($\text{Hol}(M, g_0) = \text{SU}(n/2)$, $\text{Sp}(n/4)$, or $\text{Spin}(7)$) and are therefore spin with a nonzero parallel spinor \cite[Remark 1.4]{KronckePetersen20}.
        \end{itemize}


        \vspace{2mm}

        \item If $(M^n, g_0)$ is K{\"a}hler of general dimension, i.e. a Calabi-Yau ALE space, then hypotheses (\ref{main thm weighted Schauder spaces, hypothesis stable})--(\ref{main thm weighted Schauder spaces, hypothesis integrable}) are satisfied with $\mu = n$ and any $\ell \in (1, n-2)$ \cite{DaiWangWei05, DeruelleKroncke20, KronckePetersen22}.
        Thus, for Calabi-Yau ALE spaces, Theorem \ref{main thm weighted Schauder spaces} applies for each $\ell \in (1, n-2).$
        \begin{itemize}
            \item In particular, if $(M^4, g_0)$ is a 4-dimensional Calabi-Yau ALE space, then Theorem \ref{main thm weighted Schauder spaces} applies for each $\ell \in (1, 2)$. 
            Note that all currently-known 4D Ricci-flat ALE spaces are Calabi-Yau.
            It is an open question whether there are 4D Ricci-flat ALE spaces which are not K{\"a}hler.
        \end{itemize}




    \end{itemize}

    \vspace{2mm}

\end{rmk}

Theorem \ref{main thm weighted Schauder spaces} is essentially a H{\"o}lder-space version of the $L^p$-stability theorems due to Deruelle-Kr{\"o}ncke \cite{DeruelleKroncke20} and Kr{\"o}ncke-Petersen \cite{KronckePetersen20}. Our proof, however, is original, and rests on the following observation. Assuming that $g_0$ has integrable deformations, all nearby metrics enjoy an \emph{almost-orthogonality} property: as $g$ tends to $g_0$, the kernel component of the Ricci-DeTurck tensor $\Ric(g) - \frac12 \mathcal L_{V(g, g_0)} (g)$ tends to zero relative to its overall norm---see Lemma \ref{lemma:almostorthog} and Theorem \ref{thm:ALEalmostorthog}.
Almost-orthogonality allows us to directly analyze the Ricci-DeTurck tensor along a Ricci-DeTurck flow near $g_0;$ 
it avoids the construction of a suitable gauge along the flow, Perelman's functionals, and the choice of dynamically changing Ricci-flat reference metrics. 
Moreover, this approach easily recovers {\v S}e{\v s}um's stability theorem \cite{Sesum06} in the compact case (see Theorem \ref{thm stability compact case}).

While Theorem \ref{main thm weighted Schauder spaces} is most naturally proved in weighted H{\"o}lder spaces, some additional integral estimates allow us to fully recover the $L^p \cap L^\infty$-stability results of \cite{DeruelleKroncke20, KronckePetersen20}. We assume for simplicity that the decay order is $\mu = n,$ although the proof also works for $\mu = n - 1$ and $p$ in an appropriate range. 
\begin{thm} \label{main thm L^p spaces}
    Let $(M^n, g_0)$ be a smooth, complete, Ricci-flat ALE manifold and fix $1 < p < n.$
    Assume that $g_0$ {satisfies the hypotheses of Theorem \ref{main thm weighted Schauder spaces} 
    with $\mu = n$ and for some $1 < \ell \leq \frac{n}{p}$ with $\ell < \frac{n}{2}.$}
     Given $\eps > 0,$ there exists $\delta > 0$ such that if $\|g(0) - g_0 \|_{L^p\cap L^\infty(M, g_0)} < \delta,$ 
    then the (complete, bounded curvature) Ricci-DeTurck flow $g(t)$ starting from $g(0)$ is defined for all $t \in [0, \infty)$, with $\|g(t) - g_0 \|_{L^p \cap L^\infty(M, g_0) } < \eps$ and 
        $$g(t) \xrightarrow[t\to \infty]{C^\infty(M, g_0)} g_\infty$$
    for some smooth, gauged Ricci-flat ALE metric $g_\infty$ on $M$. Moreover, $g(t) \to g_\infty$ in $L^p(M)$ as $t \to \infty.$

There further exists a smoothly convergent family $\theta_t$ of diffeomorphisms such that $\theta_t^* g(t)$ solves the Ricci flow and $\theta_t^*g(t) \to g_{\infty}'$ in $C^\infty_{loc}$ for a Ricci-flat metric $g'_\infty$ diffeomorphic to $g_\infty.$
\end{thm}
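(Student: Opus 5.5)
The plan is to reduce Theorem \ref{main thm L^p spaces} to Theorem \ref{main thm weighted Schauder spaces}, together with integral estimates on the Ricci-DeTurck tensor along the flow. Write $h(t) = g(t) - g_0$.

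\textbf{Step 1: smoothing and small-time control.} Short-time existence and interior estimates for Ricci-DeTurck flow with small $L^\infty$ data give, for $t \in [0,1]$, that $\|h(t)\|_{L^\infty}$ stays small and $\|\nabla^k h(1)\|_{L^\infty} \lesssim \delta$. Writing the flow as $\partial_t h + L h = Q(h, \nabla h, \nabla^2 h)$, where $Q$ is at least quadratic, the $L^p$ bound of $h(t)$ on $[0,1]$ follows from heat-kernel bounds for $L$ on ALE spaces. Linear stability (hypothesis (1)) together with Kato's inequality should give Gaussian or $L^p$ bounds for $e^{-tL}$. The Schauder spaces do not apply at $t=0$, since $L^p\cap L^\infty$ data carry no pointwise decay.

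\textbf{Step 2: gaining weighted decay (expected main obstacle).} The difficulty is that $L^p \cap L^\infty$ data do not lie in $C^{2,\gamma}_{-\ell}$. My first attempt is a splitting argument. Fix a large radius $R$. The part of $h(1)$ inside $B_R(x_*)$ is small in $C^{2,\gamma}_{-\ell}$ once $\delta \ll R^{-\ell}$. The exterior part is controlled by the $L^p$ norm, and $\ell \le n/p$ is exactly the condition that makes $L^p$ scale like $r^{-\ell}$ decay.

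Concretely, I would run a weighted $L^p$–$L^\infty$ estimate for the linear flow. On the asymptotically Euclidean end, $e^{-tL}$ maps $L^p$ to $L^\infty$ with decay $t^{-n/(2p)}$, and $t^{-n/(2p)} \le t^{-\ell/2}$. From this I would show that $h(t)$ satisfies $|h(t)|(x) \lesssim \delta\,(1+\sqrt t + r(x))^{-\ell}$ up to a time $T$, with $T$ chosen so that this $\eps$-smallness is preserved. The restriction $\ell<n/2$ would enter when the quadratic term $Q \sim \nabla h * \nabla h$ is fed back into Duhamel's formula and the resulting weighted integrals must converge.

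Alternatively, I would not convert to Schauder spaces at all. Instead I would redo the key mechanism of Theorem \ref{main thm weighted Schauder spaces} directly in $L^p$. Apply almost-orthogonality (Theorem \ref{thm:ALEalmostorthog}) to the Ricci-DeTurck tensor $\mathcal R(t) = \Ric(g) - \frac12 \mathcal L_{V} g$. Since it evolves by $\partial_t \mathcal R = -L\mathcal R + \text{(nonlinear)}$, almost-orthogonality removes the kernel of $L$ and yields decay $\|\mathcal R(t)\|_{L^p} \lesssim \delta\, t^{-\beta}$ with $\beta > 1$. Here $\beta$ would be obtained from the heat-kernel $L^p\to L^{q}$ bounds, with decay of the kernel elements at order $\mu = n$ guaranteeing that projections onto the kernel are bounded on $L^p$ for all $1<p<n$.

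\textbf{Step 3: integrating and concluding.} Since $\partial_t g = -2\mathcal R$, integrability of $\|\mathcal R(t)\|_{L^p\cap L^\infty}$ in $t$ gives $\sup_t \|h(t)\|_{L^p\cap L^\infty} \lesssim \delta + \int_1^\infty \|\mathcal R\| < \eps$. A continuity/bootstrap argument closes this, with the bootstrap hypothesis being $\|h\|_{L^p\cap L^\infty} < \eps$ on $[0,T]$. The same integrability shows $g(t)$ is Cauchy in $L^p$, so it converges in $L^p$ to some $g_\infty$. Higher-derivative Shi-type estimates upgrade this to $C^\infty$ convergence, and $\mathcal R(g_\infty)=0$. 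A gauged Ricci-flat metric close to $g_0$ is Ricci-flat, by the standard argument via $\div$ and Bianchi with the maximum principle on small vector fields, and it is ALE by elliptic decay theory. Finally, the diffeomorphisms $\theta_t$ solve $\partial_t\theta_t = -V(g(t),g_0)\circ\theta_t$. Since $|V| \lesssim |\nabla h|$ and its time integral converges, $\theta_t$ converges smoothly, and this gives $g'_\infty$ exactly as in Theorem \ref{main thm weighted Schauder spaces}.

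I expect the heart of the proof to be the time-decay rate $\beta>1$ for $\mathcal R$ in $L^p\cap L^\infty$, equivalently the passage from $L^p$ data to weighted decay. This is where $1<\ell\le n/p$, $\ell<n/2$, and $\mu=n$ are all used.
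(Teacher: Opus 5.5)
The step you identify as the heart of the proof is not delivered by either of your routes.

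\textbf{First route.} The splitting reduction to Theorem \ref{main thm weighted Schauder spaces} fails because $L^p\cap L^\infty$ data carry no spatial decay \emph{rate}, and smoothing does not create one. A unit bump of height $\delta$ centred at distance $R\gg1$ has $L^p\cap L^\infty$ norm $\approx\delta$, yet $|h(t)|\approx\delta$ near it for $t\le1$. So $|h(t)|(x)\lesssim\delta(1+\sqrt t+r(x))^{-\ell}$ is false. Worse, at any fixed time $T$, $\|h(T)\|_{C^0_{-\ell}}$ is not even bounded uniformly over the $\delta$-ball, since the exterior part tends to zero at infinity with no rate. What can actually be propagated is the paper's bound $|h(x,t)|\le\eps\,v(x,t)^{\ell}\approx\eps\min\{\rho(x),\sqrt{1+t}\}^{-\ell}$. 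This gives weighted decay only \emph{inside} the parabolic ball $B_{\sqrt t}$, and merely time decay $t^{-\ell/2}$ outside it; that is where $t^{-n/(2p)}\le t^{-\ell/2}$, i.e.\ $\ell\le n/p$, is used.

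\textbf{Second route.} Theorem \ref{thm:ALEalmostorthog} needs $\|g-g_0\|_{C^{2,\gamma}_{-\delta}}\le\eps_0$, which fails for general $L^p$ data. The paper can only apply it to cut-off metrics $g_0+\chi_ih_i$ supported in $B_{2\sqrt{t_i}}$. Moreover, a uniform rate $\|\calM(t)\|_{L^p}\lesssim\delta t^{-\beta}$ with $\beta>1$ is false. For a bump of scale $R$ with $L^p$ norm $\delta$, one has $\|\partial_t h\|_{L^p}\approx\delta R^{-2}\approx\delta t^{-1}$ at $t\approx R^2$. Superposing spatially separated bumps at scales $2^k$, $k\le K$, each of $L^p$ norm $\delta K^{-1/p}$ (total $L^p$ norm $\delta$), gives $\int_1^\infty\|\partial_th\|_{L^p}\,dt\gtrsim\delta K^{1-1/p}\to\infty$ for the heat flow modelling the far field. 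Hence Step 3 cannot produce $L^p$ smallness or the $L^p$-Cauchy property by integrating $\calM$ in time; only pointwise time-integrability of $\calM$ is available.

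\textbf{Further problems.} Removing the kernel yields no rate by itself on an ALE space, since there is no spectral gap. Kato plus linear stability do not give Gaussian or $L^p\to L^\infty$ bounds for $e^{tL}$: the $\Rm*h$ term has no sign, and $L^2$-kernel elements are stationary. In your last step, $|V|\lesssim|\nabla h|$ gives no decay, because $\nabla h\to\nabla(g_\infty-g_0)\neq0$. The paper instead gets integrable decay of $V$ on $B_{\sqrt t}$ from the elliptic equation (\ref{Vequation}) via Proposition \ref{prop:ellipticestonV}.

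\textbf{What the paper does.} It runs a continuity argument on the pair of bounds $\sup_t\|h(t)\|_{L^p}\le\eps$ and $|h|\le\eps v^{\ell}$, improving both to $\eps/2$.
\begin{itemize}
\item Outside the parabolic ball it uses only the \emph{scalar} heat kernel (Li--Yau and Kotschwar bounds) in a Duhamel formula for $|h|^q$. This formula is built from the subsolution inequality of Lemma \ref{lemma:hpevolution}, treating $|\Rm(g_0)|=O(\rho^{-2-\tau})$ as a decaying potential (Proposition \ref{prop:Lpbootstrap}). It gives the pointwise bound where $\rho\ge\sqrt t/2$ and the $L^p$ bound on $M\setminus B_R$. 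The hypothesis $\ell<n/2$ is what makes the estimate of the quadratic gradient term work.
\item Inside $B_{\sqrt t}$ it runs a localized version of Theorem \ref{thm:Mestimate} (Proposition \ref{prop:cutoffMestimate}). This combines the barrier comparison of Lemma \ref{lemma:comparisonprinciple} on the annulus $R_1\le r\le\sqrt t$, with boundary data at $r=\sqrt t$ supplied by the heat-kernel bound, and the blow-up/Liouville/almost-orthogonality argument applied to the cut-off metrics. This gives $|\calM|\lesssim\eps v^{\ell'}t^{-\frac{\ell-\ell'}{2}-1}$ and $|h|\le\kappa\eps v^{\ell}$ there.
\item The $L^p$ bound is closed by integrating the pointwise bound over a fixed ball $B_{R_2}$ and using the exterior Duhamel estimate.
\item $L^p$-convergence to $g_\infty$ is proved without any rate.
\item The range $1<p\le\frac{n}{n-2}$ is handled by rerunning the argument with $\tilde h=g-g_\infty$ (Proposition \ref{prop:smallpLpbootstrap}).
\end{itemize}
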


We begin in Section \ref{sec: almost-orthogonality and exponential convergence} by proving an abstract almost-orthogonality lemma and showing that it implies an exponential convergence result for associated evolution equations.
After reviewing some preliminaries in Section \ref{sec: preliminaries}, Section \ref{s:exponentialconvergence} applies the general almost-orthogonality result to prove the stability of linearly stable Ricci-flat metrics with integrable deformations on closed manifolds (Theorem \ref{thm stability compact case}).
In Section \ref{sec: almost-orthogonality ALE case}, we begin our treatment of ALE metrics and establish a \emph{quantitative} almost-orthogonality property (Theorem \ref{thm:ALEalmostorthog}) for these non-compact spaces.
Sections \ref{sec: parabolic estimates}--\ref{sec: weighted Holder stability} exploit this quantitative almost-orthogonality to prove stability in weighted H{\"o}lder spaces (Theorem \ref{main thm weighted Schauder spaces}).
Finally, 
the appendices provide the additional (scalar) $L^p$-estimates required to prove the $L^p \cap L^\infty$-stability result (Theorem \ref{main thm L^p spaces}).

\subsection{Acknowledgments} The authors thank Anuk Dayaprema for reading over the appendices.
The authors also thank Tristan Ozuch, Alix Deruelle, and Qi Zhang for helpful conversations.

This work was supported by the first-listed author's Leverhulme Trust Early Career Fellowship (ECF 2023-182).


\vspace{10mm}

\section{Almost-orthogonality and exponential convergence
} \label{sec: almost-orthogonality and exponential convergence}

\subsection{Abstract almost-orthogonality}

Let $V$ and $W$ be Banach spaces and $\calM$ a $C^1$ map between open sets in $V$ and $W.$ Write
$$S = \{ \calM(u) = 0 \} \subset V.$$
Fix $u_0 \in S$ and let
$$L = d\calM_{u_0}: V \to W.$$ 
Define $V_0 = \ker L,$ which is closed, and suppose that there exists a 
complementary closed subspace $V_1 \subset V:$ 
$$V = V_0 \oplus V_1.$$
Let $\pi_0 : V= V_0 \oplus V_1 \to V_0$ denote the associated projection map.

\begin{defn}
The solution 
$u_0 \in S$ is said to have \emph{integrable deformations} in $V$ if there exists a $C^1$ map $\sigma$ from a neighborhood of the origin $0 \in V_0$ onto a neighborhood of $u_0$ in $S,$ such that $\sigma(0) = u_0$ and
$$ \left( \pi_0 \circ d\sigma \right)_{0} = \mathbf{1} : V_0 \to V_0.$$
\end{defn}

\noindent Suppose also that the image $W_1 : = \Im L$ is closed and complemented by a closed subspace $W_0 \subset W:$ 
$$W = W_0 \oplus W_1.$$
We decompose $\calM$ along the direct sum as follows:
\begin{equation}\label{Msplitting}
\calM(u) = \calM_0(u) \oplus \calM_1(u) \in W_0 \oplus W_1.
\end{equation}

\begin{lemma}[Almost-orthogonality]\label{lemma:almostorthog} The solution $u_0 \in S$ has integrable deformations if and only if, for each $\eps > 0,$ there exists $\delta > 0$ such that
\begin{equation}\label{almostorthog:est}
    | \calM_0(u) | \leq \eps | \calM(u) |
\end{equation}
for all $u \in B_{\delta}(u_0).$
\end{lemma}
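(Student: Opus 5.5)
The plan is to reduce both directions to the implicit function theorem applied to the $W_1$-component $\calM_1$, comparing $S$ with the $C^1$ graph $\Gamma = \{\calM_1 = 0\}$ near $u_0$. Write $\pi_{W_0}, \pi_{W_1}$ for the (bounded) projections of $W = W_0 \oplus W_1$, so $\calM_i = \pi_{W_i} \circ \calM$. Since $W_1 = \Im L$ is closed, $L|_{V_1} : V_1 \to W_1$ is a bounded bijection between Banach spaces, hence an isomorphism by the open mapping theorem. Note that $d(\calM_0)_{u_0} = \pi_{W_0} L = 0$ and $d(\calM_1)_{u_0} = L$.

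\emph{Step 1 (graph construction).} Consider $F(v_0, v_1) = \calM_1(u_0 + v_0 + v_1)$ on a neighborhood of $0 \in V_0 \times V_1$, with values in $W_1$. Its partial derivative in $v_1$ at the origin is $L|_{V_1}$, which is invertible. The implicit function theorem therefore gives a $C^1$ map $\varphi$ with $\varphi(0) = 0$ such that, near $u_0$,
$$\Gamma := \{\calM_1 = 0\} = \{u_0 + v_0 + \varphi(v_0)\}.$$
Differentiating $F(v_0, \varphi(v_0)) = 0$ at $0$ gives $d\varphi_0 = 0$, since $L v_0 = 0$ for $v_0 \in V_0$. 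We have $S \subset \Gamma$ near $u_0$. I also record a quantitative estimate, obtained from the mean value inequality and the $C^1$ property. For $u = u_0 + v_0 + v_1$ near $u_0$, put $w = v_1 - \varphi(v_0)$ and let $\bar u = u_0 + v_0 + \varphi(v_0) \in \Gamma$. Then
$$|\calM_1(u)| = |\calM_1(u) - \calM_1(\bar u)| \ge \big(c - o(1)\big)|w|,$$
where $c^{-1} = \|(L|_{V_1})^{-1}\|$. Similarly,
$$|\calM_0(u) - \calM_0(\bar u)| \le o(1)\,|w|,$$
because $d\calM_0$ is $o(1)$ near $u_0$.

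\emph{Step 2 (integrable $\Rightarrow$ almost-orthogonal).} This is the step I expect to be the real content: one must show that $S$ fills all of $\Gamma$ near $u_0$. The map $\pi_0 \circ \sigma$ is $C^1$ with derivative $\mathbf{1}$ at $0$, so by the inverse function theorem it covers a neighborhood of $0$ in $V_0$. Points of $S \subset \Gamma$ are determined by their $V_0$-component, so $S \supset \sigma(\cdot)$ must coincide with $\Gamma$ near $u_0$. Hence $\calM_0(\bar u) = 0$. Step 1 then gives
$$|\calM_0(u)| \le o(1)\,|w| \le o(1)\,|\calM_1(u)| \le o(1)\,\|\pi_{W_1}\|\,|\calM(u)|.$$
Shrinking $\delta$ yields \eqref{almostorthog:est}.

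\emph{Step 3 (almost-orthogonal $\Rightarrow$ integrable).} Take $\eps < 1$ in \eqref{almostorthog:est}. On $\Gamma \cap B_\delta(u_0)$ we have $\calM = \calM_0$, so $|\calM_0| \le \eps |\calM_0|$, which forces $\calM_0 = 0$. Thus $\Gamma \subset S$ locally, and therefore $S = \Gamma$ near $u_0$. Setting $\sigma(v_0) = u_0 + v_0 + \varphi(v_0)$ gives a $C^1$ map onto a neighborhood of $u_0$ in $S$, with $\sigma(0) = u_0$ and $\pi_0 \circ d\sigma_0 = \mathbf{1} + \pi_0\, d\varphi_0 = \mathbf{1}$. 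This is exactly the definition of integrable deformations.
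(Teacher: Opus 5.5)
Your proof is correct and follows essentially the same route as the paper: the implicit function theorem realizes $\{\calM_1 = 0\}$ as a $C^1$ graph over $V_0$ (your $\bar u$ plays the role of the paper's coordinate change $(x,y)\mapsto(x,y+y_1(x))$). Integrability is then identified with $S$ filling this graph; the forward direction is a mean-value estimate of $\calM_0$ against the $V_1$-displacement $w$, and the converse follows because $\calM=\calM_0$ on the graph. You are in fact more careful than the paper, which simply asserts that integrability means $S=S_0$ locally, whereas you justify both directions of that identification: the inverse function theorem applied to $\pi_0\circ\sigma$ for one direction, and the explicit map $\sigma(v_0)=u_0+v_0+\varphi(v_0)$ for the other.
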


The proof will be based on:
\begin{lemma}\label{lemma:fvanident}
    If $f(x,y)$ is $C^1$ with $f(x,0) \equiv 0$ and $df_{(0,0)} = 0$ then for each $\eps > 0$ there exists $\delta > 0$ such that
    \begin{equation}
        |f(x,y)| \leq \eps |y|
    \end{equation}
    for $|(x,y)| < \delta.$
\end{lemma}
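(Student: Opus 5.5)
The plan is to apply the mean value theorem in the $y$-variable, starting from the base point $(x,0)$ where $f$ vanishes identically. We work in Banach spaces, with $x \in X$ and $y \in Y$ and $f$ defined on a neighborhood of $(0,0)$ in $X \times Y$ with values in a Banach space $Z$. Because $f(x,0) = 0$ for every $x$, we can write
$$f(x,y) = f(x,y) - f(x,0) = \int_0^1 \partial_y f(x, sy)\,[y] \, ds,$$
which is valid since $s \mapsto f(x,sy)$ is $C^1$ for $(x,y)$ in a small convex ball around the origin. Here $\partial_y f$ denotes the partial derivative in $y$, i.e. the restriction of $df$ to $\{0\} \times Y$. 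Taking norms gives
$$|f(x,y)| \leq \sup_{s \in [0,1]} \|\partial_y f(x,sy)\| \, |y|.$$

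It then remains to make the operator norm $\|\partial_y f(x,sy)\|$ small. The hypothesis $df_{(0,0)} = 0$ gives $\partial_y f(0,0) = 0$. Since $f$ is $C^1$, the map $(x,y) \mapsto df_{(x,y)}$ is continuous into the space of bounded linear operators. So for a given $\eps > 0$ there is a $\delta > 0$ such that $\|df_{(x',y')}\| \leq \eps$ whenever $|(x',y')| < \delta$. We also shrink $\delta$ if needed so that the ball of radius $\delta$ lies in the domain of $f$.

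For $|(x,y)| < \delta$, every point $(x,sy)$ with $s \in [0,1]$ satisfies $|(x,sy)| \leq |(x,y)| < \delta$. This uses any reasonable product norm, for instance $\max(|x|,|y|)$ or $|x| + |y|$. Hence $\|\partial_y f(x,sy)\| \leq \|df_{(x,sy)}\| \leq \eps$, and substituting into the previous estimate gives $|f(x,y)| \leq \eps|y|$.

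There is no real obstacle in this argument. The only points needing care are that the line segment from $(x,0)$ to $(x,y)$ stays inside the $\delta$-ball and inside the domain, and that in infinite dimensions one should use the integral form of the mean value theorem (or the mean value inequality) rather than a pointwise mean value equality. The key observation is that $f(x,0) \equiv 0$ lets us base the estimate at $(x,0)$ instead of $(0,0)$. This is what produces a bound proportional to $|y|$ alone rather than to $|(x,y)|$.
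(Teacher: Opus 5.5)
Your proposal is correct and follows the same route as the paper: apply the mean value theorem in the $y$-variable from the base point $(x,0)$, where $f$ vanishes, and use continuity of $df$ at the origin, where $df_{(0,0)}=0$, to make the derivative small. Your additional care with the integral form of the mean value theorem in Banach spaces, the choice of product norm, and keeping the segment inside the $\delta$-ball fills in details that the paper's one-line proof leaves implicit.
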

\begin{proof}
    By the mean-value theorem, we have
    \begin{equation*}
    \begin{split}
    |f(x,y) - f(x,0)|\leq |y| \sup |df(x,t)| \leq \eps |y|
    \end{split}
    \end{equation*}
    for $(x,y)$ sufficiently close to $(0,0).$
\end{proof}

\begin{proof}[Proof of Lemma \ref{lemma:almostorthog}] Assume without loss of generality that $u_0 = 0.$ 
Let 
$$S_0 = \{ \calM_1(u) = 0 \}$$
and observe that $S \subset S_0$ by definition. The integrability assumption is that $S = S_0$ after restricting to a sufficiently small neighborhood.

We may further assume that $S_0$ is a neighborhood of the origin in $V_0,$ by the following argument. Write
$$u = (x,y) \in V_0 \oplus V_1.$$
Since $L : V_1 \stackrel{\sim}{\to} W_1$ is an isomorphism, the implicit function theorem provides $y_1(x),$ defined in a neighborhood of the origin in $V_0,$ such that $\calM_1(x,y_1(x)) = 0.$ After applying the coordinate change
\begin{equation}\label{almostorthog:coordchange}
(x,y) \mapsto \left( x, y + y_1(x) \right)
\end{equation}
on $V,$ we have $\calM_1(x,0) \equiv 0.$ Note that (\ref{almostorthog:coordchange}) does not change the splitting (\ref{Msplitting}), which is on $W$,
hence the conclusion is unaffected. Also, since $V_0 = \ker L,$ we have $\left(d y_1 \right)_{(0,0)} = 0,$ so $L$ is unchanged. 


\vspace{2mm}

\noindent ($\Rightarrow$) Suppose $S = S_0.$ Define $\delta_1(x,y) \in W_1$ by 
    $$\calM(u) = \calM(x,y) = \calM_0(x,y) + L(y) + \delta_1(x,y).$$
    By definition of the derivative, we have
    $$\left(d \calM_0 \right)_{(0,0)} + \left( d \delta_1 \right)_{(0,0)} = 0.$$
    Since the images of $\left(d \calM_0 \right)_{(0,0)}$ and $\left( d \delta_1 \right)_{(0,0)}$ lie in complementary subspaces, they must vanish individually.
    
    Also notice that $\calM(x, 0)$ and $\calM_0(x,0)$ both vanish identically, since $S = S_0$, hence the same is true of $\delta_1(x,0).$
    Applying Lemma \ref{lemma:fvanident} both to $\delta_1$ and to $\calM_0,$ we may conclude that
    \begin{equation}\label{almostorthog:deltaM0eps}
    |\delta_1(x,y)| + |\calM_0(x,y)| \leq \eps |y|
    \end{equation}
    for $|(x,y)| < \delta.$
    
    On the other hand, since $L : V_1 \stackrel{\sim}{\to} W_1$ is an isomorphism, the Bounded Inverse Theorem gives  
\begin{equation}\label{almostorthog:L(y)est}
    |L(y)| \geq c |y|
\end{equation}
for a constant $c > 0.$
    Applying (\ref{almostorthog:L(y)est}) and (\ref{almostorthog:deltaM0eps}) twice, we have
    \begin{equation}
        \begin{split}
           \eps |\calM(x,y)| & \geq \eps \left( |L(y)| - |\delta_1(x,y) + \calM_0(x,y)| \right) \\
            & \geq (c - \eps) \eps |y| \\
            & \geq (c - \eps)|\calM_0(x,y)|,
        \end{split}
    \end{equation}
    which implies the desired conclusion.

\vspace{2mm}

    \noindent ($\Leftarrow$) Suppose that the integrability assumption fails, i.e. $S \cap B_\delta(0) \subsetneq V_0 \cap B_\delta(0)$ for every $\delta > 0.$ Take $\eps = \frac12.$ For any $\delta > 0,$ there exists $x \in B_{\delta}(0) \cap V_0$ with $\calM(x,0) = \calM_0(x,0) \neq 0,$ so that
    $$|\calM_0(x,0) | > \frac12 |\calM_0(x,0)| = \frac12 |\calM(x,0)|.$$
    Hence (\ref{almostorthog:est}) fails.
\end{proof}

\vspace{10mm}

\subsection{Exponential convergence of 2nd-order parabolic evolution equations on compact manifolds}

Suppose now that 
$\calM$ is a 2nd-order quasilinear elliptic operator on sections of a vector bundle $E$ (with metric) over a closed Riemannian manifold, and consider the parabolic evolution equation
\begin{equation}\label{parabolicevolution}
\frac{ \p u}{\p t} = \mathcal{M}(u).
\end{equation}
Let $V \hookrightarrow W$ be Banach spaces of sections on which $\mathcal{M}$ defines a $C^1$ map between open subsets. Assume that $C^k \hookrightarrow W \hookrightarrow L^2$ for some $k \geq 0.$
Let $u_0 \in V$ be a solution of the equation $\calM(u) = 0,$ and define $L = d\mathcal{M}_{u_0}.$ Take
$$V_0 = \ker_V L, \qquad V_1 = V_0^\perp \cap V, \qquad W_1 = \Im L, \qquad W_0 = W_1^\perp \cap W,$$
where $V_0^\perp, W_1^\perp$ refer to $L^2$-orthogonal complements.

Assume further that
\begin{itemize}
\item (\ref{parabolicevolution}) is well-posed and smoothing on $V$, 



\item $L$ is self-adjoint (i.e. $\left( L u , v \right)_{L^2} = \left( u , Lv \right)_{L^2}$ for all smooth sections $u,v$),

\item 
$V = V_0 \oplus V_1$, and

\item 
$W = W_0 \oplus W_1.$
\end{itemize}
For example, if $V = C^{2,\alpha}$ and $W = C^\alpha,$ these are standard facts. 

\begin{thm}[Two-Interval Theorem\footnote{Cf. Leon Simon's Three-Interval Theorem \cite[(4.11-4.12), p. 546]{Simon83}.}]\label{thm:twointerval} Let $u_0$ be a solution of the equation $\calM(u) = 0$ as above, and assume that $u_0$ has integrable deformations in $V$ and is linearly stable.
Given $\lambda < \lambda_1,$ the first positive eigenvalue of $L,$ there exists $\delta > 0$ such that for any $u(0) \in B_\delta (u_0) \subset V,$ the corresponding solution $u(t)$ of (\ref{parabolicevolution}) satisfies 
\begin{equation}\label{twointerval:est}
\sup_{1 \leq t \leq 2} ||\calM(u(t))||_{L^2} \leq e^{ -\lambda} \sup_{0 \leq t \leq 1} ||\calM(u(t))||_{L^2}.
\end{equation}
\end{thm}
\begin{proof}

Suppose for contradiction that 
(\ref{twointerval:est}) fails for solutions $u_i(t)$ with $u_i(0) \in B_{1/i}(u_0).$ By well-posedness in $V$ and smoothing, we may assume that $u_i(t)$ are defined for $t \in \LB 0, 2\RB$ and converge in $C^\infty_{loc}\left( \left( 0 , 2 \RB \right)$ to $u_0,$ with $u_i(t) \to u_0$ in $V$ for each $t$ as $i \to \infty.$ 

Let $c_i \to \infty $ be such that
$$f_i(t) := c_i \calM(u_i(t))$$
satisfies
\begin{equation}\label{fiL2equalsone}
\sup_{1 \leq t \leq 2} ||f_i(t)||_{L^2} = 1.
\end{equation}
Since the estimate (\ref{twointerval:est}) fails, we in fact have
\begin{equation}\label{fiboundedinL2}
\sup_{0 \leq t \leq 2} || f_i(t) ||_{L^2} \leq e^{\lambda}.
\end{equation}
Let $k, \tau > 0.$
Since the $f_i$'s solve a linear parabolic equation
\begin{equation}\label{fievolution}
    \left( \frac{\p}{\p t} - d\calM_{u_i(t)} \right) f_i(t) = 0
\end{equation}
with uniformly smooth coefficients at positive times, we have
\begin{equation*}
\sup_{\tau \leq t \leq 2} \| f_i(t) \|_{C^k} \leq C_{k,\tau} \sup_{0 \leq t \leq 2} \| f_i(t) \|_{L^2}.
\end{equation*}
In particular,
\begin{equation}\label{L2controlsV}
\sup_{1 \leq t \leq 2} \| f_i(t) \|_{W} \leq C.
\end{equation}

By Arzela-Ascoli, we may pass to a subsequence such that $f_i(t) \to f_\infty(t)$ in $C^\infty_{loc}\left( \left( 0 , 2 \RB \right)$ and weakly in $L^2$ at $t = 0,$
so that the assumptions (\ref{fiL2equalsone}-\ref{fiboundedinL2}) are preserved in the limit. Since $u_i \to u_0$ as $i \to \infty,$ (\ref{fievolution}) converges to the fixed equation
$$\left( \frac{\partial}{\partial t} - L \right) f_\infty(t) = 0.$$
We now apply Lemma \ref{lemma:almostorthog}. Given $\eps > 0,$ for $i$ sufficiently large, at any $t \in \LB 1,2 \RB$ we have 
$$ \| \pi_0 f_i(t) \|_{L^2} \lesssim \| \pi_0 f_i(t) \|_W = c_i \| \calM_0(u_i(t)) \|_W \leq c_i \eps \|\calM(u_i(t)) \|_W = \eps \| f_i(t) \|_{W} \leq C \eps,$$
where we have applied (\ref{L2controlsV}). 
Since $\eps > 0$ was arbitrary, we conclude that
$$f_\infty(t) \perp \ker L $$
for $t \in \LB 1, 2 \RB.$ However, by the explicit formula for such a solution 
in terms of eigenfunctions of $L$ \cite[(1.20-21)]{Simon83}, for each $t \in \LB 1,2 \RB,$ we must have
$$||f_\infty(t)||_{L^2} \leq e^{-\lambda_1} ||f_\infty(t-1)||_{L^2} \leq e^{\lambda - \lambda_1} < 1$$
by (\ref{fiboundedinL2}). This is a contradiction.
\end{proof}

\begin{cor}\label{cor:exponentialconvergence}
    For any initial datum in a sufficiently small $V$-neighborhood of $u_0,$ the corresponding solution of (\ref{parabolicevolution}) exists for all time and converges exponentially.
\end{cor}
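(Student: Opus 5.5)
The plan is to iterate Theorem \ref{thm:twointerval} on successive unit time intervals, and to use the resulting geometric decay of $\|\calM(u(t))\|_{L^2}$ to show that $u(t)$ stays in the $\delta$-ball where the theorem applies. Fix $\lambda \in (0,\lambda_1)$ and let $\delta$ be as in Theorem \ref{thm:twointerval}. Set
$$a_k := \sup_{k \le t \le k+1} \|\calM(u(t))\|_{L^2}.$$
As long as $u(k) \in B_\delta(u_0)$ and the solution exists on $[k, k+2]$, time-translation invariance of (\ref{parabolicevolution}) and the theorem give $a_{k+1} \le e^{-\lambda} a_k$. By induction, $a_k \le e^{-\lambda k} a_0$ for every $k$ up to the first integer at which $u$ leaves $B_\delta(u_0)$.

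To show the solution never leaves, I run a bootstrap in $V$. Since $\partial_t u = \calM(u)$, we have
$$\|u(t) - u(1)\|_{L^2} \le \int_1^t \|\calM(u)\|_{L^2} \le C a_0 \sum_k e^{-\lambda k} \lesssim a_0 .$$
Smoothing/well-posedness on unit intervals gives $a_0 \lesssim \|u(0) - u_0\|_V$, since $\calM(u_0) = 0$ and $\calM$ is $C^1$. This controls the $L^2$ displacement, but the bootstrap needs control in $V$. The main obstacle is upgrading this to $V$-closeness, and I would do it with parabolic smoothing. On each interval $[k, k+2]$ I write $u - u_0$ as a solution of a quasilinear parabolic equation close to $\partial_t - L$. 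Interior estimates (with coefficients uniformly controlled while $u$ stays $\delta$-close) then bound $\|u(t) - u_0\|_{C^{k'}}$ by $\sup_{[k, k+2]} \|u - u_0\|_{L^2}$ plus higher-order small terms. Since $C^{k'} \hookrightarrow V$ for large $k'$ (e.g. $V = C^{2,\alpha}$), this gives
$$\|u(t) - u_0\|_V \le C\left(\|u(1) - u_0\|_V + a_0\right) \le C' \|u(0) - u_0\|_V .$$
Choosing the initial neighborhood of size $\delta/(2C')$ closes the bootstrap, so $u(k) \in B_\delta(u_0)$ for all $k$, and long-time existence follows from well-posedness restarted at each integer time.

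Finally, $\int_1^\infty \|\partial_t u\|_{L^2}\,dt < \infty$ with exponentially decaying tail, so $u(t) \to u_\infty$ in $L^2$ at rate $e^{-\lambda t}$. Interpolating against the uniform $C^{k'}$ bounds from smoothing upgrades this to exponential convergence in every $C^k$, hence in $V$. The limit satisfies $\calM(u_\infty) = 0$ because $\|\calM(u(t))\|_{L^2} \to 0$. Note that $u_\infty$ need not equal $u_0$; it is a nearby point of $S$, consistent with integrability.
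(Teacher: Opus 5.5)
Your proposal is correct and is exactly the argument the paper leaves implicit (the corollary is stated there without proof, as a direct consequence of Theorem~\ref{thm:twointerval}): you iterate the two-interval estimate on unit intervals, and you close a bootstrap keeping $u(k)\in B_\delta(u_0)$ using the summable $L^2$ displacement together with parabolic smoothing. A slightly more economical variant, mirroring the proof of Theorem~\ref{thm:twointerval}, applies the smoothing to $\calM(u)$ rather than to $u-u_0$; this gives $\|u(t)-u(1)\|_V\lesssim\int_1^t\|\calM(u(s))\|_{C^k}\,ds\lesssim a_0$ and exponential convergence in $V$ at the full rate $\lambda$ without interpolation.
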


\begin{rmk}
    Below we will check the above hypotheses in the case of Ricci flow. The theorem can equally be used to show exponential convergence in the compact case for harmonic map flow near a linearly stable harmonic map with integrable deformations, for Yang-Mills flow near a linearly stable Yang-Mills connection with integrable deformations, \it{et cetera}.
\end{rmk}

\vspace{10mm}

\section{Geometric preliminaries} \label{sec: preliminaries}

\subsection{The Ricci-DeTurck operator and its linearization}\label{ss:riccideturckoperator}

Fix a reference metric $g_0$ on a manifold $M$ and define
\begin{equation}\label{Mdefn}
\begin{split}
     \calM(g) & := -2 \Ric(g) + \scrL_{V(g, g_0)} (g).
    \end{split}
    \end{equation}
Here $V(g, g_0)$ is the DeTurck vector field defined locally by
$$V^k = g^{ij} \left( \Gamma(g)^k_{ij} - \Gamma(g_0)^k_{ij} \right)$$
or globally by
$$g_0(V(g, g_0), \cdot) = - \div_{g} g_0 + \frac12 \nabla^{g} \tr_{g} g_0.$$
Let
$$\Delta_{g,g_0} = g^{ab} \nabla^{g_0,2}_{ab}.$$
By \cite[Lemma 2.1]{Shi89} (see also \cite[p.\ 5]{DeruelleKroncke20}),
we have
\begin{equation}\label{RHSofRdT}
\begin{split}
    \calM(g)_{ij} & = \Delta_{g,g_0} g_{ij} - g^{k\ell} g_{ip} (g_0)^{pq} \Rm(g_0)_{jk\ell q} - g^{k \ell} g_{jp} (g_0)^{pq} \Rm(g_0)_{ik\ell q} \\
   & + g^{ab} g^{pq} \left( \frac12 \nabla^{g_0}_i g_{pa} \nabla^{g_0}_j g_{qb} + \nabla^{g_0}_a g_{jp} \nabla^{g_0}_q g_{ib} \right) \\
   & - g^{ab} g^{pq} \left( \nabla^{g_0}_a g_{jp} \nabla^{g_0}_b g_{iq} - \nabla^{g_0}_j g_{pa} \nabla^{g_0}_b g_{iq} - \nabla^{g_0}_i g_{pa} \nabla^{g_0}_b g_{jq} \right).
    \end{split}
\end{equation}
Suppose that $g_0$ is Ricci-flat, and write $h = g - g_0.$ Observing that 
\begin{equation}\label{g0inversegrelation}
    g^{ab} = g_0^{ab} - g_0^{am} h_{mn} g^{nb},
    \end{equation}
    and using Ricci-flatness of $g_0,$ we have 
\begin{equation}\label{calMfullexpression}
\begin{split}
    \calM(g)_{ij} & = \Delta_{g,g_0} h_{ij} - h_{ab} g^{ka} (g_0)^{\ell b} g_{ip} (g_0)^{pq} \Rm(g_0)_{jk\ell q} - h_{ab} g^{k a} (g_0)^{\ell b} g_{jp} (g_0)^{pq} \Rm(g_0)_{ik\ell q} \\
   & + g^{ab} g^{pq} \left( \frac12 \nabla^{g_0}_i h_{pa} \nabla^{g_0}_j h_{qb} + \nabla^{g_0}_a h_{jp} \nabla^{g_0}_q h_{ib} \right) \\
   & - g^{ab} g^{pq} \left( \nabla^{g_0}_a h_{jp} \nabla^{g_0}_b h_{iq} - \nabla^{g_0}_j h_{pa} \nabla^{g_0}_b h_{iq} - \nabla^{g_0}_i h_{pa} \nabla^{g_0}_b h_{jq} \right).
    \end{split}
\end{equation}
\noindent The linearization of $\calM$ at $g$ comes out to
\begin{equation}\label{dMcompactexpression}
\begin{split}
d\calM_{g}(\hat{h})_{ij} & = \Delta_{g,g_0} \hat{h}_{ij} -2 g_0^{ka}g_0^{\ell b} \Rm(g_0)_{ik\ell j} \hat{h}_{ab} + F_{ij}{}^{cab} \nabla^{g_0}_c \hat{h}_{ab} + G_{ij}{}^{ab} \hat{h}_{ab},
\end{split}
\end{equation}
where
\begin{equation}\label{Fexpression}
\begin{split}
F_{ij}{}^{cmn} 
    & = g^{mq} g^{nb} \left( \delta_i{}^c  \left( \frac12 \nabla^{g_0}_j h_{qb} - \nabla^{g_0}_b h_{jq} \right) + \delta_j{}^c \left( \frac12 \nabla^{g_0}_i h_{bq} - \nabla^{g_0}_b h_{iq} \right) \right) \\
   & + g^{bc} g^{nq} \left( \delta_j{}^m \left( \nabla^{g_0}_q h_{ib} - \nabla^{g_0}_b h_{iq} - \nabla^{g_0}_i h_{qb} \right) + \delta_i{}^m \left( \nabla^{g_0}_q h_{jb} - \nabla^{g_0}_b h_{jq} - \nabla^{g_0}_j h_{qb} \right) \right) 
\end{split}
\end{equation}
and
\begin{equation}\label{Gexpression}
\begin{split}
G_{ij}{}^{mn} & = - g^{am} g^{nb} \nabla^{g_0, 2}_{a,b} h_{ij} \\
& - g_0^{km} g_0^{\ell n} g_0^{pq} \left( h_{ip}   \Rm(g_0)_{jk\ell q} + h_{jp}  \Rm(g_0)_{ik\ell q} \right) \\
& + h_{ab} \left( \begin{split} & \left( g_0^{ka}  g^{bm} g_0^{\ell n} + g^{km} g^{na} g_0^{\ell b} \right) g_0^{pq} \left( g_{ip} \Rm(g_0)_{jk\ell q} + g_{jp} \Rm(g_0)_{ik\ell q} \right) \\
& - g^{ka} g_0^{\ell b} g_0^{nq} \left( \delta_i{}^m \Rm(g_0)_{jk\ell q} + \delta_j{}^m \Rm(g_0)_{ik\ell q} \right) \end{split} \right) \\
   & - 2 g^{am} g^{nb} g^{pq} \left( \begin{split} \frac12 \nabla^{g_0}_i h_{pa} \nabla^{g_0}_j h_{qb} + \nabla^{g_0}_a h_{jp} \nabla^{g_0}_q h_{ib} - \nabla^{g_0}_a h_{jp} \nabla^{g_0}_b h_{iq} \\ - \nabla^{g_0}_j h_{pa} \nabla^{g_0}_b h_{iq} - \nabla^{g_0}_i h_{pa} \nabla^{g_0}_b h_{jq} \end{split} \right).
\end{split}
\end{equation}
These exact expressions are not crucial for the present paper but have been included for the sake of concreteness.
When $g = g_0,$ we have $F = G = 0$ and (\ref{dMcompactexpression}) reduces to the Lichnerowicz operator $L_{g_0}(\hat{h}).$

The Ricci-DeTurck flow (with respect to reference metric $g_0$) is given by
\begin{equation} \label{eqn Ricci-DeTurck flow}
\frac{\partial g}{\partial t} = \calM(g).
\end{equation}
Notice that along Ricci-DeTurck flow, we have
\begin{equation}\label{Mevolution}
\begin{split}
    \frac{\p}{\p t } \mathcal{M}(g(t))_{ij} & = d \mathcal{M}_{g(t)} \left( \frac{\p  g}{\p t} \right)_{ij} \\
    & = \Delta_{g,g_0} \mathcal{M}_{ij} -2 g_0^{ka}g_0^{\ell b} \Rm(g_0)_{ik\ell j} \mathcal{M}_{ab} + F_{ij}{}^{cab} \nabla^{g_0}_c \mathcal{M}_{ab} + G_{ij}{}^{ab} \mathcal{M}_{ab}.
    \end{split}
\end{equation}

\vspace{2mm}

\subsection{Preliminaries on ALE manifolds}

\begin{defn} \label{defn: ALE}
    A complete, connected Riemannian manifold $(M^n, g_0)$ is \emph{asymptotically locally Euclidean (ALE) of order $\tau > 0$} if there exists $K \Subset M$, a finite subgroup $\Gamma < SO(n)$ acting freely on $\R^n \setminus \{ 0\}$, and a diffeomorphism 
        $$ \Psi : ( \R^n \setminus \overline B_1) / \Gamma  \to M  \setminus K $$
    such that
        $$| \nabla^k_{g_{Euc}} ( \Psi^* g_0 - g_{Euc} ) |_{g_{Euc}} = O ( r^{-\tau - k} ) \qquad \forall k \in \mathbb N.$$
    Here $r = |x|$ is the radial coordinate on $\R^n$.
        
    Such a diffeomorphism $\Psi : (\R^n \setminus \overline B_1)/ \Gamma \to M \setminus K $ will be called a \emph{coordinate system at infinity}.

    A triple $(M^n, g_0, x_*)$ consisting of an ALE manifold $(M^n, g_0)$ and a point $x_* \in M$ will be called a \emph{pointed ALE manifold}.
    We use $\rho : M \to [0, \infty)$ to denote the regularized distance function $\rho(x) := \sqrt { 1 + \dist_{g_0}^2 ( x, x_*)} $.
\end{defn}

Throughout the remainder of this subsection unless otherwise noted, all raising/lowering of indices, norms $| \cdot | = |\cdot |_{g_0}$, and all covariant derivatives $\nabla = \nabla^{g_0}$ are with respect to $g_0$, where $(M^n, g_0)$ is an ALE manifold.

\begin{lemma} \label{lemma: Rm est for ALE}
    Let $(M^n, g_0, x_*)$ be a pointed ALE manifold of order $\tau > 0$ and $\rho(x) := \sqrt{1 + \dist_{g_0}^2(x, x_*)}$.
    For all $k \in \mathbb N$, there exists $C_k = C_k(k, M^n, g_0, x_*, \tau) >0 $ such that 
        $$|\nabla^k Rm | \le C_k \rho^{-\tau -2-k} \qquad \text{on } M.$$
\end{lemma}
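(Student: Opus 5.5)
The plan is to split $M$ into the compact set $K$ and the end $M \setminus K$, and to bound each piece separately.

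\emph{Compact part.} On the compact set $\overline{K}$, every $|\nabla^k \Rm|$ is a continuous function, so it is bounded. The function $\rho$ is bounded above and below by positive constants there. Hence $|\nabla^k \Rm| \le C \le C' \rho^{-\tau-2-k}$ on $K$. We may also enlarge $K$ to $K \cup \Psi(\overline{B_R}\setminus \overline{B_1})/\Gamma$ for any fixed $R$ without changing this argument.

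\emph{The end.} Write $g = \Psi^* g_0 = g_{Euc} + e$ on $(\R^n \setminus \overline B_1)/\Gamma$, where $|\partial^j e| = O(r^{-\tau-j})$ for all $j$. This is best done on the $\Gamma$-cover, where the computation is local.

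First, the Christoffel symbols satisfy $\Gamma(g) = g^{-1} * \partial e$. Since $g^{-1}$ and all its Euclidean derivatives are bounded with the correct decay for large $r$, we get $|\partial^j \Gamma| = O(r^{-\tau-1-j})$.

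Second, the curvature in coordinates is $\Rm = \partial \Gamma + \Gamma * \Gamma$. Differentiating gives $|\partial^j \Rm| = O(r^{-\tau-2-j})$, because the quadratic terms decay even faster, since $\tau + 1 > 0$.

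Third, we convert coordinate derivatives into covariant derivatives. We have $\nabla^k \Rm = \partial^k \Rm + \sum \partial^{a_1}\Gamma * \cdots * \partial^{a_m}\Gamma * \partial^{b}\Rm$. Every term is a product whose decay exponents add up to at least $\tau + 2 + k$, because each $\Gamma$ factor contributes one unit of derivative order together with its decay. Finally, $g_0$-norms and Euclidean norms are uniformly equivalent for large $r$. Together these give $|\nabla^k \Rm|_{g_0} \le C r^{-\tau-2-k}$ on the end.

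\emph{Comparing $r$ with $\rho$.} This is the main (mild) obstacle. We need $r \circ \Psi^{-1}$ to be comparable to $\rho$ on $M \setminus K$, in the sense that $c\rho \le r \le C\rho$ for $r \ge R$ large.

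For the upper bound on $\dist$: the radial segment from $r=2$ to $x$ has $g_0$-length at most $(1+o(1))r$, because $g$ is uniformly equivalent to $g_{Euc}$. Adding the fixed distance from $x_*$ to the sphere $\{r=2\}$ gives $\dist(x,x_*) \le Cr$.

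For the lower bound: any curve from $x_*$ to $x$ must cross $\{r=2\}$, and the portion after the crossing has Euclidean length at least $r-2$. Uniform equivalence of the metrics then gives $\dist(x,x_*) \ge c(r-2)$, so $\rho \ge c' r$ for large $r$.

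With this comparison, the end estimate becomes $|\nabla^k \Rm| \le C_k \rho^{-\tau-2-k}$. Combining it with the compact-part bound finishes the proof, and the constants depend only on $k$, $g_0$, $x_*$, $\tau$.
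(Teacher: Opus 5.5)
Your proof is correct. The paper states this lemma without proof and treats it as an immediate consequence of the ALE definition. It later uses your comparison $r\sim\rho$ on the end, in the form $1\le \frac12 r\le \rho\circ\Psi\le 2r$, in the appendix. Your computation is the standard verification the paper leaves to the reader: $\Gamma = g^{-1}*\partial e$, $\Rm=\partial\Gamma+\Gamma*\Gamma$, the expansion of $\nabla^k\Rm$ with each $\Gamma$ counted as one derivative, and the compact part.

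A few small points to tidy up:

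\begin{enumerate}
\item The quadratic terms $\partial^a\Gamma*\partial^b\Gamma$ decay like $r^{-2\tau-2-j}$. This beats $r^{-\tau-2-j}$ because $\tau>0$, not because $\tau+1>0$.

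\item The claim that every curve from $x_*$ to $x$ crosses $\Psi(\{r=2\})$ presumes $x_*\in K\cup\Psi(\{r<2\})$. You can arrange this by using a larger sphere, or by applying the triangle inequality with a base point in $K$.

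\item The lower bound $\Psi^*g_0\ge c\,g_{Euc}$ on $\{2\le r\le R_1\}$ comes from compactness of that annulus, not from the decay.

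\item The compactness of $K\cup\Psi(\{1<r\le R\})$ is part of the intended meaning of the ALE definition. Alternatively, it follows from Hopf--Rinow if the decay bounds hold uniformly on all of $\{r>1\}$.
\end{enumerate}
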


\begin{defn} \label{defn weighted Holder}
    Let $(M^n, g_0, x_*)$ be a pointed ALE manifold and let $E$ be a tensor bundle over $M$.
    Let $\rho(x) := \sqrt{1 + \dist_{g_0}^2(x, x_*)}.$ 
    For $0 < \gamma < 1$ and $\delta \in \R$, define a weighted H{\"o}lder norm $C^{0, \gamma}_{-\delta}$ on sections $h$ of $E$ by 
\begin{equation} \label{eqn defn weighted Holder norm}
\begin{split}
    \| h \|_{C^{0 , \gamma}_{-\delta} } := \sup_M \rho(x)^{\delta} | h | + \sup_{x\in M} \rho(x)^{\delta + \gamma} [h ]_{C^{0, \gamma}_* (B(x, \rho(x)/2))}, \text{ where }\\
 [h ]_{C^{0, \gamma}_*(B(x, \frac{\rho(x)}2))} :=  \sup_{\sigma:[0, l] \to B(x, \rho(x)/2)} \frac{|P_l h(\sigma(0)) - h(\sigma(l)) |}{l^\gamma},
    \end{split}
\end{equation}
the supremum is taken over all unit-speed $g_0$-geodesics $\sigma :[0, l] \to B(x, \rho(x)/2)$, and $P_l$ denotes parallel transport along $\sigma$ from $\sigma(0)$ to $\sigma(l)$.

Define weighted H{\"o}lder spaces $C^{k, \gamma}_{-\delta}(E, M, g_0, x_*)$ as the set of sections $h$ of $E$ for which the norm
\begin{gather} \label{eqn defn weighted Holder norm, higher order}
    \| h \|_{C^{k , \gamma}_{-\delta} } := \sum_{i=0}^k \sup_M \rho(x)^{\delta +i} |\nabla^i h | + \sup_{x\in M} \rho(x)^{\delta + k + \gamma} [ \nabla^{k} h ]_{C^{0, \gamma}_* (B(x, \rho(x)/2))}
\end{gather}
is finite.

When the tensor bundle $E$, base manifold $M$, or base point $x_* \in M$ is implicit from context, we shall simplify the notation and write simply $C^{k, \gamma}_{-\delta} (E, g_0)$ or $C^{k, \gamma}_{-\delta} (M, g_0)$ for example.
Note that different choices of base point $x_* \in M$ define equivalent $C^{k,\gamma}_{-\delta}$ norms.
\end{defn}

Observe that if $h \in C^{k,\gamma}_{-\delta}$ then for $i \leq k,$ $\nabla^{(i)} h \in C^{k - i,\gamma}_{-\delta - i}.$
We have the following simple interpolation inequality:


\begin{lemma}\label{lemma:interpolation} We have
\begin{equation}
    \|\nabla^{i}f \nabla^{j} g \|_{C^{0,\gamma}_{-\alpha -\beta - i - j}} \leq C \|f \|_{C^{i,\gamma}_{-\alpha }} \|g \|_{C^{j,\gamma}_{-\beta}}.
    \end{equation}
\end{lemma}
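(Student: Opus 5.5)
The plan is to prove the estimate separately for the weighted sup part and the weighted Hölder seminorm part of $\|\nabla^i f \nabla^j g\|_{C^{0,\gamma}_{-\alpha-\beta-i-j}}$. The basic principle is that weights multiply. Each of the two parts is then controlled by products of the corresponding pieces of $\|f\|_{C^{i,\gamma}_{-\alpha}}$ and $\|g\|_{C^{j,\gamma}_{-\beta}}$.

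\emph{Sup part.} At any $x \in M$, the definition of the norms gives
\[
|\nabla^i f(x)| \le \rho(x)^{-\alpha-i}\|f\|_{C^{i,\gamma}_{-\alpha}}, \qquad |\nabla^j g(x)| \le \rho(x)^{-\beta-j}\|g\|_{C^{j,\gamma}_{-\beta}}.
\]
Multiplying these yields $\rho^{\alpha+\beta+i+j}|\nabla^i f \nabla^j g| \le \|f\|\,\|g\|$ immediately.

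\emph{Hölder part.} Fix $x$ and a unit-speed geodesic $\sigma:[0,l]\to B(x,\rho(x)/2)$. Write $a=\nabla^i f$ and $b=\nabla^j g$. Parallel transport commutes with tensor products and is an isometry, so
\[
P_l(a\otimes b)(\sigma(0)) - (a\otimes b)(\sigma(l)) = \big(P_l a(\sigma(0)) - a(\sigma(l))\big)\otimes P_l b(\sigma(0)) + a(\sigma(l))\otimes\big(P_l b(\sigma(0)) - b(\sigma(l))\big).
\]
The first difference is bounded by $l^\gamma [a]_{C^{0,\gamma}_*(B(x,\rho(x)/2))}$, and similarly for $b$; contractions are handled the same way since they commute with parallel transport. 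Two inputs are needed here.
\begin{itemize}
\item On $B(x,\rho(x)/2)$, $\rho$ is comparable to $\rho(x)$: the triangle inequality gives $\tfrac12\rho(x)-1\lesssim \rho(y)\lesssim \tfrac32\rho(x)+1$. With the regularization $\rho=\sqrt{1+d^2}$ one in fact gets $c\rho(x)\le\rho(y)\le C\rho(x)$ with universal constants. So the pointwise bounds on $|a|$ and $|b|$ at points of the ball are comparable to their values at $x$.
\item The seminorm of $a$ must be controlled by $\rho(x)^{-\alpha-i-\gamma}\|f\|_{C^{i,\gamma}_{-\alpha}}$. This is directly built into the norm, since the top-order Hölder seminorm appears in it.
\end{itemize}
Combining these gives $\rho(x)^{\alpha+\beta+i+j+\gamma}[ab]_{C^{0,\gamma}_*}\lesssim \|f\|\,\|g\|$.

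The one subtlety is that the $C^{i,\gamma}_{-\alpha}$ norm only contains the Hölder seminorm of the top derivative $\nabla^i f$, so no lower-order seminorms are needed. This is the case here, because the product involves exactly $\nabla^i f$ and $\nabla^j g$; I therefore expect no real obstacle. The only care needed is the comparability of $\rho$ on $B(x,\rho(x)/2)$, which follows from the triangle inequality as above.
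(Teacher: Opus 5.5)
Your proof is correct and follows the paper's approach. The paper's entire argument is the factorization $\rho^{\alpha+\beta+i+j}\nabla^i f\,\nabla^j g = (\rho^{\alpha+i}\nabla^i f)(\rho^{\beta+j}\nabla^j g)$, and your Leibniz splitting under parallel transport, together with $\rho(y)\sim\rho(x)$ on $B(x,\rho(x)/2)$, is exactly the detail needed to extend that factorization to the weighted H\"older seminorm.
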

\begin{proof}
    This follows by writing
    $$\rho^{\alpha + \beta + i + j} \nabla^{i}f \nabla^{j} g = \left( \rho^{\alpha + i } \nabla^{i} f \right) \left( \rho^{\beta + j } \nabla^{j} g \right).$$
\end{proof}

\begin{lemma} \label{lemma: difference of Laplacians}
    Let $(M^n, g_0, x_*)$ be a pointed ALE manifold of order $\tau > 0$.
    Let $\Psi : ( \R^n \setminus \overline B_1) / \Gamma  \to M  \setminus K$ be a coordinate system at infinity and denote the inverse $\Phi  = \Psi^{-1}$.
    Let $S^2 M$ be the bundle of symmetric 2-tensors over $M $.

    Then there exists $C = C(M^n, g_0, x_*, \tau, \Psi) > 0$ such that for all sections $h: M \setminus K \to S^2 M,$
    \begin{equation} \label{eqn difference of Laplacians, pointwise est}
        | ( \Delta_{g_0} - \Delta_{\Phi^* g_{Euc}}) h |_{g_0} 
        \le C \left( \rho^{- \tau} |\nabla^2 _{g_0} h |_{g_0} + \rho^{- \tau-1} |\nabla _{g_0} h |_{g_0} + \rho^{- \tau-2} | h |_{g_0}  \right) 
        \quad \text{on }  M \setminus K.
    \end{equation}

    Moreover, for any $\gamma \in (0,1)$ and $\delta \ge 0$,
    there exists $C = C(M^n, g_0,x_*, \gamma, \delta, \tau, \Psi) > 0$ such that 
    \begin{equation} \label{eqn difference of Laplacians, Holder est}
        \| ( \Delta_{g_0} - \Delta_{\Phi^* g_{Euc}} ) h \|_{C^{0, \gamma}_{-\tau - \delta - 2} (S^2(M\setminus K ), g_0) } \le C \| h \|_{C^{2, \gamma}_{-\delta} (S^2M, g_0)}
        \quad \forall h \in C^{2, \gamma}_{-\delta}(S^2M , g_0). 
    \end{equation}

    Identical statements hold for $\R$-valued functions $u : M  \to \R$.
\end{lemma}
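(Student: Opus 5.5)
Work in the coordinate system at infinity: via $\Psi$, regard a section $h$ of $S^2(M\setminus K)$ as a tuple of functions $h_{ij}$ on $(\R^n\setminus \overline B_1)/\Gamma$, or equivalently $\Gamma$-invariantly on $\R^n\setminus\overline B_1$, using the Euclidean coordinate frame. Write $g_0 = g_{Euc} + e$ with $|\partial^k e| = O(r^{-\tau-k})$ for all $k$.

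For the pointwise estimate \eqref{eqn difference of Laplacians, pointwise est}, expand the rough Laplacian on symmetric 2-tensors in coordinates. We have
$$\Delta_{g_0} h_{ij} = g_0^{ab}\big(\partial_a\partial_b h_{ij} - \Gamma^c_{ab}\partial_c h_{ij} - 2\Gamma^c_{ai}\partial_b h_{cj} - 2\Gamma^c_{aj}\partial_b h_{ic}\big) + (\partial\Gamma + \Gamma*\Gamma)*h,$$
while $\Delta_{\Phi^*g_{Euc}}h_{ij} = \delta^{ab}\partial_a\partial_b h_{ij}$, since the Euclidean Christoffel symbols vanish. The difference is therefore
$$(g_0^{ab}-\delta^{ab})\,\partial^2 h + \Gamma*\partial h + (\partial\Gamma+\Gamma*\Gamma)*h,$$
with coefficients $O(r^{-\tau})$, $O(r^{-\tau-1})$, and $O(r^{-\tau-2})$ respectively; the last uses $\tau>0$ to absorb $\Gamma*\Gamma = O(r^{-2\tau-2})$.

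Next, convert coordinate derivatives back to covariant ones: $\partial h = \nabla h + \Gamma*h$ and $\partial^2 h = \nabla^2 h + \Gamma*\nabla h + (\partial\Gamma+\Gamma*\Gamma)*h$. The norms $|\cdot|_{g_0}$ and $|\cdot|_{Euc}$ are uniformly equivalent on $M\setminus K$, and $r$ and $\rho$ are comparable there (take $K$ large and use completeness). This gives \eqref{eqn difference of Laplacians, pointwise est}. In invariant form, the cleanest route is to write $\Delta_{g_0}-\Delta_{\Phi^*g_{Euc}} = (g_0^{-1}-g_{Euc}^{-1})*\nabla^2 + A*\nabla + (\nabla A + A*A)*$, where $A = \nabla^{g_0}-\nabla^{Euc}$ is a tensor with $|\nabla^k_{g_0} A| = O(\rho^{-\tau-1-k})$ and $|\nabla^k_{g_0}(g_0^{-1}-g_{Euc}^{-1})| = O(\rho^{-\tau-k})$.

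For the H\"older estimate \eqref{eqn difference of Laplacians, Holder est}, write the difference as $T_2 * \nabla^2 h + T_1 * \nabla h + T_0 * h$ with smooth tensors satisfying $|\nabla^k T_m| \le C\rho^{-\tau-(2-m)-k}$. Hence $T_m \in C^{0,\gamma}_{-\tau-(2-m)}$: on $B(x,\rho(x)/2)$ we have $\rho \sim \rho(x)$, so the bound on $\nabla T_m$ bounds the weighted seminorm. Since $\nabla^m h \in C^{0,\gamma}_{-\delta-m}$ with norm at most $\|h\|_{C^{2,\gamma}_{-\delta}}$, the product estimate, which is the $i=j=0$ case of Lemma \ref{lemma:interpolation} including the H\"older seminorm via the Leibniz rule for $[fg]_\gamma$, gives each term in $C^{0,\gamma}_{-\tau-\delta-2}$. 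The hypothesis $\delta\ge0$ plays no essential role beyond ensuring the spaces are as stated.

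The only mildly delicate point is the H\"older part. The seminorm is defined by parallel transport along $g_0$-geodesics, and we must check that multiplication by the coefficient tensors behaves as usual. Because the balls $B(x,\rho(x)/2)$ stay in the asymptotic region (for $x$ far out) or in a compact set (handled by ordinary H\"older theory with uniform constants), this reduces to the standard Leibniz estimate $[T*S]_\gamma \le \sup|T|\,[S]_\gamma + [T]_\gamma \sup|S|$, bounding $[T]_\gamma$ by $\sup_{B}|\nabla T|\,(\rho(x))^{1-\gamma}$. Finally, the function case is identical and simpler, since $\Delta u = g^{ab}(\partial_a\partial_b u - \Gamma^c_{ab}\partial_c u)$ has no zeroth-order term.
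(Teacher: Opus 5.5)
Your proposal is correct and is essentially the paper's proof. The paper writes $(\Delta_{g_0}-\Delta_{\overline g})h = [g_0^{-1}-\overline g^{-1}]\nabla^2 h + \overline g^{-1}\tilde\Gamma*\nabla h + \overline g^{-1}(\nabla\tilde\Gamma+\tilde\Gamma*\tilde\Gamma)*h$ with $\overline g=\Phi^*g_{Euc}$ and $\tilde\Gamma=\Gamma(\overline g)-\Gamma(g_0)$, which is exactly your ``cleanest route'' with $A=\tilde\Gamma$, then reads off the decay. For the H\"older estimate the paper only says it ``follows similarly,'' so your coefficient-tensor and Leibniz argument supplies the omitted details; one small fix is that for geodesics of length $l>\rho(x)$ inside the ball you should bound the difference quotient by $2\sup|T|/l^{\gamma}$ rather than by $\sup|\nabla T|\,l^{1-\gamma}$.
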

\begin{proof} 
    To simplify notation, denote the Riemannian metric $\overline g = \Phi^* g_{Euc}$ on $M \setminus K $ throughout the proof.
    Denote also
        $$\tilde g \doteqdot \overline g - g_0 \qquad \text{and} \qquad \tilde \Gamma \doteqdot \Gamma(\overline g) - \Gamma(g_0)$$
    where $\Gamma(\overline g), \Gamma(g_0)$ are the Christoffel symbols for $\overline g$ and $g_0$, respectively.
    Since $g_0$ is ALE of order $\tau > 0$, it follows that
    \begin{equation} \label{proof difference of Laplacians, eqn 1}
        |\tilde g|_{g_0} = O ( \rho^{-\tau} ) , \quad 
        |\tilde \Gamma|_{g_0} = O ( \rho^{- \tau-1}), \quad \text{and} \quad 
        |\nabla_{g_0} \tilde \Gamma |_{g_0} = O ( \rho^{-\tau - 2} ). 
    \end{equation}
    Observe that, for $C^2$-sections $h: M \to S^2 M$,
    \begin{align*}
        (\Delta_{g_0} - \Delta_{\overline g} )h 
        ={}& g_{0}^{-1} \nabla_{g_0} \nabla_{g_0} h - \overline g^{-1} \nabla_{\overline g} \nabla_{\overline g} h \\
        ={}&  g_0^{-1} \nabla_{g_0} \nabla_{g_0}h
        - ( g_0 + \tilde g)^{-1} \left( \nabla_{g_0} + \tilde \Gamma * \right) \left( \nabla_{g_0} + \tilde \Gamma * \right) h \\
        ={}& \left[ g_0^{-1} - (g_0 + \tilde g)^{-1} \right] \nabla_{g_0} \nabla_{g_0} h 
        + ( g_0 + \tilde g)^{-1} \tilde \Gamma * \nabla_{g_0} h 
        + ( g_0 + \tilde g)^{-1} (\nabla_{g_0} \tilde \Gamma) *  h\\
        &+ ( g_0 + \tilde g)^{-1} \tilde \Gamma * \tilde \Gamma*  h.
    \end{align*}
    By taking norms with respect to $g_0$ and using \eqref{proof difference of Laplacians, eqn 1}, it follows that 
    \begin{align*}
        | (\Delta_{g_0} - \Delta_{\overline g} )h |_{g_0} 
        \le{}& O( \rho^{-\tau} ) |\nabla^2_{g_0} h |_{g_0}
        + O (\rho^{-\tau-1} ) |\nabla_{g_0} h |_{g_0} 
        + O ( \rho^{- \tau-2}) | h |_{g_0} 
    \end{align*}
    Estimate \eqref{eqn difference of Laplacians, pointwise est} now follows.
    Multiplying both sides by $\rho^{\tau + \delta + 2}$ and taking a supremum over $M\setminus K$ then implies
        $$\| ( \Delta_{g_0} - \Delta_{\overline g} ) h \|_{C^{0}_{-\tau-\delta-2} ( S^2(M\setminus K) , g_0) } \le C \| h \|_{C^2_{-\delta}(S^2M, g_0)} $$
    for some $C= C(M^n, g_0, \delta, \tau, \Psi) > 0$.
    The proof of the $C^{0, \gamma}_{-\tau-\delta-2}$-estimates \eqref{eqn difference of Laplacians, Holder est} and for $\R$-valued functions $u$ follow similarly.
\end{proof}

Throughout the paper, we also make use of the Bishop-Gromov volume comparison estimate to estimate certain integrals on $M$.
\begin{prop}[Bishop-Gromov Volume Comparison]
    Let $\omega_n$ denote the volume of the unit ball in Euclidean space $\R^n$.
    If $(M^n, g_0)$ is a smooth, complete, connected Riemannian manifold with $Rc \ge 0 $, then, for any $x_* \in M$,
        $$r \mapsto \frac{Vol_{g_0} ( B_{g_0}(x_*, r) ) }{\omega_n r^n} \text{ is nonincreasing and } \lim_{r \searrow 0} \frac{Vol_{g_0} ( B_{g_0}(x_*, r) ) }{\omega_n r^n} = 1.$$
    Moreover, there exists a dimensional constant $C_n > 0$ such that for any $f : [0, \infty) \to \R$ we have
        $$\int_M f( d(x_*, x) ) dV_{g_0}(x) \le C_n \int_0^\infty f(r) r^{n-1}\, dr.$$
\end{prop}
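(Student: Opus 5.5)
The monotonicity statement is the classical Bishop--Gromov theorem, and I would prove it in geodesic polar coordinates about $x_*$. Write the volume form as $dV_{g_0} = \mathcal{A}(r,\theta)\, dr\, d\theta$ on the star-shaped domain inside the cut locus of $x_*$, where $\theta \in S^{n-1}$, and extend $\mathcal{A}$ by zero beyond the cut point along each ray. The cut locus has measure zero, so this extension loses nothing.

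Along each minimizing geodesic, $\psi := \mathcal{A}^{1/(n-1)}$ satisfies the Riccati-type inequality $\psi'' \le -\frac{\Ric(\partial_r,\partial_r)}{n-1}\psi \le 0$. This follows from the Bochner formula applied to $r$, together with $|\nabla^2 r|^2 \ge (\Delta r)^2/(n-1)$. Since $\psi(r) \sim r$ as $r \to 0$, concavity gives that $\psi(r)/r$ is nonincreasing, so $\mathcal{A}(r,\theta)/r^{n-1}$ is nonincreasing in $r$. This monotonicity survives the extension by zero past the cut point.

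Next I apply the standard lemma that if $a(r)/b(r)$ is nonincreasing with $a,b\ge 0$, then $\int_0^R a \big/ \int_0^R b$ is nonincreasing. Take $a(r)=\int_{S^{n-1}}\mathcal{A}(r,\theta)\,d\theta$ and $b(r)=n\omega_n r^{n-1}$. Then $\operatorname{Vol}_{g_0}(B(x_*,r))/(\omega_n r^n)$ is nonincreasing. The limit as $r \searrow 0$ equals $1$ because $g_0$ is smooth and hence Euclidean to first order in normal coordinates, so $\mathcal{A}(r,\theta) = r^{n-1}(1+O(r^2))$.

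For the integral estimate I use the same polar coordinates. Monotonicity together with the small-$r$ limit gives $\mathcal{A}(r,\theta) \le r^{n-1}$. Hence
\begin{equation*}
\int_M f(d(x_*,x))\, dV_{g_0}(x) = \int_0^\infty \int_{S^{n-1}} f(r)\,\mathcal{A}(r,\theta)\, d\theta\, dr \le |S^{n-1}| \int_0^\infty |f(r)|\, r^{n-1}\, dr,
\end{equation*}
with $C_n = n\omega_n$.

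The statement as written, for arbitrary real $f$, should be read with $f \ge 0$ (or $|f|$ on the right). For sign-changing $f$ the inequality could fail, since $\mathcal{A}$ may be strictly less than $r^{n-1}$, so I would state and prove it for $f\ge 0$ measurable. Alternatively, for $f\ge0$ I could avoid the pointwise bound on $\mathcal{A}$: use the layer-cake/coarea formula $\int_M f(d)\,dV = \int_0^\infty f(r)\, \frac{d}{dr}\operatorname{Vol}(B_r)\, dr$ and bound the area of the distance spheres by $n\omega_n r^{n-1}$, which follows from the same $\mathcal{A}$ monotonicity.

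The only delicate step is handling the cut locus. Using the zero extension of $\mathcal{A}$ and the fact that the cut locus is null makes it routine.
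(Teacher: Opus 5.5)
The paper gives no proof of this proposition; it is quoted as the classical Bishop--Gromov theorem. Your argument is the standard proof and is correct: the Riccati inequality for $\mathcal{A}^{1/(n-1)}$ inside the cut locus, extension by zero past the cut point, Gromov's ratio-of-integrals lemma, and then the pointwise bound $\mathcal{A}(r,\theta)\le r^{n-1}$, which gives $C_n=n\omega_n$.

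Your caveat is also right. As literally stated for arbitrary real $f$, the integral inequality is false. For example, take $f=-\mathbf{1}_{[D,D+1]}$ on a flat torus with $D$ larger than the diameter: the left side is $0$ and the right side is negative. So the statement must be read for measurable $f\ge 0$. This costs the paper nothing, since every application has a nonnegative integrand, namely weights $\rho^{-\beta}$, $d(x,y)^{2-n}$, or Gaussian heat-kernel factors. Some of these are centered at points other than $x_*$, which your proof covers because the base point is arbitrary.
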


\vspace{2mm}

\subsection{The Lichnerowicz Laplacian and stability} \label{subsect Lich Laplacian and stability}

\begin{defn}
    Let $(M, g_0)$ be a Riemannian manifold.
    The \emph{Lichnerowicz Laplacian} $L = L_{g_0}$ is the operator on symmetric 2-tensors given in local coordinates by
        $$Lh_{ij} = \Delta h_{ij} + 2 R_{ipjq} h^{pq} - R_{ip} h^p{}_j - R_{jp} h^p{}_i $$
    where the Laplacian $\Delta$, curvatures $Rm, Rc$, and raising and lowering of indices are all with respect to $g_0$.
    Note that, in the case $(M, g_0)$ is Ricci-flat, $Lh_{ij} = \Delta h_{ij} + 2 R_{ipjq} h^{pq}$.
\end{defn}

\begin{defn} \label{defn linearly stable}
    A Ricci-flat manifold $(M, g_0)$ is said to be \emph{linearly stable} if 
        $$\int_M \langle h, Lh \rangle_{g_0} dV_{g_0} \le 0 \qquad \forall  h \in C^\infty_c(S^2M, g_0) .$$
\end{defn}

\begin{defn}
    Let $(M^n, g_0)$ be a Ricci-flat ALE manifold.
    Observe that the Lichnerowicz Laplacian $L= L_{g_0}$ defines a bounded linear map
        $$L : C^{2, \gamma}_{-\delta}(S^2M, g_0) \to C^{0, \gamma}_{-\delta-2}(S^2 M, g_0)$$
    for any $\gamma \in (0,1)$ and $\delta \in \R$.
    We let $\ker_{C^{2, \gamma}_{-\delta}} L $ denote its kernel
        $$\ker_{C^{2, \gamma}_{-\delta}} L := \{ h \in C^{2, \gamma}_{-\delta}(S^2 M, g_0) \ | \ Lh = 0 \} .$$    

    We also define 
        $$\ker_{L^2} L := \{ h \in L^2(S^2M, g_0) \ | \ h \text{ is smooth and } Lh=0 \}.$$
\end{defn}

\begin{rmk}
    As a consequence of elliptic regularity estimates, any $h \in \ker_{C^{2, \gamma}_{-\delta}} L $ is also smooth 
    and the kernels $\ker_{C^{2, \gamma}_{-\delta}} L$ and $\ker_{L^2} L $ are always finite-dimensional \cite[Theorem 4.21, Corollary 4.23]{Marshall02}.
\end{rmk}

We collect a few elementary properties about the kernel(s) of the Lichnerowicz Laplacian.
\begin{lemma} \label{lem kernel properties}
    Let $(M^n, g_0)$ be a Ricci-flat ALE manifold, let $L= L_{g_0}$ denote its Lichnerowicz Laplacian, and let $\gamma \in (0,1)$.

    \begin{enumerate}

        \item \label{lem kernel properties, item 1}
        For all $\delta, \delta' \in \R$,
            $$\delta \le \delta' \quad \implies\quad  \ker_{C^{2, \gamma}_{-\delta'}} L \subseteq \ker_{C^{2, \gamma}_{-\delta}} L .$$

        \item \label{lem kernel properties, item 2}
        If $\ker_{L^2}L \subseteq C^{2, \gamma}_{-\mu} ( S^2 M, g_0) $ for some $\mu > \frac n2$, then
            $$\ker_{C^{2, \gamma}_{-\delta}} L = \ker_{L^2} L \qquad \forall \delta \in \left(\frac n2 , \mu \right].$$

        \item \label{lem kernel properties, item 3}
        We have
            $$\ker_{C^{2, \gamma}_{-\delta}} L = \ker_{C^{2, \gamma}_{-\delta'}} L \qquad \forall \delta, \delta' \in (0, n-2) .$$
    \end{enumerate}
\end{lemma}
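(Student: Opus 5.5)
Item (\ref{lem kernel properties, item 1}) is immediate from the definitions: since $\rho \ge 1$, for $\delta \le \delta'$ we have $\rho^{\delta+i} \le \rho^{\delta'+i}$. Hence $\|h\|_{C^{2,\gamma}_{-\delta}} \le \|h\|_{C^{2,\gamma}_{-\delta'}}$, so $C^{2,\gamma}_{-\delta'} \subseteq C^{2,\gamma}_{-\delta}$, and intersecting with $\{Lh=0\}$ gives the claim.

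For item (\ref{lem kernel properties, item 2}), fix $\delta \in (\frac n2, \mu]$. First, if $h \in C^{2,\gamma}_{-\delta}$ then $|h| \lesssim \rho^{-\delta}$. By the Bishop--Gromov integral estimate (valid since $g_0$ is Ricci-flat),
$$\int_M |h|^2 \lesssim \int_0^\infty (1+r^2)^{-\delta} r^{n-1}\,dr < \infty$$
because $2\delta > n$. Such $h$ is smooth by elliptic regularity, so $\ker_{C^{2,\gamma}_{-\delta}} L \subseteq \ker_{L^2} L$. Conversely, the hypothesis gives $\ker_{L^2} L \subseteq C^{2,\gamma}_{-\mu} \cap \ker L = \ker_{C^{2,\gamma}_{-\mu}} L$, and item (\ref{lem kernel properties, item 1}) with $\delta \le \mu$ gives $\ker_{C^{2,\gamma}_{-\mu}} L \subseteq \ker_{C^{2,\gamma}_{-\delta}} L$.

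Item (\ref{lem kernel properties, item 3}) is the substantive one, and I would prove it by an improvement-of-decay argument at infinity. By item (\ref{lem kernel properties, item 1}) it suffices to show that if $0<\delta<n-2$ and $h \in \ker_{C^{2,\gamma}_{-\delta}} L$, then $h \in C^{2,\gamma}_{-\delta'}$ for every $\delta' < n-2$. The steps are as follows.

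\begin{enumerate}
\item Work in a coordinate system at infinity $\Psi$ and set $\overline g = \Phi^* g_{Euc}$. By Lemma \ref{lemma: difference of Laplacians} and Lemma \ref{lemma: Rm est for ALE}, $Lh = 0$ gives
$$\Delta_{\overline g} h = (\Delta_{\overline g}-\Delta_{g_0})h - 2\Rm * h =: f, \qquad \|f\|_{C^{0,\gamma}_{-\delta-\tau-2}(M\setminus K)} \lesssim \|h\|_{C^{2,\gamma}_{-\delta}}.$$
\item In Euclidean coordinates, after cutting off near $K$ and lifting to $\R^n\setminus \overline B_1$, this becomes a componentwise flat Laplace equation with right-hand side decaying like $r^{-\delta-\tau-2}$.
\item Invert the Euclidean Laplacian on weighted spaces: convolving $f$ with the Newtonian potential gives $u$ with $\Delta u = f$ and $u = O(r^{-\min(\delta+\tau,\,n-2)})$, up to log factors at the critical weight, which I avoid by shrinking $\tau$ slightly. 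The difference $h-u$ is then harmonic on the exterior region and $o(1)$ at infinity.
\item Expand $h-u$ in spherical harmonics. Since it decays, only the terms $r^{2-n-k}$ survive, so $h-u = O(r^{2-n})$.
\item Conclude $h \in C^{0}_{-\min(\delta+\tau,\,n-2)}$, and upgrade to the weighted $C^{2,\gamma}$ norm by interior Schauder estimates on the balls $B(x,\rho(x)/2)$, rescaled to unit size.
\end{enumerate}

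Iterating this finitely many times, each step gaining $\tau$ in decay, reaches any $\delta' < n-2$.

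The main obstacle is the weighted inversion in step 3: it needs the standard fact that $\Delta_{Euc}$ is an isomorphism between weighted spaces at non-exceptional weights in $(0,n-2)$. Equivalently, there are no indicial roots strictly between $0$ and $n-2$, i.e.\ no homogeneous harmonic functions $r^{-s}$ with $0<s<n-2$. I would cite this from the weighted-space theory (e.g.\ Marshall, as referenced above) rather than reprove it. An essentially equivalent formulation is that the Fredholm index of $L:C^{2,\gamma}_{-\delta}\to C^{0,\gamma}_{-\delta-2}$ is constant for $\delta$ in this interval, so the kernel cannot jump there. Finally, the $\Gamma$-quotient only restricts to $\Gamma$-invariant harmonics, so it does not affect the argument.
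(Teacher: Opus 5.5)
Your proposal is correct. Items (1) and (2) coincide with the paper's argument, but for item (3) you take a genuinely different route.

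\textbf{How the paper proves item (3).} The paper never analyzes the equation near infinity. It observes that every weight $-\delta\in(2-n,0)$ is non-exceptional, so by Marshall's weighted Fredholm theory the index of $L\colon C^{2,\gamma}_{-\delta}\to C^{0,\gamma}_{-\delta-2}$ is constant on this interval. It then runs the Lockhart--McOwen argument: as the weight is relaxed, the kernel can only grow, while the cokernel (identified by duality with a kernel at the dual weight) can only shrink. Constancy of the index therefore forces the kernels to agree.

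\textbf{How your route compares.} Your decay bootstrap replaces this machinery with explicit potential theory. You write $\Delta_{\overline g}h=f=O(r^{-\delta-\tau-2})$, subtract a Newtonian potential, use that a decaying exterior harmonic function is $O(r^{2-n})$, upgrade via rescaled Schauder estimates, and iterate. This in fact proves more: every element of $\ker_{C^{2,\gamma}_{-\delta}}L$ with $\delta>0$ already lies in $C^{2,\gamma}_{-(n-2)}$. Indeed, once $\delta+k\tau>n-2$ the potential is $O(r^{2-n})$ with no logarithm. The index argument, by contrast, is a one-liner once the weighted Fredholm theory is in hand, and it applies verbatim to any operator whose exceptional weights are known.

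\textbf{Two small comments.} First, the step you flag as the main obstacle needs no citation. For $|F(y)|\lesssim(1+|y|)^{-s-2}$ with $0<s<n-2$, the bound $|N*F(x)|\lesssim(1+|x|)^{-s}$ for the Newtonian kernel $N$ follows by splitting the convolution integral. This is exactly how the paper's Green's-function lemma (the one solving $-\Delta_{g_0}u=\rho^{-\beta}$, used for the DeTurck vector field) is proved. Applying this to $\Delta(\chi h)$ on all of $\R^n$, for a cutoff $\chi$, even lets you replace the spherical-harmonic step by the maximum principle.

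Second, your closing remark that constancy of the index by itself means ``the kernel cannot jump'' is incomplete. One also needs the monotonicity of kernel and cokernel in the weight, which is precisely the Lockhart--McOwen step. Your proof, however, does not depend on that remark.
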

\begin{proof}
    \eqref{lem kernel properties, item 1}:
    Clearly, $C^{2, \gamma}_{-\delta'} \subseteq C^{2, \gamma}_{-\delta}$ when $\delta \le \delta'$.
    Hence, $\delta \le \delta'$ implies
        $$\ker_{C^{2, \gamma}_{-\delta'} } L 
        = \{ h \in C^{2, \gamma}_{-\delta'} \ | \ L h = 0 \} \subseteq \{ h \in C^{2, \gamma}_{-\delta} \ | \ L h = 0 \} = \ker_{C^{2, \gamma}_{-\delta}} L.$$

    \eqref{lem kernel properties, item 2}:
    Note that $C^{2, \gamma}_{-\delta} (S^2 M, g_0) \subseteq L^2(S^2M, g_0) $ when $\delta > \frac n2$.
    Indeed, $\delta > \frac n2$ implies
        $$\int_M | h |_{g_0}^2 dV_{g_0} \le C \| h \|^2_{C^{2, \gamma}_{-\delta} } \int_1^\infty \rho^{-2 \delta + n-1} d \rho
        \le C \| h \|^2_{C^{2, \gamma}_{-\delta}}
        \qquad ( C = C(M^n, g_0, \delta) ).$$
    Thus, $C^{2, \gamma}_{-\delta} \subseteq L^2$ and elliptic regularity imply
    \begin{multline} \label{pf kernel properties, item 2, eqn 1}
        \ker_{C^{2, \gamma}_{-\delta} } L = \{ h \in C^{2, \gamma}_{-\delta} \ | \ L h = 0 \} \subseteq \{ h \in L^2 \ | \ h \text{ is smooth and } Lh = 0 \} = \ker_{L^2 } L  \\ \forall \delta > \frac n2.
    \end{multline}

    On the other hand, $\ker_{L^2} L \subseteq C^{2, \gamma}_{-\mu} $ implies
    \begin{equation} \label{pf kernel properties, item 2, eqn 2}
        \ker_{L^2} L = \{ h \in L^2 \ | \ h \text{ is smooth and } L h = 0 \} 
        \subseteq \{ h \in C^{2, \gamma}_{-\mu} \  | \ Lh = 0 \} = \ker_{C^{2, \gamma}_{-\mu}} L.  
    \end{equation}

    Combining item \eqref{lem kernel properties, item 1} with \eqref{pf kernel properties, item 2, eqn 1}--\eqref{pf kernel properties, item 2, eqn 2} then implies that, for all $\delta \in \left( \frac n2 ,\mu \right]$,
    $$\ker_{C^{2, \gamma}_{-\mu}} L \subseteq \ker_{C^{2, \gamma}_{-\delta}} L \subseteq \ker_{L^2 }L \subseteq \ker_{C^{2,\gamma}_{-\mu}} L .$$
    Thus, we have equality throughout and \eqref{lem kernel properties, item 2} is proven.

    \eqref{lem kernel properties, item 3}:
    $-\delta \in (2-n, 0)$ consists of non-exceptional values $-\delta$, 
    and so the Fredholm index of $L : C^{2, \gamma}_{-\delta} \to C^{0, \gamma}_{-\delta -2}$ is independent of $-\delta \in (2-n, 0)$ (see e.g. \cite[Corollary 4.22, Theorem 4.24, Corollary 4.27, and Chapter 5]{Marshall02}).
    One can then argue as in \cite[Lemma 7.1]{LM85} to deduce that, for any $\gamma \in (0,1)$, 
        $$\ker_{C^{2, \gamma}_{-\delta}} L = \ker_{C^{2, \gamma}_{-\delta'}} L \qquad \forall \delta, \delta' \in (0, n-2).$$
\end{proof}

\begin{prop} \label{prop decay rate of L2 kernel elements}
    Let $(M^n, g_0)$ be a Ricci-flat ALE manifold and $L = L_{g_0}$ its Lichnerowicz Laplacian.
    Then, for any $\gamma \in (0,1)$,
    \begin{equation} \label{eqn decay of L^2 kernel}
        \ker_{L^2} L \subseteq C^{2, \gamma}_{-n+1} ( S^2M,g_0).
    \end{equation}

    If additionally
        \begin{enumerate}
            \item $n=4$,
            \item $(M^n, g_0)$ is K{\"a}hler, or
            \item $(M^n, g_0)$ is spin and admits a nonzero parallel spinor,
        \end{enumerate}
    then, for any $\gamma \in (0,1)$,
    \begin{equation} \label{eqn decay of L^2 kernel+}
        \ker_{L^2} L \subseteq C^{2, \gamma}_{-n}(S^2M, g_0) .
    \end{equation}
\end{prop}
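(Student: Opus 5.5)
The plan is to prove the general decay \eqref{eqn decay of L^2 kernel} first and then improve it under the additional hypotheses.

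\emph{Step 1: gauge and a harmonic-type equation.} Let $h \in \ker_{L^2} L$. First I would show that $h$ is trace-free and divergence-free. Since $g_0$ is Ricci-flat, the Bochner-type commutation identities give $\tr (Lh) = \Delta \tr h$ and $\div(Lh) = \Delta_H(\div h)$, where $\Delta_H$ is the Hodge Laplacian on 1-forms, which equals the rough Laplacian here because $\Ric=0$. Local elliptic estimates give $\nabla^k h \in L^2$ with some initial (weak) decay. Then $\tr h$ is an $L^2$ harmonic function, so it vanishes. Likewise $\div h$ is an $L^2$ harmonic 1-form on an ALE Ricci-flat manifold; integrating by parts with cutoffs and using $\Ric=0$ gives $\nabla \div h = 0$, and parallel plus $L^2$ forces $\div h = 0$.

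\emph{Step 2: expansion at infinity.} In a coordinate system at infinity $\Psi$, I would write $Lh = 0$ as
$$\Delta_{\Phi^*g_{Euc}} h = (\Delta_{\Phi^* g_{Euc}} - \Delta_{g_0}) h - 2 \Rm * h.$$
By Lemma \ref{lemma: difference of Laplacians} and Lemma \ref{lemma: Rm est for ALE}, the right-hand side decays like $\rho^{-\tau-2}$ times the decay of $h$. Start from some weak decay $h = O(\rho^{-\delta_0})$, obtained from $L^2$ membership together with local Schauder/Moser estimates on balls $B(x,\rho(x)/2)$, which give $\delta_0 = n/2$. Then iterate with the Fredholm theory of the Euclidean Laplacian on weighted spaces (Lichnerowicz–Marshall). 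Each iteration improves the decay by $\tau$ until it hits the next indicial root. The indicial roots of the flat Laplacian on $(\R^n\setminus B_1)/\Gamma$ are $-(n-2)-k$, $k\ge 0$. Hence
$$h = \frac{A}{r^{n-2}} + O(r^{-n+1}),$$
with $A$ a constant symmetric tensor that is $\Gamma$-invariant and can be read off from the harmonic expansion. At this point the weighted Schauder estimates upgrade everything to $C^{2,\gamma}$ norms.

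\emph{Step 3: kill the leading term.} The constant $A$ must vanish. I would argue from the gauge of Step 1: $\div h = 0$ and $\tr h = 0$ applied to $A r^{2-n}$ give $\tr A = 0$ and $A_{ij}x_i = 0$ to leading order, forcing $A=0$. If $\Gamma$ is nontrivial, note also that a nonzero $\Gamma$-invariant constant leading coefficient paired with the flux integral $\int_{S_r} \langle h, \nu\otimes\cdot\rangle$ would contradict the divergence condition. This yields decay $O(r^{-n+1})$ and proves \eqref{eqn decay of L^2 kernel}.

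\emph{Step 4: the improvement to $\rho^{-n}$.} Under the extra hypotheses I would aim to kill the $r^{1-n}$ harmonic term $B_{ijk}x_k r^{-n}$ as well. In the spin/parallel spinor case (which includes the Kähler case and, for hyperkähler, $n=4$), the plan is to convert $h$ via the parallel spinor into a spinor-valued 1-form $\Phi$ with $D\Phi=0$, the Dai–Wang–Wei correspondence. Twisted-Dirac harmonic objects on ALE spaces have expansions in which the first-order term corresponds to a degree-one harmonic polynomial divided by $r^n$, and the Dirac equation kills it: $\Phi$ is $D$-harmonic and in $L^2$, so its leading term is $\nabla$ of a Green's function contracted with a constant, and integrating $D\Phi=0$ over large spheres shows that the constant vanishes. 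For general $n=4$, I would instead use the decomposition into self-dual and anti-self-dual parts together with the vanishing of the $r^{-3}$ term shown in \cite{BKN89}.

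I expect this last step to be the main obstacle. The existence of the parallel spinor (or the 4D structure) is essential there, and I would rely on \cite{KronckePetersen22} rather than reprove it in detail.
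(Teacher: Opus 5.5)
The paper's proof is purely bibliographic. It quotes Deruelle--Kr\"oncke for the $\rho^{1-n}$ bound, Bando--Kasue--Nakajima for $n=4$, and Kr\"oncke--Petersen for a parallel spinor, and it reduces the K\"ahler case to the spinor case via $SU(n/2)$ holonomy.

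Your Steps 1--3 instead reprove the general bound directly. You get TT gauge from the commutation identities and $L^2$ Liouville arguments, expand $h=Ar^{2-n}+O(r^{1-n})$ on the lifted end, and then use $\div_{g_0}h=0$ to force $A_{ij}x_i\equiv 0$. This is sound and more informative than the citation. Two points need care. First, the paper's ALE definition allows any $\tau>0$, so you should either upgrade to order $n-1$ via Bando--Kasue--Nakajima or keep the iteration off the indicial rate $1-n$; otherwise an $r^{1-n}\log r$ term can appear. Second, you need scaled Schauder bounds so that the remainder's divergence is really $O(r^{-n})$. The $\Gamma$/flux aside is superfluous.

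For the improved decay you rely on the same sources as the paper, which is fine. Do not mistake the Step 4 sketch for an argument, though. The section $\nabla G\cdot\psi_0$ is itself $D$-harmonic on the flat end, so the local equation does not remove the $r^{1-n}$ term. The flux identity $\int_{S_R}\langle\nu\cdot\Phi,\psi\rangle=0$ needs test sections $\psi$ that are $D$-harmonic on all of $M$ with arbitrary constant asymptotics. By the Fredholm alternative, the existence of such $\psi$ is essentially equivalent to the vanishing you want, so the sketch is circular.

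Incidentally, your Steps 2--3 seem to go one order further with no spinor. Once $A=0$ and $\tau>1$, you have $h=B_{ijk}x_k r^{-n}+O(r^{-n})$. The Euclidean divergence condition then forces $B_{ijk}=\frac1n(c_j\delta_{ik}+c_i\delta_{jk}-c_k\delta_{ij})$ with $c_j=B_{ljl}$. The condition $\tr h=0$ then gives $\frac{2-n}{n}c_k=0$, so $B=0$. The paper treats $\mu=n$ as known only in special cases, so check this carefully, but it is a much more promising route than the Dirac heuristic.
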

\begin{proof}
    The $-n+1$ decay rate \eqref{eqn decay of L^2 kernel} was proven in \cite[Theorem 2.7, Remark 2.10]{DeruelleKroncke20}.

    The improved decay rate when $n=4$ follows from \cite{BKN89}.
    For spin manifolds with parallel spinor, the improved decay rate was obtained in \cite{KronckePetersen22}.
    If $(M^n, g_0)$ is K{\"a}hler, then it has $SU\left(\frac n2\right)$ holonomy. It is therefore spin with a parallel spinor, and so the result follows from above.
\end{proof}

\begin{prop} \label{prop L2 kernel equals Holder kernel}
    Let $(M^n, g_0)$ be a Ricci-flat ALE manifold and $L = L_{g_0}$ its Lichnerowicz Laplacian.

    Then the following hold:
    \begin{enumerate}
        \item \label{prop L2 kernel equals Holder kernel, item 1}
        If $n \ge 5$, then
        $$\ker_{C^{2, \gamma}_{-\delta}} L = \ker_{L^2} L \qquad \forall \delta \in (0, n-1], \gamma \in (0,1).$$

        \item \label{prop L2 kernel equals Holder kernel, item 2}
        If $n \ge 5$ and $(M^n, g_0)$ is spin and admits a nonzero parallel spinor, then 
            $$\ker_{C^{2, \gamma}_{-\delta}} L = \ker_{L^2} L \qquad \forall \delta \in (0, n], \gamma \in (0,1).$$

        \item \label{prop L2 kernel equals Holder kernel, item 3}
        If $n = 4$, then
        $$\ker_{C^{2, \gamma}_{-\delta}} L = \ker_{L^2} L \qquad \forall \delta \in (0, 4], \gamma \in (0,1).$$
        
    \end{enumerate}
\end{prop}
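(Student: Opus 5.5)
The plan is to combine Lemma \ref{lem kernel properties} with the decay rates of Proposition \ref{prop decay rate of L2 kernel elements}. The interval $(0,\delta_{\max}]$ splits into two overlapping pieces: $(0,n-2)$, where item \eqref{lem kernel properties, item 3} says the weighted kernel does not depend on $\delta$, and $(\frac n2, \mu]$, where item \eqref{lem kernel properties, item 2} identifies the weighted kernel with $\ker_{L^2} L$ once we know $\ker_{L^2} L \subseteq C^{2,\gamma}_{-\mu}$. These pieces overlap exactly when $\frac n2 < n-2$, i.e.\ $n > 4$. This is why $n \ge 5$ is the generic case and $n=4$ needs separate treatment.

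For item \eqref{prop L2 kernel equals Holder kernel, item 1}, take $\mu = n-1$. This is allowed by \eqref{eqn decay of L^2 kernel}, and $\mu > \frac n2$ holds since $n \geq 3$. Item \eqref{lem kernel properties, item 2} then gives $\ker_{C^{2,\gamma}_{-\delta}} L = \ker_{L^2} L$ for all $\delta \in (\frac n2, n-1]$. Because $n \ge 5$, the interval $(\frac n2, n-2)$ is nonempty; fix some $\delta_0$ in it. For any $\delta \in (0,n-2)$, item \eqref{lem kernel properties, item 3} gives $\ker_{C^{2,\gamma}_{-\delta}} L = \ker_{C^{2,\gamma}_{-\delta_0}} L$, and the latter equals $\ker_{L^2} L$. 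Since $(0,n-2) \cup (\frac n2, n-1] = (0,n-1]$, this covers every $\delta$. Item \eqref{prop L2 kernel equals Holder kernel, item 2} is the same argument with $\mu = n$, which is available from \eqref{eqn decay of L^2 kernel+} in the parallel-spinor case.

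The main obstacle is item \eqref{prop L2 kernel equals Holder kernel, item 3}. When $n = 4$ we have $\frac n2 = n-2 = 2$, so the two intervals only touch at $\delta = 2$, which belongs to neither. The range $(2,4]$ is handled directly: \eqref{eqn decay of L^2 kernel+} gives $\mu = 4$, and item \eqref{lem kernel properties, item 2} does the rest. For $\delta \in (0,2]$, item \eqref{lem kernel properties, item 1} gives $\ker_{L^2} L = \ker_{C^{2,\gamma}_{-4}} L \subseteq \ker_{C^{2,\gamma}_{-\delta}} L$, so only the reverse inclusion is needed.

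I would obtain the reverse inclusion from the asymptotic expansion of kernel elements. Take $h \in \ker_{C^{2,\gamma}_{-\delta}} L$ with $\delta \in (0,2)$; by item \eqref{lem kernel properties, item 3} we may assume $\delta$ is arbitrarily close to $2$. The weighted Fredholm theory \cite{Marshall02, LM85} says $h$ improves its decay until it reaches the next exceptional weight. The indicial roots of $L$ at infinity are those of the flat Laplacian on $(\R^4\setminus B_1)/\Gamma$, namely $j$ and $-2-j$ for $j \ge 0$. There are none in $(-2,0)$, and in this range the only borderline rate is $r^{-2}$, coming from harmonic $2$-tensors with constant coefficients times $r^{2-n}$. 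The decay of $\Rm(g_0)$ from Lemma \ref{lemma: Rm est for ALE}, together with Lemma \ref{lemma: difference of Laplacians}, controls the error $(\Delta_{g_0} - \Delta_{\Phi^* g_{Euc}}) h$ and the curvature term. So $h = c\, r^{-2} + O(r^{-2-\epsilon})$ for a constant tensor $c$. Then $h$ is in $L^2$ precisely when it decays faster than $r^{-2}$, and then it lies in $\ker_{L^2} L$. This step has to rule out a nonzero leading term $c$. If it cannot be done directly, I would fall back on the $n=4$ analysis in \cite{BKN89}: there the kernel elements decaying like $r^{-\delta}$ for small $\delta > 0$ are shown to decay like $r^{-4}$, because the $r^{-2}$ coefficient is forced to vanish using the trace/divergence structure and integration by parts on large spheres. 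The cases $\delta = 2$ and $\delta \in (2,4]$ then follow from item \eqref{lem kernel properties, item 1} and the first step.
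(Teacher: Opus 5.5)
Your treatment of items (1) and (2), and of the range $\delta \in (2,4]$ in item (3), is exactly the paper's argument. For $n = 4$ and $\delta \in (0,2]$ you take a different route. The paper simply invokes \cite[Lemma 4.2]{Ozuch22}, which gives $\ker_{C^{2,\gamma}_{-\delta}} L \subseteq \ker_{C^{2,\gamma}_{-4}} L$ for every $\delta \in (0,4]$, and then concludes with item (1) of Lemma \ref{lem kernel properties}.

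Your asymptotic-expansion route is viable, but as written it stops at its only nontrivial step. After reducing to $h = c\,r^{-2} + O(r^{-2-\epsilon})$, you do not show $c = 0$. Your fallback is a description of \cite{BKN89}, not an argument; the paper cites \cite{BKN89} only for the $r^{-4}$ decay of $L^2$ kernel elements and takes the statement you need from \cite{Ozuch22}. Without $c = 0$ you only have $|h| = O(r^{-2})$, which in dimension $4$ is exactly the borderline non-$L^2$ rate, so nothing is yet proved for $\delta \le 2$.

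The missing idea is short.
\begin{itemize}
\item Because $g_0$ is Ricci-flat, $\div(Lh) = \Delta(\div h)$. This is Lichnerowicz's commutation on Einstein manifolds: the contracted Bianchi identity kills the $\nabla \Rm$ terms and the remaining curvature terms cancel.
\item Hence $\omega = \div h$ satisfies $\Delta\omega = 0$ with $|\omega| = O(\rho^{-\delta-1})$. So $\Delta|\omega|^2 = 2|\nabla\omega|^2 \ge 0$, and the maximum principle on large balls gives $\omega \equiv 0$.
\item In the coordinates at infinity, $\div(c\,r^{-2})_j = -2c_{ij}x^i r^{-4}$ is homogeneous of degree $-3$. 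The remainder, and the difference between $\div_{g_0}$ and the Euclidean divergence, contribute only $O(r^{-3-\epsilon})$. Therefore $c_{ij}x^i \equiv 0$, i.e.\ $c = 0$.
\item Then $h = O(\rho^{-2-\epsilon}) \in L^2$, so $h \in \ker_{L^2} L$.
\end{itemize}
With this inserted, your proof of item (3) is self-contained modulo the standard weighted expansion, whereas the paper's is a one-line reduction to \cite{Ozuch22}.
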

\begin{proof}
    By Proposition \ref{prop decay rate of L2 kernel elements}, $\ker_{L^2} L \subseteq C^{2, \gamma}_{-n+1}$.
    If $n\ge 5$, then this fact can be combined with Lemma \ref{lem kernel properties} (\ref{lem kernel properties, item 2})--(\ref{lem kernel properties, item 3}) to deduce (\ref{prop L2 kernel equals Holder kernel, item 1}).

    When $n \ge 5$ and $(M^n, g_0)$ is spin and admits a nonzero parallel spinor, then Proposition \ref{prop decay rate of L2 kernel elements} yields $\ker_{L^2} L \subseteq C^{2, \gamma}_{-n}$ and a similar argument as above then proves (\ref{prop L2 kernel equals Holder kernel, item 2}).

    Finally, consider the case that $n=4$.
    Then Proposition \ref{prop decay rate of L2 kernel elements} gives $\ker_{L^2}L \subseteq C^{2, \gamma}_{-4}$, and Lemma \ref{lem kernel properties} (\ref{lem kernel properties, item 2}) then implies
    \begin{equation} \label{proof L2 kernel equals Holder kernel, eqn 1}
        \ker_{C^{2, \gamma}_{-\delta}} L = \ker_{L^2} L \qquad \forall \delta \in (2, 4].
    \end{equation}
    \cite[Lemma 4.2]{Ozuch22} furthermore implies 
        $$\ker_{C^{2, \gamma}_{-\delta}} L \subseteq \ker_{C^{2, \gamma}_{-4}} L \qquad \forall \delta \in (0, 4],$$
    which, combined with Lemma \ref{lem kernel properties} (\ref{lem kernel properties, item 1}) and \eqref{proof L2 kernel equals Holder kernel, eqn 1}, thereby proves (\ref{prop L2 kernel equals Holder kernel, item 3}).
\end{proof}

\subsection{Integrable deformations} \label{subsect integrability}

\begin{defn} \label{defn integrable closed mfld}
    Let $(M^n, g_0)$ be a closed Ricci-flat manifold, let $\gamma \in (0,1)$, and let
        $$\mathcal F := \{ g \in C^{2,\gamma}(S^2 M, g_0) \, | \, \mathcal M(g) = 0 \} .$$
    Consider the Lichnerowicz Laplacian $L = L_{g_0}$ as a bounded linear map $C^{2, \gamma}(S^2 M, g_0) \to C^{0, \gamma}(S^2 M, g_0)$, and denote its kernel $\ker_{C^{2, \gamma}} L$ in $C^{2, \gamma}(S^2 M, g_0)$.
    Denote the $L^2(S^2 M, g_0)$-orthogonal complement 
        $$V_1 := \left\{ h \in C^{2, \gamma}(M, g_0) \, \left| \, \int_M \langle h, e \rangle_{g_0} dV_{g_0} = 0 \quad \forall e \in \ker_{C^{2, \gamma}} L \right.  \right\} ,$$
    and let $\pi_0 : C^{2, \gamma} = \ker_{C^{2, \gamma} } L \oplus V_1 \to \ker_{C^{2, \gamma}}L$ be the associated projection map.

    We say $g_0$ has \emph{integrable deformations in $C^{2, \gamma}(M, g_0)$} if there exists a $C^1$-map $\sigma$ from a neighborhood of $0 \in \ker_{C^{2, \gamma}} L$ onto a neighborhood of $g_0 \in \mathcal F$ such that 
        $$\sigma(0) = g_0 \qquad \text{and} \qquad \left( \pi_0 \circ d \sigma \right)_0 = \mathbf 1 : \ker_{C^{2, \gamma}} L \to \ker_{C^{2, \gamma}}L.$$
\end{defn}

\begin{defn} \label{defn integrable ALE}
    Let $(M^n, g_0, x_*)$ be a pointed Ricci-flat ALE manifold. 
    Let $\gamma \in (0,1)$, $\delta \ge 0$, and
        $$\mathcal F := \{ g \in C^{2, \gamma}_{-\delta}(S^2M, g_0, x_*) \, | \, \mathcal M(g) = 0 \} \qquad (\text{with the } C^{2, \gamma}_{-\delta}  \text{-topology}).$$
    Consider the Lichnerowicz Laplacian $L = L_{g_0}$ as a map $C^{2, \gamma}_{-\delta} (M, g_0, x_*) \to C^{0, \gamma}_{-\delta-2} (M, g_0, x_*)$ and its kernel $\ker_{C^{2, \gamma}_{-\delta}} L $ in $C^{2, \gamma}_{-\delta}(M, g_0, x_*)$.
    Assume additionally that 
    \begin{equation} \label{eqn assumption for integrability defn}
        \ker_{C^{2, \gamma}_{-\delta}} L = \ker_{L^2} L \qquad \text{and} \qquad \ker_{L^2} L  \subseteq C^{2, \gamma}_{-\mu}(M, g_0, x_*) \qquad \text{for some } \mu > n-\delta,
    \end{equation}
    in which case $\langle h, e \rangle_{g_0}$ is in $L^1(M, g_0)$ for any $h \in C^{2, \gamma}_{-\delta}$ and any $e \in \ker_{C^{2, \gamma}_{-\delta}} L $ (see \eqref{orthogonal elliptic est proof, eqn 1} below).
    Thus, $C^{2, \gamma}_{-\delta}(S^2M , g_0, x_*) $ decomposes as
        $$C^{2, \gamma}_{-\delta}(S^2M , g_0, x_*) = \ker_{C^{2, \gamma}_{-\delta}} L \oplus \{ h \in C^{2, \gamma}_{-\delta}(S^2M, g_0, x_*) \, | \, ( h , e )_{L^2(M, g_0)} = 0 \quad \forall e \in \ker_{C^{2, \gamma}_{-\delta}} L \},$$
    and we can let $\pi_0: C^{2, \gamma}_{-\delta} \to \ker_{C^{2, \gamma}_{-\delta}} L$ denote the associated projection map.

    We say $g_0$ has \emph{integrable deformations in $C^{2, \gamma}_{-\delta}(M, g_0, x_*)$} if there exists a $C^1$-map $\sigma$ from a neighborhood of $0 \in \ker_{C^{2, \gamma}_{-\delta}} L$ onto a neighborhood of $g_0$ in $\mathcal F$ such that 
        $$\sigma(0) = g_0 \qquad \text{and} \qquad \left( \pi_0 \circ d \sigma \right)_0 = \mathbf 1 : \ker_{C^{2, \gamma}_{-\delta}} L \to \ker_{C^{2, \gamma}_{-\delta}} L.$$
\end{defn}

\bigskip

\begin{lemma} \label{lem integrability and changing decay rate}
    Let $(M^n, g_0, x_*)$ be a pointed Ricci-flat ALE manifold, $\gamma \in (0,1)$, $\mu \in \{ n-1, n\}$, and $n - \mu < \delta, \delta' \le n-1$.
    Assume
        $$\ker_{L^2} L \subseteq C^{2,\gamma}_{-\mu}.$$
    Then
        $$\ker_{C^{2, \gamma}_{-\delta}} L = \ker_{C^{2, \gamma}_{-\delta'}} L = \ker_{L^2} L,$$
    and
    \begin{equation*}
        g_0 \text{ has integrable deformations in } C^{2, \gamma}_{-\delta} 
        \iff g_0 \text{ has integrable deformations in } C^{2, \gamma}_{-\delta'} .
    \end{equation*}
\end{lemma}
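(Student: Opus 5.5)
The plan has two parts. First, identify the kernels using Proposition \ref{prop L2 kernel equals Holder kernel}. Then compare the solution sets for the two weights through the reduction in the proof of Lemma \ref{lemma:almostorthog}, rather than through an explicit decay-improvement argument for Ricci-flat metrics.

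\emph{Kernels.} Since $\mu \le n$, we have $0 \le n-\mu < \delta, \delta' \le n-1$. Proposition \ref{prop L2 kernel equals Holder kernel} then gives $\ker_{C^{2,\gamma}_{-\delta}} L = \ker_{L^2} L = \ker_{C^{2,\gamma}_{-\delta'}} L$: use item (1) when $n \ge 5$ and item (3) when $n = 4$, where $(0,n-1]\subset(0,4]$. Call this common finite-dimensional space $K$. The hypothesis $\ker_{L^2}L \subseteq C^{2,\gamma}_{-\mu}$ together with $\mu > n-\delta$ and $\mu > n-\delta'$ shows that \eqref{eqn assumption for integrability defn} holds for both weights. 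Hence both notions of integrability are defined. The projection $\pi_0$ is given in both cases by the same formula $h \mapsto \sum_j (h,e_j)_{L^2} e_j$ for an $L^2$-orthonormal basis $\{e_j\}$ of $K$. So $\pi_0$ on $C^{2,\gamma}_{-\delta'}$ is the restriction of $\pi_0$ on $C^{2,\gamma}_{-\delta}$.

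\emph{Integrability.} Assume without loss of generality that $\delta < \delta'$, so $C^{2,\gamma}_{-\delta'} \hookrightarrow C^{2,\gamma}_{-\delta}$ continuously. Write $V^\beta = K \oplus V_1^\beta$ for $\beta \in \{\delta,\delta'\}$, with $V_1^{\delta'} = V_1^\delta \cap C^{2,\gamma}_{-\delta'}$. On the target, let $\calM_1$ denote $\calM$ followed by the $L^2$-orthogonal projection off $K$. This makes sense on $C^{0,\gamma}_{-\beta-2}$ because $\mu+\beta+2>n$, and it is the same formula for both weights. As in the proof of Lemma \ref{lemma:almostorthog}, the implicit function theorem produces $C^1$ graphs
$$S_0^\beta = \{\calM_1 = 0\} \cap B^\beta = \{ g_0 + x + y_\beta(x) : x \in K \text{ small}\}.$$
By the uniqueness in the implicit function theorem, $y_{\delta'}(x) = y_\delta(x)$: the point $g_0 + x + y_{\delta'}(x)$ is small in $C^{2,\gamma}_{-\delta}$ and solves $\calM_1 = 0$ there. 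Hence $S_0^{\delta'} = S_0^\delta$ near $g_0$, parametrized identically by $K$. In the abstract framework, integrability in $C^{2,\gamma}_{-\beta}$ is equivalent to $S^\beta = S_0^\beta$ locally, where $S^\beta = \{\calM=0\}\cap C^{2,\gamma}_{-\beta}$, and we have $S^{\delta'} = S^\delta \cap C^{2,\gamma}_{-\delta'}$.

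The two implications now follow by set inclusions. If $g_0$ is integrable in $C^{2,\gamma}_{-\delta}$, then $S_0^{\delta'} = S_0^\delta = S^\delta$, and $S_0^{\delta'}\subset C^{2,\gamma}_{-\delta'}$, so $S_0^{\delta'} \subseteq S^{\delta'} \subseteq S_0^{\delta'}$. Conversely, if $g_0$ is integrable in $C^{2,\gamma}_{-\delta'}$, then $S^\delta \subseteq S_0^\delta = S_0^{\delta'} = S^{\delta'} \subseteq S^\delta$. In either case the map $\sigma(x) = g_0 + x + y(x)$ has $(\pi_0\circ d\sigma)_0 = \mathbf 1$, since $dy_0 = 0$. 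Neighborhoods correspond because the inclusion of spaces is continuous and the parametrizing domain $K$ is finite-dimensional, so all norms on it are equivalent.

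\emph{Main obstacle.} The hardest step is verifying the implicit-function-theorem hypothesis at both weights. We need $L : V_1^\beta \to \{f \in C^{0,\gamma}_{-\beta-2} : f \perp_{L^2} K\}$ to be an isomorphism. Injectivity is clear. Surjectivity requires that the weighted cokernel of $L$ be exactly $K$, via the duality between weights $\beta$ and $n-2-\beta$ and the Fredholm theory cited in Lemma \ref{lem kernel properties}. This is delicate when $\beta \ge n-2$: then $n-2-\beta \le 0$, so the dual kernel may contain bounded or growing elements, and $\beta = n-2$ itself is an exceptional weight. My first approach would be to run the argument at a non-exceptional weight $\beta \in (n-\mu, n-2)$, where the index is well controlled. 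I would then transfer to weights up to $n-1$ by a decay bootstrap: a solution $h$ of $Lh = Q(h)$, with $Q(h) \in C^{0,\gamma}_{-2\beta-2}$, has an expansion whose only possible term between the two rates is an $r^{2-n}$-type term. Its coefficient is a pairing of $Q(h)$ against bounded solutions of $L$, and I would try to show this coefficient vanishes, or is absorbed, using the gauge condition $\calM(g)=0$.
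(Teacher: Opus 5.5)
Your identification of the kernels is exactly the paper's first step. For the integrability equivalence, the paper then simply observes that the kernels are the same for both weights, hence so are $\pi_0$ and condition \eqref{eqn assumption for integrability defn}, and concludes that the equivalence can be read off Definition \ref{defn integrable ALE}. Since $\mathcal F$ and its topology depend on the weight, that step does need an argument. Your implicit-function-theorem comparison of the graphs $S_0^\delta, S_0^{\delta'}$ supplies a correct one when both weights lie in $(n-\mu,n-2)$. There $n-2-\beta\in(0,n-2)$, so duality, Lemma \ref{lem kernel properties}(3) and Proposition \ref{prop L2 kernel equals Holder kernel} give that the image of $L$ on $C^{2,\gamma}_{-\beta}$ is exactly $\{f\in C^{0,\gamma}_{-\beta-2}: f\perp_{L^2}K\}$. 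This range covers how the lemma is used in Theorem \ref{thm:Mestimate}. One minor repair: when $\beta+2>\mu$ the $e_j$ need not lie in $C^{0,\gamma}_{-\beta-2}$. So take $W_0$ spanned by compactly supported $\psi_j$ with $(\psi_j,e_k)_{L^2}$ equal to the Kronecker delta, rather than $K$ itself.

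The genuine gap is the range $\max\{\delta,\delta'\}\geq n-2$, which the statement includes. There the isomorphism $L:V_1^\beta\to\{f\in C^{0,\gamma}_{-\beta-2}: f\perp_{L^2}K\}$ is not just delicate; it is false. Take $\beta\in(n-2,n-1]$ and $h\in C^{2,\gamma}_{-\beta}$. Ricci-flatness gives $\tr_{g_0}Lh=\Delta_{g_0}\tr_{g_0}h$, and the boundary term on $\partial B_R$ is $O(R^{n-2-\beta})\to 0$, so $\int_M \tr_{g_0}(Lh)\,dV_{g_0}=0$. Thus the bounded, non-$L^2$ kernel element $g_0$ (note $Lg_0=0$) obstructs surjectivity. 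Any compactly supported $f\perp K$ with $\int_M\tr_{g_0}f\neq 0$ lies in your target but not in the image; such $f$ exist because $g_0\notin\operatorname{span}K$. At $\beta=n-2$ the weight is exceptional (indicial root $2-n$), so $L$ is not even Fredholm. Hence for these weights the graph $S_0^\beta$ is not produced, and the identification $y_{\delta'}=y_\delta$ is unavailable.

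What is missing is a decay-improvement statement: every $g$ with $\calM(g)=0$ that is sufficiently $C^{2,\gamma}_{-\delta}$-close to $g_0$ lies in $C^{2,\gamma}_{-\delta'}$, with $\|g-g_0\|_{C^{2,\gamma}_{-\delta'}}\to 0$ as $\|g-g_0\|_{C^{2,\gamma}_{-\delta}}\to 0$. Given that, $\mathcal F$ is locally the same set with equivalent topologies for both weights, and your argument at a weight below $n-2$ finishes the proof. Your last paragraph only announces an attempt at this. To carry it out:
\begin{enumerate}
\item Use that $\calM(g)=0$ forces $V(g,g_0)=0$ \cite{DeruelleKroncke20}.
\item Bootstrap the decay of $h=g-g_0$ through $Lh=Q(h)$ at non-exceptional weights.
\item When crossing the rate $n-2$, observe that a leading term $A\,r^{2-n}$ must satisfy the linearized gauge condition $\div h-\frac12 d\tr h=0$ to leading order. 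This forces $A=\frac12(\tr A)\,g_{Euc}$, hence $\tr A=0$ (as $n\neq 2$) and $A=0$.
\end{enumerate}
The next crossing, at rate $n-1$, only produces $O(r^{1-n})$ terms, which are harmless because $\delta'\leq n-1$.
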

\begin{proof}
    Since $n - \mu \ge 0$, Proposition \ref{prop L2 kernel equals Holder kernel} implies 
    $$\ker_{C^{2, \gamma}_{-\delta}} L = \ker_{C^{2, \gamma}_{-\delta'}} L = \ker_{L^2} L .$$
    In particular, condition \eqref{eqn assumption for integrability defn} in Definition \ref{defn integrable ALE} holds for both $\delta$ and $\delta'$.
    Since $\ker_{C^{2, \gamma}_{-\delta}} L = \ker_{C^{2, \gamma}_{-\delta'} } L $, it follows directly from Definition \ref{defn integrable ALE} that $g_0$ has integrable deformations in $C^{2, \gamma}_{-\delta}$ if and only if it has integrable deformations in $C^{2, \gamma}_{-\delta'}$.
\end{proof}

It's worth comparing the definition of integrable (Definition \ref{defn integrable ALE}) to the definition of integrable in \cite[Definition 2.4]{DeruelleKroncke20}.

\begin{lemma} \label{lem comparing integrability defns}
    Let $(M^n, g_0)$ be a Ricci-flat ALE manifold and $\gamma \in (0,1)$.
    Assume $(M^n, g_0)$ is integrable in the sense of \cite[Definition 2.4]{DeruelleKroncke20}.
    Then $(M^n, g_0)$ is integrable in $C^{2, \gamma}_{-\delta}$ for all $\delta \in (1, n-1]$.

    If additionally $n=4$ or $(M^n, g_0)$ is spin with a nonzero parallel spinor,
    then $(M^n, g_0)$ is integrable in $C^{2, \gamma}_{-\delta}$ for all $\delta \in (0, n]$.
\end{lemma}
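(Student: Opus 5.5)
We compare \cite[Definition 2.4]{DeruelleKroncke20} with Definition \ref{defn integrable ALE}. Recall that in \cite{DeruelleKroncke20}, $g_0$ is integrable if for every $h \in \ker_{L^2} L$ there is a smooth family of Ricci-flat metrics $g(s)$ with $g(0)=g_0$ and $g'(0) = h$. Equivalently, the premoduli space of gauged Ricci-flat metrics near $g_0$ is a smooth manifold whose tangent space at $g_0$ is $\ker_{L^2} L$. Deruelle--Kröncke show, via an implicit function argument in weighted spaces, that these metrics form a finite-dimensional real-analytic set. Integrability then says that this set is a smooth manifold of dimension $\dim \ker_{L^2} L$, graphed over the kernel. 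Our plan is to show that this manifold, viewed in $C^{2,\gamma}_{-\delta}$, provides the map $\sigma$ required in Definition \ref{defn integrable ALE}.

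\textbf{Step 1 (verifying hypothesis \eqref{eqn assumption for integrability defn}).} By Proposition \ref{prop decay rate of L2 kernel elements}, $\ker_{L^2} L \subseteq C^{2,\gamma}_{-(n-1)}$. Proposition \ref{prop L2 kernel equals Holder kernel} (for $n \geq 5$) gives $\ker_{C^{2,\gamma}_{-\delta}} L = \ker_{L^2}L$ for $\delta \in (0,n-1]$. For $n=4$ we use item (3) of that proposition directly. Taking $\mu = n-1$, the requirement $\mu > n-\delta$ becomes $\delta > 1$, which explains the range $\delta\in(1,n-1]$. In the spin/parallel-spinor case or when $n=4$, we instead take $\mu = n$, which allows $\delta \in (0,n]$. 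Lemma \ref{lem integrability and changing decay rate} then shows that it suffices to treat a single convenient $\delta$ in each range; the endpoint $\delta = n$ is handled by Proposition \ref{prop L2 kernel equals Holder kernel}(2),(3) in the same way.

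\textbf{Step 2 (constructing $\sigma$).} Let $\mathcal F$ be as in Definition \ref{defn integrable ALE}. We use the decomposition $V = \ker L \oplus V_1$, where $V_1$ is the $L^2$-orthogonal complement, and write $\mathcal M = \mathcal M_0 \oplus \mathcal M_1$ as in Section \ref{sec: almost-orthogonality and exponential convergence}. Here $W_1 = \Im L$ in $C^{0,\gamma}_{-\delta-2}$, and for $\delta$ in the non-exceptional range this image is closed with finite-dimensional cokernel identified with $\ker_{L^2}L$ (Fredholm theory \cite{Marshall02}). The implicit function theorem yields a $C^1$ (indeed analytic) map $x \mapsto x + y_1(x)$ from a neighbourhood of $0$ in $\ker L$ with $\mathcal M_1 = 0$ and $dy_1(0)=0$, and $\mathcal F \subseteq \{\mathcal M_1 = 0\}$. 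Set $\sigma(x) = g_0 + x + y_1(x)$; then $(\pi_0\circ d\sigma)_0 = \mathbf 1$ automatically. It remains to show that $\sigma(x)\in\mathcal F$, i.e.\ $\mathcal M_0(\sigma(x))=0$, and that $\sigma$ is onto a neighbourhood of $g_0$ in $\mathcal F$. Surjectivity follows because every $g\in\mathcal F$ near $g_0$ satisfies $\mathcal M_1(g)=0$ and hence, by uniqueness in the implicit function theorem, equals $\sigma(\pi_0(g-g_0))$.

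\textbf{Step 3 (main obstacle).} The key step is to show that $\mathcal M_0\circ\sigma\equiv 0$ using the \cite{DeruelleKroncke20} integrability. Their result provides, for each kernel direction, Ricci-flat gauged metrics (these satisfy $\mathcal M = 0$ after a DeTurck gauge fixing, as in their construction) forming a smooth manifold $\mathcal F'$ of dimension $\dim\ker L$, with tangent space $\ker_{L^2} L$ at $g_0$. Since $\mathcal F'\subseteq\{\mathcal M_1=0\}=\Im\sigma$ and both are $C^1$ manifolds of the same finite dimension, $\pi_0$ restricted to $\mathcal F'$ is a local diffeomorphism onto a neighbourhood of $0$. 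Hence $\mathcal F'$ coincides with $\Im\sigma$ near $g_0$, which gives $\mathcal M_0\circ\sigma=0$. The delicate points are, first, that their metrics lie in $C^{2,\gamma}_{-\delta}$ with continuous dependence in this topology (their gauged Ricci-flat metrics decay like $\rho^{-(n-1)}$, or $\rho^{-n}$ in the spin case, at the kernel rate, so this should follow from Proposition \ref{prop decay rate of L2 kernel elements} and elliptic estimates for $\mathcal M$ in weighted spaces), and second, matching their gauge condition with $\mathcal M(g)=0$ relative to $g_0$. I would handle the gauge by noting that Deruelle--Kröncke also work in the Bianchi/DeTurck gauge relative to $g_0$.
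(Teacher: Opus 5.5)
Your route is sound and genuinely different from the paper's. The topological comparison you leave open is one the paper also takes for granted.

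\textbf{The paper's argument.} It does no reduction. It reads \cite[Definition 2.4]{DeruelleKroncke20} as saying that $\Psi_{g_0}(g)=\pi_0(g-g_0)$ is a local diffeomorphism at $g_0$ on the set $\mathcal U_{g_0}$ of ALE metrics with $\Ric(g)=0$ and $\mathcal L_{V(g,g_0)}g=0$, in the $L^2\cap L^\infty$ topology. It sets $\sigma:=\Psi_{g_0}^{-1}$, so that $\pi_0(\sigma(h)-g_0)=h$ and hence $(\pi_0\circ d\sigma)_0=\mathbf 1$. It verifies \eqref{eqn assumption for integrability defn} exactly as in your Step 1. It then simply asserts that a neighbourhood of $g_0$ in $\mathcal U_{g_0}$ coincides with a neighbourhood of $g_0$ in $\{g\in C^{2,\gamma}_{-\delta}:\calM(g)=0\}$.

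\textbf{Your argument and what it buys.} You build $\sigma$ intrinsically in $C^{2,\gamma}_{-\delta}$ by the Lyapunov--Schmidt reduction of Section \ref{sec: almost-orthogonality and exponential convergence}. You use Deruelle--Kr\"oncke only to show that the reduced obstruction $\calM_0\circ\sigma$ vanishes.
\begin{itemize}
\item The $C^1$ regularity of $\sigma$ in the $C^{2,\gamma}_{-\delta}$ topology, and surjectivity onto an $\mathcal F$-neighbourhood, come for free from the implicit function theorem.
\item The only comparison of topologies you need is one-way: metrics in $\mathcal U_{g_0}$ that are $L^2\cap L^\infty$-close to $g_0$ are $C^{2,\gamma}_{-\delta}$-close.
\item The paper's identification implicitly needs both directions, plus regularity of $\Psi_{g_0}^{-1}$ as a map into $C^{2,\gamma}_{-\delta}$.
\end{itemize}

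\textbf{What it costs.} The price is Fredholm theory. Your reduction needs a weight at which $L$ has closed range with cokernel $\ker_{L^2}L$, i.e.\ $\delta<n-2$. At exceptional weights such as $\delta=n-2$ the image of $L$ is not closed. So the rest of the range, including $\delta\in[n-2,n]$, is reached only through Lemma \ref{lem integrability and changing decay rate}, together with its extension past $n-1$ via Proposition \ref{prop L2 kernel equals Holder kernel}. The paper's direct inversion never uses Fredholmness.

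\textbf{Two remarks.}
\begin{enumerate}
\item The definition in play is the local-diffeomorphism statement above, not the existence of curves tangent to each $h\in\ker_{L^2}L$. That statement is exactly the input your Step 3 needs. With it your gauge worry disappears, since $\Ric(g)=0$ and $\mathcal L_{V(g,g_0)}g=0$ give $\calM(g)=0$ directly.
\item Proposition \ref{prop decay rate of L2 kernel elements} concerns the linear kernel, so it cannot by itself control the nonlinear metrics in $\mathcal U_{g_0}$. One clean way to settle your ``delicate point'' is the following.
\begin{itemize}
\item Take the convenient weight $\delta\in(n-\mu,\min\{n/2,n-2\})$.
\item For $g\in\mathcal U_{g_0}$, the computation in Lemma \ref{lemma:hpevolution} with $\partial_t h=0$ shows that $|g-g_0|^2$ is a subsolution of $\Delta_{g,g_0}u+C\rho^{-2}u\ge 0$.
\item A local maximum principle on the rescaled balls $B(x,\rho(x)/2)$ then gives $\sup_M\rho^{n/2}|g-g_0|\lesssim\|g-g_0\|_{L^2\cap L^\infty}$.
\item Lemma \ref{lemma:localC2gamma-deltaestimate}, applied to the static flow $g(t)\equiv g$, upgrades this to $C^{2,\gamma}_{-\delta}$.
\end{itemize}
\end{enumerate}
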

\begin{proof}   
    In \cite[Definition 2.4]{DeruelleKroncke20}, a Ricci-flat ALE space $(M^n, g_0)$ is said to be integrable if, for the set
        $$\mathcal U_{g_0} := \{ g \in C^\infty \cap L^2 \cap L^\infty \, | \, \text{$g$ is an ALE metric on $M$ with } Rc_g = 0 \text{ and } \mathcal L_{V(g, g_0)} g = 0 \}$$ 
    with the $L^2 \cap L^\infty(M, g_0)$ topology,
    the $L^2(M,g_0)$-projection map to the kernel of the Lichnerowicz Laplacian $L = L_{g_0}$, that is,
        $$\Psi_{g_0} : \mathcal U_{g_0} \to \ker_{L^2} L , \qquad \Psi_{g_0} (g) := \pi_0 ( g - g_0)$$
    is a local diffeomorphism at $g_0$.
    In this case, the inverse function theorem implies there exists a locally defined smooth inverse $\sigma = \Psi_{g_0}^{-1}$ onto a neighborhood of $g_0 \in \mathcal U_{g_0}$ such that $\sigma(0) = g_0$ and 
        $$\pi_0 ( \sigma(h) - g_0) = h$$
    for all $h $ in a neighborhood of $0\in \ker_{L^2} L$.
    Differentiating at $0$ then gives $(\pi_0 \circ d\sigma)_0 = \mathbf 1$.
    
    Proposition \ref{prop L2 kernel equals Holder kernel} implies $\ker_{C^{2, \gamma}_{-\delta} } L = \ker_{L^2} L $ for all $\delta \in (0, n-1]$, and a neighborhood of $g_0 \in \mathcal U_{g_0}$ coincides with a neighborhood of 
        $$g_0 \in \{ g \in C^{2, \gamma}_{-\delta} (M, g_0) \, | \, \mathcal M(g) = 0\}.$$
    Proposition \ref{prop decay rate of L2 kernel elements} also implies $\ker_{L^2} L \subseteq C^{2, \gamma}_{-\mu}$ for $\mu = n-1$.
    It therefore follows that if $g_0$ is integrable in the sense of \cite[Definition 2.4]{DeruelleKroncke20}, then $g_0$ has integrable deformations in $C^{2, \gamma}_{-\delta}(M, g_0)$ in the sense of Definition \ref{defn integrable ALE} above for any $\delta \in (1, n-1]$.
    
    In the case that additionally $n=4$ or $(M^n,g_0)$ is spin with a nonzero parallel spinor, an analogous argument shows the same statement is true for any $\delta \in (0, n]$.
\end{proof}

\begin{prop} \label{prop Calabi-Yau implies integrable}
    Let $(M^n, g_0)$ be a Ricci-flat ALE manifold which is also K{\"a}hler.
    Then $g_0$ has integrable deformations in $C^{2, \gamma}_{-\delta}(M, g_0)$ for any $\delta \in (0, n]$ and $\gamma \in (0,1)$.
\end{prop}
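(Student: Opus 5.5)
The plan is to reduce the statement to integrability in the sense of \cite[Definition 2.4]{DeruelleKroncke20} and then invoke Lemma \ref{lem comparing integrability defns}. There are three inputs. First, a Kähler Ricci-flat ALE manifold has holonomy contained in $SU(n/2)$. Hence it is spin with a nonzero parallel spinor, so $n = 4$ or the spin hypothesis holds. Second, Proposition \ref{prop decay rate of L2 kernel elements} gives $\ker_{L^2} L \subseteq C^{2,\gamma}_{-n}$. Third, the Kähler case of \cite[Theorem 2.18]{DeruelleKroncke20} (Calabi-Yau ALE) gives integrability in their sense. Once these are in hand, the second part of Lemma \ref{lem comparing integrability defns} yields integrable deformations in $C^{2,\gamma}_{-\delta}$ for every $\delta \in (0,n]$ and $\gamma \in (0,1)$, which is the claim.

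The substantive step is the third input. I would take it as the cited result, but its proof should be sketched so the chain is transparent. Let $h \in \ker_{L^2} L$. Since $\ker_{L^2} L \subseteq C^{2,\gamma}_{-n}$, $h$ decays fast, and one shows it is trace-free and divergence-free. Via the Kähler structure, $h$ splits into a $J$-invariant part and a $J$-anti-invariant part. The $J$-invariant part corresponds to an $L^2$ harmonic $(1,1)$-form, i.e.\ an infinitesimal change of Kähler class. The $J$-anti-invariant part corresponds, through the holomorphic volume form, to an $L^2$ harmonic $(n/2-1,1)$-form, i.e.\ an infinitesimal complex deformation. Each such infinitesimal deformation is unobstructed: complex deformations by a Tian-Todorov type argument in the ALE setting, and Kähler classes trivially. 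Applying an ALE Calabi-Yau theorem (solving the complex Monge-Ampère equation in weighted spaces) to each nearby complex structure and Kähler class produces a smooth finite-dimensional family of Ricci-flat ALE metrics. Its tangent space at $g_0$ is all of $\ker_{L^2} L$. Putting each member into DeTurck gauge relative to $g_0$, by solving the harmonic-map gauge equation in weighted spaces with the implicit function theorem, gives the map $\sigma$ of Definition \ref{defn integrable ALE}, with $(\pi_0\circ d\sigma)_0 = \mathbf 1$.

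One further point must be checked: $\sigma$ has to be onto a neighborhood of $g_0$ in $\mathcal F$. I would argue this by counting dimensions. Near $g_0$, $\mathcal F$ lies inside the image of the Kuranishi-type map obtained from the implicit function theorem, exactly as $S_0$ is constructed in the proof of Lemma \ref{lemma:almostorthog}. That image is a graph over $\ker L$, and the constructed family already fills it. For the weight range, when $\delta$ is small the space $C^{2,\gamma}_{-\delta}$ is larger than $L^2$. Lemma \ref{lem kernel properties}(\ref{lem kernel properties, item 3}) together with Proposition \ref{prop L2 kernel equals Holder kernel} ensures the kernel is unchanged, and Lemma \ref{lem integrability and changing decay rate} with $\mu = n$ transfers integrability between all $\delta \in (0,n]$. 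I expect the main obstacle to be the unobstructedness and ALE Calabi-Yau step, which is why I would rely on \cite{DeruelleKroncke20} there.
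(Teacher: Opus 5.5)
Your proposal is correct and follows essentially the same route as the paper. You cite \cite[Theorem 2.18]{DeruelleKroncke20} for integrability in the Deruelle--Kr\"oncke sense, note that Ricci-flat K\"ahler implies spin with a nonzero parallel spinor, and then apply the second part of Lemma \ref{lem comparing integrability defns} to get every $\delta \in (0,n]$.

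Your sketch of the Deruelle--Kr\"oncke argument and your weight-transfer remark are not needed, since Lemma \ref{lem comparing integrability defns} already covers the full range. The remark also overstates slightly: with $\mu = n$, Lemma \ref{lem integrability and changing decay rate} only reaches $\delta \le n-1$.
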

\begin{proof}
    \cite[Theorem 2.18]{DeruelleKroncke20} implies such metrics are integrable in the sense of \cite[Definition 2.14]{DeruelleKroncke20}.
    Ricci-flat K{\"a}hler metrics are also spin with a nonzero parallel spinor (see the proof of Proposition \ref{prop decay rate of L2 kernel elements}).
    Lemma \ref{lem comparing integrability defns} therefore implies $g_0$ has integrable deformations in $C^{2, \gamma}_{-\delta}(M, g_0)$ for any $\delta \in (0, n]$ and $\gamma \in (0,1)$.
\end{proof}

\begin{rmk}
    If $(M^n, g_0)$ is a (non-flat) Ricci-flat ALE space which is spin with a nonzero parallel spinor, then $(M^n, g_0)$ has special holonomy $\text{Hol}(M, g_0) = \text{SU}(n/2)$, $\text{Sp}(n/4)$, or $\text{Spin}(7)$ \cite[Section 6]{KronckePetersen22}.
    In particular, if such an $(M^n, g_0)$ has $\text{Hol}(M, g_0) \ne \text{Spin}(7)$, then Proposition \ref{prop Calabi-Yau implies integrable} implies $(M^n, g_0)$ has integrable deformations in $C^{2, \gamma}_{-\delta}$ for any $\delta \in (0, n]$ and $\gamma \in (0,1)$.
\end{rmk}

\vspace{10mm}

\section{Exponential convergence of the Ricci flow (compact case)}\label{s:exponentialconvergence}

\begin{thm}[\cite{Sesum06}] \label{thm stability compact case}
    Ricci flow starting near a compact, linearly stable, Ricci-flat metric with integrable deformations converges exponentially.
\end{thm}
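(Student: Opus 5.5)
The plan is to apply the abstract machinery of Section \ref{sec: almost-orthogonality and exponential convergence} to the Ricci-DeTurck operator $\calM$ of (\ref{Mdefn}), with reference metric $g_0$. I would then pass back to Ricci flow by the usual DeTurck diffeomorphisms.

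\textbf{Step 1: verify the hypotheses of Theorem \ref{thm:twointerval}.} Take $V = C^{2,\gamma}(S^2M)$ and $W = C^{0,\gamma}(S^2M)$, and set $u_0 = g_0$. By (\ref{RHSofRdT}), $\calM$ is a smooth quasilinear elliptic operator $V \to W$ near $g_0$. By (\ref{dMcompactexpression}), its linearization $d\calM_{g_0} = L_{g_0}$ is formally self-adjoint. On a closed manifold the Ricci-DeTurck flow is well-posed in $C^{2,\gamma}$ and smoothing; this is standard parabolic theory, with continuous dependence on initial data. The Lichnerowicz Laplacian is elliptic, so Schauder theory and Fredholm theory give $\ker L$ finite-dimensional and smooth, $\Im L = (\ker L)^{\perp_{L^2}} \cap C^{0,\gamma}$ closed, and the $L^2$-orthogonal splittings $V = V_0 \oplus V_1$ and $W = W_0 \oplus W_1$ (here $W_0 = \ker L$). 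The stated assumptions, linear stability and integrable deformations in the sense of Definition \ref{defn integrable closed mfld}, are exactly the hypotheses needed. Note in particular that $\mathcal F = \{\calM = 0\}$ is the $S$ of Lemma \ref{lemma:almostorthog}.

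\textbf{Step 2: iterate the Two-Interval Theorem.} From Theorem \ref{thm:twointerval} one gets $\sup_{[k,k+1]}\|\calM(g(t))\|_{L^2} \le e^{-\lambda k}\sup_{[0,1]}\|\calM\|_{L^2}$. This holds as long as $g(t)$ stays in the ball $B_\delta(g_0) \subset V$, and the theorem can then be reapplied on $[k,k+1]$ by time-translation invariance. To close the loop, parabolic smoothing applied to (\ref{Mevolution}) upgrades the $L^2$ decay to $\|\calM(g(t))\|_{C^k} \lesssim e^{-\lambda t}\sup_{[0,1]}\|\calM\|_{L^2}$ for $t\ge1$. Since $\partial_t g = \calM(g)$, integrating gives
$$\|g(t)-g(1)\|_{C^{2,\gamma}} \lesssim \sup_{[0,1]}\|\calM(g)\|_{L^2} \lesssim \|g(0)-g_0\|_{C^{2,\gamma}}.$$
The first factor is small by continuous dependence on the data. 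Hence, if $g(0)$ starts in a smaller ball $B_{\delta'}$, a continuity (bootstrap) argument keeps $g(t)$ in $B_\delta$ for all time. This yields Corollary \ref{cor:exponentialconvergence}: long-time existence, and exponential convergence of $g(t)$ in $C^\infty$ to a limit $g_\infty$ with $\calM(g_\infty)=0$.

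\textbf{Step 3: return to Ricci flow.} A metric with $\calM(g_\infty)=0$ near $g_0$ is Ricci-flat, by the standard argument: $\Ric \ge 0$ type identities, or integrating $|V|^2$ against the Bianchi identity on a closed manifold. Let $\theta_t$ solve $\partial_t\theta_t = -V(g(t),g_0)\circ\theta_t$ with $\theta_0 = \mathrm{id}$. Then $\theta_t^*g(t)$ is a Ricci flow. Moreover $|V| \lesssim \|g(t)-g_\infty\|_{C^1} + |V(g_\infty,g_0)|$, and $V(g_\infty,g_0)=0$ because $g_\infty$ is gauged. So $V$ decays exponentially, the $\theta_t$ converge smoothly, and $\theta_t^*g(t)$ converges exponentially to a Ricci-flat metric.

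\textbf{Main obstacle.} I expect the main obstacle to be the bootstrap in Step 2, namely making the constant $\delta$ from Theorem \ref{thm:twointerval} uniform under restarting. The theorem is proved by contradiction only for initial data in $B_\delta(g_0)$, so at each step one must know that $g(k)$ is still in that ball. I would handle this by shrinking the initial ball, and by using the summable geometric series together with the bound of $\sup_{[0,1]}\|\calM\|_{L^2}$ by the initial $C^{2,\gamma}$ distance, which comes from well-posedness.
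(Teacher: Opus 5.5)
Your proposal follows the paper's proof: you take $V = C^{2,\gamma}$, $W = C^{0,\gamma}$ in Corollary \ref{cor:exponentialconvergence}, with your Step 2 writing out the iteration and bootstrap the paper leaves implicit, and then integrate the DeTurck field using that $g_\infty$ is Ricci-flat with $V(g_\infty,g_0)=0$. The one soft spot is that last input, which the paper takes from the argument of Deruelle--Kr\"oncke \cite[Theorem 2.7]{DeruelleKroncke20}: your justification ``because $g_\infty$ is gauged'' is circular, since on a closed manifold $\calM(g_\infty)=0$ only makes $V$ a $g_\infty$-Killing field once Ricci-flatness is known (e.g.\ via the steady-soliton argument), and tori show this alone does not give $V=0$, so you need a further argument, for instance $V=-\tau(\mathrm{id}\colon (M,g_\infty)\to(M,g_0))$ together with invariance of the harmonic-map energy under the isometric flow of $V$, which forces $\int_M |V|^2=0$.
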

\begin{proof}
    Let $V = C^{2, \alpha}$ and $W = C^\alpha.$ The result for Ricci-DeTurck flow follows from Corollary \ref{cor:exponentialconvergence} and the calculations in \S \ref{ss:riccideturckoperator}. 
    
To obtain the result for the Ricci flow itself, let $g_\infty = \lim_{t \to \infty} g(t).$ Since $\calM(g_\infty) = 0,$ the argument e.g. of Deruelle-Kr{\"o}ncke \cite[Theorem 2.7]{DeruelleKroncke20} implies that $g_\infty$ is Ricci-flat and $V(g_\infty, g_0) = 0.$ Since $g(t) - g_\infty$ tends to zero exponentially, so does $V(g(t), g_0).$ Integrating, we obtain a convergent family $\varphi_t$ of diffeomorphisms such that $\varphi_t^* g(t)$ is an exponentially convergent solution of the Ricci flow.
\end{proof}

\vspace{10mm}

\section{Quantitative almost-orthogonality (ALE case)} \label{sec: almost-orthogonality ALE case}

\begin{lemma}\label{lemma:orthogonalellipticest1}
    Let $(M^n, g_0, x_*)$ be a pointed Ricci-flat ALE manifold. 
    Let $\mu \in \{n-1,n\},$ $\gamma \in (0,1)$, $\delta \in \left( n-\mu, n-2 \right),$ 
    and
        $$\mathcal F = \{ g \in C^{2, \gamma}_{-\delta} ( S^2 M , g_0) \, | \, \mathcal M (g ) = 0 \}.$$
            Assume 
that
$$\ker_{C^{2, \gamma}_{-\delta}} L = \ker_{L^2} L  \subseteq C^{2, \gamma}_{-\mu} (S^2M, g_0)$$ and
        $g_0$ has integrable deformations in $C^{2, \gamma}_{-\delta}(S^2M, g_0)$.
    Then there exist $C^{2,\gamma}_{-\delta}(S^2M, g_0)$-neighborhoods $U, U' \ni g_0$ such that any metric $g \in U$ can be written as
    \begin{equation}\label{orthogonalellipticest:orthogonality}
        g = g_1 + h_1,
    \end{equation}
    where $g_1 \in U',$ $\calM(g_1) = 0,$ and $h_1 \perp_{g_0} \ker_{L^2} L.$
    Moreover,
    \begin{equation} \label{lem orthogonal elliptic est, eqn comparable topologies}
        \| g_1 - g_0 \|_{C^{2, \gamma}_{-\delta}} + \| h_1 \|_{C^{2, \gamma}_{-\delta}} \to 0 \qquad \text{as } \| g - g_0 \|_{C^{2, \gamma}_{-\delta}} \to 0
    \end{equation}
    and there exists $C = C(M^n,g_0 , \gamma , \delta ) > 0$ such that 
    \begin{equation} \label{orthogonalellipticest:est}
        \| h_1 \|_{C^{2,\gamma}_{-\delta}(S^2M, g_0)} \leq C \|L h_1 \|_{C^{0,\gamma}_{-\delta - 2}(S^2M, g_0)}.
    \end{equation}

    
\end{lemma}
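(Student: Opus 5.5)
The plan is to construct the decomposition (\ref{orthogonalellipticest:orthogonality}) with the inverse function theorem applied to the integrability map $\sigma$, and then to get (\ref{orthogonalellipticest:est}) from the Fredholm theory of $L$ on the $L^2$-orthogonal complement of its kernel.

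Write $K := \ker_{L^2} L = \ker_{C^{2,\gamma}_{-\delta}} L$, which is finite-dimensional. The pairing $(h,e)_{L^2}$ is continuous on $C^{2,\gamma}_{-\delta}$ for each $e \in K$, because $\delta + \mu > n$ and Bishop--Gromov give
$$\int_M |h||e| \lesssim \|h\|_{C^0_{-\delta}} \|e\|_{C^0_{-\mu}} \int_1^\infty \rho^{-\delta-\mu+n-1}\,d\rho .$$
Hence $\pi_0$ is a bounded projection, and $C^{2,\gamma}_{-\delta} = K \oplus V_1$ with $V_1 = K^{\perp} \cap C^{2,\gamma}_{-\delta}$.

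\textbf{Decomposition.} Consider the $C^1$ map
$$\Phi : (\text{nbhd of } 0 \in K) \times V_1 \to C^{2,\gamma}_{-\delta}, \qquad \Phi(k, h) = \sigma(k) + h .$$
Its differential at $(0,0)$ is $(\hat k, \hat h) \mapsto d\sigma_0(\hat k) + \hat h$. This is an isomorphism: since $(\pi_0 \circ d\sigma)_0 = \mathbf 1$, applying $\pi_0$ recovers $\hat k$, and then $\hat h = (\cdot) - d\sigma_0(\hat k)$. The inverse function theorem gives neighborhoods on which $\Phi$ is a $C^1$ diffeomorphism onto a neighborhood $U$ of $g_0$. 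For $g \in U$ we set $(k,h_1) = \Phi^{-1}(g)$ and $g_1 = \sigma(k) \in U' := \sigma(\text{nbhd})$. Then $\calM(g_1) = 0$ because $\sigma$ maps into $\mathcal F$, and $h_1 \perp K$ by construction. Continuity of $\Phi^{-1}$ at $g_0$, with $\Phi^{-1}(g_0) = (0,0)$, together with continuity of $\sigma$, gives (\ref{lem orthogonal elliptic est, eqn comparable topologies}).

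\textbf{Estimate.} We need (\ref{orthogonalellipticest:est}) for every $h \in V_1$, with no smallness required. Since $\delta \in (0, n-2)$ is a non-exceptional weight, $L : C^{2,\gamma}_{-\delta} \to C^{0,\gamma}_{-\delta-2}$ is Fredholm (Lemma \ref{lem kernel properties} and \cite{Marshall02}). In particular it has closed range and finite-dimensional kernel $K$. Restricted to the closed complement $V_1$, $L$ is an injective bounded operator onto its closed range, so the bounded inverse theorem yields $\|h\|_{C^{2,\gamma}_{-\delta}} \le C\|Lh\|_{C^{0,\gamma}_{-\delta-2}}$.

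As an alternative that avoids quoting closed range, argue by contradiction. Take $h_i \in V_1$ with $\|h_i\| = 1$ and $\|Lh_i\| \to 0$. Interior Schauder estimates give local $C^{2,\gamma'}$ convergence to some $h_\infty \in C^{2,\gamma}_{-\delta}$ with $Lh_\infty = 0$. The orthogonality $h_\infty \perp K$ passes to the limit by dominated convergence, using the $L^1$ bound above, so $h_\infty \in K \cap K^\perp = 0$. To contradict the normalization we must then rule out mass escaping to infinity. For that we use the weighted Schauder estimate for the Euclidean Laplacian on the end: Lemma \ref{lemma: difference of Laplacians}, together with $|\Rm| = O(\rho^{-2-\tau})$, shows $L$ is a small perturbation of $\Delta_{Euc}$ there. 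Since $\delta$ is non-exceptional, this gives $\|h\|_{C^{2,\gamma}_{-\delta}} \le C(\|Lh\|_{C^{0,\gamma}_{-\delta-2}} + \|h\|_{C^0(B_R)})$. With $h_\infty = 0$ the local term tends to zero, a contradiction.

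The main obstacle is this last weighted a priori estimate on the end. That is where $\delta \in (0, n-2)$ and the Fredholm theory are genuinely used, and I would cite \cite{Marshall02, LM85} for it rather than reprove it.
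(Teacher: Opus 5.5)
Your proposal is correct and follows essentially the same route as the paper. For the decomposition, the paper applies the inverse function theorem to $(g_1,h_1)\mapsto g_1+h_1$ on $\mathcal F\times\mathcal X$, where $\mathcal X$ is the $L^2$-orthogonal complement of $\ker_{L^2}L$; you do the same after composing with the chart $\sigma$, which makes the invertibility of the differential more explicit. For the estimate, the paper applies the bounded inverse theorem to the injective Fredholm map $L:\mathcal X\to C^{0,\gamma}_{-\delta-2}$, citing Deruelle--Ozuch for Fredholmness where you appeal to the non-exceptional-weight theory, so your alternative compactness argument is not needed (and, as you note, it rests on the same weighted semi-Fredholm estimate anyway).
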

\begin{proof}
    By the integrability assumption, $\mathcal F \subset C^{2, \gamma}_{-\delta}$ is a $C^1$-submanifold in a neighborhood $g_0$ with $T_{g_0} \mathcal F = \ker_{C^{2, \gamma}_{-\delta}} L$.
    Any $e \in T_{g_0} \mathcal F = \ker_{C^{2, \gamma}_{-\delta}} L =  \ker_{L^2}L$ lies in $C^{2, \gamma}_{-\mu}(S^2M, g_0).$ 
    In particular, since $n - \mu - \delta < 0,$ for any $h \in C^{2, \gamma}_{-\delta}(S^2M, g_0)$, and any $e \in T_{g_0} \mathcal F$, we have
    \begin{equation} \label{orthogonal elliptic est proof, eqn 1}
        \int_M \langle h, e \rangle d V
        \le C \| h \|_{C^{0}_{-\delta}} \| e \|_{C^0_{-\mu}} \int_0^\infty \rho^{-\delta} \rho^{-\mu} \rho^{n-1} d \rho < \infty.
    \end{equation}
    We may therefore define
        $$\mathcal X : = \left\{ h \in C^{2, \gamma}_{-\delta} ( S^2M, g_0) \, | \, \int_M \langle h, e \rangle_{g_0} d V_{g_0} = 0 \text{ for all } e \in \ker_{L^2} L \right\} .$$
    Consider the map
        $$\Phi : \mathcal F \times \mathcal X \to C^{2, \gamma}_{-\delta}(S^2M , g_0)  \quad\text{given by} \quad  \Phi (g, h ) = g + h,$$
    which is a smooth map of Banach manifolds.
    Clearly, $\Phi(g_0, 0) = g_0$ and 
        $$d \Phi_{(g_0, 0)} : T_{g_0} \mathcal F \oplus \mathcal X \to C^{2, \gamma}_{-\delta}(S^2 M, g_0) \text{ is } d \Phi_{(g_0, 0)} ( e, h ) = e + h.$$
    $T_{g_0} \mathcal F = \ker_{L^2} L$ is finite-dimensional (see e.g. \cite[Theorem 2.11]{DeruelleKroncke20}), and so there exists a finite basis, say $\{ e_j \}_{j =1}^k$, of $T_{g_0} \mathcal F = \ker_{L^2} L$.
    By assumption, $e_j \in C^{2, \gamma}_{-\mu}(S^2M, g_0)\cap C^{2, \gamma}_{-\delta}(S^2M, g_0)$ for all $1 \le j \le k$.
    The linearization of $\Phi$ has an explicit inverse
        $$( d \Phi_{(g_0,0)} )^{-1} : C^{2, \gamma}_{-\delta} (S^2 M, g_0) \to T_{g_0} \mathcal F \oplus \mathcal X \quad \text{ given by } $$
        $$( d \Phi_{(g_0, 0)} )^{-1}(g) = \left( \sum_{j=1}^k \langle g, e_j \rangle_{L^2 (g_0)} e_j, \, g - \sum_{j=1}^k \langle g, e_j \rangle_{L^2 (g_0)} e_j \right)$$
    which is well-defined by \eqref{orthogonal elliptic est proof, eqn 1}.
    The inverse function theorem for Banach manifolds now implies there are $C^{2, \gamma}_{-\delta}(S^2M, g_0)$-neighborhoods $U, U' \ni g_0$ and $U'' \ni 0$ such that 
        $$\Phi : ( \mathcal F \cap U') \times (\mathcal X \cap U'') \to U \quad \text{is a diffeomorphism}.$$
    Thus, any $g \in U$ can be written as $g = g_1 + h_1$ where $g_1 \in U' \cap \mathcal F$ and $h_1 \perp_{g_0} \ker_{L^2} L$, and additionally
    \begin{equation*}
        \| g_1 - g_0 \|_{C^{2, \gamma}_{-\delta}} + \| h_1 \|_{C^{2, \gamma}_{-\delta}} \to 0 \qquad \text{as } \| g - g_0 \|_{C^{2, \gamma}_{-\delta}} \to 0.
    \end{equation*}

    To obtain the estimate (\ref{orthogonalellipticest:est}), note that $h_1 \in \mathcal{X}$ and $L : \mathcal{X} \to C^{0, \gamma}_{-\delta - 2}$ is bounded and injective by definition. Moreover, it is Fredholm  \cite[Prop. 5.1]{DeruelleOzuch20} and thus has closed image. The desired estimate then follows from the Bounded Inverse Theorem.
\end{proof}

\begin{lemma}\label{lemma:orthogMest}
    For $g = g_1 + h_1$ as in Lemma \ref{lemma:orthogonalellipticest1}, after possibly shrinking $U,$ we have
    \begin{gather}
    \label{hsmallness}
        \|h_1\|_{C^{2,\gamma}_{-\delta}} \leq C \| \calM(g) \|_{C^{0, \gamma}_{-\delta - 2}},
    \end{gather}
    where $C = C(M^n, g_0, x_*, \gamma, \delta) > 0.$

\end{lemma}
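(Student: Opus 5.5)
We would prove \eqref{hsmallness} by comparing $\calM(g)$ with its linearization at the Ricci-flat metric $g_1$, and then passing from $d\calM_{g_1}$ to $L = d\calM_{g_0}$ so that \eqref{orthogonalellipticest:est} applies. Since $\calM(g_1) = 0$, the fundamental theorem of calculus gives
\begin{equation*}
\calM(g) = \calM(g_1 + h_1) - \calM(g_1) = \int_0^1 d\calM_{g_1 + s h_1}(h_1)\, ds = L h_1 + \int_0^1 \left( d\calM_{g_1 + s h_1} - d\calM_{g_0}\right)(h_1)\, ds .
\end{equation*}
Hence
\[
\|L h_1\|_{C^{0,\gamma}_{-\delta-2}} \le \|\calM(g)\|_{C^{0,\gamma}_{-\delta-2}} + \sup_{s\in[0,1]} \big\| (d\calM_{g_1+sh_1} - L)(h_1)\big\|_{C^{0,\gamma}_{-\delta-2}}.
\]

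The key step is to show that the error operator has small norm in the weighted spaces:
\[
\big\|(d\calM_{\tilde g} - L)(\hat h)\big\|_{C^{0,\gamma}_{-\delta-2}} \le C\,\|\tilde g - g_0\|_{C^{2,\gamma}_{-\delta}}\,\|\hat h\|_{C^{2,\gamma}_{-\delta}}
\]
for all $\tilde g$ in a small $C^{2,\gamma}_{-\delta}$-ball around $g_0$. We would read this off the explicit formulas \eqref{dMcompactexpression}--\eqref{Gexpression} with $h = \tilde g - g_0$. There are four kinds of terms to check.
\begin{itemize}
\item[(i)] The difference $\Delta_{\tilde g, g_0}\hat h - \Delta_{g_0}\hat h = (\tilde g^{ab} - g_0^{ab})\nabla^2_{ab}\hat h$. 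By \eqref{g0inversegrelation} it has the form $h * \tilde g^{-1} * \nabla^2 \hat h$.
\item[(ii)] The term $F * \nabla \hat h$. Here $F$ is linear in $\nabla h$ with bounded coefficients.
\item[(iii)] The term $G * \hat h$. Here $G$ involves $\nabla^2 h$, $h * \Rm(g_0)$ and $\nabla h * \nabla h$.
\item[(iv)] The difference of the curvature terms between $d\calM_{\tilde g}$ and $L$, which again carries a factor of $h$.
\end{itemize}
Each term is a product $\nabla^i h * \nabla^j \hat h$ with $i + j \le 2$, multiplied by bounded tensors or by $\Rm(g_0)$. Lemma \ref{lemma:interpolation} places such a product in $C^{0,\gamma}_{-2\delta - i - j}$. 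Because $\rho \ge 1$ and $\delta \ge 0$, this space embeds in $C^{0,\gamma}_{-\delta-2}$ whenever $i + j = 2$. For terms with fewer derivatives we use the decay of $\Rm(g_0)$ from Lemma \ref{lemma: Rm est for ALE} (or the extra weight $\rho^{-\delta}$ when $\delta \ge 2 - i - j$) to supply the missing powers of $\rho$. For instance, $h*\hat h*\Rm(g_0)$ lies in $C^{0,\gamma}_{-2\delta-2-\tau} \subset C^{0,\gamma}_{-\delta-2}$. The coefficients $\tilde g^{-1}$ are uniformly bounded in the unweighted $C^{0,\gamma}$ norm as long as $h$ is small. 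The only real care needed is in the H\"older seminorms of these products, and we expect this bookkeeping to be the main (routine but tedious) obstacle.

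Finally we close the estimate. By \eqref{lem orthogonal elliptic est, eqn comparable topologies}, both $\|g_1 - g_0\|$ and $\|h_1\|$ tend to $0$ as $g \to g_0$, so every $\tilde g = g_1 + s h_1$ satisfies $\|\tilde g - g_0\|_{C^{2,\gamma}_{-\delta}} \le \eta$, with $\eta$ as small as we like after shrinking $U$. Combining \eqref{orthogonalellipticest:est} with the bounds above gives
\[
\|h_1\|_{C^{2,\gamma}_{-\delta}} \le C\|\calM(g)\|_{C^{0,\gamma}_{-\delta-2}} + C\eta \|h_1\|_{C^{2,\gamma}_{-\delta}}.
\]
Choosing $U$ so that $C\eta \le \tfrac12$ and absorbing the last term into the left-hand side yields \eqref{hsmallness}.
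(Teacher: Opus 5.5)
Your proposal is correct and follows essentially the same route as the paper. You write $\calM(g)=\int_0^1 d\calM_{g_1+sh_1}(h_1)\,ds = Lh_1+(\text{error})$, bound the error by a small multiple of $\|h_1\|_{C^{2,\gamma}_{-\delta}}$ using the structure of $F$ and $G$ in \eqref{dMcompactexpression}, and absorb it via \eqref{orthogonalellipticest:est}. The only cosmetic difference is that the paper bounds the coefficients $g(t)^{-1}-g_0^{-1}$, $F$, $G$ in $C^{0,\gamma}_0$, $C^{0,\gamma}_{-1}$, $C^{0,\gamma}_{-2}$ by $C\eps_0$, where you prove a bilinear weighted estimate that is slightly stronger than needed.
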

\begin{proof} 
    Fix $\eps_0 > 0$ to be some small constant that is to be determined, and shrink $U$ so that Lemma \ref{lemma:orthogonalellipticest1} \eqref{lem orthogonal elliptic est, eqn comparable topologies} ensures 
    \begin{equation} \label{proof orthog M est, eqn 0}
        \| g_1 - g_0\|_{C^{2, \gamma}_{-\delta}} + \| h_1 \|_{C^{2, \gamma}_{-\delta} } < \eps_0.    
    \end{equation}
    Write 
    $g(t) = g_1 + t h_1$ for $0 \leq t \leq 1.$ Since $\calM(g_1) = 0,$ we have
    \begin{equation}
    \calM(g) = \calM(g) - \calM(g_1) = \int_0^1 d\calM(g(t))(h_1) \, dt. 
    \end{equation}
    Referring to the expression (\ref{dMcompactexpression}), we have
    \begin{equation}\label{Ricciexpansion}
    \begin{split}
    \calM(g) &= \int_0^1 \Delta_{g(t), g_0} h_1 - 2 \Rm(g_0) \# h_1 + F(g(t)) \# \nabla h_1 + G(g(t)) \# h_1 \, dt \\
    & = L h_1 + \int_0^1 \left( g(t)^{ij} - g_0^{ij} \right) \nabla_i \nabla_j h_1 + F(g(t)) \# \nabla h_1 + G(g(t)) \# h_1 \, dt.
    \end{split}
    \end{equation}
    We now estimate the three terms on the RHS of (\ref{Ricciexpansion}). We have
    $$\| \left( g(t)^{ij} - g_0^{ij} \right) \nabla_i \nabla_j h_1 \|_{C^{0, \gamma}_{-\delta - 2}} \leq \| g(t)^{-1} - g_0^{-1} \|_{C^{0, \gamma}_0} \| \nabla^{(2)} h_1 \|_{C^{0, \gamma}_{-\delta - 2}} \leq C \eps_0 \| h_1 \|_{C^{2, \gamma}_{-\delta}},$$
    where we have used (\ref{proof orthog M est, eqn 0}).
    Observe from (\ref{Fexpression}) and (\ref{proof orthog M est, eqn 0}) that
    $$\|F(g(t)) \|_{C^{0, \gamma}_{-1}} \leq C \|g(t) - g_0 \|_{C^{1, \gamma}_0} \leq C \eps_0$$
    for each $0 \leq t \leq 1.$ By Lemma \ref{lemma:interpolation}, this gives
    $$\| F(g(t)) \# \nabla h_1 \|_{C^{0, \gamma}_{- \delta - 2}} \leq \|F(g(t)) \|_{C^{0, \gamma}_{-1}} \| h_1 \|_{C^{1, \gamma}_{-\delta}} \leq C \eps_0 \| h_1 \|_{C^{2, \gamma}_{-\delta}}.
    $$
    Last, from (\ref{Gexpression}), we have
    $$\|G(g(t)) \|_{C^{0,\gamma}_{-2}} \leq C \left( \|g(t) - g_0 \|_{C^{2, \gamma}_0} + \|\Rm(g_0) \# (g(t) - g_0) \|_{C^{0,\gamma}_{-2}} \right) + \|g(t) - g_0 \|^2_{C^{1, \gamma}_0} \leq C \eps_0,$$
    since $| \Rm(g_0) | = O(\rho^{-2}).$
    This gives
    $$\| G(g(t)) \# h_1 \|_{C^{0, \gamma}_{- \delta - 2}} \leq \|G(g(t)) \|_{C^{0, \gamma}_{-2}} \| h_1 \|_{C^{0, \gamma}_{-\delta}} \leq C \eps_0 \| h_1 \|_{C^{2, \gamma}_{-\delta}}.
    $$
    Overall, we may now conclude from (\ref{Ricciexpansion}) that
    $$\| \calM(g) \|_{C^{0, \gamma}_{-\delta - 2}} \geq \|L h_1 \|_{C^{0, \gamma}_{-\delta - 2}} - C \eps_0 \| h_1 \|_{C^{2, \gamma}_{-\delta}}.$$
    Applying Lemma \ref{lemma:orthogonalellipticest1}, we obtain
        $$\| \calM(g) \|_{C^{0, \gamma}_{-\delta - 2}} \geq \left( \frac{1}{C} - C \eps_0 \right) \| h_1 \|_{C^{2, \gamma}_{-\delta}}.$$
        For $\eps_0$ sufficiently small, this implies the desired estimate.
\end{proof}

\begin{lemma}\label{lemma:RicandVbounds}
    For $\|g - g_0\|_{C^{2, \gamma}_{-\delta}(M, g_0)} < \eps_0 = \eps_0 (M^n, g_0, x_*, \gamma, \delta),$ we have
    \begin{equation}
        \|\Ric_g \|_{C^{0,\gamma}_{-\delta-2}} + \|V(g,g_0) \|_{C^{1,\gamma}_{-\delta-1}} \leq C \| \calM(g) \|_{C^{0, \gamma}_{-\delta - 2}},
    \end{equation}
    where $C = C(M^n, g_0, x_*, \gamma, \delta) > 0.$

\end{lemma}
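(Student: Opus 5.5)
The plan is to use the decomposition $g = g_1 + h_1$ of Lemma \ref{lemma:orthogonalellipticest1} together with the bound (\ref{hsmallness}) of Lemma \ref{lemma:orthogMest}. It then suffices to show that both $\Ric_g$ and $V(g,g_0)$ are controlled linearly by $\|h_1\|_{C^{2,\gamma}_{-\delta}}$. For $\eps_0$ small, $g$ lies in the neighborhood $U$, so $g_1\in U'$ with $\calM(g_1)=0$ and $\|h_1\|_{C^{2,\gamma}_{-\delta}} \le C\|\calM(g)\|_{C^{0,\gamma}_{-\delta-2}}$.

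The key input is that $g_1$ is itself Ricci-flat with vanishing DeTurck vector field. Since $\calM(g_1)=0$ and $g_1 - g_0 \in C^{2,\gamma}_{-\delta}$ with $\delta>0$ small in norm, the argument cited in the proof of Theorem \ref{thm stability compact case} (Deruelle--Kr\"oncke \cite[Theorem 2.7]{DeruelleKroncke20}) gives $\Ric(g_1)=0$ and $V(g_1,g_0)=0$. In outline, that argument applies the Bianchi identity to $\calM(g_1)=0$, which shows that $V=V(g_1,g_0)$ satisfies a vector Laplace-type equation $\Delta_{g_1}V + \Ric(g_1)(V)=0$, i.e.\ $\Delta_{g_1}V=0$ up to curvature terms that vanish. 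Since $V$ decays at rate $\rho^{-\delta-1}$, the maximum principle (or integration by parts) then forces $V\equiv 0$, and hence $\Ric(g_1)=0$. I expect this step, verifying that the decay suffices and that the cited argument applies in the weighted H\"older setting, to be the main point to check. I would first try running the Bochner/maximum principle on $|V|^2$ with decay at infinity.

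Granting this, I would write
\[
\Ric(g)=\Ric(g)-\Ric(g_1)=\int_0^1 d\Ric_{g_1+sh_1}(h_1)\,ds,
\qquad
V(g,g_0)=\int_0^1 dV_{g_1+sh_1}(h_1)\,ds.
\]
The linearization $d\Ric_{\tilde g}(h)$ has the schematic form $\tilde g^{-1}*\tilde g^{-1}*\nabla^2 h + \tilde g^{-2}*\nabla(\tilde g - g_0)*\nabla h + \cdots$, with all derivatives taken with respect to $\nabla=\nabla^{g_0}$. Its coefficients are bounded in $C^{0,\gamma}_0$, $C^{0,\gamma}_{-1}$ and $C^{0,\gamma}_{-2}$ respectively, using $|\Rm(g_0)|=O(\rho^{-2})$ (Lemma \ref{lemma: Rm est for ALE}) and the smallness of $\tilde g-g_0$ in $C^{2,\gamma}_{-\delta}$. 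Similarly, $dV_{\tilde g}(h)$ is of the form $\tilde g^{-1}*\nabla h + \tilde g^{-2}*\nabla(\tilde g-g_0)*h$. Applying Lemma \ref{lemma:interpolation} term by term, exactly as in the proof of Lemma \ref{lemma:orthogMest}, I would obtain
\[
\|\Ric_g\|_{C^{0,\gamma}_{-\delta-2}} \le C\|h_1\|_{C^{2,\gamma}_{-\delta}},
\qquad
\|V(g,g_0)\|_{C^{1,\gamma}_{-\delta-1}} \le C\|h_1\|_{C^{2,\gamma}_{-\delta}}.
\]
The second bound needs one derivative of $V$, which costs $\nabla^2 h_1$ and $\nabla^2(\tilde g - g_0)$; both are available in $C^{0,\gamma}_{-\delta-2}$. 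Combining these with (\ref{hsmallness}) gives the lemma.

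An alternative that avoids the linearization of $\Ric$ is available once $V$ is bounded. Since $-2\Ric_g = \calM(g) - \scrL_{V}g$ and $\|\scrL_V g\|_{C^{0,\gamma}_{-\delta-2}} \le C\|V\|_{C^{1,\gamma}_{-\delta-1}}$, the bound on $\Ric_g$ follows directly from the bound on $V$ together with $\calM(g)$ itself. This means only the estimate for $V$ really needs the $g_1$ decomposition.
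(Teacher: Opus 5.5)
Your proposal is correct and matches the paper's proof. Both decompose $g = g_1 + h_1$, use $V(g_1,g_0)=0$ from Deruelle--Kr\"oncke, bound $\|V(g,g_0)\|_{C^{1,\gamma}_{-\delta-1}}$ linearly in $\|h_1\|_{C^{2,\gamma}_{-\delta}}$ via Lemma \ref{lemma:interpolation}, and conclude with (\ref{hsmallness}); the paper uses the explicit splitting $V(g,g_0) = (g^{ij}-g_1^{ij})(\Gamma(g)-\Gamma(g_0))^k_{ij} + g_1^{ij}(\Gamma(g)-\Gamma(g_1))^k_{ij}$ rather than integrating $dV$ along $g_1+sh_1$. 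The paper also takes your ``alternative'' for the Ricci term, reading it off from $-2\Ric_g = \calM(g) - \scrL_{V} g$, so linearizing $\Ric$ is unnecessary.
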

\begin{proof}
    Lemma \ref{lemma:orthogonalellipticest1} implies that, when $\eps_0$ is sufficiently small and $\| g - g_0 \|_{C^{2, \gamma}_{-\delta}} < \eps_0$, we can write $g$ as $g = g_1 + h_1$ for some $g_1, h_1 \in  C^{2, \gamma}_{-\delta}$ with $\mathcal M (g_1) = 0$ and $h_1 \perp_{g_0} \ker_{L^2} L $.
    Moreover, since $\delta > 0$,
    \begin{equation} \label{proof Ric and V bounds, eqn small g_1-g_0 and h_1}
        \| g_1 - g_0 \|_{C^{2, \gamma}_{0}} + \| h_1 \|_{C^{2, \gamma}_0} 
        \le \| g_1 - g_0 \|_{C^{2, \gamma}_{-\delta}} + \| h_1 \|_{C^{2, \gamma}_{-\delta}} \to 0 \qquad \text{as } \eps_0 \to 0. 
    \end{equation}

    By Deruelle-Kr{\"o}ncke \cite[Theorem 2.7]{DeruelleKroncke20}, 
    we know that $V(g_1, g_0) = 0.$ We can expand $V$ as follows:
    \begin{equation}\label{Vexpansion}
    \begin{split}
        V(g, g_0) = V(g, g_0) - V(g_1, g_0) & = g^{ij} \left( \Gamma(g)^k_{ij} - \Gamma(g_0)^k_{ij}\right) - g_1^{ij} \left( \Gamma(g_1)^k_{ij} - \Gamma(g_0)^k_{ij}\right) \\
        & = \left( g^{ij} - g_1^{ij} \right) \left( \Gamma(g)^k_{ij} - \Gamma(g_0)^k_{ij} \right) + g_1^{ij} \left( \Gamma(g)^k_{ij} - \Gamma(g_1)^k_{ij} \right).
        \end{split}
    \end{equation}
    We further have
    \begin{align*}
        \Gamma(g)^k_{ij} - \Gamma(g_0)^k_{ij} 
        ={}& \frac12 \left( g^{k\ell} - g_0^{k\ell} \right) \left( \p_i g_{j \ell} + \p_j g_{i \ell} - \p_\ell g_{ij}  \right) \\
        &+ \frac12 g_0^{k \ell} \left( \p_i (g - g_0)_{j\ell} + \p_j (g - g_0 )_{i \ell} - \p_\ell ( g-g_0)_{ij}  \right)
    \end{align*}
    and similarly for $\Gamma(g) - \Gamma(g_1)$,
    so that
    $$ \| \Gamma(g) - \Gamma(g_0) \|_{C^{1,\gamma}_{- 1}} \leq C \|g - g_0\|_{C^{2,\gamma}_{0}} \leq C\eps_0$$
    and
      $$ \| \Gamma(g) - \Gamma(g_1) \|_{C^{1,\gamma}_{-\delta - 1}} \leq C \|h_1\|_{C^{2,\gamma}_{-\delta}}.$$
    Combining these estimates in \eqref{Vexpansion} and using Lemmas \ref{lemma:interpolation} and \ref{lemma:orthogMest}, we obtain
      $$\|V(g,g_0) \|_{C^{1,\gamma}_{-\delta-1}} \leq C\| h_1 \|_{C^{2,\gamma}_{-\delta}} \leq C \| \calM(g) \|_{C^{0, \gamma}_{-\delta - 2}}.$$
    The bound on Ricci then follows from the definition \eqref{Mdefn}.
\end{proof}

The following theorem is morally similar to \cite[Lemma 3.8]{DeruelleKroncke20}.

\begin{thm}[Quantitative almost-orthogonality]\label{thm:ALEalmostorthog} 
    Let $(M^n, g_0, x_*)$ be a pointed Ricci-flat ALE manifold as in Lemmas \ref{lemma:orthogonalellipticest1}-\ref{lemma:RicandVbounds}.
    There exists $\eps_0 = \eps_0 (M^n, g_0, x_*, \gamma, \delta) > 0$ and $C = C(M^n, g_0, x_*, \gamma, \delta) > 0$ such that the following holds:

    For all $g \in C^{2, \gamma}_{-\delta}$ with
        $$\| g - g_0 \|_{C^{2, \gamma}_{-\delta}} \le \eps_0,$$
    there exists $g_1, h_1 \in C^{2, \gamma}_{-\delta}$ with $\mathcal M (g_1) = 0$ and $h_1 \perp_{L^2(g_0)} \ker_{L^2} L $ such that $g= g_1 + h_1$ and
    \begin{equation}
        ( \calM(g), e_1 )_{g_1} \leq C \|\calM(g) \|_{C^{0, \gamma}_{-\delta-2} }^2
        \qquad \forall e_1 \in \ker L_{g_1} \text{ with } \| e_1 \|_{L^2(g_1)} = 1.
    \end{equation}
\end{thm}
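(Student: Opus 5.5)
We use the decomposition $g = g_1 + h_1$ of Lemma \ref{lemma:orthogonalellipticest1} and Taylor-expand $\calM$ about the Ricci-flat, gauged metric $g_1$. Since $\calM(g_1)=0$, we write
\begin{equation*}
\calM(g) = d\calM_{g_1}(h_1) + Q(g_1,h_1), \qquad Q(g_1,h_1) = \int_0^1 \left( d\calM_{g_1 + s h_1} - d\calM_{g_1} \right)(h_1)\, ds .
\end{equation*}
The first key step is to identify $d\calM_{g_1}$ with the Lichnerowicz Laplacian $L_{g_1}$ plus a gauge term. By Deruelle--Kr\"oncke, $g_1$ is Ricci-flat and $V(g_1,g_0)=0$. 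Hence
\begin{equation*}
d\calM_{g_1}(h) = L_{g_1} h + \scrL_{dV_{g_1}(h)} g_1 ,
\end{equation*}
where the Lie-derivative term arises from differentiating $\scrL_{V(g,g_0)} g$ at a point where $V$ vanishes. Here $dV_{g_1}(h)$ is, up to a term involving $g_1-g_0$, the operator $\div_{g_1} h - \frac12 \nabla \tr_{g_1} h$.

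We then pair with $e_1 \in \ker L_{g_1}$ with $\|e_1\|_{L^2(g_1)}=1$. The claim is that $( d\calM_{g_1}(h_1), e_1)_{g_1}$ vanishes, or is quadratically small. We first establish that $e_1$ decays at the rate $\rho^{-\mu}$ with $C^{2,\gamma}_{-\mu}$ norm uniformly bounded for $g_1$ near $g_0$. This is a perturbation of the hypothesis $\ker_{L^2}L \subseteq C^{2,\gamma}_{-\mu}$, obtained from the Fredholm theory for $L_{g_1}$ and the fact that $\dim \ker L_{g_1}\le \dim\ker L_{g_0}$. Given this decay, and since $\delta+\mu > n$, we may integrate by parts without boundary terms:
\begin{equation*}
(L_{g_1}h_1, e_1)_{g_1} = (h_1, L_{g_1}e_1)_{g_1} = 0 .
\end{equation*}
For the gauge term, $(\scrL_X g_1, e_1)_{g_1} = -2 (X, \div_{g_1} e_1)_{g_1}$. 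We need $\div_{g_1} e_1 = 0$ for $L^2$ kernel elements on a Ricci-flat ALE metric. This holds because $\div_{g_1} L_{g_1} = \Delta_{g_1} \div_{g_1}$ on Ricci-flat metrics, so $\div_{g_1} e_1$ is a decaying harmonic $1$-form field, hence zero. A similar argument shows that $\tr_{g_1} e_1$ is a decaying harmonic function, hence zero. Consequently $(d\calM_{g_1}(h_1), e_1)_{g_1} = 0$. If the $g_1-g_0$ correction in $dV_{g_1}$ spoils the vanishing, we will instead bound it by $C\|g_1-g_0\|\,\|h_1\|$. This is not quadratic in $\calM$, so we would need exact vanishing. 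It does vanish, because $V(\cdot,g_0)$ is linearized at a zero and its linearization is exactly $\div_{g_1}$-type in $g_1$ after absorbing $\Gamma(g_1)-\Gamma(g_0)$ terms, which pair against $\div_{g_1} e_1 = 0$ only after contraction. This point needs to be checked carefully.

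It remains to bound the quadratic remainder. From \eqref{dMcompactexpression}--\eqref{Gexpression}, $Q$ is a sum of terms of the form $h_1 * \nabla^2 h_1$, $\nabla h_1 * \nabla h_1$, $(\nabla(g_1-g_0)+\nabla h_1) * h_1 * \nabla h_1$, and so on. Each of these is bilinear in $h_1$ with coefficients bounded in $C^{0,\gamma}$. Lemma \ref{lemma:interpolation} gives
\begin{equation*}
\|Q\|_{C^{0,\gamma}_{-2\delta-2}} \le C\|h_1\|_{C^{2,\gamma}_{-\delta}}^2 .
\end{equation*}
Pairing with $e_1 \in C^0_{-\mu}$ requires $\int \rho^{-2\delta-2-\mu+n-1}\,d\rho < \infty$, which holds since $2\delta+2+\mu > n$. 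Therefore
\begin{equation*}
(Q,e_1) \le C\|h_1\|_{C^{2,\gamma}_{-\delta}}^2 \le C\|\calM(g)\|_{C^{0,\gamma}_{-\delta-2}}^2
\end{equation*}
by Lemma \ref{lemma:orthogMest}. The volume-form difference between $g_1$ and $g_0$ is harmless.

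The main obstacle is the uniform decay estimate for $\ker_{L^2} L_{g_1}$, with constants independent of $g_1$. This is needed both to justify the integration by parts and to bound $\|e_1\|_{C^0_{-\mu}}$ uniformly. We would prove it by contradiction and compactness, using weighted Schauder estimates for $L_{g_1}$ together with the non-exceptionality of the weights between $n-2$ and $\mu$.
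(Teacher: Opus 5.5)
Your proposal is correct in substance and uses the same mechanism as the paper's proof. Self-adjointness gives $(L_{g_1}h_1,e_1)_{g_1}=0$. Every linear gauge term is a Lie derivative of $g_1$, so it is annihilated by $\div_{g_1}e_1=0$. The remainder is $O(\|h_1\|^2_{C^{2,\gamma}_{-\delta}})$ in $C^{0,\gamma}_{-2\delta-2}$, which Lemma~\ref{lemma:orthogMest} turns into $\|\calM(g)\|^2_{C^{0,\gamma}_{-\delta-2}}$. Only the bookkeeping differs. The paper switches the DeTurck reference metric to $g_1$, writing $\calM(g)=\calM_{g_1}(g)+\div_g^*\big(V(g,g_0)-V(g,g_1)\big)$ with $V(g,g_0)-V(g,g_1)=(g^{-1}-g_1^{-1})(\Gamma(g_1)-\Gamma(g_0))$. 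Then the linearization of $\calM_{g_1}$ at $g_1$ is exactly $L_{g_1}$, and the paper splits $\div_g^*=\div_{g_1}^*+(\Gamma(g)-\Gamma(g_1))\#$. You instead expand $\calM$ itself at $g_1$. This collects the whole linear gauge contribution into a single term $\scrL_X g_1$ and leaves a remainder that is purely quadratic in $h_1$.

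Two things need fixing.

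First, your linearization formula is misstated. In fact $-2\,d\Ric_{g_1}(h)=L_{g_1}h-\scrL_{W(h)}g_1$ with $W(h)=(\div_{g_1}h-\frac12\nabla\tr_{g_1}h)^\sharp$. The correct identity is therefore $d\calM_{g_1}(h)=L_{g_1}h+\scrL_{X(h)}g_1$, where $X(h)^k=dV_{g_1}(h)^k-W(h)^k=-h^{ij}\big(\Gamma(g_1)^k_{ij}-\Gamma(g_0)^k_{ij}\big)$. This is consistent with $d\calM_{g_0}=L_{g_0}$, and it is exactly the linear part of the paper's $V(g,g_0)-V(g,g_1)$. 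It is still a Lie derivative of $g_1$, so nothing breaks. Your worry about the $g_1-g_0$ correction is also unfounded. The identity $(\scrL_Xg_1,e_1)_{g_1}=-2(X,\div_{g_1}e_1)_{g_1}$, which you already wrote, kills the term for any sufficiently decaying $X$, whatever its form.

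Second, your route to uniform $C^{2,\gamma}_{-\mu}$ bounds on $\ker L_{g_1}$ does not work as stated. The weights in $[n-2,\mu]$ are not all non-exceptional: $n-2$ always is exceptional, e.g.\ via $r^{2-n}g_{Euc}$ at infinity. Passing those weights requires extra structure of the kernel, which is what the decay results of Deruelle--Kr\"oncke and Kr\"oncke--Petersen supply for $g_0$; Fredholm theory plus compactness alone does not do it.

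However, your argument never needs the rate $\mu$. Every pairing and integration by parts you perform converges, with vanishing boundary terms, as soon as $e_1\in C^{2,\gamma}_{-\mu'}$ with $\mu'>n-2-\delta$. Choose $\mu'\in(n-2-\delta,n-2)$, which is a non-exceptional range. First obtain qualitative membership, e.g.\ from Proposition~\ref{prop decay rate of L2 kernel elements} applied to $g_1$. Then the uniform bound for $L^2(g_1)$-unit kernel elements follows from $\|e\|_{C^{2,\gamma}_{-\mu'}}\le C(\|L_{g_0}e\|_{C^{0,\gamma}_{-\mu'-2}}+\|e\|_{C^0(B_R)})$. Absorb $(L_{g_0}-L_{g_1})e$, whose coefficients are small, and bound $\|e\|_{C^0(B_R)}$ by $\|e\|_{L^2}$ using local elliptic estimates. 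For comparison, the paper itself simply treats $\|e_1\|_{C^{0,\gamma}_{-\mu}}$ as bounded, citing Kr\"oncke--Petersen when it passes to limits.
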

\begin{proof}
We have
$$\calM(g) = -2 \Ric_g + \div_g^* V(g, g_0).$$
Also write
$$\calM_{g_1}(g) = -2 \Ric_g + \div_g^* V(g, g_1).$$
We have
\begin{equation}\label{M(g)M(g1)}
\calM(g) = \calM_{g_1}(g) + \div_g^* \left( V(g,g_0) - V(g,g_1) \right).
\end{equation}
We calculate
$$V(g,g_0) - V(g,g_1) = \cancelto{0}{V(g_1,g_0)} + \left( g^{ij} - g_1^{ij} \right) \left( \Gamma(g_1)^k_{ij} - \Gamma(g_0)^k_{ij} \right),$$
so that by Lemma \ref{lemma:orthogMest}
\begin{equation*}
\| V(g,g_0) - V(g,g_1) \|_{C^{1,\gamma}_{-\delta - 1}} \leq C \| h_1 \|_{C^{2,\gamma}_{-\delta}} \| g_1 - g_0 \|_{C^{2, \gamma}_{0}} \leq C \eps_0 \| \calM(g) \|_{C^{0,\gamma}_{-\delta - 2}}.
\end{equation*}

Next, by \cite[Lemma 2.5]{DeruelleKroncke20}, we have $\div_{g_1} e_1 = 0.$ So for any decaying vector field $X,$ we have
\begin{equation*}
\begin{split}
 \left( \div_g^* X, e_1 \right)_{g_1} & = \left( \div_{g_1}^* X + \left( \div^*_g - \div_{g_1}^* \right) X , e_1 \right)_{g_1} \\
 & = 0 + \left( \left( \Gamma(g) - \Gamma(g_1) \right) \# X , e_1 \right)_{g_1}.
    \end{split}
\end{equation*}
Using $\delta > 0$, this gives the estimate
\begin{gather} \label{divg*v1est} \begin{aligned}
    &\left| \left( \div_g^* \left( V(g,g_0) - V(g,g_1) \right), e_1 \right)_{g_1} \right| \\
    &\leq C \| \Gamma(g) - \Gamma(g_1) \|_{C^{0, \gamma}_{-\delta-1}} \| V(g, g_1) - V(g, g_0) \|_{C^{0, \gamma}_{-\delta-1} } \| e_1 \|_{C^{0, \gamma}_{-\mu}} 
    \int \rho^{-2(\delta+1) - \mu} \rho^{n-1}  dr \\
    & \leq C \|h_1 \|_{C^{2, \gamma}_{- \delta }}^2 \| g_1 - g_0 \|_{C^{2, \gamma}_{0}} \| e_1 \|_{C^{0, \gamma}_{-\mu}}  \int \rho^{-2\delta - 2 - \mu + n-1}  \, dr \\
    & \leq C \eps_0 \|h_1 \|_{C^{2, \gamma}_{- \delta }}^2
    \int \rho^{-\delta - \mu + n-1}  \, dr \\ 
    & \leq C \eps_0\|\calM(g)\|^2_{C^{0, \gamma}_{-\delta - 2}}
\end{aligned} \end{gather}
where the last line follows from the fact that $\mu > n - \delta$.
Now, by the expansion (\ref{Ricciexpansion}) with $g_1$ in place of $g_0,$ we have
\begin{equation*}
    \calM_{g_1}(g) = L_{g_1} h_1 + \int_{0}^1 \left( g(t)^{ij} - g_1^{ij} \right)\nabla^{g_1}_i \nabla^{g_1}_j h_1 + F_{g_1}(g(t)) \# \nabla^{g_1} h_1 + G_{g_1}(g(t)) \# h_1 \, dt.
\end{equation*}
Estimating as in the proof of Lemma \ref{lemma:orthogMest}, we get
\begin{equation*}
    \| \calM_{g_1}(g) - L_{g_1} h_1 \|_{C^{0, \gamma}_{-2\delta - 2}} \leq C \|h_1\|^2_{C^{2, \gamma}_{-\delta}} \leq C \|\calM(g)\|^2_{C^{0, \gamma}_{-\delta - 2}}.
\end{equation*}
This gives the estimate
\begin{equation}\label{M-Lg1est}
\begin{split}
    \left| \left( \calM_{g_1}(g), e_1 \right)_{g_1} \right| & = \left| \left( \calM_{g_1}(g) - L_{g_1} h_1, e_1 \right)_{g_1} \right| \\
    & \leq C \|\calM_{g_1}(g) - L_{g_1} h_1 \|_{C^{0, \gamma}_{-2 \delta - 2}} \| e_1 \|_{C^{0, \gamma}_{-\mu}}\int_1^\infty \rho^{-2(\delta + 1) - \mu} \rho^{n-1} \, dr \\
    & \leq C \|\calM(g)\|^2_{C^{0, \gamma}_{-\delta - 2}}.
    \end{split}
\end{equation}
Finally, we take the inner product with $e_1$ in (\ref{M(g)M(g1)}) and insert (\ref{divg*v1est}) and (\ref{M-Lg1est}), to obtain
\begin{equation*}
    \left| \left( \calM(g), e_1 \right)_{g_1} \right| = \left| \left( \calM_{g_1}(g), e_1 \right)_{g_1} + \left( \div_g^*\left( V(g,g_0) - V(g,g_1) \right) , e_1 \right)_{g_1} \right| \leq C \|\calM(g)\|^2_{C^{0, \gamma}_{-\delta - 2}},
\end{equation*}
as desired.
\end{proof}

\vspace{10mm}

\section{Parabolic estimates} \label{sec: parabolic estimates}

\subsection{Norms of evolving tensors}

Define
$$| h |^2 = |h |^2_{g_0}, \qquad | \nabla^{g_0} h |_{g,g_0}^2 = g^{ab} g_0^{ik}g_0^{j\ell} \nabla^{g_0}_a h_{ij} \nabla^{g_0}_b h_{k \ell}.$$
The proof of the standard Kato inequality goes through, giving 
\begin{lemma}\label{lemma:kato} For any symmetric 2-tensor $h,$ we have
$| \nabla^{g_0} |h| |_g \leq | \nabla^{g_0} h |_{g,g_0}.$
\end{lemma}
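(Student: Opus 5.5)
The plan is to imitate the standard proof of Kato's inequality $|\nabla |h|| \le |\nabla h|$, keeping track of which metric is used where. In $|h|^2 = g_0^{ik}g_0^{j\ell}h_{ij}h_{k\ell}$ only $g_0$ contracts the tensor indices, and $\nabla^{g_0}$ is $g_0$-compatible. The metric $g$ enters only through the one-form index, in $|\cdot|_g$ and in $|\cdot|_{g,g_0}$. So everything reduces to a Cauchy--Schwarz inequality in the tensor slots, applied separately to each component of a one-form.

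Work first on the open set $\{h \neq 0\}$, where $|h|$ is smooth. Differentiating $|h|^2$ and using $\nabla^{g_0} g_0 = 0$ gives
\[
2|h|\,\nabla^{g_0}_a |h| = 2\, g_0^{ik}g_0^{j\ell}\, h_{ij}\,\nabla^{g_0}_a h_{k\ell} = 2\langle h, \nabla^{g_0}_a h\rangle_{g_0},
\]
so $\nabla_a |h| = \langle h/|h|, \nabla_a h\rangle_{g_0}$ for each index $a$.

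Next fix a point and an arbitrary covector direction. Take any vector $X$ with $|X|_g = 1$. Then
\[
X^a \nabla_a |h| = \langle h/|h|, X^a\nabla_a h\rangle_{g_0} \le |X^a \nabla_a h|_{g_0}
\]
by Cauchy--Schwarz for $g_0$ on $S^2$. Now choose a $g$-orthonormal frame $\{X_1,\dots,X_n\}$ at the point with $X_1$ parallel to the $g$-gradient of $|h|$. Then
\[
|\nabla|h||_g = X_1^a\nabla_a|h| \le |X_1^a\nabla_a h|_{g_0} \le \Big(\sum_{m} |X_m^a \nabla_a h|_{g_0}^2\Big)^{1/2} = |\nabla^{g_0} h|_{g,g_0},
\]
because $g^{ab} = \sum_m X_m^a X_m^b$. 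This proves the inequality on $\{h\neq 0\}$.

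The step that needs care is the set $\{h = 0\}$, where $|h|$ need not be differentiable. There I would read the inequality in the weak or Lipschitz sense, as is standard. The cleanest route is to replace $|h|$ by $\sqrt{|h|^2+\eta^2}$, which is smooth for every $\eta>0$. The same computation gives
\[
|\nabla \sqrt{|h|^2+\eta^2}|_g \le \frac{|h|}{\sqrt{|h|^2+\eta^2}}\,|\nabla h|_{g,g_0} \le |\nabla h|_{g,g_0}
\]
everywhere. Letting $\eta \to 0$ then yields the inequality almost everywhere, and in the distributional sense. This is enough for its later use in maximum-principle and integral estimates.
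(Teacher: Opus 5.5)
Your proof is correct and follows the paper's approach. The paper only asserts that ``the proof of the standard Kato inequality goes through,'' and you carry that out: $g_0$ contracts the tensor slots, so Cauchy--Schwarz in $g_0$ applies to each directional derivative, while $g$ enters only through the derivative index, summed over a $g$-orthonormal frame. Your regularization by $\sqrt{|h|^2+\eta^2}$ to handle $\{h=0\}$ is a standard detail the paper leaves implicit, and it suffices for the lemma's later uses.
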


\begin{lemma}\label{lemma:hpevolution} 
Fix $q > 1.$ Let $g(t) = g_0 + h(t)$ solve Ricci-DeTurck flow with respect to $g_0,$ and write $|h|_{g_0} = |h|$ and $\nabla = \nabla^{g_0}.$ Suppose that
\begin{equation}\label{gg0lambdaassumption}
\lambda^{-2} g_0 \leq g \leq \lambda^2 g_0,
\end{equation}
where $\lambda$ is sufficiently close to $1$ (depending on $n$ and $q$).
Then
    \begin{equation}\label{evolutionof|h|^q}
        \left( \partial_t - \Delta_{g(t), g_0} \right) |h|^{q} + \frac{4(q-1)}{q} \left| \nabla |h|^{\frac{q}{2}} \right|_{g(t)}^2 \leq 2q \lambda^4 |\Rm(g_0)| |h|^{q}
    \end{equation}
    and
        \begin{equation}\label{g0evolutionof|h|^q}
        \left( \partial_t - \Delta_{ g_0} \right) |h|^{q} + \frac{3(q-1)}{q} \left| \nabla |h|^{\frac{q}{2}} \right|_{g_0}^2 \leq 2q \lambda^4 |\Rm(g_0)| |h|^{q} + \nabla_i \left( \left( g^{ij} - g_0^{ij} \right) \nabla_j |h|^q \right).
    \end{equation}
\end{lemma}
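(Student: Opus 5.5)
The plan is to compute the evolution of $|h|^2 = g_0^{ik}g_0^{j\ell}h_{ij}h_{k\ell}$ directly from the Ricci-DeTurck equation in the form (\ref{calMfullexpression}), and then pass to $|h|^q = (|h|^2)^{q/2}$ by the chain rule. Since $\partial_t h = \calM(g)$, we have
\[
\partial_t |h|^2 = 2\langle h, \calM(g)\rangle_{g_0}.
\]
From (\ref{calMfullexpression}), $\calM(g) = \Delta_{g,g_0} h + \Rm(g_0)\ast h + Q$. Here the curvature terms are $h_{ab}\, g^{-1} g_0^{-1} g\, g_0^{-1} \Rm(g_0)$ contracted in two ways, and $Q$ collects the terms quadratic in $\nabla h$, of the form $g^{-1}\ast g^{-1}\ast\nabla h\ast\nabla h$.

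Next, write
\[
\Delta_{g,g_0}|h|^2 = 2\langle h,\Delta_{g,g_0}h\rangle + 2|\nabla h|^2_{g,g_0},
\]
which holds because $\nabla = \nabla^{g_0}$ is $g_0$-compatible. This gives
\[
(\partial_t-\Delta_{g,g_0})|h|^2 = -2|\nabla h|_{g,g_0}^2 + 2\langle h,\Rm\ast h\rangle + 2\langle h, Q\rangle.
\]
Under (\ref{gg0lambdaassumption}), each curvature term has size at most $\lambda^4|\Rm(g_0)||h|^2$: there are two factors of $g$ or $g^{-1}$ compared against $g_0$, and there are two such terms. So the middle term is bounded by $4\lambda^4|\Rm||h|^2$, up to checking the constants.

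The quadratic term is the main obstacle. One cannot simply bound it by $C|h||\nabla h|^2$ and absorb it into the good gradient term, because that costs smallness of $|h|$ rather than of $\lambda-1$. The remedy is to use the algebraic structure. With $\lambda=1$ we have $g=g_0$, so $h=0$ and $Q$ is a quadratic form in $\nabla h$ with coefficient $\approx g_0^{-1}g_0^{-1}$, paired against $h$. However, $h = g-g_0$ has size $|h|\le C(\lambda-1)$ by (\ref{gg0lambdaassumption}). Therefore
\[
|2\langle h,Q\rangle| \le C_n(\lambda^2-1)\lambda^4|\nabla h|^2_{g,g_0},
\]
and this is absorbable once $\lambda$ is close to $1$. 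This yields
\[
(\partial_t-\Delta_{g,g_0})|h|^2 \le -(2-\eta)|\nabla h|^2_{g,g_0} + 4\lambda^4|\Rm||h|^2,
\]
with $\eta$ small depending on $\lambda$ and $n$.

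For general $q$, set $u=|h|^2$ and use
\[
(\partial_t-\Delta)u^{q/2} = \tfrac q2 u^{q/2-1}(\partial_t-\Delta)u - \tfrac q2(\tfrac q2-1)u^{q/2-2}|\nabla u|_g^2 .
\]
By Kato (Lemma \ref{lemma:kato}), $|\nabla u|_g^2 = 4|h|^2|\nabla|h||_g^2 \le 4|h|^2|\nabla h|^2_{g,g_0}$. Combining the good term $-\tfrac q2(2-\eta)|h|^{q-2}|\nabla h|^2$ with the Kato bound gives a total gradient contribution of at most
\[
-\bigl(2q(q-1) - q\eta\bigr)|h|^{q-2}|\nabla|h||^2 .
\]
Now $|\nabla|h|^{q/2}|^2 = \tfrac{q^2}{4}|h|^{q-2}|\nabla|h||^2$, so the coefficient $\tfrac{4(q-1)}{q}$ needs exactly $q(q-1)|h|^{q-2}|\nabla|h||^2$. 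There is a surplus of $q(q-1)-q\eta$, which is positive when $\eta<q-1$; this is where the dependence of $\lambda$ on $q$ enters. The curvature term becomes $\tfrac q2\cdot 4\lambda^4|\Rm||h|^q = 2q\lambda^4|\Rm||h|^q$, which gives (\ref{evolutionof|h|^q}). At the zero set of $h$ the argument goes through via $(|h|^2+\epsilon)^{q/2}$ and letting $\epsilon\to0$, or in the distributional sense.

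For (\ref{g0evolutionof|h|^q}), write
\[
\Delta_{g,g_0}f = \Delta_{g_0}f + (g^{ij}-g_0^{ij})\nabla_i\nabla_j f = \Delta_{g_0}f + \nabla_i\bigl((g^{ij}-g_0^{ij})\nabla_j f\bigr) - \nabla_i(g^{ij})\nabla_j f,
\]
with $f=|h|^q$. The last error term is $g^{-1}\ast g^{-1}\ast\nabla h\ast\nabla|h|^q$, which is $q|h|^{q-1}\,O(|\nabla h||\nabla|h||)$. This can be absorbed using the surplus gradient term above, with Kato again. Finally, comparing $|\cdot|_g$ with $|\cdot|_{g_0}$ loses a factor $\lambda^{\pm2}$, which reduces the coefficient from $4$ to $3$ when $\lambda$ is close to $1$.
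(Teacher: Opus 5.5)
Your strategy is the paper's. You evolve $|h|^2$ using (\ref{calMfullexpression}) and bound the curvature terms by $4\lambda^4|\Rm(g_0)||h|^2$. You bound $2\langle h,Q\rangle$ by $\eta|\nabla h|^2_{g,g_0}$ with $\eta=C\lambda^8|h|\lesssim\lambda^2-1$; your ``obstacle/remedy'' paragraph is just this naive estimate, as in the paper. You then pass to $|h|^q$ by the chain rule and Kato.

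\textbf{Gap 1: the gradient count for $|h|^q$.} The gradient terms actually combine to
\[
-\tfrac q2(2-\eta)|h|^{q-2}|\nabla h|^2_{g,g_0}-q(q-2)|h|^{q-2}|\nabla|h||^2_g\le-\bigl(q(q-1)-\tfrac{q\eta}{2}\bigr)|h|^{q-2}|\nabla|h||^2_g ,
\]
not $-(2q(q-1)-q\eta)|h|^{q-2}|\nabla|h||^2_g$; you have doubled the count. At $q=2$ your version would give $(\partial_t-\Delta_{g,g_0})|h|^2\le-(4-2\eta)|\nabla|h||_g^2+\dots$, which is impossible since you started from $-(2-\eta)|\nabla h|^2$. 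Since $\frac{4(q-1)}{q}|\nabla|h|^{q/2}|_g^2=q(q-1)|h|^{q-2}|\nabla|h||_g^2$, you have a deficit of $\frac{q\eta}{2}$, not a surplus. So the first inequality with constant $\frac{4(q-1)}{q}$ does not follow.

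No refinement can rescue that constant. It is exactly what Kato gives when $Q=0$, and $\langle h,Q\rangle$ has no sign at points of Kato equality. Concretely, on flat $\R^n$ let $h=-w(x^1)\,dx^1\otimes dx^1$ with $w>0$ small. The flow reduces to $\partial_t w=\frac{w''}{1-w}+\frac32\frac{(w')^2}{(1-w)^2}$. The left side of (\ref{evolutionof|h|^q}) then equals $\frac{3q}{2}\frac{w^{q-1}(w')^2}{(1-w)^2}>0$, while the right side vanishes. So that inequality only holds with a smaller constant, e.g.\ $\frac{4(q-1)-2\eta}{q}$.

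The paper's passage to (\ref{hpevolution:almostdone}) has the same flaw. From $(2-q)|\nabla|h||^2-(1-\frac94\lambda^8|h|)|\nabla h|^2$ one cannot extract both $(1-q)|\nabla|h||^2$ and an extra $\frac{1-q}{2}|\nabla h|^2$. This is harmless downstream, since Lemma \ref{lemma:Duhamel} only needs some constant $c_q>0$.

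\textbf{Gap 2: the cross term in the second inequality.} The second inequality is correct as stated, but your absorption step fails. The cross term $q|h|^{q-1}\,O(|\nabla h|_{g,g_0}|\nabla|h||_g)$ cannot be absorbed by any multiple of $|h|^{q-2}|\nabla|h||^2$, because Kato bounds $|\nabla|h||$ by $|\nabla h|$, not conversely. So you must not Kato-convert all of $-q(1-\frac{\eta}{2})|h|^{q-2}|\nabla h|^2_{g,g_0}$. The fix is:
\begin{enumerate}
\item Reserve $-q\theta|h|^{q-2}|\nabla h|^2_{g,g_0}$, say with $\theta=\frac{q-1}{16}$. This is the role of the $\frac{q(1-q)}{2}|h|^{q-2}|\nabla h|^2_{g,g_0}$ term in the paper.
\item Absorb the cross term into it using $|h|\le\sqrt n(\lambda^2-1)$.
\item Kato-convert only the remainder. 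This leaves $-\frac4q\bigl(q-1-\frac{\eta}{2}-\theta\bigr)|\nabla|h|^{q/2}|_g^2$.
\end{enumerate}
After passing to $g_0$-norms, the coefficient is at least $\frac{3(q-1)}{q}$ once $\lambda$ is close enough to $1$, depending on $n$ and $q$.
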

\begin{proof}
We have
   \begin{equation}
       \begin{split}
           \left( \frac{\p}{\p t} - \Delta_{g, g_0} \right) |h|^2 & = 2 \LA \left( \frac{\p}{\p t} - \Delta_{g, g_0} \right) h, h \RA - 2 |\nabla h|_{g,g_0}^2.
       \end{split}
   \end{equation}
   For the remaining terms in the first line of \eqref{calMfullexpression}, we can estimate
   \begin{equation*}
     \begin{split}
     & \left| h_{ij} g_0^{im} g_0^{in} h_{ab} g^{ka} g_0^{\ell b} g_{ip} g_0^{pq} \Rm(g_0)_{jk\ell q} \right| \\
     & \quad \leq \lambda^4 \left| h_{ij} g_0^{im} g_0^{in} h_{ab} g_0^{ka} g_0^{\ell b} (g_0)_{ip} g_0^{pq} \Rm(g_0)_{jk\ell q} \right| \\
     & \quad \leq \lambda^4 |\Rm(g_0)| |h|^2.
     \end{split}
     \end{equation*}
   Similar calculations with the other terms in \eqref{calMfullexpression} give
      \begin{equation}\label{evolutionof|h|^2}
       \begin{split}
           \left( \frac{\p}{\p t} - \Delta_{g, g_0} \right) |h|^2 & \leq - 2 |\nabla h|_{g,g_0}^2 + 4 \lambda^4 |\Rm(g_0)| |h|^2 + \frac{9}{2} \lambda^8 |h| |\nabla h|_{g,g_0}^2 \\
           & \leq \left( - 2 +  \frac{9}{2} \lambda^8 |h| \right) |\nabla h|_{g,g_0}^2 + 4 \lambda^4 |\Rm(g_0)| |h|^2,
       \end{split}
   \end{equation}
   which is (\ref{evolutionof|h|^q}) with $q = 2.$
For general $q,$ we have
      \begin{equation*}
       \begin{split}
           \left( \frac{\p}{\p t} - \Delta_{g, g_0} \right) |h|^q & = -q(q-2) |h|^{q-2}|\nabla |h||_{g}^2 + \frac{q}{2} |h|^{q-2} \left( \frac{\p}{\p t} - \Delta_{g,g_0} \right) |h|^2 \\
           & \leq q |h|^{q-2} \left( (2-q) | \nabla |h| |_g^2 - \left( 1 - \frac{9}{4} \lambda^8 |h| \right) |\nabla h|_{g,g_0}^2 + 2\lambda^4 |\Rm(g_0)| |h|^2 \right). 
       \end{split}
   \end{equation*}
    We now assume that $\sup |h| \leq \sqrt{n} \left(\lambda^2 - 1 \right) \leq \frac{2 (q-1)}{9 \lambda^8},$ so that 
   $$\frac{9}{4} \lambda^8 |h| - 1 \leq \frac12 (q - 1) - 1.$$
 Applying the Kato inequality of Lemma \ref{lemma:kato}, we obtain
         \begin{equation}\label{hpevolution:almostdone}
       \begin{split}
           \left( \frac{\p}{\p t} - \Delta_{g, g_0} \right) |h|^q 
           & \leq q |h|^{q-2} \left( (1-q) | \nabla |h| |_g^2 + \frac{1 - q}{2} |\nabla h|_{g,g_0}^2 + 2\lambda^4 |\Rm(g_0)| |h|^2 \right) \\
           & \leq \frac{4(1 - q)}{q} \left| \nabla |h|^{\frac{q}{2}} \right|_g^2  + \frac{q(1 - q)}{2} |h|^{q-2} |\nabla h |^2_{g,g_0} + 2 q \lambda^4 |\Rm(g_0) ||h|^q.
       \end{split}
   \end{equation}
   This implies (\ref{evolutionof|h|^q}).

   To prove (\ref{g0evolutionof|h|^q}), we note that
   \begin{equation}
   \begin{split}
   \Delta_{g,g_0} |h|^q & = \Delta_{g_0} |h|^q + (g^{ij} - g_0^{ij}) \nabla_i \nabla_j |h|^q \\
   & = \Delta_{g_0}|h|^q + \nabla_i \left( (g^{ij} - g_0^{ij})  \nabla_j|h|^q \right) + g^{ik} \nabla_i h_{k\ell} g^{\ell j} q |h|^{q-1}\nabla_j |h|.
   \end{split}
   \end{equation}
   By the Kato inequality, we can estimate the last term as
   $$ | g^{ik} \nabla_i h_{k\ell} g^{\ell j} q |h|^{q-1}\nabla_j |h| | \leq q \lambda^2 \sqrt{n} |h| |h|^{q-2} |\nabla h |^2,$$
   which can be absorbed into the second term on the RHS of (\ref{hpevolution:almostdone}) if $|h| \leq \frac{q-1}{2 \lambda^2 \sqrt{n}}.$
\end{proof}

\begin{lemma}\label{Mpevolution} Let $q, \lambda > 1$ and $g(t) = g_0 + h(t)$ be as above, satisfying (\ref{gg0lambdaassumption}). We have
        \begin{equation}\label{evolutionof|M|^q}
        \left( \partial_t - \Delta_{g(t),g_0} \right) |\calM|^{q} \leq  C_{q, \lambda} \left( \left( 1 + |h| \right) |\Rm(g_0)| + |\nabla h|^2 + |\nabla^2 h| \right) |\calM|^{q} .
    \end{equation}
\end{lemma}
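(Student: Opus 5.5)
We start from the evolution equation \eqref{Mevolution} for $\calM = \calM(g(t))$ along the Ricci-DeTurck flow and argue exactly as in the proof of Lemma \ref{lemma:hpevolution}. First compute the evolution of $|\calM|^2 = |\calM|^2_{g_0}$. Since $g_0$ is fixed and $\Delta_{g,g_0} = g^{ab}\nabla^{g_0,2}_{ab}$, we have
\begin{equation*}
\left( \partial_t - \Delta_{g,g_0} \right) |\calM|^2 = 2 \LA \left( \partial_t - \Delta_{g,g_0} \right) \calM, \calM \RA - 2 |\nabla \calM|^2_{g,g_0}.
\end{equation*}
By \eqref{Mevolution}, the first term equals $2\LA -2\Rm(g_0) \# \calM + F \# \nabla \calM + G \# \calM, \calM \RA$. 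We bound the curvature term by $C|\Rm(g_0)||\calM|^2$. The gradient term is absorbed via Young's inequality:
\begin{equation*}
2|F||\nabla \calM||\calM| \leq |\nabla\calM|^2_{g,g_0} + C_\lambda |F|^2|\calM|^2 .
\end{equation*}
Here \eqref{gg0lambdaassumption} makes $|\cdot|_{g,g_0}$ comparable to $|\cdot|_{g_0}$ with constants depending on $\lambda$. This leaves a good term $-|\nabla\calM|^2_{g,g_0}$ on the right.

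Next, read off pointwise bounds on the coefficients from \eqref{Fexpression} and \eqref{Gexpression}, using \eqref{gg0lambdaassumption} to control all factors of $g^{-1}$ and $g$. We expect $|F| \leq C_\lambda |\nabla h|$, so $|F|^2 \leq C_\lambda |\nabla h|^2$. Term by term, $G$ consists of $g^{-1}g^{-1}\nabla^2 h$, then terms $h\#\Rm(g_0)$ with bounded metric coefficients, then quadratic gradient terms. This gives
\begin{equation*}
|G| \leq C_\lambda\left( |\nabla^2 h| + (1+|h|)|\Rm(g_0)| + |\nabla h|^2 \right).
\end{equation*}
It follows that
\begin{equation*}
\left( \partial_t - \Delta_{g,g_0} \right) |\calM|^2 + |\nabla \calM|^2_{g,g_0} \leq C_\lambda\left( (1+|h|)|\Rm(g_0)| + |\nabla h|^2 + |\nabla^2 h| \right)|\calM|^2 .
\end{equation*}

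For general $q$, use the chain rule where $|\calM| > 0$:
\begin{equation*}
\left( \partial_t - \Delta_{g,g_0} \right) |\calM|^q = \tfrac{q}{2}|\calM|^{q-2}\left( \partial_t - \Delta_{g,g_0}\right)|\calM|^2 - q(q-2)|\calM|^{q-2}|\nabla|\calM||^2_g .
\end{equation*}
When $q \geq 2$, the last term is nonpositive and can be dropped. When $1<q<2$, the term $(2-q)|\nabla|\calM||^2_g$ is dominated by the retained good term $|\nabla\calM|^2_{g,g_0}$ via the Kato inequality, Lemma \ref{lemma:kato}, since $2-q<1$. Either way, \eqref{evolutionof|M|^q} follows with $C_{q,\lambda}$. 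At zeros of $\calM$, one works with $(|\calM|^2+\eta)^{q/2}$ and lets $\eta\to0$, or interprets the inequality in the barrier or distributional sense.

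The main obstacle is only bookkeeping. One must verify from \eqref{Gexpression} that every term of $G$ is controlled by the stated quantities with constants depending solely on $\lambda$. In particular, the $\nabla^2 h$ term enters linearly and no derivatives of $\Rm$ appear. One must also check that the Young absorption leaves enough of $|\nabla\calM|^2$ to cover the $q<2$ Kato term. Unlike Lemma \ref{lemma:hpevolution}, no smallness of $|h|$ is needed, because the $F\#\nabla\calM$ term is paid for by $|\nabla h|^2|\calM|^q$ on the right-hand side.
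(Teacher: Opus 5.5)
Your route is the paper's. You evolve $|\calM|^2$ via \eqref{Mevolution}, absorb $F\#\nabla\calM$ by Young's inequality, pass to $|\calM|^q$ by the chain rule and the Kato inequality (Lemma \ref{lemma:kato}), and read off $|F|\le C_\lambda|\nabla h|$ and $|G|\le C_\lambda(|\nabla^2 h|+|h||\Rm(g_0)|+|\nabla h|^2)$ from \eqref{Fexpression}--\eqref{Gexpression}. The gap is in the absorption step: it is miscounted, and as written it fails for $1<q<\tfrac32$. That is the regime the paper needs, since it takes $q$ close to $1$.

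After your Young step you keep only $-|\nabla\calM|^2_{g,g_0}$ in the $|\calM|^2$ inequality. The chain rule multiplies this by $\tfrac q2|\calM|^{q-2}$, while the Kato term enters as $q(2-q)|\calM|^{q-2}|\nabla|\calM||_g^2$. So what you actually get is
\[
\left(\partial_t-\Delta_{g,g_0}\right)|\calM|^q \le q|\calM|^{q-2}\left((2-q)\,|\nabla|\calM||_g^2-\tfrac12|\nabla\calM|^2_{g,g_0}\right)+C_{q,\lambda}\left((1+|h|)|\Rm(g_0)|+|\nabla h|^2+|\nabla^2 h|\right)|\calM|^q .
\]
After Kato, the gradient coefficient is $q(\tfrac32-q)$, which is nonpositive only when $q\ge\tfrac32$. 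The correct comparison is $2-q\le\tfrac12$, not $2-q<1$. Kato can be sharp, for instance when $\calM=f\,e$ with $e$ parallel. So the leftover positive multiple of $|\calM|^{q-2}|\nabla\calM|^2_{g,g_0}$ cannot be bounded by the right-hand side.

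The fix is what the paper does: let the Young parameter depend on $q$. Write $2|F\#\nabla\calM|\,|\calM|\le \eta|\nabla\calM|^2_{g,g_0}+C_\lambda\eta^{-1}|F|^2|\calM|^2$ with $\eta=\min\{1,2(q-1)\}$. Then $-(2-\eta)|\nabla\calM|^2_{g,g_0}$ survives in the $|\calM|^2$ inequality. After the chain rule and Kato, the gradient coefficient is $q(1-q+\tfrac\eta2)\le 0$ for $q<2$, and at most $-q(1-\tfrac\eta2)<0$ for $q\ge 2$. The cost is a constant of order $\lambda^2/(q-1)$ in front of $|F|^2\lesssim|\nabla h|^2$, matching the paper's $\frac{\lambda^2|F|^2}{8(q-1)}$ term. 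With that change the rest of your argument goes through, including your remark that no smallness of $|h|$ is needed.
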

\begin{proof}
From (\ref{Mevolution}), we have
   \begin{equation}
       \begin{split}
           \left( \frac{\p}{\p t} - \Delta_{g, g_0} \right) |\calM|^2 & = 2 \LA \left( \frac{\p}{\p t} - \Delta_{g, g_0} \right) \calM , \calM \RA - 2 |\nabla \calM |_{g,g_0}^2 \\
           & = 2 g_0^{im} g_0^{jn}\left( -2  g_0^{ka}g_0^{\ell b} \Rm(g_0)_{ik\ell j} \mathcal{M}_{ab} + F_{ij}{}^{cab} \nabla^{g_0}_c \mathcal{M}_{ab} + G_{ij}{}^{ab} \mathcal{M}_{ab} \right) \calM_{mn} \\
           & \quad - 2 |\nabla \calM|_{g,g_0}^2.
       \end{split}
   \end{equation}
   Applying Young's inequality with $\eps > 0$ to the second term, we get
      \begin{equation}
       \begin{split}
           \left( \frac{\p}{\p t} - \Delta_{g, g_0} \right) |\calM|^2 
           & \leq \left( \frac{\lambda^2 \eps}{2} - 2 \right) |\nabla \calM|_{g,g_0}^2 + \left( 4 |\Rm (g_0)| + \frac{|F|^2}{2 \eps} + |G| \right) |\calM|^2.
       \end{split}
   \end{equation}
   Calculating as in the proof of Lemma \ref{lemma:hpevolution}, we get
         \begin{equation}
       \begin{split}
           \left( \frac{\p}{\p t} - \Delta_{g, g_0} \right) |\calM|^q 
           & \leq q |\calM|^{q-2} \left( \left( 1 + \frac{\lambda^2 \eps}{4} - q \right) |\nabla \calM|_{g,g_0}^2 + \left( 4 |\Rm (g_0)| + \frac{|F|^2}{2 \eps} + |G| \right) |\calM|^2 \right).
       \end{split}
   \end{equation}
   Taking $\eps = \frac{\lambda^2}{4(q-1)}$ gives
            \begin{equation}
       \begin{split}
           \left( \frac{\p}{\p t} - \Delta_{g, g_0} \right) |\calM|^q 
           & \leq q \left( 4 |\Rm (g_0)| + \frac{\lambda^2 |F|^2}{8 (q-1)} + |G| \right) |\calM|^q.
       \end{split}
   \end{equation}
   The identity (\ref{evolutionof|M|^q}) follows by inspection from (\ref{Fexpression}-\ref{Gexpression}).
\end{proof}

\vspace{2mm}

\vspace{2mm}

\subsection{Well-posedness}

In this subsection, we prove short-time well-posedness of Ricci-DeTurck flow with respect to a Ricci-flat ALE background metric in the weighted H{\"o}lder spaces $C^{2, \gamma}_{-\delta}$ (Theorem \ref{thm:wellposedness}).
The proof essentially follows from the well-posedness of complete, bounded-curvature Ricci-DeTurck flows \cite{Shi89, ChenZhu06} and the Schauder estimates for weighted H{\"o}lder norms developed in Appendix \ref{section Schauder ests}.

\begin{thm}\label{thm:wellposedness}
    Let $(M^n, g_0, x_*)$ be a pointed Ricci-flat ALE manifold.
    For any $\delta \geq 0$ and $\gamma \in (0,1)$, we have short-time well-posedness of $C^{2, \gamma}_{-\delta}(M, g_0)$ Ricci-DeTurck flows starting in small $C^{2, \gamma}_{-\delta}(M, g_0, x_*)$-neighborhoods of $g_0$.

    In other words, there exists $\eps_0 \in (0,1)$ and $K > 0$ (depending only on $M^n ,g_0, \gamma, \delta, x_*$) such that if $g(0) \in C^{2, \gamma}_{-\delta}(M, g_0, x_*)$ with 
    \begin{equation} \label{wellposedness:C2gammadelta_initial_est}
        \| g(0) - g_0\|_{C^{2, \gamma}_{-\delta}(M, g_0, x_*)} \leq \eps \leq \eps_0,
    \end{equation}
    then there is a $T > -K^{-1} \log \eps$ and a solution $(g(t))_{t \in [0, T]}$ to
    Ricci-DeTurck flow \eqref{eqn Ricci-DeTurck flow} with reference metric $g_0$ and initial data $g(0)$ such that 
    \begin{equation}\label{wellposedness:C2gammadeltaest}
        \| g(t ) - g_0 \|_{C^{2, \gamma}_{-\delta} } \leq C \eps e^{Kt} \quad  \forall t \in [0,T].
    \end{equation}
    Additionally, (up to restricting time intervals) this solution is unique among all complete, bounded-curvature Ricci-DeTurck flows.
\end{thm}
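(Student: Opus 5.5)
Existence will come from the classical theory of complete, bounded-curvature Ricci-DeTurck flows, and the weighted estimates will be added using the weighted Schauder estimates of Appendix \ref{section Schauder ests}. Since $\delta \ge 0$, the assumption $\|g(0)-g_0\|_{C^{2,\gamma}_{-\delta}} \le \eps_0$ gives $\lambda^{-2} g_0 \le g(0) \le \lambda^2 g_0$ with $\lambda$ close to $1$. It also gives uniform $C^{2,\gamma}$ bounds on $g(0)-g_0$, so $g(0)$ is complete with bounded curvature. Shi's theorem \cite{Shi89} therefore produces a solution $g(t)$ of \eqref{eqn Ricci-DeTurck flow} on some interval $[0,T_0]$, where $T_0$ depends only on these bounds. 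On this interval $g(t)$ stays uniformly equivalent to $g_0$ and $h=g-g_0$ is bounded in unweighted $C^{2,\gamma}$. Uniqueness among complete bounded-curvature Ricci-DeTurck flows is the Chen-Zhu uniqueness theorem \cite{ChenZhu06}, and this proves the last sentence of the statement.

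The main step is to propagate the weight. Write the equation as a linear parabolic equation for $h$:
$$\partial_t h = \Delta_{g,g_0} h + \Rm(g_0)\# h + Q(h),$$
where, by \eqref{calMfullexpression}, $Q$ is a sum of terms $g^{-1}\# g^{-1}\#\nabla h\#\nabla h$ plus curvature terms quadratic in $h$. On the ball $B(x,\rho(x)/2)$, rescale by $\rho(x)^{-1}$. After rescaling, $g_0$ has bounded geometry (Lemma \ref{lemma: Rm est for ALE}), the weighted $C^{2,\gamma}_{-\delta}$ norm becomes $\rho(x)^{\delta}$ times the unweighted norm, and time is rescaled by $\rho(x)^{-2}$. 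Interior parabolic Schauder estimates then give
$$\|h(t)\|_{C^{2,\gamma}_{-\delta}} \le C\Big(\|h(0)\|_{C^{2,\gamma}_{-\delta}} + \sup_{s\le t}\|h(s)\|_{C^{0}_{-\delta}}\Big)$$
for $t\le 1$, with coefficients controlled by the unweighted smallness of $h$. Here the quadratic terms $\nabla h\#\nabla h$ are handled by Lemma \ref{lemma:interpolation}, which places them in $C^{0,\gamma}_{-\delta-2}$ with a small factor $\|h\|_{C^{2,\gamma}_0}$.

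The weighted $C^0$ bound comes from Lemma \ref{lemma:hpevolution}. By \eqref{evolutionof|h|^q}, $|h|^2$ is a subsolution of $\partial_t u \le \Delta_{g,g_0}u + C\rho^{-2}u$. I will compare it with the barrier $w=A e^{Kt}\rho^{-2\delta}$. A direct computation, using $|\nabla\rho|\le 1$ and $|\nabla^2\rho|\lesssim\rho^{-1}$ away from $x_*$ (smoothed near $x_*$), gives $\Delta_{g,g_0}\rho^{-2\delta}\le C\rho^{-2\delta-2}$. So for large $K$ the function $w$ is a supersolution. The maximum principle on the complete manifold applies because $|h|^2$ is bounded, and it yields $|h|\le C\eps e^{Kt/2}\rho^{-\delta}$.

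Combining the two steps gives $\|h(t)\|_{C^{2,\gamma}_{-\delta}}\le C\eps e^{Kt}$ on unit time intervals. Iterating over successive intervals keeps the estimate valid as long as $C\eps e^{Kt}\le\eps_0$, that is, up to a time $T\gtrsim -K^{-1}\log\eps$; this is \eqref{wellposedness:C2gammadeltaest}. Continuous dependence on the data follows by applying the same estimates to the linear equation satisfied by the difference of two solutions. I expect the rescaled Schauder estimate to be the main obstacle. The difficulty is making the constants uniform in $x$ and handling the non-local interaction between the Hölder seminorm on $B(x,\rho/2)$ and the time scale $\rho^2$. For large $\rho$ the parabolic cylinder is longer than the unit time step, so on $[0,t]$ one must use the initial-data Schauder estimate rather than the interior one. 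This is exactly the content of Appendix \ref{section Schauder ests}.
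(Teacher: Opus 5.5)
Your proposal is correct, and its skeleton matches the paper's: Shi for existence, Chen--Zhu for uniqueness, and the weighted Schauder estimates of Appendix~\ref{section Schauder ests} to upgrade a weighted $C^0$ bound to $C^{2,\gamma}_{-\delta}$. The difference is in how you get the weighted $C^0$ bound.

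In its proof, the paper puts the barrier on $|\calM|^q$ rather than on $|h|$. It lets $T_0$ be the first time at which $\|g(t)-g_0\|_{C^{2,\gamma}_0}$ reaches $\sqrt{\eps_0}$. It then uses \eqref{evolutionof|M|^q} of Lemma~\ref{Mpevolution} with the supersolution $e^{qKt}(C_n\eps)^q\rho^{-\delta q}$ to get $|\calM|\le C_n\eps e^{Kt}\rho^{-\delta}$. Integrating $\partial_t h=\calM$ in time gives $\|g(t)-g(0)\|_{C^0_{-\delta}}\lesssim\eps(e^{Kt}-1)/K$. Lemma~\ref{lemma:globalC2gamma-deltaestimate} is then applied once on all of $[0,T_0]$, and the lower bound on $T_0$ comes from comparing $C'\eps e^{KT_0}$ with $\sqrt{\eps_0}\ge\sqrt{\eps}$.

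Your use of \eqref{evolutionof|h|^q} with $q=2$ is more economical at this step. Its only zeroth-order coefficient is $|\Rm(g_0)|$, so the maximum principle needs only $C^0$-smallness of $h$. By contrast, the coefficients $|\nabla h|^2+|\nabla^2 h|$ in \eqref{evolutionof|M|^q} are why the paper's barrier argument must run inside a $C^{2,\gamma}_0$-ball.

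In exchange, the paper controls $\partial_t h$ directly, which is the same $\calM$-barrier mechanism that drives Theorem~\ref{thm:Mestimate}. Also, the constant in Lemma~\ref{lemma:globalC2gamma-deltaestimate} does not depend on the length of the time interval, so no iteration is needed. Your unit-interval iteration compounds the Schauder constant at each restart; this is harmless, since the growth is still absorbed into $e^{Kt}$ after enlarging $K$.

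One small correction. Smoothing $\rho$ near $x_*$ is not enough: the distance function from $x_*$ can be singular along a cut locus that leaves every compact set. There $\rho^{-2\delta}$ has a convex kink, which is the wrong sign for a supersolution, and $|\nabla^2\rho|\lesssim\rho^{-1}$ fails. Use instead a smooth weight comparable to $\rho$, built from the radial coordinate at infinity as in \eqref{Mestimate:Gdef}.
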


\begin{proof}
(Uniqueness)
    Let $(g(t))_{t \in [0, T]}$ be a Ricci-DeTurck flow as in the statement of the theorem and let $(g'(t))_{t \in [0, T']}$ be a complete, bounded curvature Ricci-DeTurck flow with the same initial data $g'(0 ) = g(0)$.
    The $C^{2, \gamma}_{-\delta}$-bounds \eqref{wellposedness:C2gammadeltaest} ensure $g(t)$ is also complete with bounded curvature, and
    the uniqueness of complete, bounded curvature Ricci-DeTurck flows then implies $g'(t) = g(t)$ for all $t \in [0, \min (T, T') ]$ \cite{ChenZhu06}.

(Existence)
    The assumed $C^{2, \gamma}_{-\delta}$-estimates \eqref{wellposedness:C2gammadelta_initial_est} on $g(0) - g_0$  imply $g(0)$ is complete with bounded curvature, say $|Rm_{g(0)}|_{g(0)} \le \mathcal K < \infty$ where $\mathcal K = \mathcal K(M^n, g_0, \eps_0)$.
    By \cite[Theorems 4.3 and 6.6]{Shi89},
    there then exists a smooth, complete, bounded curvature Ricci-DeTurck flow $(g(t))_{t \in [0, T)}$ defined up to some maximal time $T > 0$.
    It must be the case that 
        $$\sup_{t \in [0, T)} \| g(t) - g_0 \|_{C^{2, \gamma}_0(M, g_0)} > \sqrt{\eps_0},$$
    else $g(t)$ would be complete with uniformly bounded curvature $|Rm_{g(t)}|_{g(t)} \le \mathcal K'(M^n, g_0,\eps_0) < \infty$ for all $t \in [0, T)$ and one could then extend the flow beyond time $T$ \cite{Shi89}.
    We can therefore define $T_0$ to be
        $$T_0 := \inf \left\{ t \in [0, T) \, \colon \, \| g(t) - g_0 \|_{C^{2, \gamma}_0} \ge \sqrt{\eps_0} \right\},$$
    and observe, using also the initial data bounds \eqref{wellposedness:C2gammadelta_initial_est}, that
    \begin{equation} \label{well-posedness proof, eqn properties of T_0}
        T_0 \in (0, T) \qquad \text{and} \qquad \sup_{t \in [0, T_0] } \| g(t) - g_0 \|_{C^{2, \gamma}_0} = \sqrt{\eps_0}.
    \end{equation}
        

    Fix $q > 1.$ It is easy to see (as in the proof of Lemma \ref{lemma:comparisonprinciple}) that with the $C^{2, \gamma}_0$-bound in (\ref{well-posedness proof, eqn properties of T_0}), the function
        $$\frac{e^{q Kt} (C_n\eps)^q}{\rho(x)^{\delta q }}$$
    is a supersolution for the evolution (\ref{evolutionof|M|^q}) of $|\mathcal{M}|^q$ (where $K = K( M^n, g_0, \gamma, \delta, x_*) > 0$ is independent of $\eps$).
    From the comparison principle and the initial-data bounds \eqref{wellposedness:C2gammadelta_initial_est}, we learn that
        $$| \calM (x,t)| \leq \frac{C_n e^{Kt} \eps}{\rho(x)^{\delta}} \qquad \forall t \in [0, T_0].$$
    Integrating this estimate in time, we obtain the $C^0$ bound
    \begin{equation}\label{wellposedness:C0deltabound}
        \|g(t) - g(0) \|_{C^0_{-\delta}} \leq \frac{\left( e^{Kt} - 1 \right) C_n \eps}{K}
        \qquad \forall t \in [0, T_0].
    \end{equation}
    Applying Lemma \ref{lemma:globalC2gamma-deltaestimate} with the given $\delta,$ it follows that
        $$\| g(t) - g_0 \|_{C^{2, \gamma}_{-\delta} } \le \frac{C \eps ( e^{Kt} - 1) }{K} + \eps \le C' \eps e^{Kt} \qquad \forall t \in [0, T_0]$$
    where $C, C' > 0$ depend only on $M^n, g_0, \gamma, \delta, x_*$.
    Combining this estimate with the fact that $\sup_{t \in [0, T_0]} \| g(t) - g_0 \|_{C^{2, \gamma}_0} = \sqrt{\eps_0} \ge \sqrt{\eps}$ \eqref{well-posedness proof, eqn properties of T_0}, it follows that $ T_0 > - K' \log \eps$ for some $K' = K'(M^n, g_0, \gamma, \delta , x_*) > 0$.
    This completes the proof.
\end{proof}

\subsection{Liouville Theorem} We need the following generalization of a key theorem in \cite{BrendleKapouleas17}.

\begin{thm}[{\cite[Proposition 5.2]{BrendleKapouleas17}}] \label{thm:liouville}
    Fix a linearly stable, pointed, Ricci-flat, ALE manifold $(M^n,g, x_*).$ 
    If $h(x,t)$ is an ancient solution of the Lichnerowicz heat equation
        $$\left( \frac{\p}{\p t} - L_g \right) h = 0 \qquad \text{on } M \times(-\infty, 0),$$
    which, for some $\ell \in (0, n-2)$, satisfies
        $$|h(x,t)|_g \leq \rho(x)^{-\ell} \qquad \forall t \in (-\infty, 0)$$
    and $h(t_0) \perp_{L^2(g)} \ker_{L^2} L_g$ for some $t_0 < 0$, 
    then $h \equiv 0$ on $M \times (-\infty, 0)$.
\end{thm}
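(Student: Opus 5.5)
We follow the strategy of Brendle--Kapouleas: first show that $h$ becomes smooth with $L^2$-controlled energy, then use linear stability to show that $\|h(t)\|_{L^2}$ is monotone and that $h$ must lie in $\ker_{L^2} L_g$, and finally use the orthogonality at $t_0$ to conclude $h \equiv 0$.

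\textbf{Step 1 (regularity and improved decay).} Since $h$ is ancient with $|h| \le \rho^{-\ell}$, interior parabolic Schauder estimates on parabolic cylinders of size comparable to $\rho(x)$ give $|\nabla^k h| \le C_k \rho^{-\ell-k}$ uniformly in $t$, and similarly for $\p_t h = L_g h$. The bound $\rho^{-\ell}$ alone does not give $h(t) \in L^2$ when $\ell \le n/2$. I would therefore improve the decay. The tail $h$ solves, on the end, a heat equation whose zeroth-order term is $O(\rho^{-2-\tau})$. Comparing with Euclidean heat flow via a barrier argument, or using the weighted Fredholm theory behind Lemma \ref{lem kernel properties}(3) (no indicial roots in $(0,n-2)$), I expect to upgrade to $|h| \le C \rho^{-(n-2)+\eta}$, and then to $|\partial_t h| \le C\rho^{-n+\eta}$. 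This is the step I expect to be the main obstacle: making the decay improvement uniform in $t \in (-\infty,0)$, which is exactly what Brendle--Kapouleas handle in the case $\ell$ close to $n-2$. A backup is to work instead with $\partial_t h$ or with differences $h(t)-h(s)$, which decay faster, and to run the energy argument on them.

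\textbf{Step 2 (energy monotonicity).} Assume for the moment that $h(t),\partial_t h \in L^2$, either directly or after Step 1, using cutoff functions $\phi_R$ with error terms vanishing as $R\to\infty$. Then
\begin{itemize}
\item $\frac{d}{dt}\|h\|_{L^2}^2 = 2(h,L h) \le 0$ by linear stability (Definition \ref{defn linearly stable}), extended from compactly supported $h$ by density;
\item $\frac{d}{dt}(h,Lh) = 2\|Lh\|^2 \ge 0$.
\end{itemize}
Since $\|h(t)\|_{L^2}^2$ is bounded as $t\to-\infty$ by uniform decay and is nonincreasing, we get $\int_{-\infty}^0 -(h,Lh)\,dt<\infty$. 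Combined with the monotonicity of $(h,Lh)$, this gives $\int_{-\infty}^{0} \|Lh\|^2\, |t|\,dt<\infty$, by the standard argument of Simon and Brendle--Kapouleas. Applying the same reasoning to $\partial_t h = Lh$, which is itself an ancient solution with better decay, shows that $\|Lh(t)\|_{L^2}$ is nonincreasing in $t$. Since $\|Lh(t)\|_{L^2}$ is square-integrable with weight $|t|$ over $(-\infty,0)$ and is monotone, it must vanish. Hence $Lh \equiv 0$, and $h(t) = h$ is independent of $t$ and lies in the kernel of $L_g$ in a decaying class.

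\textbf{Step 3 (conclusion).} Now $h$ is a time-independent element of $\ker_{C^{2,\gamma}_{-\ell}} L_g$. By Lemma \ref{lem kernel properties}(3), this kernel equals $\ker_{C^{2,\gamma}_{-\delta}}L_g$ for $\delta$ near $n-2$. Under the decay hypotheses already in force (Proposition \ref{prop L2 kernel equals Holder kernel}), it coincides with $\ker_{L^2} L_g$. Then $h = h(t_0) \perp \ker_{L^2} L_g$ and $h \in \ker_{L^2}L_g$, so $h \equiv 0$.
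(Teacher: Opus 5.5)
There is a genuine gap: Step 2 runs the stability/energy argument on $h$ itself, so it needs $h(t)\in L^2$ uniformly in $t$. The decay improvement in Step 1 that would supply this is only expected, not proved.

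Worse, even your target bound $|h|\le C\rho^{-(n-2)+\eta}$ does not put $h(t)$ in $L^2$ when $n=4$, since $2(n-2-\eta)<n$ there. A maximum-principle argument on the scalar subsolution $|h|$ cannot beat $\rho^{2-n}$ either. For $a>n-2$ the function $\rho^{-a}$ is a subsolution rather than a supersolution of the model heat equation, and the exterior harmonic function with positive boundary values decays exactly like $\rho^{2-n}$.

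Dimension $4$ is precisely the Brendle--Kapouleas/Deruelle--Ozuch setting. This paper also uses it, with a small decay rate $\ell'$. So in that case $\|h(t)\|_{L^2}^2$ and $(h,Lh)$ need not be finite, and Step 2 does not close. For $n\ge 5$ a barrier argument might push the decay to $n-2-\eta>n/2$, but that is extra unproved work and is unnecessary.

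What is missing is the mechanism behind the Brendle--Kapouleas proof that the paper cites: improve the decay of time derivatives, not of $h$.
\begin{itemize}
\item Because $h$ is ancient, interior estimates on the cylinders $B(x,\rho(x)/2)\times(t-\rho(x)^2/4,t]$ apply (rescaled as in Lemma \ref{lem parab smoothing in weighted Holder}). Iterated, they give $|\nabla^j\partial_t^k h|\le C_{j,k}\rho^{-\ell-2k-j}$ uniformly in $t$, with no indicial-root obstruction.
\item Choose $k$ with $2(\ell+2k)>n$; $k=1$ suffices when $n=4$. Then $u=\partial_t^k h$ is an ancient solution that stays bounded in $L^2$, together with $\nabla u$ and $Lu$.
\item Your Step 2 applied to $u$ gives $Lu\equiv 0$, i.e.\ $\partial_t^{k+1}h\equiv 0$.
\item Now comes the step your backup omits. $h(x,t)=\sum_{j\le k}a_j(x)t^j$ is a polynomial in $t$, and the bound $|h|\le\rho^{-\ell}$ on all of $(-\infty,0)$ forces $a_j=0$ for $j\ge 1$. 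So $h=a_0$ is static with $La_0=0$.
\item Your Step 3, via Proposition \ref{prop L2 kernel equals Holder kernel}, then gives $a_0\in\ker_{L^2}L_g$. Orthogonality at the single time $t_0$ yields $a_0=0$.
\end{itemize}
This is also why, as the paper remarks, orthogonality at one time is enough.
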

\begin{proof}
    As observed by Deruelle and Ozuch \cite[Prop. 5.3]{DeruelleOzuch25}, the proof in \cite[Prop. 5.2]{BrendleKapouleas17} goes through for dimension $n=4$. 
    The same holds for $n \ge 5$ using also Proposition \ref{prop L2 kernel equals Holder kernel}.
    One can further observe that the orthogonality condition is only required for a fixed time $t_0 < 0.$
\end{proof}

\subsection{Comparison principle}


    In this subsection, we begin by focusing on the subset of Euclidean space $(0, \infty) \subseteq \R$ with the standard coordinate $r$.
For a real $\mu \geq 1,$ we write
$$\Delta_\mu :=\p_r^2 + \frac{(\mu - 1)}{r} \p_r.$$
We will construct radial barriers for small perturbations of the equation $\left( \partial_t - \Delta_\mu \right) f = 0.$ 
Together with the above Liouville theorem, this will be the basis for our proof of stability in weighted H\"older spaces. For $L^p$-spaces, we use an alternative heat-kernel approach in \S \ref{sec:Lpstability}.

\begin{lemma}\label{lemma:chilemma} Given $a, b > 0,$ define ``Kummer's confluent hypergeometric function''
$$\chi(u) = \chi_{a,b}(u) = \int_0^1 x^{a-1} (1 - x)^{b-1} e^{-ux} \, dx.$$
The function $\chi(u)$ is smooth, positive, strictly decreasing, and concave-up for $u \geq 0,$ and satisfies the ODE
\begin{equation}\label{chilemma:chiuode}
u \chi''(u) + \left(a + b + u\right) \chi'(u) + a \chi(u) = 0.
\end{equation}
We have
\begin{equation}\label{chilemma:chigammaasymp}
\chi(u) = 
\Gamma(a) u^{-a} + O(u^{-a-1}) \quad (u \to \infty),
\end{equation}
where $\Gamma$ is the Euler Gamma function, as well as
\begin{equation}\label{chilemma:chiupperlower}
\frac{\chi(0)}{1 + C_{a,b} u^{a}} \leq \chi(u) \leq \frac{\chi(0)}{1 + c_{a,b} u^{a}},
\end{equation}
for approriate constants $C_{a,b}, c_{a,b} > 0,$ and
\begin{equation}\label{chilemma:chi'est}
c_{a,b} \chi(u) \leq -(1 + u) \chi'(u) \leq C_{a,b} \chi(u),
\end{equation}
\begin{equation}\label{chilemma:chi''est}
c_{a,b} \chi(u) \leq (1 + u^2) \chi''(u) \leq C_{a,b} \chi(u).
\end{equation}
\end{lemma}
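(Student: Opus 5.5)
Everything follows from differentiating under the integral sign and from the behaviour of the integrand on $[0,1]$. For $u \geq 0$ the integrand $x^{a-1}(1-x)^{b-1}e^{-ux}$ is positive and integrable, since $a,b>0$. Moreover $\partial_u^k$ of the integrand equals $(-x)^k$ times the integrand, which is dominated by the integrand itself uniformly for $u \ge 0$. Dominated convergence therefore gives smoothness, together with the formulas
\[
\chi^{(k)}(u) = (-1)^k\int_0^1 x^{a+k-1}(1-x)^{b-1}e^{-ux}\,dx .
\]
From these, $\chi>0$, $\chi'<0$ and $\chi''>0$, so $\chi$ is positive, strictly decreasing and concave-up.

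For the ODE (\ref{chilemma:chiuode}), I would write
\[
u\chi''+(a+b+u)\chi'+a\chi=\int_0^1 \big(ux^2-(a+b)x-ux+a\big)x^{a-1}(1-x)^{b-1}e^{-ux}\,dx .
\]
I then claim the integrand is an exact derivative, namely $-\frac{d}{dx}\big[x^{a}(1-x)^{b}e^{-ux}\big]$. Indeed,
\[
\frac{d}{dx}\big[x^a(1-x)^be^{-ux}\big]=x^{a-1}(1-x)^{b-1}e^{-ux}\big(a(1-x)-bx-ux(1-x)\big),
\]
and the bracket is exactly the negative of $ux^2-(a+b)x-ux+a$. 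Since $a,b>0$, the boundary terms vanish at both endpoints, so the left-hand side is $0$.

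For the asymptotics (\ref{chilemma:chigammaasymp}), I substitute $x=s/u$ to get
\[
\chi(u)=u^{-a}\int_0^u s^{a-1}(1-s/u)^{b-1}e^{-s}\,ds .
\]
I split the integral at $s=u/2$. On $[0,u/2]$ I use the bound $|(1-s/u)^{b-1}-1|\le C s/u$, which gives $\Gamma(a)+O(u^{-1})$ after accounting for the exponentially small tail $\int_{u/2}^\infty$. On $[u/2,u]$ I use $e^{-s}\le e^{-u/2}$ together with the integrability of $(1-x)^{b-1}$ near $x=1$, so this piece is exponentially small. Together these give $\chi(u)=\Gamma(a)u^{-a}+O(u^{-a-1})$.

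The two-sided bound (\ref{chilemma:chiupperlower}) then follows by comparing two regimes. On a compact set $[0,U]$, $\chi$ is continuous and positive, hence comparable to $\chi(0)$. For $u\ge U$, the asymptotics make $\chi$ comparable to $u^{-a}$. Since $1+u^a$ is comparable to $1$ on $[0,U]$ and to $u^a$ for large $u$, the constants can be chosen uniformly.

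The derivative bounds (\ref{chilemma:chi'est}) and (\ref{chilemma:chi''est}) need one extra observation. The formula for $-\chi'$ is exactly $\chi_{a+1,b}$, and likewise $\chi''=\chi_{a+2,b}$. Applying the previous two estimates with $a$ replaced by $a+1$ and $a+2$ gives $-\chi'\sim(1+u)^{-a-1}$ and $\chi''\sim(1+u)^{-a-2}$. Comparing with $\chi\sim(1+u)^{-a}$, and using that $1+u^2$ is comparable to $(1+u)^2$, yields both estimates.

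The only delicate step is keeping the error terms in the asymptotic expansion uniform when $b<1$, where $(1-x)^{b-1}$ blows up at $x=1$. This is handled by the splitting at $s=u/2$, which isolates that endpoint into the exponentially small piece.
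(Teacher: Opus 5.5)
Your proof is correct and follows the paper's route, with the omitted details filled in. The shared steps are differentiation under the integral, the ODE by integration by parts (your exact-derivative identity is the same computation), the substitution $x=s/u$ for the asymptotics, and the identities $-\chi'=\chi_{a+1,b}$, $\chi''=\chi_{a+2,b}$, which transfer the two-sided bounds to the derivatives.

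Two small corrections. First, the ODE integrand is $+\frac{d}{dx}\bigl[x^a(1-x)^b e^{-ux}\bigr]$, not its negative: the bracket $a(1-x)-bx-ux(1-x)$ expands to exactly $ux^2-(a+b)x-ux+a$. This is harmless, since the integral vanishes either way.

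Second, your compactness argument gives \eqref{chilemma:chiupperlower} only with the constants in front: $c\,\chi(0)(1+u^a)^{-1}\le \chi(u)\le C\,\chi(0)(1+u^a)^{-1}$. That is the most one can get. Since $\chi'(0)=-\int_0^1 x^a(1-x)^{b-1}\,dx$ is finite and nonzero, $\chi(0)-\chi(u)\sim|\chi'(0)|\,u$ as $u\to 0^+$. Consequently the displayed upper bound $\chi\le\chi(0)/(1+c\,u^a)$ fails for small $u$ when $a<1$, and the displayed lower bound $\chi\ge\chi(0)/(1+C\,u^a)$ fails for small $u$ when $a>1$.

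So you should state the conclusion as comparability, $c(1+u)^{-a}\le\chi(u)\le C(1+u)^{-a}$, rather than claim the constants can be arranged to match the literal display. This is also all that the paper's ``clearly implies'' actually delivers. It is all that is used in Proposition \ref{prop:firstsupersol}, and it is the form your own derivative step relies on.
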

\begin{proof}
We have
\begin{equation}\label{chilemma:chi'chi''}
\chi'(u) = -\int_0^1 x^{a} (1 - x)^{b-1} e^{-ux} \, dx, \quad \chi''(u) = \int_0^1 x^{a + 1} (1 - x)^{b-1} e^{-ux} \, dx.
\end{equation}
Using integration by parts, we have
\begin{equation*}
\begin{split}
-\chi'(u) & = \cancelto{0}{\left. x^{a} e^{-ux} \left( \frac{-(1-x)^{b}}{b} \right) \right|_0^1} + \frac{1}{b}\int_0^1 \left( a x^{a-1} e^{-ux} - ux^{a} e^{-ux} \right) (1 - x) (1 - x)^{b-1} \, dx \\
& =\frac{1}{b} \left( a \chi(u) + (a + u) \chi'(u) + u \chi''(u) \right).
\end{split}
\end{equation*}
Rearranging gives the ODE (\ref{chilemma:chiuode}). The formula (\ref{chilemma:chigammaasymp}) follows by changing variables and, in view of (\ref{chilemma:chi'chi''}), clearly implies (\ref{chilemma:chiupperlower}-\ref{chilemma:chi''est}).
\end{proof}

\begin{prop}\label{prop:firstsupersol}
Fix $\nu \geq 1$ and $0 < k < \nu.$ 
The function 
$$F(r,t) = F_{k, \nu}(r,t) = \frac{\chi_{\frac{k}{2}, \frac{\nu-k}{2}} \left( \dfrac{r^2}{4t} \right) }{2^k \Gamma\left( \frac{k}{2} \right)  t^{\frac{k}{2} }}$$
satisfies
\begin{equation}\label{firstsupersol:pde}
\left( \frac{\p}{\p t} - \Delta_{\nu} \right) F(r,t) = 0
\end{equation}
with
\begin{equation}
F(r,t) \to r^{-k} \quad (t \searrow 0).
\end{equation}
We have
\begin{equation}\label{firstsupersol:Fbounds}
\frac{c_{k,\nu}}{(r^2 + t)^{k/2}} \leq F(r,t) \leq \frac{C_{k, \nu} }{(r^2 + t)^{k/2}},
\end{equation}
\begin{equation}\label{firstsupersol:dFdrbound}
\frac{c_{k, \nu} r F(r,t)}{r^2 + t }  \leq - \frac{\p F}{\p r} \leq \frac{C_{k, \nu} r F(r,t)}{r^2 + t} ,
\end{equation}
and
\begin{equation}\label{firstsupersol:d2Fdr2bound}
\left| \frac{\p^2 F}{\p^2 r} \right| \leq \frac{C_{k, \nu} F(r,t)}{r^2 + t} .
\end{equation}
Further, for $\mu \geq \nu,$ 
we have
\begin{equation}\label{firstsupersol:evolutionlowerbound}
\left( \frac{\p}{\p t} - \Delta_{\mu} - \frac{ c_{k, \nu} (\mu - \nu) } { r^2 + t } \right) F(r,t) \geq 0  
\end{equation}
\end{prop}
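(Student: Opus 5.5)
The plan is to reduce everything to the properties of $\chi = \chi_{a,b}$ from Lemma \ref{lemma:chilemma}, with $a = \frac{k}{2}$ and $b = \frac{\nu - k}{2}$. Both are positive because $0 < k < \nu$, so the lemma applies, and $a + b = \frac{\nu}{2}$. Write $u = \frac{r^2}{4t}$ and $c_0 = \left(2^k \Gamma(\tfrac{k}{2})\right)^{-1}$, so that $F = c_0\, t^{-a} \chi(u)$. I will use the chain-rule identities $\p_t u = -u/t$, $\p_r u = \frac{r}{2t}$, and $\p_r^2 u = \frac{1}{2t}$.

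\emph{Step 1 (the PDE).} Differentiating directly gives
\begin{equation*}
\p_t F = c_0 t^{-a-1}\left( -a\chi - u\chi' \right), \qquad \p_r F = c_0 t^{-a}\,\frac{r}{2t}\,\chi',
\end{equation*}
\begin{equation*}
\p_r^2 F = c_0 t^{-a-1}\left( u\chi'' + \tfrac12 \chi' \right).
\end{equation*}
Since $\frac{\nu-1}{r}\p_r F = c_0 t^{-a-1}\,\frac{\nu-1}{2}\chi'$, we get
\begin{equation*}
\Delta_\nu F = c_0 t^{-a-1}\left( u\chi'' + \tfrac{\nu}{2}\chi' \right).
\end{equation*}
Therefore
\begin{equation*}
\left(\p_t - \Delta_\nu\right) F = -c_0 t^{-a-1}\left( u\chi'' + (a + b + u)\chi' + a\chi \right),
\end{equation*}
and the bracket vanishes by the ODE (\ref{chilemma:chiuode}). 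This proves (\ref{firstsupersol:pde}).

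\emph{Step 2 (initial behavior).} For fixed $r > 0$, we have $u \to \infty$ as $t \searrow 0$. By (\ref{chilemma:chigammaasymp}),
\begin{equation*}
\chi(u) = \Gamma(a)\,(4t)^{k/2} r^{-k} + O\!\left(t^{k/2+1}\right).
\end{equation*}
Multiplying by $c_0 t^{-k/2}$, the factors $2^k$ and $\Gamma(\tfrac{k}{2})$ cancel exactly, giving $F \to r^{-k}$.

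\emph{Step 3 (the bounds).} By (\ref{chilemma:chiupperlower}), $\chi(u) \simeq (1+u)^{-a}$. Hence
\begin{equation*}
F \simeq t^{-k/2}\left(1 + \frac{r^2}{4t}\right)^{-k/2} \simeq (t + r^2)^{-k/2},
\end{equation*}
which is (\ref{firstsupersol:Fbounds}). For the first derivative, $-\p_r F = c_0 t^{-a}\frac{r}{2t}(-\chi')$, and (\ref{chilemma:chi'est}) gives $-\chi' \simeq \frac{\chi}{1+u}$. Combined with $t(1+u) \simeq t + r^2$, this yields (\ref{firstsupersol:dFdrbound}). For the second derivative, $|\p_r^2 F| \le c_0 t^{-a-1}\left(u\chi'' + \tfrac12|\chi'|\right)$. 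Using (\ref{chilemma:chi''est}) and (\ref{chilemma:chi'est}), $u\chi'' \lesssim \frac{u}{1+u^2}\chi \lesssim \frac{\chi}{1+u}$, and likewise $|\chi'| \lesssim \frac{\chi}{1+u}$. This gives $|\p_r^2 F| \lesssim \frac{F}{t(1+u)} \simeq \frac{F}{r^2 + t}$, which is (\ref{firstsupersol:d2Fdr2bound}).

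\emph{Step 4 (the supersolution inequality).} For $\mu \ge \nu$ we have $\Delta_\mu = \Delta_\nu + \frac{\mu - \nu}{r}\p_r$. By Step 1,
\begin{equation*}
\left(\p_t - \Delta_\mu\right) F = \frac{\mu - \nu}{r}\left(-\p_r F\right).
\end{equation*}
The lower bound in (\ref{firstsupersol:dFdrbound}) then gives $\left(\p_t - \Delta_\mu\right) F \ge \frac{c_{k,\nu}(\mu - \nu)}{r^2 + t} F$, which is (\ref{firstsupersol:evolutionlowerbound}).

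All computations above are routine. The only delicate point is the two-sided comparison estimates for $\chi$, $\chi'$, $\chi''$ uniformly in $u \ge 0$. These are already supplied by (\ref{chilemma:chiupperlower}) through (\ref{chilemma:chi''est}); they are justified by the integral formulas (\ref{chilemma:chi'chi''}) together with the $u^{-a}$, $u^{-a-1}$, $u^{-a-2}$ asymptotics at infinity, which in turn require $b > 0$, that is, $k < \nu$.
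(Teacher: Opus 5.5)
Your proposal is correct and follows the paper's proof. You reduce $(\partial_t-\Delta_\nu)F$ to the Kummer ODE (\ref{chilemma:chiuode}) with $a=\frac k2$, $b=\frac{\nu-k}{2}$, which the paper does via the self-similar ansatz $t^{-k/2}g(r/\sqrt t)$ with $g(v)=\chi(v^2/4)$. You then read off the bounds and the supersolution inequality from (\ref{chilemma:chiupperlower})--(\ref{chilemma:chi''est}), where your identity $(\partial_t-\Delta_\mu)F=\frac{\mu-\nu}{r}(-\partial_r F)$ combined with the lower bound in (\ref{firstsupersol:dFdrbound}) simply spells out what the paper leaves implicit.
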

\begin{proof}
For a general function $g(x),$ we have
\begin{equation}
\Delta_{\nu} t^{-\frac{k}{2} } g\left( \frac{r}{\sqrt{t}} \right)  = t^{- \frac{k}{2} - 1} \left( g''\left( \frac{r}{\sqrt{t}} \right) + \frac{\nu -1}{r/\sqrt{t}} g'\left( \frac{r}{\sqrt{t}} \right) \right)
\end{equation}
and
\begin{equation}
\left( \frac{\partial}{\partial t} - \Delta_{\nu} \right) t^{-\frac{k}{2} } g\left( \frac{r}{\sqrt{t}} \right) = -t^{- \frac{k}{2} - 1} \left( g''\left( \frac{r}{\sqrt{t}} \right) + \left( \frac{ \nu -1}{r/\sqrt{t}} + \frac{r}{2 \sqrt{t}} \right) g'\left( \frac{r}{\sqrt{t}} \right) + \frac{k}{2} g\left( \frac{r}{\sqrt{t}} \right) \right).
\end{equation}
Let $u = \frac{v^2}{4}$ and take $g(v) = \chi_{a,b}(u).$
Observe that
$$g'(v) = \frac{v}{2} \chi_{a,b}'(u) ,$$
$$g''(v) = \frac{v^2}{4} \chi_{a,b} ''(u) + \frac12 \chi_{a,b}' (u) = u \chi_{a,b}''(u) + \frac12 \chi_{a,b}'(u) .$$
We obtain
$$g''\left(v \right) + \left( \frac{\nu-1}{v} + \frac{v}{2} \right) g'\left( v\right) + \frac{k}{2} g\left( v \right) = u \chi_{a,b}''(u) + \left( \frac{\nu}{2} + u \right) \chi_{a,b}'(u) + \frac{k}{2}\chi_{a,b}(u). $$
Taking $a = \frac{k}{2}$ and $b = \frac{\nu - k}{2},$ we obtain (\ref{firstsupersol:pde}) by the previous lemma. The remaining formulae follow from (\ref{chilemma:chiupperlower}-\ref{chilemma:chi''est}).
\end{proof}

\begin{prop}\label{prop:secondsupersol}
Suppose $\nu > 2$ and let $0 < \ell \leq k < \nu$ with $\ell < \nu - 2.$ For $\eps > 0$ sufficiently small and each $\mu \geq \nu + \eps,$ 
the function
$$G(r,t) = G_{k, \ell, \nu, \eps}(r,t) = \frac{\eps^2}{(r^2 + 1)^{\frac{\ell}{2} } (r^2 + t + 1)^{\frac{k - \ell}{2}}} + F_{k, \nu}(r,t + 1)$$
satisfies
\begin{equation}\label{secondsupersol:evolutionlowerbound}
\left( \frac{\p}{\p t} - \Delta_{\mu} - \frac{ c_{k, \ell, \nu} \eps^2 } { r^2 + 1 } \right) G(r,t) > 0
\end{equation}
with
\begin{equation}\label{secondsupersol:Gupperlowerbound}
\frac{\eps^2}{(r^2 + 1)^{\frac{\ell}{2} } (r^2 + t + 1)^{\frac{k - \ell}{2}}} \leq G(r,t) \leq  \left( \frac{\eps^2}{(r^2 + 1)^{\frac{\ell}{2} }} + \frac{C_{k, \nu} }{ (r^2 + t + 1)^{\frac{\ell}{2}} }  \right) \frac{1}{ (r^2 + t + 1)^{\frac{k - \ell}{2}} },
\end{equation}
\begin{equation}\label{secondsupersol:dGdrbound}
\frac{c r G(r,t)}{r^2 + 1} \leq -\frac{\p G(r,t) }{\p r} \leq \frac{C r G(r,t)}{r^2 + 1} ,
\end{equation}
and
\begin{equation}\label{secondsupersol:d2Gdr2bound}
\left| \frac{\p^2 G(r,t) }{\p r^2} \right| \leq \frac{C G(r,t)}{r^2 + 1}.
\end{equation}
\end{prop}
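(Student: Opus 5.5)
We split $G = A + \widetilde F$, where
$$A(r,t) = \eps^2 \varphi(r)\psi(r,t), \qquad \varphi = (r^2+1)^{-\frac{\ell}{2}}, \quad \psi = (r^2+t+1)^{-\frac{k-\ell}{2}}, \qquad \widetilde F(r,t) = F_{k,\nu}(r,t+1).$$
We then compute $(\p_t - \Delta_\mu)$ of each piece separately. For $\widetilde F$, Proposition \ref{prop:firstsupersol} (applied at time $t+1$, with $\mu - \nu \geq \eps$) gives
$$\left(\p_t - \Delta_\mu\right)\widetilde F \geq \frac{c_{k,\nu}\,\eps}{r^2+t+1}\,\widetilde F,$$
and this gain of order $\eps$ is what will absorb the bad terms coming from $A$. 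The bounds (\ref{secondsupersol:Gupperlowerbound})--(\ref{secondsupersol:d2Gdr2bound}) are routine. The lower bound is $G \geq A$, and the upper bound uses (\ref{firstsupersol:Fbounds}) together with $(r^2+t+1)^{-k/2} = (r^2+t+1)^{-\ell/2}\psi$. The derivative bounds follow by differentiating the explicit powers in $A$ and using (\ref{firstsupersol:dFdrbound})--(\ref{firstsupersol:d2Fdr2bound}), together with $(r^2+t+1)^{-1} \leq (r^2+1)^{-1}$. Both summands are decreasing in $r$ with comparable logarithmic derivatives, which gives the two-sided bound (\ref{secondsupersol:dGdrbound}).

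\emph{Computation for $A$.} A direct calculation gives
$$\Delta_\mu \varphi = -\ell (r^2+1)^{-\frac{\ell}{2}-2}\left[(\mu-\ell-2)r^2 + \mu\right].$$
Since $\ell < \nu - 2 < \mu - 2$, this yields $-\Delta_\mu\varphi \geq c\,\varphi/(r^2+1)$ with $c = c_{\ell,\nu} > 0$; this is the good term. We expand
$$\left(\p_t - \Delta_\mu\right)(\varphi\psi) = \psi\left(-\Delta_\mu\varphi\right) + \varphi\left(\p_t - \Delta_\mu\right)\psi - 2\varphi'\psi'.$$
The cross term $-2\varphi'\psi'$ and the possibly negative part of $(\p_t - \Delta_\mu)\psi$ are bounded in absolute value by $C_{k,\ell,\mu}\,\varphi\psi/(r^2+t+1)$. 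This follows because each $r$-derivative of $\psi$ costs a factor $r/(r^2+t+1)$ and $\p_t\psi$ costs $(r^2+t+1)^{-1}$. So
$$\left(\p_t - \Delta_\mu\right)A \geq \frac{c\,A}{r^2+1} - \frac{C\eps^2\,\varphi\psi}{r^2+t+1}.$$
The constant $C$ depends on $\mu$, but only through bounded factors such as $\mu/(r^2+1)$ and $\mu r/(r^2+t+1)$; I would check that these can be rearranged so that the bad part is uniform in $\mu \geq \nu + \eps$, or else restrict to $\mu$ in a bounded range, which is all that is needed later.

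\emph{Absorption (the main step).} The bad term is $\eps^2\varphi\psi/(r^2+t+1) \leq \eps^2 (r^2+t+1)^{-\frac{k}{2}-1}$, using $\varphi \leq 1$ and $\varphi \leq (r^2+t+1)^{-\ell/2}$. By (\ref{firstsupersol:Fbounds}), this is at most $C\eps^2\,\widetilde F/(r^2+t+1)$. That is dominated by half of the gain $c\,\eps\,\widetilde F/(r^2+t+1)$ once $\eps$ is small. It remains to produce $c\,\eps^2 G/(r^2+1) = c\,\eps^2 (A + \widetilde F)/(r^2+1)$ on the right-hand side.
\begin{itemize}
\item The $A$-part is supplied by the good term $cA/(r^2+1)$, since $\eps^2 \leq 1$.
\item For the $\widetilde F$-part, the crucial observation is that
$$\widetilde F \leq C(r^2+t+1)^{-k/2} = C(r^2+t+1)^{-\ell/2}\psi \leq C\varphi\psi.$$
Hence $\eps^2\widetilde F \leq C\eps^2\varphi\psi = CA$, and $\eps^2\widetilde F/(r^2+1)$ is also controlled by the good $A$-term.
\end{itemize}
Choosing $c_{k,\ell,\nu}$ small relative to these constants then gives (\ref{secondsupersol:evolutionlowerbound}). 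The inequality is strict because a positive multiple of $A/(r^2+1) > 0$ remains. I expect the only delicate point to be the bookkeeping: the $\eps$-gain from $\widetilde F$ sits at scale $r^2+t$ while the target sits at scale $r^2$. That mismatch is exactly why the $\eps^2$ correction $A$ is included, and the observation $\eps^2\widetilde F \lesssim A$ resolves it.
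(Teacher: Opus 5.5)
Your computation for $A$ is correct. You can also drop the hedge about $\mu$: the cross term $-2\varphi'\psi'$ does not involve $\mu$, and $(\p_t-\Delta_\mu)\psi=(k-\ell)(r^2+t+1)^{-\frac{k-\ell}{2}-2}\left[(\mu-(k-\ell)-\tfrac52)r^2+(\mu-\tfrac12)(t+1)\right]$ is increasing in $\mu$, so the bad constant is uniform in $\mu\ge\nu$.

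The gap is in the absorption step. There you bound the bad term using $\varphi\le(r^2+t+1)^{-\ell/2}$, but this inequality is reversed. Since $\ell>0$ and $r^2+1\le r^2+t+1$, one has $\varphi=(r^2+1)^{-\ell/2}\ge(r^2+t+1)^{-\ell/2}$, which is exactly the direction you use in the next bullet to get $\widetilde F\le C\varphi\psi$. Consequently $C\eps^2\varphi\psi/(r^2+t+1)$ is not dominated by $C\eps^2\widetilde F/(r^2+t+1)$. By \eqref{firstsupersol:Fbounds}, $A/\widetilde F\approx\eps^2\left((r^2+t+1)/(r^2+1)\right)^{\ell/2}$, which is unbounded as $t/(r^2+1)\to\infty$ for every fixed $\eps$. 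So in the regime $t\gg r^2$ the gain $c\eps\widetilde F/(r^2+t+1)$ cannot absorb the bad term, and \eqref{secondsupersol:evolutionlowerbound} does not follow as written.

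The missing idea is a two-region split, which is what the paper does; it splits according to whether $t+1\ge D_{k,\ell,\nu}r^2$, using that each negative term carries a factor $r^2$.

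\emph{Region $r^2+t+1\ge D(r^2+1)$.} Here $CA/(r^2+t+1)\le (C/D)\,A/(r^2+1)$. So for $D$ large the bad term is absorbed by half of $A$'s own good term $cA/(r^2+1)$. The other half yields $c'\eps^2(A+\widetilde F)/(r^2+1)$ via your correct observation $\eps^2\widetilde F\lesssim A$. No gain from $\widetilde F$ is needed in this region.

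\emph{Region $r^2+t+1\le D(r^2+1)$.} Here the two scales are comparable, so $\varphi\le D^{\ell/2}(r^2+t+1)^{-\ell/2}$. Hence $A\le C_D\eps^2\widetilde F$ and $1/(r^2+t+1)\ge 1/(D(r^2+1))$. Now your absorption by $c\eps\widetilde F/(r^2+t+1)$ works once $\eps\ll 1/C_D$, and what remains is $\gtrsim\eps(A+\widetilde F)/(r^2+1)$.

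A smaller point of the same kind concerns \eqref{secondsupersol:dGdrbound}. The logarithmic derivatives of $A$ and $\widetilde F$ are not comparable when $t\gg r^2$, since $-\p_r\widetilde F/\widetilde F\approx r/(r^2+t+1)$. So the lower bound there holds only with $c$ depending on $\eps$, which you get from $\eps^2\widetilde F\lesssim A$. This is harmless, because $\eps$ is fixed in Lemma~\ref{lemma:comparisonprinciple}, which uses only the upper bounds.
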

\begin{proof} For $a> 0,$ we calculate
$$\frac{\p}{\p r} (r^2 + t + 1)^{-\frac{a}{2}} = \frac{-ar}{ (r^2 + t + 1)^{\frac{a}{2} + 1} },$$
\begin{equation*}
\begin{split}
\frac{\p^2}{\p r^2} (r^2 + t + 1)^{-\frac{a}{2} } & = \frac{a \left( (a + 2)r^2 - (r^2 + t + 1) \right) }{ (r^2 + t + 1)^{\frac{a}{2} + 2} } = \frac{a\left( (a + 1)r^2 - (t + 1) \right) }{ (r^2 + t + 1)^{\frac{a}{2} + 2} },
\end{split}
\end{equation*}
\begin{equation*}
\begin{split}
\Delta_\mu  (r^2 + t + 1)^{-\frac{a}{2} } & = \frac{a\left( (a + 1)r^2 - (t + 1) - (\mu -1)(r^2 + t + 1) \right) }{ (r^2 + t + 1)^{\frac{a}{2} + 2} } = \frac{a\left( (a + 2 - \mu)r^2 - \mu (t + 1) \right) }{ (r^2 + t + 1)^{\frac{a}{2} + 2} },
\end{split}
\end{equation*}
\begin{equation*}\label{secondsupersol:drequation}
\frac{\p}{\p r} (r^2 + 1)^{-\frac{\ell}{2}} (r^2 + t + 1)^{-\frac{k - \ell }{2}} = \frac{-r}{ (r^2 + 1)^{\frac{\ell}{2}} (r^2 + t + 1)^{\frac{k - \ell }{2} } } \left( \frac{\ell}{r^2 + 1} + \frac{k - \ell }{r^2 + t + 1}\right),
\end{equation*}
\begin{equation*}\label{secondsupersol:d2requation}
\frac{\p^2}{\p r^2} (r^2 + 1)^{-\frac{\ell}{2}} (r^2 + t + 1)^{-\frac{k - \ell }{2}} = \frac{\frac{\ell\left( (\ell + 1)r^2 - 1 \right) }{ (r^2 + 1)^{ 2} } + \frac{(k - \ell )\left( (k - \ell  + 1)r^2 - (t + 1) \right) }{ (r^2 + t + 1)^{2} } + \frac{2\ell (k - \ell ) r^2}{(r^2 + 1) (r^2 + t + 1)} }{ (r^2 + 1)^{\frac{\ell }{2}} (r^2 + t + 1)^{\frac{k - \ell }{2} } }
\end{equation*}
$$\Delta_\mu (r^2 + 1)^{-\frac{\ell }{2}} (r^2 + t + 1)^{-\frac{k - \ell }{2} } = \frac{ \frac{\ell \left( (\ell  + 2 - \mu)r^2 - \mu  \right) }{ (r^2 + 1)^{2} } + \frac{(k - \ell )\left( (k - \ell  + 2 - \mu)r^2 - \mu (t + 1) \right) }{ (r^2 + t + 1)^{2} } +  \frac{2\ell (k - \ell )r^2}{(r^2 + 1) (r^2 + t + 1)} }{ (r^2 + 1)^{\frac{\ell }{2}} (r^2 + t + 1)^{\frac{k - \ell }{2}} }$$
$$\left( \frac{\p}{\p t} - \Delta_\mu \right) (r^2 + 1)^{-\frac{\ell }{2}} (r^2 + t + 1)^{-\frac{k - \ell }{2} } = - \frac{ \frac{\ell \left( (\ell  + 2 - \mu)r^2 - \mu  \right) }{ (r^2 + 1)^{2} } + \frac{(k - \ell )\left( (k - \ell  + \frac{5}{2} - \mu)r^2 - \left( \mu - \frac12 \right) (t + 1) \right) }{ (r^2 + t + 1)^{2} } + \frac{2\ell (k - \ell ) r^2}{(r^2 + 1) (r^2 + t + 1)} }{ (r^2 + 1)^{\frac{\ell }{2}} (r^2 + t + 1)^{\frac{k - \ell }{2}} }.$$
Since
$$0 < \ell  < \nu - 2 < \mu - 2,$$
the first term on the right-hand side dominates the other two as long as
$$t + 1 \geq D_{k,\ell, \nu} r^2.$$
We obtain
\begin{equation*}
\left( \frac{\p}{\p t} - \Delta_\mu - \frac{d_{k, \ell, \nu}}{r^2 + 1} \right) \frac{1}{(r^2 + 1)^{\frac{\ell }{2}} (r^2 + t + 1)^{\frac{k - \ell }{2} } } \geq 0 
\end{equation*}
for $t + 1 \geq D_{k,\ell, \nu} r^2,$ and
\begin{equation*}
\left( \frac{\p}{\p t} - \Delta_\mu + \frac{E_{k, \ell, \nu}}{r^2 + t + 1} \right) \frac{1}{(r^2 + 1)^{\frac{\ell }{2}} (r^2 + t + 1)^{\frac{k - \ell }{2} } } \geq 0 
\end{equation*}
for general $(r,t).$ 
Next, since $\mu \geq \nu + \eps,$ (\ref{firstsupersol:evolutionlowerbound}) gives
\begin{equation*}
\left( \frac{\p}{\p t} - \Delta_\mu - \frac{c_{k, \nu} \eps}{r^2 + t + 1} \right) F_{k, \nu}(r,t+1) \geq 0.
\end{equation*}
Combining these, we have for $t + 1 \geq D_{k,\ell, \nu} r^2,$
\begin{equation*}
\begin{split}
\left( \frac{\p}{\p t} - \Delta_\mu \right) G(r,t) & \geq \frac{d_{k,\ell,\nu}}{r^2 + 1} \frac{\eps^2}{ (r^2 + 1)^{\frac{\ell }{2}} (r^2 + t + 1)^{\frac{k - \ell }{2} }} > \frac{d'_{k,\ell,\nu} \eps^2}{r^2 + 1} G
\end{split}
\end{equation*}
by (\ref{firstsupersol:Fbounds}). Meanwhile, for $t + 1 \leq D_{k,\ell,\nu}r^2,$ we have
\begin{equation*}
\begin{split}
\left( \frac{\p}{\p t} - \Delta_\mu \right) G(r,t) & \geq -\frac{E_{k,\ell,\nu}}{r^2 + 1} \frac{\eps^2}{ (r^2 + 1)^{\frac{\ell }{2}} (r^2 + t + 1)^{\frac{k - \ell }{2} }} + \frac{c_{k,\nu} \eps}{r^2 + t + 1} F_{k,v}(r, t + 1) \\
& \geq \frac{c'_{k,\nu} \left( 1 -E_{k,\ell,\nu} \eps \right) \eps}{r^2 + 1}  F_{k,v}(r, t + 1) \\
& > \frac{c''_{k,\nu} \eps}{r^2 + 1} G
\end{split}
\end{equation*}
for $\eps$ sufficiently small. Combining these gives (\ref{secondsupersol:evolutionlowerbound}).


The other bounds follow from the calculations above.
\end{proof}

    The above results can now be applied to develop barriers for ALE manifolds.
    
    \begin{lemma}[ALE Comparison Principle]\label{lemma:comparisonprinciple} Let $(M^n, g_0)$ be a Ricci-flat ALE space of dimension $n \geq 3$ and order $\tau > 0.$ 
    Fix coordinates at infinity $\psi : ( \R^n \setminus \bar{B}_1 ) / \Gamma \to M \setminus K  $.
    Given $0 < \ell \leq k < n$ with $\ell < n - 2,$ there exist $\eps_0, \gamma_0 > 0$ and $R_0 \geq 1$ as follows.

    Let $g(t),$ $t_0 \leq t < T,$ be any smooth 
    family of metrics with
    \begin{equation}\label{g(t)eps0closeness}
        \| g(t) - g_0 \|_{C^0} < \eps_0
    \end{equation}
    for all $t_0 \leq t < T.$ 
    Let $\underline{R}(t), \overline{R}(t)$ be any smooth functions with 
    $$R_0 \leq \underline{R}(t) \leq \overline{R}(t)$$
    for $t_0 \leq t \leq T,$ where we allow the possibility $\overline{R}(t) \equiv \infty.$
    Write
$$W = \left\{(x,t)  \in M \times [t_0, T) \mid  x \in \psi \left( \overline{B}_{\overline{R}(t) } \setminus B_{\underline{R}(t)}(0)  \right) \right\}.$$
Let $G(x,t) = G_{k,\ell, n - \gamma_0, \gamma_0}(r,t)$ be as above, where $r = r\left( \psi^{-1} (x) \right)$ is the radial coordinate at infinity.
    Suppose that $f(x,t)$ is a function satisfying
    \begin{equation}\label{comparisonprinciple:eps0evolution}
        \left( \frac{\p}{\p t} - \Delta_{g(t),g_0} - \frac{\eps_0}{r^2 + 1} \right) f \leq 0
    \end{equation}
    on $W,$ as well as
    \begin{equation}
        f(x,0) \leq A G(r,0)
    \end{equation}
    for $\underline{R}(0) \leq r \leq \overline{R}(0),$ where $A > 0,$ and
    \begin{equation}
    f(x,t) \leq A G(r, t) 
    \end{equation}
    for all $t_0 \leq t < T$ and $r = \underline{R}(t) \text{ or } \overline{R}(t).$    
    Then 
 \begin{equation}
        f(x,t) < A G(r,t) 
    \end{equation}
    for all $(x,t) \in W^\circ.$ 
    \end{lemma}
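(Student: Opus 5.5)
The plan is the standard maximum-principle argument: show that $AG$ is a strict supersolution of the operator in (\ref{comparisonprinciple:eps0evolution}) on $W$, and then exclude a first touching time. The supersolution property comes from Proposition \ref{prop:secondsupersol} with $\nu = n-\gamma_0$, $\eps = \gamma_0$, and $\mu = n$. The hypotheses of that proposition hold once $\gamma_0$ is small enough that $\nu > 2$, $\ell \le k < \nu$ and $\ell < \nu - 2$. These are possible because $k<n$ and $\ell<n-2$, and then $\mu = n \ge \nu + \eps$. Proposition \ref{prop:secondsupersol} gives
$$\left(\p_t - \Delta_n - \frac{c\gamma_0^2}{r^2+1}\right) G > 0,$$
with $c = c_{k,\ell,\nu}$.

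\textbf{Step 1: passing from $\Delta_n$ to $\Delta_{g(t),g_0}$.} We write $\Delta_{g(t),g_0}G = \Delta_{\psi^*g_{Euc}} G + E$, where $\Delta_{\psi^* g_{Euc}}G = \Delta_n G$ because $G$ is radial. The error is
$$E = (g^{ab}-g_0^{ab})\nabla^2_{ab}G + (\Delta_{g_0}-\Delta_{\psi^*g_{Euc}})G.$$
We bound it using (\ref{secondsupersol:dGdrbound})--(\ref{secondsupersol:d2Gdr2bound}). These give $|\nabla^2 G|\lesssim G/(r^2+1)$ and $|\nabla G|\lesssim G/r$, after accounting for the Hessian of $r$ being $O(1/r)$ in Euclidean coordinates. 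Combining this with Lemma \ref{lemma: difference of Laplacians} and (\ref{g(t)eps0closeness}) yields
$$|E| \le C\left(\eps_0 + R_0^{-\tau}\right)\frac{G}{r^2+1} \qquad \text{for } r\ge R_0.$$
We then choose $\eps_0$ small and $R_0$ large so that $C(\eps_0+R_0^{-\tau}) + \eps_0 < c\gamma_0^2$. This gives
$$\left(\p_t - \Delta_{g(t),g_0} - \frac{\eps_0}{r^2+1}\right)(AG) > 0 \quad \text{on } W.$$
This step is the main obstacle: the constant $\gamma_0$ must be fixed first, depending only on $k,\ell,n$, and only afterwards are $\eps_0$ and $R_0$ chosen. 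One must check that nothing circular arises. It does not, since $c$ depends only on $k,\ell,\nu$.

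\textbf{Step 2: the maximum principle.} Set $w = f - AG$. Then
$$\left(\p_t - \Delta_{g,g_0} - \frac{\eps_0}{r^2+1}\right) w < 0$$
on $W$, with $w \le 0$ on the parabolic boundary. It is slightly cleaner to use $(1+\eta)AG$ and let $\eta\to0$, or to note directly that the strict inequality already handles equality on the boundary. Suppose $w$ reaches $0$ at an interior point for the first time at $(x_1,t_1)$. There $\p_t w \ge 0$, $\Delta_{g,g_0} w \le 0$ (as $g^{-1}$ is positive definite), and $w = 0$, so the left-hand side is $\ge 0$, a contradiction. The moving boundaries cause no trouble because $W$ is defined by smooth $\underline R(t)\le \overline R(t)$ and boundary values are assumed controlled there.

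\textbf{Step 3: the case $\overline R\equiv\infty$.} The maximum may escape to infinity, and this needs a growth assumption on $f$. We implicitly use that $f$ is bounded by a multiple of $G$ near infinity, or decays as in the applications. If needed, we would run the argument with $(1+\eta)AG$ on truncated regions $r\le R$ and send $R\to\infty$, using that $\sup f/G$ is finite. The strict conclusion on $W^\circ$ then follows from the strict supersolution inequality (the strong maximum principle, or the $\eta$-perturbation argument).
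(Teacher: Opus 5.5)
Your proposal is correct and is essentially the paper's own proof. You choose $\gamma_0$ so that Proposition~\ref{prop:secondsupersol} applies with $\nu=n-\gamma_0$ and $\mu=n$. You then absorb $(\Delta_{g_0}-\Delta_n)G=O(r^{-2-\tau})G$ by taking $R_0$ large, and $(g^{-1}-g_0^{-1})\nabla^2G=O(\eps_0)\,G/r^2$ by taking $\eps_0\ll\gamma_0^2$, and conclude by the maximum principle.

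Your Step~3 is more careful than the paper, which just says ``by the maximum principle''. For $\overline R\equiv\infty$ some growth condition on $f$ is genuinely needed. It does hold in Theorem~\ref{thm:Mestimate}, where $f=|\calM|^q$ decays like $r^{-k}$ on compact time intervals, as $G$ does.

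However, your truncation sketch does not quite close. Finiteness of $\sup f/G$ does not give $f\le(1+\eta)AG$ on a sphere $\{r=R\}$. Instead, compare $f$ with $AG+\eta e^{Ct}(1+r^2)G$. For $C$ large this is still a strict supersolution, and it exceeds $f$ for $r\gg1$; then let $\eta\to0$.
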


    \begin{proof} Let $0 < \eps < \gamma,$ where $\gamma$ is sufficiently small in the sense of Proposition \ref{prop:secondsupersol}, and assume that 
     $\| g(t) - g_0 \|_{C^0} < \eps.$
    Consider $G = G_{k, \ell, n - \gamma, \gamma} (r, t)$ as a radial function on the coordinate chart at infinity in $M.$ By the maximum principle, it suffices to check that $G$ is a strict supersolution for (\ref{comparisonprinciple:eps0evolution}) in $W.$

By Lemma \ref{lemma: difference of Laplacians} and (\ref{secondsupersol:dGdrbound}-\ref{secondsupersol:d2Gdr2bound}), we have
$$\Delta_{g_0} G = \Delta_n G + O \left(\frac{1}{r^{2 + \tau}} \right) G.$$
For $r \geq R_0$ sufficiently large, this gives
$$\Delta_{g_0} G \leq \Delta_n G + \frac{\eps}{r^{2}}  G.$$
By (\ref{g0inversegrelation}), we further have
$$\Delta_{g, g_0} G = \Delta_{g_0} G - g_0 h g^{-1} \nabla^{g_0,2} G.$$
Applying the bounds (\ref{secondsupersol:dGdrbound}-\ref{secondsupersol:d2Gdr2bound}) again gives
$$\Delta_{g, g_0} G \leq \Delta_{g_0} G + \frac{C \eps}{r^2} G \leq \Delta_{n} G + \frac{(C + 1) \eps}{r^2} G.$$
From (\ref{secondsupersol:evolutionlowerbound}), with $\mu = n,$ we now have
\begin{equation}
0 < \left( \frac{\p}{\p t} - \Delta_{n} - \frac{ c_{k, \ell, \nu} \gamma^2 } { r^2 + 1 } \right) G(r,t) \leq \left( \frac{\p}{\p t} - \Delta_{g, g_0} - \frac{ c_{k, \ell, \nu} \gamma^2 - C \eps} { r^2 + 1 } \right) G(r,t).
\end{equation}
For $\eps \leq \frac{c_{k,\ell, \nu} \gamma^2}{2 C},$ we have a supersolution. In the statement, we let $\gamma_0 = \gamma$ and let $\eps_0 = \eps.$ 
    \end{proof}

\vspace{10mm}

\section{$C^{2,\gamma}_{-\ell}$-stability} \label{sec: weighted Holder stability}

\subsection{Stability of Ricci-DeTurck flow}

\begin{rmk} 
    Throughout the remainder of the paper, unless indicated otherwise, we use $B_R$ to denote the metric ball $B_R := B_R(x_*) \subseteq M$ in the pointed Riemannian manifold $(M, g_0, x_*)$.
\end{rmk}

\begin{thm}\label{thm:Mestimate}
    Let $(M^n, g_0 , x_*),$ $\mu \in \{n-1, n\},$ $\gamma \in \left( 0,1 \right),$ and $n - \mu + 1 < \ell < n-2$ be as in the statement of Theorem \ref{main thm weighted Schauder spaces}, satisfying the assumptions (\ref{main thm weighted Schauder spaces, hypothesis stable})--(\ref{main thm weighted Schauder spaces, hypothesis integrable}). 
    Choose $\ell'$ such that 
    $$\max\{ 0, 2(n - \mu + 1) -\ell \} <  \ell' < \ell .$$
    There exists $\eps_1 > 0$ as follows.
    
    Let $\eps_0$ be the constant of Lemma \ref{lemma:comparisonprinciple}. Given any solution $g(t)$ of the Ricci-DeTurck flow on $\LB 0, T \right)$ with
    \begin{equation}
        \| g(0) - g_0 \|_{C^{2, \gamma}_{-\ell}} \leq \eps \leq \eps_1^2
    \end{equation}
    and
    \begin{equation}\label{Mestimate:C20bound}
        \sup_{0 \leq t < T} \| g(t) - g_0 \|_{C^{2}_0} < \eps_0,
    \end{equation}
    we have
        \begin{equation}\label{Mestimate:Mestimate}
        |\calM(x,t)| \leq \frac{\eps_1^{-1} \eps}{ \rho(x)^{\ell'} (\rho(x)^2 + t)^{\frac{\ell - \ell'}{2} + 1}}
    \end{equation}
    and
     \begin{equation}\label{Mestimate:gestimate}
        \| g(t) - g_0 \|_{C^{2, \gamma}_{-\ell}} \leq \eps_1^{-1} \eps
    \end{equation}
    for all $0 \leq t < T.$
\end{thm}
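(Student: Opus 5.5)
The plan is a continuity (bootstrap) argument in $t$, combining three ingredients: the barrier of Lemma \ref{lemma:comparisonprinciple} on an exterior region, a blow-up/Liouville argument for the interior, and time integration of $\calM$ to recover (\ref{Mestimate:gestimate}).

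\textbf{Setup of the bootstrap.} Let $T_1 \le T$ be the supremum of the times $s$ for which
\[
|\calM(x,t)| \le 2\eps_1^{-1}\eps\,\rho^{-\ell'}(\rho^2+t)^{-\frac{\ell-\ell'}{2}-1}
\]
holds on $[0,s]$. At $t=0$ the required bound holds with a fixed constant $C\eps \ll \eps_1^{-1}\eps$, since $|\calM(g(0))| \lesssim \|g(0)-g_0\|_{C^{2}_{-\ell}}\rho^{-\ell-2}$. Short-time continuity (Theorem \ref{thm:wellposedness}) then gives $T_1>0$. The goal is to show that on $[0,T_1)$ the bound improves to constant $\eps_1^{-1}\eps$; this forces $T_1=T$.

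\textbf{Deriving (\ref{Mestimate:gestimate}) from the $\calM$ bound.} Integrate $\partial_t g=\calM$ in time, using
\[
\int_0^\infty(\rho^2+t)^{-\frac{\ell-\ell'}{2}-1}\,dt \lesssim \rho^{-(\ell-\ell')}.
\]
This gives $|g(t)-g(0)| \lesssim \eps_1^{-1}\eps\,\rho^{-\ell}$. The global weighted Schauder estimate (Lemma \ref{lemma:globalC2gamma-deltaestimate}, as used in the proof of Theorem \ref{thm:wellposedness}) then upgrades this to $\|g(t)-g_0\|_{C^{2,\gamma}_{-\ell}} \lesssim \eps_1^{-1}\eps$. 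Since $\eps\le\eps_1^2$, this quantity is $\lesssim\eps_1$, which is small. The remaining task is therefore to improve the $\calM$ bound.

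\textbf{Exterior region.} Apply Lemma \ref{lemma:comparisonprinciple} to $f=|\calM|$ (or to $|\calM|^q$ with $q$ close to $1$, then take roots) on $\{r\ge R_0\}$. The barrier is $G=G_{k,\ell',n-\gamma_0,\gamma_0}$ with $k=\ell+2$, which is admissible because $\ell'<\ell<n-2$ and $k<n$. By (\ref{secondsupersol:Gupperlowerbound}), $G$ has exactly the profile $\rho^{-\ell'}(\rho^2+t)^{-\frac{\ell-\ell'}{2}-1}$ up to constants.

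The potential in (\ref{evolutionof|M|^q}) is
\[
(1+|h|)|\Rm(g_0)| + |\nabla h|^2 + |\nabla^2 h| \lesssim \rho^{-2-\tau} + \eps_1\rho^{-\ell-2}.
\]
This is $\le \eps_0/(r^2+1)$ once $R_0$ is large and $\eps_1$ is small, so the hypothesis (\ref{comparisonprinciple:eps0evolution}) holds. The comparison lemma then reduces the exterior bound to controlling $|\calM|$ on the inner boundary $r=R_0$, and to the initial data, where the bound holds with constant $C\eps$.

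\textbf{Interior region (main obstacle).} It remains to show that on the compact set $\{r\le R_0\}$,
\[
|\calM| \le A\,\eps\,(1+t)^{-\frac{\ell-\ell'}{2}-1}
\]
with $A$ independent of $\eps_1$. This is where linear stability and integrability enter, and I would argue by contradiction.

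Suppose there are flows $g_i$ with $\eps_i\to0$ and times $t_i$ at which the interior bound fails with constants $A_i\to\infty$; we may take $t_i$ to be the first such time. Short times are handled by well-posedness, so assume $t_i\to\infty$. Translate time so that $t_i\mapsto0$, and set $f_i = c_i\,\calM(g_i)$, where $c_i$ normalizes $\sup_{B_{R_0}}|f_i(0)|=1$.

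By the bootstrap bound and the exterior barrier,
\[
|f_i(x,s)| \lesssim \rho^{-\ell'}\Bigl(\tfrac{\rho^2+t_i+s}{t_i}\Bigr)^{-\frac{\ell-\ell'}{2}-1}
\]
for $s\le0$; in particular $|f_i|\lesssim\rho^{-\ell'}$ uniformly on backward intervals. The $f_i$ solve (\ref{Mevolution}), whose coefficients converge to those of $L_{g_0}$. Hence, after passing to a subsequence, $f_i$ converges in $C^\infty_{loc}$ to an ancient solution $f_\infty$ of $(\partial_t-L)f_\infty=0$ with $|f_\infty|\le C\rho^{-\ell'}$ and $\ell'<n-2$, and $f_\infty(0)\not\equiv0$.

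The key step is orthogonality of the limit. By Theorem \ref{thm:ALEalmostorthog}, applied with decay rate $\delta=\ell$, the kernel component $(\calM,e_1)_{g_1}$ is $O(\|\calM\|^2_{C^{0,\gamma}_{-\ell-2}})$, hence of order $\eps_1\|\calM\|$. After normalization this tends to zero. Since $g_1\to g_0$ and $e_1$ converges to kernel elements of $L_{g_0}$, we get $f_\infty(0)\perp\ker_{L^2}L$.

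To pass the pairing to the limit, one needs the products of $\calM$-decay (with rate $\ell'$) against $e\in C^{2,\gamma}_{-\mu}$ to be integrable uniformly in $i$. The hypothesis $\ell'>2(n-\mu+1)-\ell$ should be exactly what guarantees this, via a tail estimate of the type in (\ref{divg*v1est}), together with dominated convergence. This step, making the almost-orthogonality estimate survive the normalization and the weak decay $\rho^{-\ell'}$, is the one I expect to be delicate.

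Finally, Theorem \ref{thm:liouville} forces $f_\infty\equiv0$, contradicting $f_\infty(0)\not\equiv0$. Choosing $\eps_1$ small relative to the resulting constant $A$ closes the bootstrap.
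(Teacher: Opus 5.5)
Your architecture (first-failure time, exterior barrier from Lemma~\ref{lemma:comparisonprinciple} applied to $|\calM|^q$, interior blow-up to an ancient solution, Liouville via Theorem~\ref{thm:liouville}, then time integration plus Lemma~\ref{lemma:globalC2gamma-deltaestimate}) matches the paper's. The orthogonality step, however, fails as you state it. Theorem~\ref{thm:ALEalmostorthog} with $\delta=\ell$ bounds the kernel component by $C\|\calM(t_i)\|^2_{C^{0,\gamma}_{-\ell-2}}$, and this norm does not decay in time. Under your bootstrap profile, $\rho^{\ell+2}|\calM|\lesssim\eps\,\bigl(\rho^2/(\rho^2+t)\bigr)^{\frac{\ell-\ell'}{2}+1}$. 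Its supremum is of order $\eps$ and is attained where $\rho\gtrsim\sqrt{t}$; generically $|\calM|\sim\eps\rho^{-\ell-2}$ there, since the flow has not yet reached that region. Your normalizing factor, by contrast, is the size of $\calM$ on the compact set, which is of order $\eps\,t_i^{-\frac{\ell-\ell'}{2}-1}$. After normalization the kernel component is therefore only bounded by roughly $\eps_i\,t_i^{\frac{\ell-\ell'}{2}+1}$, and nothing prevents $t_i\to\infty$ much faster than $\eps_i\to0$. The phrase ``of order $\eps_1\|\calM\|$, hence tends to zero after normalization'' conflates the global weighted norm with the local quantity you divide by.

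The missing idea is to apply almost-orthogonality at a weaker weight $\delta\in\bigl(n-\mu,\tfrac{\ell+\ell'}{2}-1\bigr)$, which Lemma~\ref{lem integrability and changing decay rate} permits. Split at $\rho\sim\sqrt{t_i}$. Inside $B_{2\sqrt{t_i}}$, use the profile bound, upgraded to H\"older by Lemma~\ref{lem parab smoothing in weighted Holder}. Outside $B_{\sqrt{t_i}}$, use the $C^{2,\gamma}_{-\ell}$ bound on $h$, which now contributes only $\eps\rho^{\delta-\ell}\lesssim\eps\,t_i^{(\delta-\ell)/2}$. Together these give $\|\calM(t_i)\|_{C^{0,\gamma}_{-\delta-2}}\lesssim\eps\,t_i^{-\frac12\min\{\ell-\ell'+2,\,\ell-\delta\}}$. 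The quadratic bound is then $o\bigl(\eps\,t_i^{-\frac{\ell-\ell'}{2}-1}\bigr)$, because $2(\ell-\delta)>\ell-\ell'+2$. This is exactly where the hypothesis $\ell'>2(n-\mu+1)-\ell$ enters: it is equivalent to the interval for $\delta$ being nonempty. It is not the integrability condition for pairing $\rho^{-\ell'}$ against $e\in C^{0}_{-\mu}$; that condition would instead be $\ell'+\mu>n$.
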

\begin{proof} 

Let $q > 1$ be small enough that $q(\ell + 2) < n.$ 
Let $\psi: (\R^n \setminus \bar{B}_1(0)) / \Gamma \to M \setminus K $ be coordinates at infinity for $M$.
Let $\varphi(x)$ be a smooth cutoff function supported on  $\psi\left( B_2(0) \right) \subseteq M$ with $\varphi(x) \equiv 1$ on $K$.
Define 
\begin{equation}\label{Mestimate:Gdef}
G(x,t) = G_{q(\ell + 2), q\ell', n - \gamma_0, \gamma_0} \left(\varphi(x) + (1 - \varphi(x) ) r(x),t \right),
\end{equation}
where $r(x)$ is the radial coordinate at infinity.
Then $G$ coincides with the function of Lemma \ref{lemma:comparisonprinciple} outside $\psi( B_2(0))$ and satisfies the bound (\ref{secondsupersol:Gupperlowerbound}) with $\rho(x)^2$ in place of $r^2 + 1.$

    Suppose for contradiction that (\ref{Mestimate:Mestimate}) fails for $\eps_1 = \frac{1}{i},$ $i \in \N,$ Ricci-DeTurck solutions $g(t) = g_i(t),$ $t \in \LB 0, T_i \right),$ and $\eps = \eps_i \leq \frac{1}{i^2}.$ 
    Write $h_i(t) = g_i(t) - g_0$ and $\calM_i(x,t) = \calM(g_i(t))(x).$
    Let $t_i > 0$ be the first time such that
    \begin{equation}\label{Mestimate:Mnonestimate}
        |\calM_i(x,t_i)|^q = (i \eps_i)^q G(x,t_i)
    \end{equation}
    for some $x \in M.$ 
Since $t_i$ is the first such time, $\calM_i(x,t)$ satisfy
    \begin{equation}\label{Mestimate:MiC2bound}
        | \calM_i(x,t) | \leq i \eps_i G(r,t)^{\frac{1}{q}} \leq \frac{C i \eps_i}{\rho(x)^{\ell'} (\rho(x)^2 + t)^{\frac{\ell - \ell'}{2} + 1} }
    \end{equation}
    for all $t \leq t_i.$ By Theorem \ref{thm:wellposedness}, we must have $t_i \to \infty$ as $i \to \infty.$
    
    By integrating (\ref{Mestimate:MiC2bound}) from $0$ to $t_i,$ we get
    \begin{equation}\label{Mestimate:hiC2-ellbound}
    \begin{split}
        |h_i(x,t) | & \leq |h_i(x,0)| + \int_0^t \left| \calM_i(x,s) \right| \, ds \\
        & \leq  \frac{\eps_i}{\rho(x)^{\ell} } + \frac{C i \eps_i}{\rho(x)^{\ell'} } \cdot \frac{1}{\rho(x)^{\ell - \ell' } } \leq \frac{C i \eps_i}{\rho(x)^{\ell } }.
        \end{split}
    \end{equation}
    By Lemma \ref{lemma:globalC2gamma-deltaestimate}, we can improve this $C^0$ bound immediately to
    \begin{equation}\label{Mestimate:hC2gammaellbound}
        \| h_i(t) \|_{C^{2,\gamma}_{-\ell}} \leq C i \eps_i \leq \frac{C}{i}.
    \end{equation}

    \vspace{2mm}

\noindent   \underline{ {\bf Case 1:} There exists a bounded sequence of $x_i \in M$ such that (\ref{Mestimate:Mnonestimate}) holds with $x = x_i.$}

Let $R_i = \sqrt{t_i}.$ We have
    \begin{equation}\label{Mestimate:Aidef}
        A_i : = |\calM_i(x_i,t_i) | = i \eps_i G(x_i,t_i)^\frac{1}{q} \geq \frac{d i \eps_i}{t_i^{\frac{\ell - \ell'}{2} + 1} } = \frac{d i \eps_i}{R_i^{\ell - \ell' + 2}  },
    \end{equation}
    since $x_i$ is uniformly bounded. Here $d > 0$ depends on the contradicting sequence.

We first estimate the projection of $\calM_i$ onto the kernel of the linearized operator. 
Choose any $\delta \in \left( n - \mu , \frac{\ell + \ell'}2 -1 \right).$ 
We have
\begin{equation}
    \label{Mestimate:kdeltaell}
    2 + 2 \delta < \ell + \ell'  \quad \text{and} \quad 
    n - \mu < \delta < \frac{\ell + \ell'}2 -1 < \ell - 1 < \ell < n-2.
\end{equation}
Notice that by Lemma \ref{lem integrability and changing decay rate},
the assumptions of Theorem \ref{main thm weighted Schauder spaces}, and so too of Theorem \ref{thm:ALEalmostorthog}, hold with $\delta$ in place of $\ell.$

By applying the interior estimate Lemma \ref{lem parab smoothing in weighted Holder} to $u = \mathcal M_i$, we can improve (\ref{Mestimate:MiC2bound}) to
    \begin{equation}\label{Mestimate:MiC2gammabound}
        \| \mathcal M_i(t_i) \|_{C^{0, \gamma}_{-\ell'} \left(B_{2R_i} \right) } \leq 
        \frac{C i \eps_i}{R_i^{\ell - \ell' + 2} } \qquad (\forall i \gg 1).
    \end{equation}
    Meanwhile, 
    (\ref{Mestimate:hC2gammaellbound}) gives
    \begin{equation}\label{Mestimate:MiC2gammaouterbound}
        \| \calM_i(t_i) \|_{C_{-\ell - 2}^{0,\gamma} \left( M \setminus B_{R_i} \right) } \leq C i \eps_i. 
    \end{equation}
Combining (\ref{Mestimate:MiC2gammabound}-\ref{Mestimate:MiC2gammaouterbound}), we get
    \begin{equation}\label{Mestimate:Mideltaest}
    \begin{split}
        \| \calM_i(t_i) \|_{C_{-\delta - 2}^{0,\gamma} } & \leq C \sup_{1 \leq r \leq 2R_i} r^{\delta + 2} \frac{i\eps_i r^{-\ell'}}{R_i^{\ell - \ell' + 2}} + C \frac{R_i^{\delta + 2} i\eps_i}{R_i^{\ell + 2} } \\
        & \leq C i \eps_i \max\{ R_i^{-(\ell - \ell' + 2)}, R_i^{\delta - \ell}  \} \\
        & \leq C i \eps_i R_i^{-\min \{\ell - \ell' + 2, \ell - \delta \}}.
        \end{split}
    \end{equation}

    Observe \eqref{Mestimate:hC2gammaellbound} and \eqref{Mestimate:kdeltaell} imply
        $$\| g_i (t) - g_0 \|_{C^{2, \gamma}_{-\delta}} \le \| g_i (t) - g_0 \|_{C^{2, \gamma}_{-\ell} } \le \frac C i.$$
    We can therefore apply Theorem \ref{thm:ALEalmostorthog} to deduce that, for all $i \gg1,$ 
    there exists $g_{1,i} \in C^{2, \gamma}_{-\delta}$ with $\mathcal M(g_{1, i} ) = 0$ such that,
    for any any unit element $e_i$ of $\ker L_{g_{1,i}},$ we have
    \begin{equation}
        \left( \calM_i, e_i \right)_{g_{1,i}} \leq C (i \eps_i)^2 R_i^{-2 \min \{\ell - \ell' + 2, \ell - \delta \}} \leq C i \eps_i R_i^{-2 \min \{\ell - \ell' + 2, \ell - \delta \}}
    \end{equation}
    since $i \eps_i \leq 1.$ Observe that $2 \ell - 2 \delta > \ell - \ell' + 2$ by (\ref{Mestimate:kdeltaell}). In view of (\ref{Mestimate:Aidef}), we therefore have
    \begin{equation}\label{Mestimate:almostorthog}
    \left( \calM_i, e_i \right)_{g_{1,i}} 
    = o(A_i)
    \end{equation}
    as $i \to \infty.$

    We now consider the solution
    $$f_i(x,s) = \frac{ \calM_i(x, t_i + s)}{A_i}$$
    of (\ref{Mevolution}), defined for $s \in \LB - t_i, 0 \RB.$ The definition (\ref{Mestimate:Aidef}) gives
    \begin{equation}
|f_i(x_i, 0)| = 1
    \end{equation}
    and the bound (\ref{Mestimate:MiC2bound}) goes over to
    \begin{equation}
    \begin{split}
        | f_i(x,s) | \leq \frac{C i \eps_i}{A_i \rho(x)^{\ell' }(\rho(x)^2 + t_i + s)^{\frac{\ell - \ell'}{2} + 1} } 
        & \leq \frac{C}{d \rho(x)^{\ell' } \left(1 + \frac{s}{t_i} \right)^{\frac{\ell - \ell'}{2} + 1} },
        \end{split}
    \end{equation}
    while the estimate (\ref{Mestimate:almostorthog}) gives
    \begin{equation}\label{Mestimate:innerproducttendstozero}
    \left( f_i, e_i \right)_{g_{1,i}} \to 0
    \end{equation}
    as $i \to \infty.$ We now pass to a subsequential limit $x_i \to x_\infty,$ $f_i \to f$ in $C^\infty_{loc}\left( M \times \left( -\infty, 0 \right] \right)$ as $i \to \infty.$ In view of (\ref{Mestimate:hC2gammaellbound}), we have $g_i(t_i) \to g_0$ as $i \to \infty.$ We obtain an ancient solution $f(x,s)$ of (\ref{Mevolution}) with $g(t) \equiv g_0,$ which satisfies
    \begin{equation}
        | f(x,s) | \leq \frac{C}{\rho(x)^{\ell'} }, \qquad |f(x_\infty, 0)| = 1, \qquad
    \left( f, e \right) = 0
    \end{equation}
    for all $e \in \ker L_{g_0}.$\footnote{Here we have let $e_i \to e$ in $C^0_{-\mu}$ 
    by the argument of Kr\"oncke-Petersen \cite[Prop. 2.3]{KronckePetersen20}, which allows us to pass to the limit in (\ref{Mestimate:innerproducttendstozero}).} But this contradicts the Liouville Theorem \ref{thm:liouville}.

\vspace{2mm}
    
\noindent   \underline{ {\bf Case 2:} There exists no bounded sequence of $x_i \in M$ such that (\ref{Mestimate:Mnonestimate}) holds with $x = x_i.$}

In particular, given any $R_1 > 0,$ for $i$ sufficiently large, we have
\begin{equation}
| \calM_i(x,t) |^q < (i \eps_i)^q G(x,t)
\end{equation}
for all $x \in \bar{B}_{2R_1}$ and $0 \leq t \leq t_i.$ But, choosing $R_1 \geq R_0$ (from Lemma \ref{lemma:comparisonprinciple}) such that the evolution equation (\ref{comparisonprinciple:eps0evolution}) is satisfied by $|\calM_i|,$ and taking $\underline{R}(t) \equiv R_1$ and $\overline{R}(t) \equiv \infty,$ the comparison principle (Lemma \ref{lemma:comparisonprinciple}) gives
\begin{equation}
| \calM_i(x,t) |^q < (i \eps_i)^q G(x,t)
\end{equation}
for all $x \in M \setminus \bar{B}_{R_1}$ and $0 \leq t \leq t_i.$ This contradicts (\ref{Mestimate:Mnonestimate}), completing the proof of (\ref{Mestimate:Mestimate}).

In view of the $C^0_{-\ell}$-bound (\ref{Mestimate:hiC2-ellbound}), we can now use Lemma \ref{lemma:globalC2gamma-deltaestimate} to deduce (\ref{Mestimate:gestimate}), completing the proof.
\end{proof}

    \begin{cor}\label{cor:existence}
    Let $\eps_1 > 0$ be as in the previous theorem. Given any initial metric $g(0)$ with
        \begin{equation}
        \| g(0) - g_0 \|_{C^{2, \gamma}_{-\ell}} \leq \eps \leq \eps_1^2,
    \end{equation}
    there exists a complete and convergent Ricci-DeTurck solution $g(t)$ with respect to $g_0$ and initial data $g(0)$
    satisfying (\ref{Mestimate:Mestimate}-\ref{Mestimate:gestimate}), as well as a metric $g_\infty$ with $\calM(g_\infty) = 0$ such that
    \begin{equation}\label{existence:Ckgammaest}
        \| g(t) - g_\infty \|_{C^{k,\gamma}_{-\ell'}(B_{\sqrt{t}})} \leq C_{k,\gamma, g_0} \eps t^{-\frac{\ell - \ell'}{2}}
    \end{equation}
    for each $k \in \N.$ Here the norms may be taken with respect to $g_0$ or $g_\infty.$
\end{cor}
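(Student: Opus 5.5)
We combine short-time well-posedness (Theorem \ref{thm:wellposedness}) with the a priori estimate of Theorem \ref{thm:Mestimate} in a continuity argument, and then integrate the decay bound (\ref{Mestimate:Mestimate}) in time to obtain the limit $g_\infty$.

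\textbf{Step 1: global existence.} Let $g(t)$ be the maximal complete, bounded-curvature Ricci-DeTurck flow starting from $g(0)$. Let $T^*$ be the supremum of times $T$ such that $\sup_{[0,T)}\|g(t)-g_0\|_{C^2_0}<\eps_0$. On $[0,T^*)$, Theorem \ref{thm:Mestimate} gives
$$\|g(t)-g_0\|_{C^{2,\gamma}_{-\ell}}\le \eps_1^{-1}\eps\le \eps_1 .$$
Shrinking $\eps_1$ if necessary, this implies $\|g(t)-g_0\|_{C^2_0}\le C\eps_1<\eps_0/2$. If $T^*$ were finite, we could restart the flow at a time $t$ slightly less than $T^*$ using Theorem \ref{thm:wellposedness} with initial datum $g(t)$. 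Its estimate $C\eps_1 e^{Kt}$ keeps the $C^2_0$ distance below $\eps_0$ for a uniform additional time, which contradicts maximality of $T^*$. Uniqueness among bounded-curvature flows identifies the restarted flow with $g(t)$. Hence $T^*=\infty$, and (\ref{Mestimate:Mestimate})--(\ref{Mestimate:gestimate}) hold for all $t\ge 0$.

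\textbf{Step 2: convergence.} Since $\partial_t g=\calM(g)$, the bound (\ref{Mestimate:Mestimate}) gives for $s>t$
$$|g(x,s)-g(x,t)|\le \int_t^\infty \frac{C\eps\,d\tau}{\rho^{\ell'}(\rho^2+\tau)^{\frac{\ell-\ell'}{2}+1}}\le \frac{C\eps}{\rho^{\ell'}(\rho^2+t)^{\frac{\ell-\ell'}{2}}}.$$
The integral converges because $\ell>\ell'$. So $g(t)$ converges uniformly to some $g_\infty$, and on $B_{\sqrt t}$ we get
$$\rho^{\ell'}|g(t)-g_\infty|\le C\eps\, t^{-\frac{\ell-\ell'}{2}}.$$
The uniform $C^{2,\gamma}_{-\ell}$ bound persists in the limit, so $g_\infty\in C^{2,\gamma}_{-\ell}$ and $\|g_\infty-g_0\|\lesssim\eps$. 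Moreover $\calM(g(t))\to0$ pointwise, so $\calM(g_\infty)=0$ by continuity of $\calM$ under $C^2_{loc}$ convergence.

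\textbf{Step 3: higher-order estimates (main obstacle).} We must upgrade the $C^0_{-\ell'}$ decay on $B_{\sqrt t}$ to the $C^{k,\gamma}_{-\ell'}$ norms in (\ref{existence:Ckgammaest}). The plan is to apply parabolic interior estimates (Lemma \ref{lem parab smoothing in weighted Holder}, or scaled Shi-type estimates) to $\calM(g)$, which solves the linear equation (\ref{Mevolution}). The coefficients of this equation are uniformly controlled in the weighted spaces by the $C^{2,\gamma}_{-\ell}$ bound. Applied on parabolic cylinders of radius comparable to $\rho(x)$ and time intervals $[\tau/2,\tau]$ with $\tau\ge t$, these estimates give
$$\rho^{\ell'+j}|\nabla^j\calM(\tau)|\le C_j\eps\,\tau^{-\frac{\ell-\ell'}{2}-1}.$$
Integrating in $\tau$ from $t$ to $\infty$ as in Step 2 bounds $\rho^{\ell'+j}|\nabla^j(g(t)-g_\infty)|$, and an analogous argument handles the H\"older seminorm. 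The delicate point is the region $\rho\sim\sqrt t$, where the cylinder size must be chosen as $\min(\rho,\sqrt\tau)$ with compatible weights. These bounds also show that the convergence is smooth.

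Finally, the norms with respect to $g_0$ and $g_\infty$ are uniformly equivalent, since $\|g_\infty-g_0\|_{C^{k,\gamma}_{-\ell}}\lesssim\eps$ for every $k$ by the same smoothing.
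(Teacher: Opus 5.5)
Your proposal is correct and follows the paper's proof. It uses the same continuity argument combining Theorem \ref{thm:wellposedness} with the a priori bounds of Theorem \ref{thm:Mestimate}; your restart-plus-uniqueness step just makes the paper's ``maximal time'' argument explicit. It then integrates (\ref{Mestimate:Mestimate}) in time and applies interior parabolic estimates to $\calM$, exactly as the paper does. As in the paper, the higher-order bound (\ref{existence:Ckgammaest}) for $k\ge 3$ should be read for $t\gtrsim 1$, since your cylinders of radius $\sim\rho(x)\ge 1$ only fit inside $[\tau/2,\tau]$ once $\tau\gtrsim 1$.
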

\begin{proof}
    According to Theorem \ref{thm:wellposedness}, there exists such a solution with bounded $h$ for $T > 0.$ Applying Lemma \ref{lemma:globalC2gamma-deltaestimate} with $\delta = 0,$ we see that (\ref{Mestimate:C20bound}) is satisfied; in particular, we can take $T$ to be the maximal such time. But then the previous Theorem gives the bounds (\ref{Mestimate:Mestimate}-\ref{Mestimate:gestimate}) on $ \left[ 0, T \right).$ Since (\ref{Mestimate:gestimate}) implies (\ref{Mestimate:C20bound}), we must have $T = \infty.$ The estimate (\ref{existence:Ckgammaest}) then follows by integrating (\ref{Mestimate:Mestimate}) in time and applying parabolic regularity.
\end{proof}

\subsection{Stability of Ricci flow}

To pass from Ricci-DeTurck flow back to Ricci flow, we need a more precise estimate on the DeTurck vector field. We use an elliptic approach along the lines of Deruelle-Kr\"oncke \cite[Prop. 2.6]{DeruelleKroncke20}.

\begin{lemma}\label{lemma:ellipticdeTurck}
    The DeTurck vector field $V = V(g,g_0)$ satisfies
    \begin{equation*}
        -\Delta_g |V|_g \leq |\Ric(g)||V|_g + \left|\left( \div - \frac12 \nabla \tr \right) \calM \right|.
    \end{equation*}
\end{lemma}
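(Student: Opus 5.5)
The plan is to derive an elliptic equation for $V$ itself and then pass to $|V|_g$ via the Kato inequality. The key identity is the contracted Bianchi identity for $g$: $\div_g \Ric(g) - \frac12 \nabla \mathrm{R}_g = 0$. Apply the Bianchi-type operator $\beta_g := \div_g - \frac12 \nabla \tr_g$ to
$$\calM(g) = -2\Ric(g) + \scrL_{V} g .$$
The Ricci term drops out, leaving
$$\beta_g(\calM) = \beta_g(\scrL_V g).$$
Here all operations (divergence, trace, norms) are taken with respect to $g$.

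Next we compute $\beta_g(\scrL_V g)$ in terms of $V$. In indices, $(\scrL_V g)_{ij} = \nabla_i V_j + \nabla_j V_i$. Then
$$\div_g(\scrL_V g)_j = \Delta_g V_j + \nabla^i \nabla_j V_i = \Delta_g V_j + \nabla_j \div V + \Ric(g)_{j}{}^{p} V_p,$$
using the commutation formula for covariant derivatives. Also $\tr_g \scrL_V g = 2\div V$, so $\frac12 \nabla \tr_g(\scrL_V g) = \nabla \div V$. Subtracting, the $\nabla \div V$ terms cancel and
$$\beta_g(\scrL_V g) = \Delta_g V + \Ric(g)(V).$$
Hence
$$\Delta_g V = -\Ric(g)(V) + \beta_g(\calM).$$

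Finally, pass to the norm. Pair with $V$:
$$\tfrac12 \Delta_g |V|^2 = |\nabla V|^2 + \langle \Delta_g V, V\rangle \ge |\nabla V|^2 - |\Ric||V|^2 - |\beta_g \calM||V|.$$
Writing $\tfrac12\Delta_g|V|^2 = |V|\Delta_g|V| + |\nabla|V||^2$ and using Kato, $|\nabla |V|| \le |\nabla V|$, gives
$$|V|\,\Delta_g |V| \ge -|\Ric||V|^2 - |\beta_g\calM||V|.$$
Dividing by $|V|$ where it is nonzero yields the claim. On the set $\{V=0\}$ the inequality should be read in the distributional (or barrier) sense, which the standard regularization $\sqrt{|V|^2+\eta^2}$, $\eta\to0$, handles.

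The only real obstacle is bookkeeping. First, the sign conventions in the Bochner/commutator term: one must check that the curvature term comes out as $+\Ric(V)$, so that it contributes $|\Ric||V|$ with the stated sign. Second, one must confirm that the paper's $\div$ and $\nabla \tr$ in the statement mean the $g$-operators. I expect both to match the displayed inequality.
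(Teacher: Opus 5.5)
Your proof is correct and is essentially the paper's argument. The paper also uses the contracted Bianchi identity for $g$ to derive $\Delta V + \Ric(g)(V) = \left(\div - \frac12 \nabla \tr\right)\calM$, and then concludes with the Kato inequality. The only difference in the derivation is organizational: the paper substitutes the traced relation $R_g = \div V - \frac12 \tr_g \calM$ into the Bianchi identity, whereas you apply the Bianchi operator to $\calM$ directly. Your regularization via $\sqrt{|V|^2+\eta^2}$ makes explicit the treatment of the zero set of $V$, which the paper leaves implicit.
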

\begin{proof}
    By definition, we have
    $$\Ric(g) = \calL_{\frac12 V} g - \frac12 \calM.$$
    Taking the trace, we get
    $$R_g = \div V - \frac12 \tr_g \calM.$$
    Also note that
    \begin{equation*}
    \begin{split}
        \div \calL_V g & = \nabla^i \nabla_i V_j + \nabla^i \nabla_j V_i \\
        & = \Delta V + \Ric(g)(V) + \nabla \div V \\
        & = \Delta V + \Ric(g)(V) + \nabla \left( R_g + \frac12 \tr_g \calM \right).
        \end{split}
    \end{equation*}
Now, the contracted Bianchi identity reads
\begin{equation*}
\begin{split}
    \frac12 \nabla R = \div \Ric & = \div \left(\calL_{\frac12 V} g - \frac12 \calM \right) \\
    & = \frac12 \left( \Delta V + \Ric(g)(V) + \nabla R + \frac12 \nabla \tr_g \calM\right) - \frac12 \div \calM.
    \end{split}
\end{equation*}
Canceling $\frac12 \nabla R$ from both sides, we have
\begin{equation}\label{Vequation}
    \Delta V + \Ric(g)(V) = \left( \div - \frac12 \nabla \tr \right) \calM.
\end{equation}
We can now get the result by applying the standard Kato inequality.
\end{proof}

\begin{lemma}
Let $R \geq 1$ and $2 \neq \beta < n.$ There exists a positive solution $u$ on $B_R{(x_*)}$ of the equation
$$-\Delta_{g_0} u = \rho(x)^{-\beta}$$
satisfying
$$|u(x)| \lesssim \begin{cases} R^{2 - \beta} & \beta < 2 \\ \rho(x)^{2 - \beta} & \beta > 2. \end{cases}$$
\end{lemma}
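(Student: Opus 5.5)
Since the statement is posed on the ball $B_R = B_R(x_*)$, which has smooth enough boundary after a small perturbation (or we may use a geodesic-radius exhaustion), I would obtain $u$ by the method of sub/supersolutions together with the maximum principle. The plan is to solve the Dirichlet problem $-\Delta_{g_0} u = \rho^{-\beta}$ on $B_R$ with $u = 0$ on $\partial B_R$. Positivity of $u$ then follows from the strong maximum principle, since the right-hand side is positive. The bound is obtained by comparing $u$ with an explicit radial supersolution $w$ satisfying $-\Delta_{g_0} w \geq \rho^{-\beta}$ and $w \geq 0$ on $\partial B_R$, so that $0 < u \leq w$.

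To build $w$, I would work in the coordinates at infinity $\psi$. There $\Delta_{g_0}$ differs from the Euclidean Laplacian $\Delta_n$ on radial functions by the error controlled in Lemma \ref{lemma: difference of Laplacians}, namely $O(r^{-\tau})$ times the natural scaling. For a radial power $r^{a}$ we have
\[
\Delta_n r^{a} = a(a+n-2) r^{a-2}.
\]
Taking $a = 2-\beta$ gives
\[
-\Delta_n r^{2-\beta} = (\beta-2)(n-\beta) r^{-\beta},
\]
which is positive precisely when $2<\beta<n$. So for $\beta>2$ I would set $w = C\rho^{2-\beta}$. This choice satisfies $-\Delta_{g_0} w \geq \rho^{-\beta}$ for $r \geq R_0$ large, because the ALE error $O(r^{-\tau-\beta})$ is absorbed there. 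For $\beta<2$ the exponent $2-\beta$ is positive, so $r^{2-\beta}$ is subharmonic and has the wrong sign. Instead I would take
\[
w = C\left(R^{2-\beta} - \rho^{2-\beta}\right) + C' R^{2-\beta},
\]
where $-\Delta(-\rho^{2-\beta}) = (2-\beta)(n-\beta)\rho^{-\beta} > 0$ because $\beta < 2 < n$. This $w$ is nonnegative on $\partial B_R$ and bounded by $C R^{2-\beta}$.

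The main obstacle is the compact core $K \cup \psi(B_{R_0})$, where $\rho$ is not a Euclidean radius and the ALE approximation gives no control. I would handle it by adding a fixed bounded correction. Let $\phi$ be a smooth function with $-\Delta_{g_0}\phi \geq 1$ on the core and $\phi$ bounded globally; one could take $\phi = A - v$, with $v$ solving a Dirichlet problem on a fixed large ball, or a function built from the Green's function of $g_0$, which is nonparabolic since $n \geq 3$ and has Euclidean volume growth. Outside the core, $\phi$ should be harmonic or superharmonic up to a small error. Adding a large multiple of $\phi$ to $w$ fixes the core region. It changes the bound only by an additive constant, which is absorbed since $R^{2-\beta} \geq 1$ when $\beta<2$ and $\rho^{2-\beta} \gtrsim R^{2-\beta}$... wait, for $\beta>2$ an additive constant is not $\lesssim \rho^{2-\beta}$. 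In that case I would instead use a decaying correction $\phi \sim \rho^{2-n}$, taken as a multiple of the minimal positive Green's function $G(x_*,\cdot)$ smoothed near $x_*$. This decays faster than $\rho^{2-\beta}$ because $\beta<n$, and it is superharmonic, strictly so on the core after smoothing. Finally, the comparison $u \leq w$ follows from the maximum principle on $B_R$, and positivity of $u$ from the strong maximum principle, giving $|u| \lesssim R^{2-\beta}$ or $\rho^{2-\beta}$ respectively, with constants independent of $R$.
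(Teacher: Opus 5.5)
Your argument takes a genuinely different route from the paper's, and it can be made to work, but it does not avoid the paper's key input.

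\textbf{The paper's route.} The paper never solves a Dirichlet problem. It takes the Green's potential $u(x)=\int_{B_R}G(x,y)\rho(y)^{-\beta}\,dV_y$ of the truncated right-hand side, which is automatically positive and solves the equation on $B_R$. It bounds $u$ using only $0\le G(x,y)\lesssim d(x,y)^{2-n}$ (from integrating the Li--Yau heat kernel bound) and Bishop--Gromov, splitting $B_R$ into $B_{\rho(x)/2}$, the annulus $B_{2\rho(x)}\setminus B_{\rho(x)/2}$, and $B_R\setminus B_{2\rho(x)}$. The dichotomy $\beta<2$ versus $\beta>2$ comes only from the last piece, $\int_{2\rho(x)}^R r^{1-\beta}\,dr$.

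\textbf{The core correction needs the Green's function.} Your correction requires a positive $\phi$ with $-\Delta_{g_0}\phi\ge0$ everywhere, $-\Delta_{g_0}\phi\ge1$ on all of $K\cup\psi(B_{R_0})$, and $\phi\lesssim\rho^{2-n}$. The clean choice is $\phi=\int_M G(\cdot,y)f(y)\,dV_y$ for a bump $f\ge0$ equal to $1$ on the core. Its decay rests on the same bound $G\lesssim d^{2-n}$ the paper uses. Both of your specific suggestions need this repair:
\begin{itemize}
\item Smoothing $G(x_*,\cdot)$ near $x_*$ makes it strictly superharmonic only near $x_*$, not on the whole core.
\item $A-v$, with $v$ a Dirichlet solution on a fixed ball, does not extend superharmonically past that ball. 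Continuing by the constant $A$ creates a kink of the wrong sign.
\end{itemize}

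\textbf{Use a smooth proxy for $\rho$.} Build the barriers from a smooth function comparable to $\rho$, e.g.\ one equal to the chart radius $r$ on the end, not from $\rho$ itself. $\rho$ is only Lipschitz, and at cut points of $x_*$ both $\rho^{2-\beta}$ (for $\beta>2$) and $-\rho^{2-\beta}$ (for $\beta<2$) acquire a negative singular part in $-\Delta_{g_0}$. So they fail to be supersolutions there.

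\textbf{Comparison.} With these fixes your proof is valid. It uses the ALE expansion of $\Delta_{g_0}$ and boundary regularity of (a smoothing of) $B_R$, and it produces a solution vanishing on the boundary. The paper needs none of this: its potential-theoretic computation works under $\Ric\ge0$ with Euclidean volume growth. Proposition \ref{prop:ellipticestonV} only uses $u\ge0$ and the upper bound, not vanishing boundary values.
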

\begin{proof}
    Let $G(x,y)$ be the Green's function of $g_0.$ This satisfies
    $$0 \leq G(x,y) \leq \frac{C}{d(x,y)^{2-n}},$$
    as can be seen by integrating (\ref{heatkernelest}) in time. We let
    $$u(x) = \int_{B_R} G(x,y) \rho(y)^{-\beta} \, dV_y,$$
    which is the required solution. 
    Recall from the definition of ALE space (Definition \ref{defn: ALE}) that there exists a coordinate system at infinity
        $$\psi : ( \R^n \setminus \overline B_1 ) / \Gamma \to M \setminus K $$
    where $K \subseteq M$ is compact.
    We now estimate $u$ as follows.

    \vspace{2mm}

   \noindent \underline{{\bf Case 1.} $x \not\in K.$} We have
    \begin{equation}\label{udecomp}
    |u(x)| \lesssim \left( \int_{B_{\rho(x)/2}} + \int_{B_{2\rho(x)} \setminus B_{\rho(x){/2}}} + \int_{B_R \setminus B_{2 \rho(x)} } \right) \frac{dV_y}{d(x,y)^{n-2} \rho(y)^\beta} =: I + II + III.
    \end{equation}
    For the first term, we have
    $$I \lesssim \int_{B_{\rho(x)/2}} \frac{1}{\rho(x)^{n-2}} \frac{dV_y}{\rho(y)^\beta} \lesssim \frac{1}{\rho(x)^{n-2}} \int_{1}^{\rho(x)/2} r^{n-1 - \beta} dr \lesssim \rho(x)^{2 - \beta}.$$
    For the second term, we have
    $$II \lesssim \frac{1}{\rho(x)^\beta} \int_{B_{2\rho(x)} \setminus B_{\rho(x){/2}}} \frac{dV_y}{d(x,y)^{n-2}}.$$
    By Bishop-Gromov, we can estimate this by an integral on $\R^n:$
    $$\int_{B_{2\rho(x)} \setminus B_{\rho(x){/2}}} \frac{dV_y}{d(x,y)^{n-2}} \lesssim \int_{0}^{4 \rho(x)} \frac{r^{n-1} dr}{r^{n-2}} \lesssim \rho(x)^2.$$
    This gives
    $$II \lesssim \rho(x)^{2 - \beta}.$$
    For the third term, we have
    $$III \lesssim \int_{2\rho(x)}^R \frac{r^{n-1} dr}{r^{n - 2 + \beta}} \lesssim \int_{2\rho(x)}^R r^{1 - \beta} dr \lesssim \begin{cases} R^{2 - \beta} & \beta < 2 \\ \rho(x)^{2 - \beta} & \beta > 2. \end{cases}$$
    The desired estimate now follows by combining the above.

\vspace{2mm}
    
    \noindent \underline{{\bf Case 2.} $x \in K.$} In lieu of (\ref{udecomp}), we can decompose $u(x)$ as
        \begin{equation*}
    |u(x)| \lesssim \left( \int_{K} + \int_{B_R \setminus K } \right) \frac{dy}{d(x,y)^{n-2} \rho(y)^\beta}.
    \end{equation*}
    The first term is estimated as in $II$ above, and the second term as in $III$ above.
\end{proof}

\begin{prop}\label{prop:ellipticestonV}
    Fix $R \geq 1$ and $0 < \ell, \ell' <n-2 $ such that $\ell' \ne 1$. 
    Let $V = V(g,g_0)$ be the DeTurck vector field for a metric $g$ on $B_{2R} \subset M$ with $\|g - g_0\|_{C^{2,\gamma}_{-\ell}(B_{2R})} < \eps_0.$ We have
    $$\| V \|_{C^{2,\gamma}_0(B_{R})} \leq C \left( \sup_{x \in B_{2R} \setminus B_R} |V(x)| + \| \calM \|_{C^{1,\gamma}_{- \ell'}(B_{2R})} \left( 1 + R^{1 - \ell'} \right) \right).$$
\end{prop}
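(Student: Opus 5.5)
The plan is to combine the elliptic inequality of Lemma \ref{lemma:ellipticdeTurck} with a maximum-principle comparison on $B_R$, using the Green's-function barrier from the preceding lemma. Then I would upgrade the resulting $C^0$ bound to $C^{2,\gamma}_0$ by interior Schauder estimates applied to the linear system (\ref{Vequation}).

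\emph{Step 1: $C^0$ bound on $|V|$.} Since $\|g-g_0\|_{C^{2,\gamma}_{-\ell}}<\eps_0$, the metric $g$ is uniformly equivalent to $g_0$. Moreover $|\Ric(g)|\le C\eps_0\rho^{-\ell-2}$, because $\Ric(g_0)=0$ and $\Ric(g)$ is linear in $\nabla^2 h$ plus quadratic terms in $\nabla h$, with $h=g-g_0$. The right-hand side of Lemma \ref{lemma:ellipticdeTurck} satisfies
$$\left|\left(\div - \tfrac12\nabla\tr\right)\calM\right|\le C\|\calM\|_{C^{1,\gamma}_{-\ell'}}\rho^{-\ell'-1}.$$
Hence $w=|V|_g$ is a subsolution:
$$-\Delta_g w - C\eps_0\rho^{-\ell-2}w \le C\|\calM\|_{C^{1,\gamma}_{-\ell'}}\rho^{-\ell'-1}.$$
Take $u$ from the preceding lemma with $\beta=\ell'+1\neq 2$; this is exactly why $\ell'\neq 1$ is assumed. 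On $B_R$ it satisfies $u\lesssim R^{1-\ell'}$ if $\ell'<1$, and $u\lesssim\rho^{1-\ell'}\lesssim 1$ if $\ell'>1$, so in both cases $u\lesssim 1+R^{1-\ell'}$. I then compare $w$ with
$$\sup_{\partial B_R}w+C\|\calM\|\,u,$$
and, if needed, add a multiple of a positive barrier to absorb the zeroth-order potential term. Two things must be handled here. First, $\Delta_g$ differs from $\Delta_{g_0}$ by $O(\eps_0)$ terms acting on $u$, which are controlled using the derivative bounds on $u$ (or by constructing $u$ for $g$ directly). Second, the term $C\eps_0\rho^{-\ell-2}w$ must be absorbed. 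For that I would use a barrier such as $(1+\eps_0\rho^{-\ell})$ times the comparison function, or simply note that $\rho^{-\ell-2}$ is an admissible potential: $\ell>0$ gives $\int r\rho^{-\ell-2}<\infty$ only when $\ell>0$, and since $\eps_0$ is small, a perturbative maximum principle (positivity of $-\Delta_g - C\eps_0\rho^{-2-\ell}$ via Hardy's inequality) applies. The boundary value $\sup_{\partial B_R}|V|$ is bounded by the sup over $B_{2R}\setminus B_R$, which gives
$$\sup_{B_R}|V|\le C\left(\sup_{B_{2R}\setminus B_R}|V|+\|\calM\|_{C^{1,\gamma}_{-\ell'}}(1+R^{1-\ell'})\right).$$
Working on the slightly larger ball $B_{3R/2}$ leaves room for Step 2.

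\emph{Step 2: higher regularity.} Equation (\ref{Vequation}), $\Delta_g V+\Ric(g)(V)=(\div-\frac12\nabla\tr)\calM$, is a uniformly elliptic linear system whose coefficients are uniformly $C^{0,\gamma}$ in $\rho$-scaled balls $B(x,\rho(x)/2)$ (or unit balls; only the unweighted norm $C^{2,\gamma}_0$ is needed). Interior Schauder estimates on unit balls give
$$\|V\|_{C^{2,\gamma}(B_1(x))}\le C\left(\sup_{B_2(x)}|V|+\|\calM\|_{C^{1,\gamma}(B_2(x))}\right).$$
Since $\rho^{-\ell'}\le 1$, the last term is at most $C\|\calM\|_{C^{1,\gamma}_{-\ell'}}$. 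Taking the supremum over $x\in B_R$ and inserting Step 1 yields the claim. To obtain the weighted $C^{2,\gamma}_0$ seminorms, which involve $\rho^{k}|\nabla^k V|$, I would instead rescale balls of radius $\rho(x)/2$ to unit size. The equation then picks up a factor $\rho^2$ on the right-hand side, giving a bound of $\rho\cdot\rho^{-\ell'}\|\calM\|$ for that term, which is again $\lesssim 1+R^{1-\ell'}$ on $B_{2R}$.

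The main obstacle is Step 1, specifically absorbing the potential term $|\Ric(g)||V|$ and the $g$ versus $g_0$ Laplacian discrepancy while preserving the sharp $R^{1-\ell'}$ dependence. If the Hardy-type argument is awkward, my first attempt would be the multiplicative barrier $\phi=1-\eps_0\rho^{-\ell}$, which satisfies $-\Delta\phi\gtrsim \eps_0\rho^{-\ell-2}$ for $\ell<n-2$. I would then apply the maximum principle to $w/\phi$.
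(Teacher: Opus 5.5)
Your architecture matches the paper's. It is a maximum-principle bound on $|V|_g$ using the Green's-function barrier with $\beta=1+\ell'$ (which is exactly why $\ell'\neq 1$ is needed), followed by Schauder estimates for \eqref{Vequation} on balls of radius $\rho(x)/2$. Your Step~2 bookkeeping is also correct: $\rho^2\cdot\rho^{-1-\ell'}\|\calM\|\lesssim(1+R^{1-\ell'})\|\calM\|$ on $B_{2R}$.

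The problem is Step~1, which you never actually close, and the one concrete device you commit to fails. For $0<\ell<n-2$ the function $\rho^{-\ell}$ is superharmonic; in the Euclidean model $\Delta r^{-\ell}=-\ell(n-2-\ell)r^{-\ell-2}$. So $\phi=1-\eps_0\rho^{-\ell}$ has $-\Delta\phi=\eps_0\Delta\rho^{-\ell}<0$, the opposite sign from what you claim. You would need $1+K\eps_0\rho^{-\ell}$, with $K$ large relative to the constant in $|\Ric(g)|\le C\eps_0\rho^{-\ell-2}$.

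Transferring the barrier $u$ from $\Delta_{g_0}$ to $\Delta_g$ also needs second-derivative bounds on $u$ that the preceding lemma does not provide. Crude interior estimates give only $|\nabla^2u|\lesssim\rho^{-2}R^{1-\ell'}+\rho^{-1-\ell'}$ when $\ell'<1$. The resulting error $\eps_0\rho^{-2-\ell}R^{1-\ell'}$ is not dominated by $\rho^{-1-\ell'}$ near the core, so a further barrier and an iteration would be required. Alternatively, building $u$ from a Green's function for $\Delta_g$ needs Green's function bounds for the perturbed metric, which the paper never uses.

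The idea you are missing is to absorb rather than insist on a standalone $C^0$ bound. The paper puts the operator discrepancy on $|V|$ instead of on the barrier:
$$\Delta_{g_0}|V|_g=\Delta_g|V|_g+(g^{-1}-g_0^{-1})\nabla^2|V|+(\Gamma_g-\Gamma_{g_0})\#\nabla|V|.$$
It bounds these terms, together with $|\Ric(g)||V|_g$, by $C\eps_0D\rho^{-2-\ell}$, where $D=\|V\|_{C^{2,\gamma}_0(B_R)}$ is the very quantity being estimated. With $B=\|\calM\|_{C^{1,\gamma}_{-\ell'}}$ this gives $-\Delta_{g_0}|V|_g\le CB\rho^{-1-\ell'}+C\eps_0D\rho^{-2-\ell}$.

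Two barriers from the preceding lemma for the \emph{fixed} operator $\Delta_{g_0}$ then suffice. These are $u_1$ with $\beta=1+\ell'$, and $u_2$ with $\beta=2+\ell\in(2,n)$, which satisfies $u_2\lesssim\rho^{-\ell}\le 1$. With $A=\sup_{B_{2R}\setminus B_R}|V|$, they give $\sup|V|_g\le A+CBR^{1-\ell'}+C\eps_0D$. Inserting this into the Schauder bound $D\le C(\sup_{B_{2R}}|V|+\dots)$ and absorbing $C\eps_0D$ finishes the proof.

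This route needs no positive supersolution for $-\Delta_g-|\Ric(g)|$, no Hardy inequality, and no derivative bounds on the barriers. Even within your decoupled setup, the potential term is handled most simply by bounding it by $C\eps_0\rho^{-2-\ell}\sup|V|$ and absorbing it via $u_2$.
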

\begin{proof}
Let
    $$A = \sup_{x \in B_{2R} \setminus B_R} |V(x)|, \qquad B = \| \calM \|_{C^{1,\gamma}_{- \ell'}(B_R)},$$
    and
    $$D = \| V \|_{C^{2, \gamma}_0(B_{R})}.$$
    Note that by applying standard elliptic regularity on rescaled balls (as in the proof of Lemma \ref{lem parab smoothing in weighted Holder}), from (\ref{Vequation}), we have
    \begin{equation}\label{Dsupestimate}
        D \leq { C \big(} \sup_{B_{2R}} |V| + \| \nabla \calM \|_{C^{0,\gamma}_{0}} {\big)}
        \qquad { (C = C(M^n, g_0, x_*, \gamma ) > 0)}.
    \end{equation}
    Now, we have
    $$\Delta_{g_0} |V|_{g} = \Delta_g |V| + (g^{-1} - g_0^{-1}) \nabla^2 |V| + (\Gamma_g - \Gamma_{g_0}) \# \nabla |V|.$$
    Combining with Lemma \ref{lemma:ellipticdeTurck}, we get 
    $$-\Delta_{g_0} |V|_{g} \leq \frac{{C_n} B}{\rho(x)^{1 + \ell'}} + \frac{{C_n} \eps_{0} D}{\rho(x)^{2 + \ell}}$$
    where $C_n$ here and below denotes a dimensional constant that may change from line to line.
    We can now use the previous lemma to construct positive functions $u_1$ and $u_2$ on $B_{2R}$ satisfying
    $$-\Delta_{g_0} u_1 = \frac{1}{\rho(x)^{1 + \ell'}}, \qquad \sup_{B_{2R}} u_1 {\lesssim} R^{1 - \ell'}$$
    and
    $$-\Delta_{g_0} u_2 = \frac{1}{\rho(x)^{2 + \ell}}, \qquad u_2 (x) {\lesssim} \rho(x)^{-\ell}.$$
    We then have
    $$-\Delta_{g_0} \left( |V|_g - {C_n} B u_1 - {C_n} \eps_{0} D u_2 \right) \leq 0.$$
    Applying the maximum principle, we get
    $$\sup_{B_{2R}} |V|_g \leq A + {C}B R^{1 - \ell'} + {C}\eps_{0} D.$$
    Plugging back in to (\ref{Dsupestimate}), we get
    $$D \leq {C \big( }A + B (1 + R^{1 - \ell'}) + \eps_0 D { \big)}.$$
    Rearranging gives the desired estimate.
\end{proof}
    
    \begin{cor}\label{cor:Ricciexistence}
In the setup of Corollary \ref{cor:existence}, there exists a $C^\infty_{loc}$-convergent family of diffeomorphisms $\theta_t$ such that $\theta_t^*g(t)$ solves the Ricci flow and converges in $C^\infty_{loc}$ to a Ricci-flat metric $g'_\infty$ diffeomorphic to $g_\infty,$ with 
$\|g_\infty' - g_0 \|_{C^{0,\gamma}_{-\ell}} \leq C \eps.$ 
\end{cor}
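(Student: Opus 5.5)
The plan is to define $\theta_t$ as the flow of the time-dependent vector field $-V(g(t),g_0)$. The standard DeTurck computation then shows that $\theta_t^* g(t)$ solves the Ricci flow. Concretely, let $\theta_t$ solve
\[
\frac{\p}{\p t}\theta_t(x) = -V(g(t),g_0)(\theta_t(x)), \qquad \theta_0 = \mathrm{id}.
\]
This ODE is globally solvable because $V$ is bounded and smooth for $t>0$: by Lemma \ref{lemma:RicandVbounds} and (\ref{Mestimate:gestimate}), $\|V\|_{C^{1,\gamma}_{-\ell-1}} \lesssim \|\calM\|_{C^{0,\gamma}_{-\ell-2}} \lesssim \eps$. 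The real content is showing that $\theta_t$ converges in $C^\infty_{loc}$ as $t \to \infty$. For that I will show that $\int_1^\infty \|V(g(t),g_0)\|_{C^k(B_R)}\,dt < \infty$ for each fixed $R$ and $k$.

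\textbf{Decay of $V$.} Apply Proposition \ref{prop:ellipticestonV} at time $t$ with radius $R_t = \sqrt{t}$, using the decay rate $\ell'$ of Theorem \ref{thm:Mestimate}. The boundary term $\sup_{B_{2R_t}\setminus B_{R_t}}|V|$ is controlled by Lemma \ref{lemma:RicandVbounds} applied with weight $\ell$:
\[
\sup_{B_{2R_t}\setminus B_{R_t}}|V| \lesssim \eps\, R_t^{-\ell-1} = \eps\, t^{-(\ell+1)/2}.
\]
For the interior term, (\ref{Mestimate:Mestimate}) together with parabolic smoothing (Lemma \ref{lem parab smoothing in weighted Holder}) gives
\[
\|\calM(t)\|_{C^{1,\gamma}_{-\ell'}(B_{2\sqrt t})} \lesssim \eps\, t^{-\frac{\ell-\ell'}{2}-1}.
\]
Multiplying by $(1+R_t^{1-\ell'})$, this contributes at most $\eps\, t^{-\frac{\ell-\ell'}{2}-1}(1 + t^{\frac{1-\ell'}{2}}) \lesssim \eps\, t^{-\frac{\ell+1}{2}} + \eps\, t^{-\frac{\ell-\ell'}{2}-1}$. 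Since $\ell > 1$, both exponents exceed $1$, so $\|V(t)\|_{C^{2,\gamma}(B_{\sqrt t})}$ is integrable in $t$. Here I should choose $\ell' \neq 1$, which the range $\ell' < \ell$ allows. Higher derivatives are handled by bootstrapping (\ref{Vequation}) with interior elliptic estimates. This is the step I expect to be the main obstacle. The inner region cannot simply use the weighted $\ell$ bound, because that bound gives no time decay. The time decay comes only through $\calM$, and the elliptic estimate loses the factor $R^{1-\ell'}$, so the exponents must be balanced as above, with $\ell > 1$ being exactly what is needed.

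\textbf{Conclusion.} Integrability of $V$ implies that trajectories of $\theta_t$ starting in a compact set stay in a compact set: their displacement is at most $\int_0^\infty |V|\,dt \lesssim \eps$ on bounded sets, using the weighted bound on $[0,1]$. Hence $\theta_t \to \theta_\infty$ in $C^\infty_{loc}$, and $\theta_\infty$ is a diffeomorphism; the same argument applied to the inverse flow gives surjectivity. Since $g(t) \to g_\infty$ smoothly by Corollary \ref{cor:existence}, we get $\theta_t^* g(t) \to g_\infty' := \theta_\infty^* g_\infty$ in $C^\infty_{loc}$. This limit is Ricci-flat because $\calM(g_\infty)=0$ forces $\Ric(g_\infty)=0$ and $V(g_\infty,g_0)=0$ by \cite[Theorem 2.7]{DeruelleKroncke20}.

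For the weighted estimate $\|g_\infty' - g_0\|_{C^{0,\gamma}_{-\ell}} \lesssim \eps$, write
\[
g_\infty' - g_0 = \theta_\infty^*(g_\infty - g_0) + (\theta_\infty^* g_0 - g_0).
\]
The first term is $O(\eps\rho^{-\ell})$ by (\ref{Mestimate:gestimate}) passed to the limit, since $\theta_\infty$ moves points by $O(\eps)$ and so preserves $\rho$ up to constants. The second term is bounded by $|\nabla(\theta_\infty - \mathrm{id})|$, which is at most $\int_0^\infty |\nabla V|\,dt$. For this integral I use the weighted bound $|\nabla V| \lesssim \eps\rho^{-\ell-2}$ on $\{\rho^2 \ge t\}$ and the decay estimate above on $\{\rho^2 \le t\}$. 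The region $t \le \rho^2$ contributes $\lesssim \eps\rho^{-\ell}$. The tail $t \ge \rho^2$ contributes $\lesssim \eps\rho^{-\ell+1}$ from the $t^{-(\ell+1)/2}$ term and $\lesssim \eps\rho^{-(\ell-\ell')}$ from the $t^{-\frac{\ell-\ell'}{2}-1}$ term, so neither gives $\rho^{-\ell}$ directly. I will fix this by redoing the elliptic estimate with weight $\ell+1$ on $\nabla V$: that is, apply Proposition \ref{prop:ellipticestonV} on dyadic annuli around $x$, or compare directly with the barrier $u_2$, to obtain $|\nabla V|(x,t) \lesssim \eps\,\rho^{-\ell-1} t^{-1}$-type decay. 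The Hölder seminorm then follows by the same argument applied to difference quotients.
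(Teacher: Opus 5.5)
Up to the final weighted estimate, your argument is the same as the paper's. The same two inputs, the annulus bound $|V|\lesssim\eps\rho^{-\ell-1}$ and the $C^{1,\gamma}_{-\ell'}$ decay of $\calM$ on $B_{2\sqrt t}$, go into Proposition \ref{prop:ellipticestonV} with $R=\sqrt t$ to give \eqref{Vpostparabestimate}, and $\ell>1$ makes the result integrable in time. Bootstrapping \eqref{Vequation} for higher derivatives is a fine way to get $C^\infty_{loc}$ convergence of $\theta_t$; the paper instead cites \eqref{existence:Ckgammaest}.

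The gap is in the last step, and it comes from misreading the norm. By Definition \ref{defn weighted Holder}, $\|V(t)\|_{C^{2,\gamma}_0(B_{\sqrt t})}$ is not an unweighted norm. It controls $\sup(|V|+\rho|\nabla V|+\rho^2|\nabla^2V|)$ and $\rho^{2+\gamma}[\nabla^2V]$. Take $\ell'<1$, as the paper does, so that the minimum in \eqref{Vpostparabestimate} is $\frac{\ell+1}{2}$. Then you already have $\rho(x)|\nabla V(x,t)|\lesssim\eps\, t^{-\frac{\ell+1}{2}}$ for $x\in B_{\sqrt t}$, and the tail you were worried about is
\[
\int_{\rho(x)^2}^\infty|\nabla V(x,t)|\,dt\lesssim\eps\,\rho(x)^{-1}\rho(x)^{1-\ell}=\eps\,\rho(x)^{-\ell}.
\]
Together with the region $t\le\rho^2$, this is exactly the paper's \eqref{Vfirstestimate}:
\[
\int_0^\infty\|V(t)\|_{C^{1,\gamma}_0(B(x,\rho(x)/2))}\,dt\lesssim\eps\,\rho(x)^{1-\ell},
\]
which says $\|\theta_t-\mathrm{id}\|_{C^{1,\gamma}_{1-\ell}}\lesssim\eps$. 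The H\"older part comes from the $\rho^{1+\gamma}[\nabla V]$ term of the same weighted norm. Since $\theta_t^*g(t)$ involves only one derivative of $\theta_t$, the bound $\|g'_\infty-g_0\|_{C^{0,\gamma}_{-\ell}}\lesssim\eps$ follows with no new elliptic estimate.

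The repair you actually propose would not work as stated. A bound of the form $|\nabla V|\lesssim\eps\rho^{-\ell-1}t^{-1}$ leads to the divergent integral $\int_{\rho^2}^\infty dt/t$, and the dyadic-annulus and barrier alternatives are not carried out.

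Also, $\ell'\neq1$ is not enough at this step; you need $\ell'<1$. For $\ell'>1$ the dominant term in \eqref{Vpostparabestimate} is $t^{-\frac{\ell-\ell'}{2}-1}$. Even with the weight, it contributes $\eps\rho^{\ell'-\ell-1}$ to $\int|\nabla V|\,dt$ and $\eps\rho^{\ell'-\ell}$ to the displacement. Both are worse than the required $\eps\rho^{-\ell}$ and $\eps\rho^{1-\ell}$.
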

\begin{proof} Assume without loss of generality that $\ell' < 1.$ 
First note that since the DeTurck vector field $V(t) = V(g(t), g_0)$ corresponds to one derivative of $g(t),$ we have
\begin{equation}\label{Vgeneraltestimate}
    \|V(t)\|_{C^{1,\gamma}_{-\ell - 1}} \leq C \eps
    \qquad (C = C(M^n, g_0, x_*, \gamma, \ell) > 0)
\end{equation}
for all times $t \geq 0.$ In particular, we have
\begin{equation*}
\sup_{B_{2\sqrt{t}} \setminus B_{\sqrt{t}/2}} |V(t)| \leq  C \eps t^{- \frac{\ell + 1}{2}}.
\end{equation*}
Note that from (\ref{Mestimate:Mestimate}) combined with parabolic regularity, 
for $t \geq 1,$ we have
\begin{equation*}
\begin{split}
\|\calM(t) \|_{C^{1, \gamma}_{-\ell'}(B_{2\sqrt{t}} )} & \leq \frac{C\eps}{t^{\frac{\ell - \ell'}{2} + 1}}.
\end{split}
\end{equation*}
Plugging in the last two estimates to Proposition \ref{prop:ellipticestonV} with $R = \sqrt{t} \geq 1,$ we obtain
\begin{equation*}
    \|V(t) \|_{C^{2,\gamma}_0(B_{\sqrt{t}})} \leq C \eps \left( t^{- \frac{\ell + 1}{2}} + t^{-1 - \frac{\ell - \ell'}{2}} \left( 1 + t^{\frac{1 - \ell'}{2}} \right) \right).
\end{equation*}
Observe that $-1 - \frac{\ell  - \ell'}{2} + \frac{1 - \ell'}{2} = - \frac{1 + \ell}{2}.$ So we obtain
\begin{equation}\label{Vpostparabestimate}
    \|V(t) \|_{C^{2,\gamma}_0(B_{\sqrt{t}})} \leq C \eps t^{- \min \{ \frac{\ell + 1}{2} , \frac{\ell - \ell'}{2} + 1 \} } 
    \qquad ( \forall t \ge 1).
\end{equation}
Using (\ref{Vgeneraltestimate}) and (\ref{Vpostparabestimate}),  for any $x \in M,$ we obtain
\begin{equation}\label{Vfirstestimate}
\begin{split}
    & \int_{0}^\infty \|V(t) \|_{C^{1, \gamma}_0\left( B\left(x, \frac12 \rho(x) \right) \right) } \, dt \\
    \leq{}& \int_{0}^{\left( \frac32 \rho(x)\right)^2} C \eps \rho(x)^{-1-\ell} \, dt + \int_{\left( \frac32 \rho(x)\right)^2}^\infty C \eps t^{- \min \{ \frac{\ell + 1}{2} , \frac{\ell - \ell'}{2} + 1 \} } \, dt \\
    \leq{}& C \eps \rho(x)^{1 - \ell} + C \eps \rho(x)^{2 - \min\{\ell + 1, \ell - \ell' +2\}} \\
    \leq{}& C \eps \rho(x)^{1 - \ell}.
    \end{split}
\end{equation}

We now integrate $V(t)$ in time to obtain a family of diffeomorphisms $\theta_t$ 
 satisfying
    $$\partial_t \theta_t = V(t) \circ \theta_t , \qquad \theta_0 = \mathbbm{1}.$$
It is clear from (\ref{Vfirstestimate}) that for $\eps$ sufficiently small, $\theta_t$ remains $C^{1,\gamma}_{1-\ell}$-close to the identity. Abusing notation, we can write
$$\|\theta_t - \mathbbm{1} \|_{C^{1, \gamma}_{1 - \ell}} \leq C \eps.$$

We also have $\theta_t \to \theta_\infty$ in $C^\infty_{loc}$ by \eqref{existence:Ckgammaest}, and set $g'_\infty := \theta_\infty^* g_\infty.$ We then have $\theta_t^*g(t) \to g'_\infty$ in $C^\infty_{loc}$ by \eqref{existence:Ckgammaest}. Since $\theta_t^*g(t)$ involves one derivative of $\theta_t$, we also have
$$\| \theta_t^*g(t) - g_0 \|_{C^{0, \gamma}_{-\ell}} \leq C \eps,$$
which implies the same of $g'_\infty.$
\end{proof}

\vspace{10mm}

\section{$L^p$-stability}\label{sec:Lpstability} 

To adapt our proof from the Schauder setting to the $L^p$ setting, we will use the following pointwise estimates on the scalar heat kernel in combination with a Duhamel-type formula. 

\begin{lemma}\label{lemma:heatkernelest}
    Let $H(x,y,t)$ denote the scalar heat kernel on $(M, g_0),$ a Ricci-flat ALE space. We have
    \begin{equation}\label{heatkernelest}
        H(x,y,t) \leq \frac{Ce^{-\frac{c d(x,y)^2}{t}} }{t^{\frac{n}{2}}} 
    \end{equation}
    and
    \begin{equation}\label{Kotschwar}
        \left| \nabla H(x,y,t) \right| \leq C \left( \frac1{\sqrt t} + \frac{d(x,y)}{t} \right)  H(x,y,t). 
    \end{equation}
\end{lemma}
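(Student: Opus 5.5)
We need to prove Gaussian upper bound for the scalar heat kernel on a Ricci-flat ALE manifold, together with the gradient estimate.

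For \eqref{heatkernelest}, we use the Li--Yau theory for manifolds with nonnegative Ricci curvature. Since $\Ric(g_0)=0$, the Li--Yau estimate gives
$$H(x,y,t) \le \frac{C_n}{\mathrm{Vol}_{g_0}(B(x,\sqrt t))} e^{-\frac{d(x,y)^2}{5t}}.$$
It then suffices to show Euclidean volume growth, $\mathrm{Vol}(B(x,r)) \ge c\, r^n$ uniformly in $x\in M$ and $r>0$. By Bishop--Gromov, the ratio $\mathrm{Vol}(B(x,r))/(\omega_n r^n)$ is nonincreasing in $r$. Its limit as $r\to\infty$ is the asymptotic volume ratio, which is independent of $x$ and equals $1/|\Gamma|>0$ for an ALE space, because the ball of radius $r$ differs from the Euclidean cone ball in the chart $\psi$ only by lower-order terms. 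Monotonicity therefore gives $\mathrm{Vol}(B(x,r)) \ge \omega_n r^n/|\Gamma|$ for every $x$ and $r$. Substituting this into the Li--Yau bound yields \eqref{heatkernelest} with $c=1/5$.

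For the gradient estimate \eqref{Kotschwar}, we invoke the Hamilton/Kotschwar gradient estimate for positive solutions of the heat equation on complete manifolds with $\Ric\ge 0$. The Li--Yau inequality in the $\Ric\ge0$ case reads $|\nabla \log u|^2 - \partial_t \log u \le \frac{n}{2t}$. Its spatial gradient part, in the form given by Souplet--Zhang or Kotschwar, bounds $|\nabla \log H|$ by $C\big(t^{-1/2} + \sqrt{\log (A/H)}\,t^{-1/2}\big)$, where $A$ is an upper bound for $H$ on a parabolic cylinder. Here we take $A \sim t^{-n/2}$, coming from \eqref{heatkernelest} applied on $[t/2,t]$.

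The step I expect to be the main obstacle is turning this logarithmic factor into $d(x,y)/\sqrt t$. That requires a matching Gaussian \emph{lower} bound. The Li--Yau lower bound, together with the volume equivalence $\mathrm{Vol}(B(x,\sqrt t))\simeq t^{n/2}$ (from the upper bound $\le \omega_n r^n$ at small scales and $\le C r^n$ at large scales), gives
$$H(x,y,t) \ge c\,t^{-n/2}e^{-d(x,y)^2/(3t)}.$$
Hence $\log(A/H)\lesssim 1 + d^2/t$, so $\sqrt{\log(A/H)}/\sqrt t \lesssim t^{-1/2} + d/t$. Multiplying through by $H$ gives \eqref{Kotschwar}. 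If the Souplet--Zhang form is awkward to apply, I would instead cite Kotschwar's direct estimate for the heat kernel, which holds on complete manifolds with $\Ric \ge 0$ and Euclidean volume growth.
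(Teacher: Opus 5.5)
Your proposal is correct and follows the same route as the paper, which simply cites the Li--Yau bounds for \eqref{heatkernelest} and Kotschwar's estimate for \eqref{Kotschwar}. You also supply details the paper leaves implicit: uniform Euclidean volume growth via Bishop--Gromov and the asymptotic volume ratio $1/|\Gamma|$, and the Li--Yau lower bound $H \geq c\,t^{-n/2}e^{-d^2/(3t)}$ used to control $\log(A/H)$.

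One correction concerns attribution. The square-root form $|\nabla \log u| \leq C\sqrt{\log(A/u)/t}$ is Hamilton's estimate, extended to complete noncompact manifolds by Kotschwar. It needs $A$ to bound $u$ globally on $M \times [t/2,t]$, which your choice $A \sim t^{-n/2}$ does. The Souplet--Zhang local estimate is instead linear in $1+\log(A/u)$, so on its own it would only give $t^{-1/2}(1+d^2/t)$. Kotschwar's version is therefore the one you actually need.
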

\begin{proof}
    The first is a direct consequence of the Li-Yau inequality in nonnegative Ricci curvature, see e.g. \cite[Theorem 13.4]{LiBook}. The second is a sharpened version of an estimate of Souplet-Zhang \cite{SoupletZhang06} 
    due to Kotschwar \cite{Kotschwar07}.
\end{proof}

\begin{lemma}\label{lemma:Duhamel} 
Fix $q > 1.$ Let $h$ solve Ricci-DeTurck flow with respect to $g_0,$ and write $|h|_{g_0} = |h|$ and $\nabla = \nabla^{g_0}.$ Suppose that (\ref{gg0lambdaassumption}) is satisfied, where $\lambda$ is sufficiently close to one (depending only on $n, q$).
Then
    \begin{multline*}
        |h|^q(x,t)  \leq \int_M H(x,y,t) |h|^q(y,0) \, dV_y \\
         + C_{\ref{lemma:Duhamel}} \int_0^t \!\!\!\! \int_M H(x,y,t - s) \left( |\Rm(g_0)| |h|^q + \left( \frac1{\sqrt {t - s} } + \frac{d(x,y)}{t-s} \right) |h|^{\frac{q}{2} + 1} |\nabla |h|^{\frac{q}{2}} | - c_{\ref{lemma:Duhamel}} |\nabla |h|^{\frac{q}{2}} |^2 \right) \, dV_y ds
    \end{multline*}
    where 
    $C_{\ref{lemma:Duhamel}}, c_{\ref{lemma:Duhamel}}$ are constants depending only on $M, g_0, q$.
\end{lemma}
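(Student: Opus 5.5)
The plan is to write Duhamel's formula for the subsolution $|h|^q$ with respect to the scalar heat operator $\partial_t - \Delta_{g_0}$, using the divergence-form inequality (\ref{g0evolutionof|h|^q}) of Lemma \ref{lemma:hpevolution}. Fix $(x,t)$ and set $\phi(y,s) = H(x,y,t-s)$, which solves the backward heat equation $(\partial_s + \Delta_{g_0,y})\phi = 0$ for $s<t$. Then compute
$$\frac{d}{ds}\int_M \phi\,|h|^q\,dV_y = \int_M \phi\,(\partial_s - \Delta_{g_0})|h|^q \,dV_y$$
after integrating by parts twice. Integrating in $s \in [0,t]$ then gives $|h|^q(x,t)$ on the left and $\int_M H(x,y,t)|h|^q(y,0)\,dV_y$ as the first term on the right. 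The integrations by parts, and the limit $s \to t$ giving the delta function, are justified on the complete bounded-geometry ALE manifold by the Gaussian bound (\ref{heatkernelest}), the gradient bound (\ref{Kotschwar}), and boundedness of $h$ and its derivatives. If needed, I would first work on a cutoff $\eta_R$ and let $R\to\infty$.

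Next I insert (\ref{g0evolutionof|h|^q}). The curvature term gives $2q\lambda^4|\Rm(g_0)||h|^q\,\phi$, which is the first term in the bracket. The good gradient term gives $-\frac{3(q-1)}{q}|\nabla|h|^{q/2}|^2\phi$, which will supply $-c_{\ref{lemma:Duhamel}}|\nabla|h|^{q/2}|^2$. The divergence term is the only nontrivial one. Integrating by parts once in $y$ moves the derivative onto $\phi$:
$$\int_M \phi\,\nabla_i\big((g^{ij}-g_0^{ij})\nabla_j|h|^q\big) = -\int_M \nabla_i\phi\,(g^{ij}-g_0^{ij})\nabla_j |h|^q .$$
Now I use $|g^{-1}-g_0^{-1}| \le C|h|$ (valid under (\ref{gg0lambdaassumption})) and $\nabla|h|^q = 2|h|^{q/2}\nabla|h|^{q/2}$. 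This bounds the term by
$$C\,|\nabla\phi|\,|h|^{\frac q2+1}\,|\nabla|h|^{\frac q2}| .$$
Then Kotschwar's estimate (\ref{Kotschwar}) gives $|\nabla_y H(x,y,t-s)| \le C\big((t-s)^{-1/2} + d(x,y)/(t-s)\big)H(x,y,t-s)$, which produces exactly the stated middle term.

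The main obstacle is that (\ref{Kotschwar}) is stated for $\nabla$ in the first variable, whereas I need the gradient in $y$. This is resolved by the symmetry $H(x,y,t)=H(y,x,t)$ of the scalar heat kernel, so the bound applies verbatim in $y$. A secondary issue is the justification of the boundary terms at spatial infinity and near $s=t$, where the factor $(t-s)^{-1/2}$ could be singular. Integrability is fine because $\int_M H\,dV = 1$ together with the Gaussian decay make $\int_M H(x,y,t-s)\,d(x,y)/(t-s)\,dV_y \lesssim (t-s)^{-1/2}$, which is integrable in $s$. Finally I collect the constants into $C_{\ref{lemma:Duhamel}}$ and $c_{\ref{lemma:Duhamel}} = \frac{3(q-1)}{q\,C_{\ref{lemma:Duhamel}}}$, both depending only on $n$, $q$, and $(M,g_0)$.
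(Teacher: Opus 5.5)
Your proposal is correct and follows the paper's route: Duhamel's principle for $|h|^q$ against $H(x,y,t-s)$ using the $\Delta_{g_0}$ divergence-form inequality of Lemma \ref{lemma:hpevolution}, then one integration by parts on the divergence term to get $|\nabla H|\,|h|^{\frac q2+1}|\nabla|h|^{\frac q2}|$, then Kotschwar's bound (\ref{Kotschwar}). Your remarks on the symmetry $H(x,y,t)=H(y,x,t)$, the bound $|g^{-1}-g_0^{-1}|\le C|h|$, and the justification of boundary terms supply details the paper leaves implicit.
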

\begin{proof}
    Applying Duhamel's principle (i.e. integrating against $H(x,y,t-s)$ and using Green's formula) in (\ref{g0evolutionof|h|^q}), and integrating by parts in the final term, we have
            \begin{equation}\label{Duhamel}
        \begin{split}
        & |h|^q(x,t)  \leq \int_M H(x,y,t) |h|^q(y,0) \, dV_y \\
        & \qquad + C_q \int_0^t \!\!\!\! \int_M H(x,y,t - s) \left( |\Rm(g_0)| |h|^q - c_q |\nabla |h|^{\frac{q}{2}} |^2 \right) + \left| \nabla H(x,y,t - s) \right| |h|^{\frac{q}{2} + 1} |\nabla |h|^{\frac{q}{2}} | \, dV_y ds.
        \end{split}
    \end{equation}
    The desired formula follows by inserting (\ref{Kotschwar}).
\end{proof}

We need the following straightforward adaptation of Theorem \ref{thm:Mestimate}.

\begin{prop}\label{prop:cutoffMestimate}
        Let $(M,g_0)$ be as above, where we assume the decay order $\mu = n.$ Given $1< \ell < n-2,$ choose $\max\{ 0, 2-\ell \} <  \ell' < \ell .$ There exists $\eps_1 > 0$ as follows.
    
    Let $0 < \eps \leq \eps_1^2.$ Given any solution $g(t) = g_0 + h(t)$ of the Ricci-DeTurck flow on $\left[ 0, T \right)$ with
    \begin{equation}\label{cutoffMestimate:C0bound}
        \sup_{\frac12 \le t < T} \|h(\cdot ,t) \|_{C^0 ( B_{2 \sqrt t} ) } < \eps_1,
    \end{equation} 
    as well as
    \begin{equation}\label{cutuffMestimate:C20bound}
        \sup_{\frac12 \le t < T} \| h(\cdot, t) \|_{C^0_{-\ell}( B_{2 \sqrt t} \setminus \overline B_{\frac12 \sqrt t})}
        + \sup_{\frac12 \le t \le 1} \| h(\cdot, t) \|_{C^0( B_{2}) }
        \le \eps,
    \end{equation}
    then we have both
    \begin{equation}\label{cutoffMestimate:Mestimate}
        |\calM(x,t)| \leq \frac{\eps_1^{-1} \eps }{ \rho(x)^{\ell'} t^{\frac{\ell - \ell'}{2} + 1}}
        \qquad \forall x \in B_{\sqrt t} , \, 1 \le t < T,
    \end{equation}
    and
    \begin{equation} \label{cutoffMestimate:gestimate}
        \sup_{1 \le t < T} \| h (\cdot, t) \|_{C^{2, \gamma}_{-\ell}( B_{\sqrt t} ) } \le \eps_1^{-1} \eps.
    \end{equation}
\end{prop}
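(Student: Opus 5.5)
We follow the blow-up/contradiction scheme of Theorem \ref{thm:Mestimate}, but work only on the parabolic region $\{(x,t) : x \in B_{2\sqrt t},\ t \ge \frac12\}$ and replace the global $C^{2,\gamma}_{-\ell}$ control by the localized hypotheses (\ref{cutoffMestimate:C0bound})--(\ref{cutuffMestimate:C20bound}).

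\textbf{Barrier and first failure time.} Take the barrier $G$ of (\ref{Mestimate:Gdef}) with exponents $q(\ell+2), q\ell'$ and $q>1$ close to $1$. Shift time so that it is normalized at $t=1$. Suppose (\ref{cutoffMestimate:Mestimate}) fails along a sequence with $\eps_1 = 1/i$. Let $t_i \ge 1$ be the first time at which $|\calM_i|^q = (i\eps_i)^q G$ at some $x_i \in \overline{B}_{\sqrt{t_i}}$.

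The barrier must hold up to the moving boundary $\partial B_{\sqrt t}$ and at the initial time $t=1$. We get this from the hypotheses by local parabolic regularity. On the annulus $B_{2\sqrt t}\setminus B_{\sqrt t/2}$, the $C^0_{-\ell}$ bound of size $\eps$ together with the smallness (\ref{cutoffMestimate:C0bound}) gives $|\calM| \lesssim \eps\, t^{-\ell/2-1}$ on $\partial B_{\sqrt t}$. This comes from rescaled interior estimates in the spirit of Lemma \ref{lem parab smoothing in weighted Holder}, applied on parabolic cylinders of size $\sim\sqrt t$. This quantity is $\le C\,\eps\, G^{1/q}$ there, since $G^{1/q}\sim \rho^{-\ell'} t^{-(\ell-\ell')/2-1}$ at $\rho\sim\sqrt t$. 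Likewise, the $C^0(B_2)$ bound for $t\in[\frac12,1]$ controls $|\calM(\cdot,1)|$ on $B_1$ by $C\eps$.

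Before $t_i$, we integrate the $\calM$ bound in time from the initial control. This gives $|h_i|\le Ci\eps_i\rho^{-\ell}$ on $B_{\sqrt t}$. Local weighted Schauder estimates (the localized version of Lemma \ref{lemma:globalC2gamma-deltaestimate}) then give (\ref{cutoffMestimate:gestimate}) up to $t_i$. For points at radius $\rho\sim \sqrt t$ we use the annulus hypothesis instead. In that case, the lower limit of the time integral should be the first time $s$ at which $x\in B_{\sqrt s}$, namely $s\sim\rho^2$.

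\textbf{Case split.} If $x_i$ stays bounded, we argue as in Case 1 of Theorem \ref{thm:Mestimate}. Rescale $f_i = \calM_i(\cdot, t_i+s)/A_i$. To apply Theorem \ref{thm:ALEalmostorthog}, the metric must be globally $C^{2,\gamma}_{-\delta}$-close to $g_0$, but we only know $h_i$ on $B_{\sqrt{t_i}}$. We therefore apply it to a cutoff metric $g_0+\chi h_i$ with $\chi$ supported in $B_{\sqrt{t_i}}$. The error $\calM(g_0+\chi h_i)-\calM_i$ is supported where $\rho\sim\sqrt{t_i}$ and has size $\eps_i t_i^{-\ell/2-1}$. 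Its pairing with a kernel element $e_i = O(\rho^{-n})$ is $\lesssim \eps_i t_i^{-(\ell+2)/2}$, which is $o(A_i)$ because $\ell'>0$. Here we use $\mu=n$, so that $\delta$ can be taken just above $0$. The condition $\ell'>2-\ell$ corresponds to $\ell'>2(n-\mu+1)-\ell$. Passing to the limit gives an ancient solution bounded by $C\rho^{-\ell'}$ and orthogonal to $\ker L$, which contradicts Theorem \ref{thm:liouville}. If $x_i$ escapes to infinity, we apply the comparison Lemma \ref{lemma:comparisonprinciple} with $\underline R = R_1$ and $\overline R(t)=\sqrt t$. Boundary control on $\overline R$ comes from the annulus estimate above, with the constant absorbed by $i\to\infty$. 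This again gives a contradiction.

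\textbf{Main obstacle.} The main difficulty is the boundary and cutoff bookkeeping. We have to ensure the annulus data gives strict barrier control with room to spare. We also have to ensure the cutoff error in almost-orthogonality is $o(A_i)$. The second requirement is the step we would verify first, by comparing exponents.
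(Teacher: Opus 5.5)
Your proposal is correct and follows the paper's proof essentially step for step: the same barrier $G$ and first failure time in $B_{\sqrt{t}}$, boundary and initial control from the annulus and $B_2$ hypotheses via local Schauder estimates, time integration from $\max\{\rho^2,1\}$, a cutoff metric fed into Theorem \ref{thm:ALEalmostorthog} plus the Liouville theorem in Case 1, and the comparison principle on $R_1\le r\le \sqrt t$ in Case 2. Two bookkeeping points need tidying. First, rescale the cutoff quantity $\calM(g_0+\chi h_i)$ itself, as the paper does, rather than $\calM_i$: $\calM_i$ is uncontrolled outside $B_{2\sqrt{t_i}}$, so $\calM(g_0+\chi h_i)-\calM_i$ is not supported only where $\rho\sim\sqrt{t_i}$. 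Second, you still need to show $t_i\to\infty$, because the well-posedness argument used in Theorem \ref{thm:Mestimate} is not available under these local hypotheses; the paper does this with a maximum-principle bound $|\calM_i|\le C\eps_i e^{Kt}$ on $B_{\sqrt t}$.
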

\begin{proof} We argue as in the proof of Theorem \ref{thm:Mestimate}. Let $q > 1$ and $G$ be as in (\ref{Mestimate:Gdef}).

    Suppose for contradiction that (\ref{cutoffMestimate:Mestimate}) fails for $\eps_1 = \frac{1}{i},$ $i \in \N,$ Ricci-DeTurck solutions $g(t) = g_i(t),$ $t \in \left[ 0, T_i \right),$ and $\eps = \eps_i \leq \frac{1}{i^2}.$ 
    Write $h_i(t) = g_i(t) - g_0$ and $\calM_i(x,t) = \calM(g_i(t))(x).$
    By Lemma \ref{lemma:localC2gamma-deltaestimate} \eqref{lem local C2gamma-delta est, item local in time}, we can pass from (\ref{cutuffMestimate:C20bound}) to 
    \begin{equation}\label{cutuffMestimate:C220bound}
       \sup_{\frac34 \le t < T} \| h_i(t)\|_{C^{2,\gamma}_{-\ell}\left( B_{\sqrt{2t}} \setminus B_{\sqrt{t/2} } \right) } 
       + \sup_{\frac34 \le t \le 1} \| h_i(t)\|_{C^{2,\gamma}_{-\ell}\left( B_{\sqrt{2t}}   \right) } 
       \leq C\eps_i.
    \end{equation}
    In particular, this gives
    \begin{equation}\label{cutoffMestimate:Mboundatparabolicscale}
         \sup_{r = \sqrt{t}} | \calM_i (x, t) | \leq C \eps_i t^{- \frac{\ell}{2} - 1}
        \qquad { \forall \frac34 \le t < T}.
 \end{equation}
    
    Let $t_i \ge 1$ be the first time such that
    \begin{equation}\label{cutoffMestimate:Mnonestimate}
        |\calM_i(x,t_i)|^q = (i \eps_i)^q G(x,t_i)
    \end{equation}
    for some $x \in B_{\sqrt{t_i}}.$ 
Since $t_i$ is the first such time, $\calM_i(x,t)$ satisfies
    \begin{equation}\label{cutoffMestimate:MiC2bound}
        | \calM_i(x,t) | \leq i \eps_i G(x,t)^{\frac{1}{q}} \leq \frac{C i \eps_i}{\rho(x)^{\ell' }(\rho(x)^2 + t)^{\frac{\ell - \ell'}{2} + 1} }
    \end{equation}
    for $1 \leq t \leq t_i$ and $x \in B_{\sqrt{t}}.$ In view of (\ref{cutuffMestimate:C220bound}), by the comparison principle (Lemma \ref{lemma:comparisonprinciple}), we have
    $$\sup_{B_{\sqrt{t}}}|\calM_i(x,t)| \leq C \eps_i e^{Kt} 
    \qquad { \forall \frac34 \le t < T},$$
    so we must have $t_i \to \infty$ as $i \to \infty.$
    
    By integrating (\ref{cutoffMestimate:MiC2bound}) from $t = \max \{ r(x)^2, 1\}  { \ge 1}$ to $t_i,$ we get
    \begin{gather}\label{cutoffMestimate:hiC2-ellbound}
    \begin{aligned}
        |h_i(x,t) | & \leq |h_i(x,\max \{ r(x)^2, 1\} )| + \int_{\max\{ r(x)^2, 1\} }^t \left| \calM_i(x,s) \right| \, ds \\
        & \leq  \frac{\eps_i}{\rho(x)^{\ell} } + \frac{C i \eps_i}{\rho(x)^{\ell'} } \cdot \frac{1}{\rho(x)^{\ell - \ell' } } 
        && ( \text{using \eqref{cutuffMestimate:C20bound}} )\\
        & \leq \frac{C i \eps_i}{\rho(x)^{\ell} }
        \end{aligned}
    \end{gather}
    for all $(x,t)$ such that $ 1 \le t \le t_i$ and $x \in B_{\sqrt t}$.

    By Lemma \ref{lemma:localC2gamma-deltaestimate} 
    together with the bounds \eqref{cutuffMestimate:C220bound},
    we can improve this $C^0_{-\ell}$ bound \eqref{cutoffMestimate:hiC2-ellbound} to
    \begin{equation}\label{cutoffMestimate:hC2gammaellbound}
        \sup_{1 \le t \le t_i} \| h_i(t) \|_{C^{2,\gamma}_{-\ell}(B_{\sqrt{t}})} \leq C i \eps_i \leq \frac{C}{i}.
    \end{equation}

    \vspace{2mm}

\noindent   \underline{ {\bf Case 1:} There exists a bounded sequence of $x_i \in M$ such that (\ref{cutoffMestimate:Mnonestimate}) holds with $x = x_i.$}

Let $R_i = \sqrt{t_i}.$ We have
    \begin{equation}\label{cutoffMestimate:Aidef}
        A_i : = |\calM_i(x_i,t_i) | = i \eps_i G(x_i,t_i)^\frac{1}{q} \geq \frac{d i \eps_i}{t_i^{\frac{\ell - \ell'}{2} + 1} } = \frac{d i \eps_i}{R_i^{\ell - \ell' + 2}  },
    \end{equation}
    since $x_i$ is uniformly bounded. Here $d > 0$ depends on the contradicting sequence.

    We first estimate the projection of $\calM_i$ onto the kernel of the linearized operator. Choose $0 < \delta < n - 2$ 
such that
\begin{equation}\label{cutoffMestimate:kdeltaell}
2 + 2 \delta < \ell + \ell'.
\end{equation}
Let $\chi_i(r) = \chi\left( \frac{r}{ R_i } \right)$ be a standard cutoff for $B_{R_i} \Subset B_{2 R_i},$ with $|\nabla^k \chi_i| \leq \frac{C_k}{ R_i^{k} }$ and supported on $B_{2 R_i} \setminus B_{R_i}$ for $k \geq 1.$ Define
$$\tilde{h}_i = \chi_i h_i, \qquad \tilde{g_i} = g_0 + \tilde{h}_i, \qquad \tilde{\calM}_i = \calM(\tilde{g}_i).$$
We have
\begin{equation}\label{cutoffMestimate:tildecalMformula}
    \tilde{\calM}_i = \chi_i \calM_i + \nabla \chi_i \# \nabla h_i + \nabla^2 \chi_i \# h_i.
\end{equation}
Observe that by applying a standard parabolic estimate, we can improve (\ref{cutoffMestimate:MiC2bound}) to
    \begin{equation}\label{cutoffMestimate:MiC2gammabound}
        \| \calM_i(t_i) \|_{C_{-\ell}^{0,\gamma} \left( B_{2R_i} \right) } \leq \frac{C i \eps_i}{R_i^{\ell - \ell' + 2} }.
    \end{equation}
    Applying (\ref{cutoffMestimate:hC2gammaellbound}) and (\ref{cutoffMestimate:MiC2gammabound}) to (\ref{cutoffMestimate:tildecalMformula}), we get
    \begin{equation}\label{cutoffMestimate:Mideltaest}
    \begin{split}
        \| \tilde{\calM}_i(t_i) \|_{C_{-\delta - 2}^{0,\gamma} } & = C \sup_{1 \leq r \leq 2R_i} r^{\delta + 2} \frac{i\eps_i r^{-\ell}}{R_i^{\ell - \ell' + 2}} + C \frac{ R_i^{\delta + 2} (i\eps_i)}{ R_i^{\ell + 2} } \\
        & \leq C i \eps_i \max\{ R_i^{-(\ell - \ell' + 2)}, R_i^{\delta - \ell} \} \\
        & \leq C i \eps_i R_i^{-\min \{\ell - \ell' + 2, \ell - \delta \}}.
        \end{split}
    \end{equation}
    We now apply Theorem \ref{thm:ALEalmostorthog}. Letting $g_{1,i}$ be the $L^2$-nearest Ricci-flat metric to $\tilde{g}_i,$ and $v_i$ be any unit element of $\ker L_{g_{1,i}},$ we get
    \begin{equation}
        \left( \tilde{\calM}_i, v_i \right)_{g_{1,i}} \leq C (i \eps_i)^2 R_i^{-2 \min \{\ell - \ell' + 2, \ell - \delta \}} \leq C i \eps_i R_i^{-2 \min \{\ell - \ell' + 2, \ell - \delta \}}
    \end{equation}
    since $i \eps_i \leq 1.$ Observe that $2 \ell - 2 \delta > \ell - \ell' + 2$ by (\ref{cutoffMestimate:kdeltaell}). In view of (\ref{cutoffMestimate:Aidef}), we therefore have
    \begin{equation}\label{cutoffMestimate:almostorthog}
    \left( \tilde{\calM}_i, v_i \right)_{g_{1,i}} \leq C i \eps_i R_i^{-2 \min \{k - \ell, \ell - \delta \}} = o(A_i)
    \end{equation}
    as $i \to \infty.$

    We now consider the solution
    $$f_i(x,s) = \frac{\tilde{\calM}_i(x, t_i + s)}{A_i}$$
    of (\ref{Mevolution}), defined for $s \in \LB - t_i, 0 \RB.$ The definition (\ref{cutoffMestimate:Aidef}) gives
    \begin{equation}
|f_i(x_i, 0)| = 1
    \end{equation}
    and the bound (\ref{cutoffMestimate:MiC2bound}) goes over to
    \begin{equation}
    \begin{split}
        | f_i(x,s) | \leq \frac{C i \eps_i}{A_i \rho(x)^{\ell}(\rho(x)^2 + t_i + s)^{\frac{k - \ell}{2}} } 
        & \leq \frac{C}{d \rho(x)^{\ell} \left(1 + \frac{s}{t_i} \right)^{\frac{k - \ell}{2}} },
        \end{split}
    \end{equation}
    while the estimate (\ref{cutoffMestimate:almostorthog}) gives
    \begin{equation}
    \left( f_i, v_i \right)_{g_{1,i}} \to 0
    \end{equation}
    as $i \to \infty.$ We now pass to a subsequential limit $x_i \to x_\infty,$ $f_i \to f$ in $C^\infty_{loc} \left( M \times \left( -\infty, 0 \RB \right)$ as $i \to \infty.$ In view of (\ref{cutoffMestimate:hC2gammaellbound}), we have $g_i(t_i) \to g_0$ as $i \to \infty.$ We obtain an ancient solution $f(x,s)$ of (\ref{Mevolution}) with $g(t) \equiv g_0,$ which satisfies
    \begin{equation}
        | f(x,s) | \leq \frac{C}{\rho(x)^{\ell} }, \qquad f(x_\infty, 0) = 1, \qquad
    \left( f, v \right) = 0
    \end{equation}
    for all $v \in \ker L_{g_0}.$ But this contradicts the Liouville Theorem \ref{thm:liouville}.

    \vspace{2mm}

    \noindent   \underline{ {\bf Case 2:} There exists no bounded sequence of $x_i \in M$ such that (\ref{Mestimate:Mnonestimate}) holds with $x = x_i.$}

In particular, given any $R_1 > 0,$ for $i$ sufficiently large, we have
\begin{equation}
| \calM_i(x,t) |^q < (i \eps_i)^q G(x,t)
\end{equation}
for all $x \in \bar{B}_{2R_1}$ and $0 \leq t \leq t_i.$ But, choosing $R_1 \geq R_0$ (from Lemma \ref{lemma:comparisonprinciple}) such that the evolution equation (\ref{comparisonprinciple:eps0evolution}) is satisfied by $|\calM_i|,$ and taking $\underline{R}(t) \equiv R_1$ and $\overline{R}(t) = \sqrt{t},$ the comparison principle (Lemma \ref{lemma:comparisonprinciple}) gives
\begin{equation}
| \calM_i(x,t) |^q < (i \eps_i)^q G(x,t)
\end{equation}
for all $R_1 < r \leq \sqrt{t}$ and $R_1^2 < t \leq t_i.$ This contradicts (\ref{Mestimate:Mnonestimate}), completing the proof of (\ref{Mestimate:Mestimate}).

In view of the $C^0_{-\ell}$-bound (\ref{cutoffMestimate:hiC2-ellbound}), we can now use Lemma \ref{lemma:localC2gamma-deltaestimate} to deduce (\ref{cutoffMestimate:gestimate}), completing the proof.
\end{proof}

    \begin{thm}[$L^p \cap L^\infty$-stability, $p < n,$ \cite{DeruelleKroncke20}-\cite{KronckePetersen20}]\label{thm:Lqstability}
        Fix an $n$-dimensional Ricci-flat ALE space $M$ of dimension and order $n \geq 4,$ satisfying (1-3) of 
        Theorem \ref{main thm weighted Schauder spaces} with $\mu = n.$ Given $1 < p < n,$ let $1 < \ell \leq \frac{n}{p}$ with $\ell < n-2$ and also choose $\ell'$ with $\max\{ 0, 2- \ell \} < \ell' < \ell.$ For each $\eps > 0,$ there exists $\delta > 0$ as follows.
        
Define
\begin{equation}\label{v(x,t)def}
v(x,t) := \frac{1}{\rho(x)} + \frac{1}{\sqrt{1 + t}}.
\end{equation}
Given any initial metric $g(0)$ with
        \begin{equation}\label{Lpstability:deltainitialbound}
        \| g(0) - g_0 \|_{L^p\cap L^\infty(M)} \leq \delta,
    \end{equation}
    the corresponding solution $g(t) = g_0 + h(t)$ of Ricci-DeTurck flow exists for all time and converges in $L^p$ and $C^\infty$ to a gauged Ricci-flat metric $g_\infty.$ We have the estimates
      \begin{equation}\label{Lpstability:hLpestimate}
        \sup_{0 \leq t < \infty} \| h(t) \|_{L^p(M)} \leq \eps,
    \end{equation}
          \begin{equation}\label{Lpstability:hpointwiseestimate}
       | h(x,t) | \leq \eps v(x,t)^{\frac{n}{p}},
    \end{equation}
    and, for $1 \leq t < \infty,$
        \begin{equation}\label{Lpstability:Mestimate}
        |\calM(x,t)| \leq \frac{\eps v(x,t)^{\ell'}}{ t^{\frac{\ell - \ell'}{2} + 1} }
    \end{equation}
    and
\begin{equation}\label{Lpstability:ginftyestimate}
        | \nabla^{g_\infty,(k)} \left( g(t) - g_\infty \right) | \leq \frac{ C_{k,g_0} \eps v(x,t)^{k + \ell'} }{ t^{\frac{\ell - \ell'}{2}} }
    \end{equation}
    for each $k \in \N.$
    
    Moreover, there exists a family $\theta_t$ of diffeomorphisms as in the statement of Theorem \ref{main thm L^p spaces}.
    \end{thm}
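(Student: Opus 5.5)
The plan is a continuity (bootstrap) argument that combines the heat-kernel Duhamel estimate of Lemma \ref{lemma:Duhamel} with the cutoff Schauder-type estimate of Proposition \ref{prop:cutoffMestimate}. Let $T$ be the maximal time up to which the flow exists and satisfies the weakened bounds $\|h(t)\|_{L^p} \le 2\eps$ and $|h(x,t)| \le 2\eps\, v(x,t)^{n/p}$. On $[0,T)$ we will improve these to the stated bounds with constant $\eps$, provided $\delta \ll \eps$. By Theorem \ref{thm:wellposedness} (in its $C^0$ form via Shi), the flow exists on $[0,1]$, and Lemma \ref{lemma:localC2gamma-deltaestimate} gives $\|h\|_{C^0}\lesssim\delta$ there. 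This starts the bootstrap.

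\textbf{Interior decay from Proposition \ref{prop:cutoffMestimate}.} For $t\ge \frac12$ and $x \in B_{2\sqrt t}\setminus B_{\sqrt t/2}$ we have $v(x,t)\sim t^{-1/2}\sim\rho(x)^{-1}$. Hence the pointwise bootstrap bound gives $|h|\lesssim \eps\rho^{-n/p}\le\eps\rho^{-\ell}$, using $\ell\le n/p$. This is exactly hypothesis (\ref{cutuffMestimate:C20bound}), with $\eps$ replaced by $C\eps$; hypothesis (\ref{cutoffMestimate:C0bound}) follows if $\eps\le\eps_1$ is small. Proposition \ref{prop:cutoffMestimate} then yields (\ref{Lpstability:Mestimate}) inside $B_{\sqrt t}$, after noting $\rho^{-\ell'}\sim v^{\ell'}$ there. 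Outside $B_{\sqrt t}$, $v\sim t^{-1/2}$, and the bound follows from interior parabolic smoothing of the pointwise bound on $h$. Integrating (\ref{Lpstability:Mestimate}) in time from $t=\max\{\rho^2,1\}$, exactly as in (\ref{cutoffMestimate:hiC2-ellbound}), gives convergence $g(t)\to g_\infty$ together with (\ref{Lpstability:ginftyestimate}) and $\calM(g_\infty)=0$.

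\textbf{Improving the $L^p$ and pointwise bounds.} Apply Lemma \ref{lemma:Duhamel} with $q=p$ (or with $q$ close to 1 and interpolate), and handle the two error terms as follows.
\begin{itemize}
\item The gradient term is absorbed by the good term $-c|\nabla|h|^{q/2}|^2$ using Young's inequality, at the cost of $\int\!\!\int H\,(t-s)^{-1}(1+d^2/(t-s))\,|h|^{q+2}$. Since $|h|\lesssim\eps$, this is bounded by $\eps^2$ times a heat-kernel integral of $|h|^q/(t-s)$ weighted by a Gaussian. That integral is still not integrable near $s=t$, so there we instead use the pointwise parabolic bound on $|\nabla h|$ from the Schauder estimates.
\item The curvature term $|\Rm|\,|h|^q\lesssim\rho^{-n-2}|h|^q$ has rapidly decaying coefficient. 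Its Duhamel contribution is estimated with (\ref{heatkernelest}) and Bishop–Gromov: $\int_0^t\!\int H\rho^{-n-2}\,dV\,ds$ is bounded uniformly (in fact like $v^{n-2}$), which gives a contribution $\lesssim\eps^{q}\cdot\eps$ in the $v^{nq/p}$ weight.
\end{itemize}
The main linear term $\int H|h_0|^q$ is bounded by $\delta^q$ in $L^1$, and pointwise by $\delta^q\min\{1,t^{-n/2}\}$. Combined with the dominance of the initial data near $x$, this gives the $v^{n/p}$ profile.

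\textbf{Main obstacle.} The critical step is that the curvature term in the Duhamel estimate does not gain smallness on its own: it is linear in $|h|^q$ with a non-small coefficient. It therefore cannot close via a small constant, and this is exactly where linear stability and integrability must enter. My plan is to avoid running Duhamel on $h$ for large times. Instead, for $t\ge1$ I would write $h(t)=h(1)+\int_1^t\calM\,ds$ and use the decay (\ref{Lpstability:Mestimate}), which already encodes stability through Proposition \ref{prop:cutoffMestimate}. Integrating it gives $|h(t)-h(1)|\lesssim\eps\,C(\eps_1)\,\delta'\rho^{-\ell}$, which lies in $L^p$ since $\ell p\le n$. I expect the endpoint case $\ell=n/p$ to be borderline (a logarithmic divergence), so I would choose $\ell$ slightly less than $n/p$ in the $L^p$ estimate, or use the $t$-decay factor to restore integrability. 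Duhamel is then only needed on $[0,1]$, where the curvature term costs just a factor $e^{C}$. Finally, the diffeomorphisms $\theta_t$ are constructed exactly as in Corollary \ref{cor:Ricciexistence}, using Proposition \ref{prop:ellipticestonV} with $R=\sqrt t$.
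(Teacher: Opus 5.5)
\textbf{There is a genuine gap: the bootstrap does not close, and your replacement for the large-time Duhamel estimate cannot control the $L^p$ norm.}

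Proposition \ref{prop:cutoffMestimate} does not improve anything on its own. If its annular hypothesis (\ref{cutuffMestimate:C20bound}) holds with constant $C\eps$, which is all your bootstrap bound supplies, then (\ref{cutoffMestimate:Mestimate})--(\ref{cutoffMestimate:gestimate}) hold only with $C\eps_1^{-1}\eps$. That is worse than what you started with. To gain, you must feed it an annular bound of size $\eps_1\kappa\eps\, t^{-\ell/2}$ on $\{\rho\ge\sqrt t/2\}$, and this needs an independent far-field estimate valid for all times.

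In the paper that estimate is Proposition \ref{prop:Lpbootstrap}, i.e.\ the Duhamel formula run over the whole interval $[0,t_1]$, which gives (\ref{Lpstability:Linftybound}). Each of its terms is small in the far region:
\begin{itemize}
\item the initial-data term is $\lesssim\delta\, t^{-\ell/2}$;
\item the nonlinear terms carry an extra factor $\eps^{1/q}$;
\item the curvature term is $\lesssim\eps\rho^{-\ell-\alpha}$, so it gains spatial decay from $|\Rm(g_0)|\lesssim\rho^{-2-\tau}$.
\end{itemize}
You are right that the curvature term is linear with a non-small constant. The paper defeats it by a spatial splitting, not by a small constant: it is negligible for $\rho\ge R_1$ with $R_1$ large. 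Stability and integrability then enter only in the near region, through Proposition \ref{prop:cutoffMestimate}. Times $t\le R_1^2$ are handled by taking $\delta$ small depending on $R_1$, so controlling only $[0,1]$ is not enough.

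Your substitute $h(t)=h(1)+\int_1^t\calM\,ds$ cannot give $L^p$ control, for three reasons.
\begin{itemize}
\item Your integrability claim is reversed. We have $\rho^{-\ell}\in L^p(M)$ if and only if $\ell p>n$, so under $\ell\le n/p$ it never lies in $L^p$. Lowering $\ell$ below $n/p$ makes this worse, not better.
\item $h(1)$ has no pointwise decay. Since $v(\cdot,1)\ge 2^{-1/2}$, your bootstrap bound only gives $|h(\cdot,1)|\lesssim\eps$.
\item In the region $\rho\gtrsim\sqrt s$, Proposition \ref{prop:cutoffMestimate} says nothing. The bound available there, $|\calM(x,s)|\lesssim\eps\, s^{-\frac{n}{2p}-1}$ from parabolic smoothing, integrates to $O(\eps)$ with no decay in $\rho$.
\end{itemize}

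The far-field $L^p$ smallness has to be transported from the initial data by the heat kernel, which is what (\ref{Lpstability:Lpbound}) does. The linear term contracts in $L^p$ (Lemma \ref{lemma:youngsconvolution}(c)). The curvature term is $O(\eps R^{-\alpha})$ in $L^p(M\setminus B_R)$. The nonlinear terms are $O(\eps^{1+1/q})$. This is then combined with $\|h\|_{L^p(B_{R_2})}\lesssim\kappa\eps$ from the near-region estimate.

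Even the Duhamel bound on the curvature term closes in $L^p$ only under (\ref{Lpbootstrap:tauassumptions}), which forces $p>\frac{n}{n-2}$. This is why the paper treats $1<p\le\frac{n}{n-2}$ separately. It first obtains an $L^{p'}$-convergent solution for some $p'>\frac{n}{n-2}$, then reruns the argument for $g(t)-g_\infty$ using Proposition \ref{prop:smallpLpbootstrap}. Your proposal does not address this range.
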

\begin{proof} 
\underline{{\bf Case 1:} $p > \frac{n}{n-2}.$} 
Let $\ell < \frac{n}{2}.$ 
Assume that $q > 1$ is such that $\ell < \frac{n}{q^2} - \frac{2}{q} $ and $\left( 1 - \frac{q}{p} \right) n > 2.$
Since $\ell < \frac{n}{2},$ it is possible to choose an $r$ as in (\ref{2-relllessthan...}).

    \vspace{2mm}
    
\noindent \emph{Claim.} 
   For $\eps > 0$ sufficiently small, assuming (\ref{Lpstability:deltainitialbound}), the following is true. If, for $t_1 \geq 1,$ we have
    \begin{equation}\label{Lpstability:etahLpestimate}
        \sup_{0 \leq t < t_1} \| h(t) \|_{L^p(M)} \leq \eps,
    \end{equation}
    and
          \begin{equation}\label{Lpstability:etahpointwiseestimate}
       | h(x,t) | \leq \eps v(x,t)^{\ell}
    \end{equation}
    for $1 \leq t \leq t_1,$ then in fact
    \begin{equation}\label{Lpstability:halfetahLpestimate}
        \sup_{0 \leq t < t_1} \| h(t) \|_{L^p(M)} \leq \frac{\eps}{2}
    \end{equation}
    and
     \begin{equation}\label{Lpstability:halfetahpointwiseestimate}
       | h(x,t) | \leq \frac{\eps}{2} v(x,t)^{\ell}.
    \end{equation}
\begin{claimproof} To prove the claim, we first apply Proposition \ref{prop:Lpbootstrap} with the assumptions (\ref{Lpstability:deltainitialbound}), (\ref{Lpstability:etahLpestimate}), and (\ref{Lpstability:etahpointwiseestimate}). For $t \geq 2,$ this gives us
\begin{equation}\label{Lpstability:Linftybound}
\begin{split}
    |h|(x,t) & \leq C_1 \left( t^{-\frac{\ell}{2} } \delta + \eps \rho(x)^{-\ell - \alpha}  + \eps^{1 + \frac{1}{q} } v(x,t)^{\ell + \frac{1}{q}} \right),
    \end{split}
\end{equation}
and, for $R \geq 1,$ 
        \begin{equation}\label{Lpstability:Lpbound}
        \begin{split}
        \|h(t) \|_{L^p(M \setminus B_R)} & \leq \delta + C_2 \eps R^{-\alpha} + C_3 \eps^{1 + \frac{1}{q}}.
        \end{split}
    \end{equation}
    Here $\alpha > 0$ is a positive power whose precise value is not significant.
    
Now, given $0 < \kappa \leq \frac12 $ (to be determined), let 
\begin{equation}\label{Lpstability:Rdef}
R_1 = \left(\frac{C_1}{3 \eps_1 \kappa } \right)^{\frac{1}{\alpha}}. 
\end{equation}
Since the $L^p$ and $L^\infty$ norms grow only exponentially while they are small (by Deruelle-Kr{\"o}ncke \cite[Lemma 3.3]{DeruelleKroncke20} or an adaptation of Proposition \ref{prop:Lpbootstrap}), we may choose $\delta$ sufficiently small so that
    \begin{equation}\label{Lpstability:kappaetahLpestimate}
        \sup_{0 \leq t \leq R_1^2} \| h(t) \|_{L^p(M)} \leq \kappa \eps
    \end{equation}
    and, for $0 \leq t \leq R_1^2,$
          \begin{equation}\label{Lpstability:kappaetahpointwiseestimate}
       | h(x,t) | \leq \kappa \eps v(x,t)^{\ell}.
    \end{equation}
    In particular, (\ref{Lpstability:halfetahLpestimate}-\ref{Lpstability:halfetahpointwiseestimate}) are satisfied for $0 \leq t \leq R_1^2.$
    
    Now, observe that (\ref{Lpstability:Linftybound}), with $R_1$ as in (\ref{Lpstability:Rdef}) and $\delta, \eps$ sufficiently small, implies
    \begin{equation}\label{Lpstability:secondLinftybound}
\begin{split}
    |h|(x,t) & \leq \eps_1 \kappa \eps t^{-\frac{\ell}{2}}
    \end{split}
\end{equation}
    for $t \geq R_1^2$ and $\rho(x) \geq \sqrt{t}/2.$ 
    In particular, this gives the assumption (\ref{cutuffMestimate:C20bound}) for all $1 \leq t \leq t_1$ (with $\eps_1 \kappa \eps$ in place of $\eps$). Applying Proposition \ref{prop:cutoffMestimate}, we now have
            \begin{equation}\label{Lpstability:secondMestimate}
        |\calM(x,t)| \leq \frac{ \kappa \eps v(x,t)^{\ell'}}{t^{\frac{\ell - \ell'}{2} + 1}}
    \end{equation}
    and
     \begin{equation}\label{Lpstability:gestimate}
        | h(x,t) | \leq \kappa \eps v(x,t)^\ell
    \end{equation}
    for all $1 \leq t < t_1$ and $x \in B_{\sqrt{t}}.$ In particular, if $\kappa \leq \frac12,$ this establishes (\ref{Lpstability:halfetahpointwiseestimate}) for $x \in B_{\sqrt{t}}$ and $t \geq R_1^2.$ For $x \not\in B_{\sqrt{t}}$ and $t \geq R_1^2,$ (\ref{Lpstability:halfetahpointwiseestimate}) follows from (\ref{Lpstability:secondLinftybound}).

    It remains to prove (\ref{Lpstability:halfetahLpestimate}). We let
    $$R_2 = \left( \frac{\eps_1}{4C_2} \right)^{\frac{1}{\alpha}},$$
    where $\alpha$ is the exponent from (\ref{Lpstability:Lpbound}).
    Notice that $R_2$ does not depend on $\kappa.$ Integrating (\ref{Lpstability:gestimate}) over $B_{R_2},$ we have
    \begin{equation}
        \| h (t) \|_{L^p(B_{R_2})} \leq C \kappa \eps.
    \end{equation}
    Also, applying (\ref{Lpstability:Lpbound}) with $R = R_2,$ we get
    \begin{equation}
        \| h (t) \|_{M \setminus B_{R_2}} \leq \frac{\eps}{4}.
    \end{equation}
    Supposing that $\kappa < \frac{1}{4C},$ this establishes (\ref{Lpstability:halfetahLpestimate}) and with it the claim.
    \end{claimproof}

Now, by Shi \cite{Shi89}, the solution will exist as long as  (\ref{Lpstability:etahpointwiseestimate}) is satisfied. Taking $T$ to be the maximal time such that (\ref{Lpstability:etahLpestimate}-\ref{Lpstability:etahpointwiseestimate}) are satisfied, by the claim, we must have $T = \infty.$ In particular, (\ref{Lpstability:hLpestimate}) is satisfied. By (\ref{Lpstability:secondMestimate}-\ref{Lpstability:gestimate}), we see that (\ref{Lpstability:Mestimate}) is satisfied, as well as
          \begin{equation}\label{Lpstability:hpointwiseestimateellonly}
       | h(x,t) | \leq \eps v(x,t)^{\ell},
    \end{equation}
which is (\ref{Lpstability:hpointwiseestimate}) with $\ell$ in place of $\frac{n}{p}.$ (Below we will prove (\ref{Lpstability:hpointwiseestimate}) with exponent $\frac{n}{p}.$) We can obtain (\ref{Lpstability:ginftyestimate}) with $k = 0$ by integrating (\ref{Lpstability:Mestimate}) in time, and for $k > 0$ by applying interior parabolic regularity to $\calM$ and integrating in time.

Last, we show that the solution $g(t)$ converges to $g_\infty$ in $L^p(M).$ Let $\eta > 0.$ The $L^p$-estimate (\ref{Lpbootstrap:Lpbound}) of Proposition \ref{prop:Lpbootstrap} again reads
\begin{equation}\label{Lpstability:detailedLpestimate}
        \begin{split}
        \|h(t) \|_{L^p(M \setminus B_R)} & \leq \|h(0) \|_{L^p ( M \setminus B_R )} + \min \left\{ 1, \left( \frac{C R^2}{t} \right)^{\alpha } \right\} \|h(0)\|_{L^p(B_R)} + C \eps \left( \frac{1}{R^{\alpha} } + \frac{\eps^{\frac{1}{q}}}{t^\alpha} \right).
        \end{split}
    \end{equation}
For $R$ sufficiently large, we have
$\|h(0) \|_{L^p ( M \setminus B_R )} < \frac{\eta}{4}$
and also
 $\|g_\infty - g_0 \|_{L^p(M \setminus B_R)} < \frac{\eta}{4}.$ Choosing $R$ larger and $t$ sufficiently large, we can also bound
 $$\left( \frac{C R^2}{t} \right)^{\frac{n}{2}\left( \frac{1}{q} - \frac{1}{p} \right)}  \|h(0)\|_{L^p(B_R)} + C \eps \left( \frac{1}{R^{\alpha} } + \frac{\eps^{\frac{1}{q}}}{t^\alpha} \right) < \frac{\eta}{4},$$
 which from (\ref{Lpstability:detailedLpestimate}) gives
 $$\|h(t) \|_{L^p(M \setminus B_R)} < \frac{\eta}{2}.$$
Finally, in view of (\ref{Lpstability:etahpointwiseestimate}), for $t$ sufficiently large, we also have
$$\|g(t) - g_\infty \|_{L^p(B_R)} < \frac{\eta}{4}.$$
Combining the above, we get
\begin{equation}
\begin{split}
\|g(t) - g_\infty \|_{L^p(M)} & \leq \|g(t) - g_\infty \|_{L^p(B_R)} + \|h(t) \|_{L^p(M \setminus B_R)} + \|g_\infty - g_0 \|_{L^p(M \setminus B_R)} \\
& \leq \frac{\eta}{4} + \frac{\eta}{2} + \frac{\eta}{4} < \eta.
\end{split}
\end{equation}
Since $\eta > 0$ was arbitrary, we are done.

\vspace{2mm}

\noindent \underline{{\bf Case 2:} $1 < p \leq \frac{n}{n-2}.$ }
We first apply Case 1 with $p'$ greater than $\frac{n}{n-2},$ to obtain an $L^{p'}$-convergent solution. We can in fact take $\ell$ arbitrarily close to $\frac{n}{2} \leq n - 2,$ and $\ell'$ arbitrarily close to zero, so that $m = \ell - \ell'$ satisfies $2 < qm < n-2.$ The assumption (\ref{smallpLpbootstrap:Linftybound}) is satisfied, and we can re-run the previous argument with $\tilde{h} = g(t) - g_\infty$ in place of $h,$ using Proposition \ref{prop:smallpLpbootstrap}.

The pointwise bounds (\ref{Lpstability:hpointwiseestimate}) and (\ref{Lpstability:ginftyestimate}) follow by letting $m$ increase to $\frac{n}{p}$ in (\ref{smallpLpbootstrap:Linftybound}).

Finally, we can pass from Ricci-DeTurck flow back to Ricci flow by the same argument as above. 
\end{proof}

\vspace{10mm}

\appendix

\section{Weighted Schauder estimates} \label{section Schauder ests}

In this appendix, we collect Schauder estimates for weighted H{\"o}lder norms (Definition \ref{defn weighted Holder}) that are used throughout the paper.

The first is an interior Schauder estimate.
Its proof combines Brandt's non-standard interior Schauder estimates \cite{Brandt69} with a rescaling argument.

\begin{lemma} \label{lem parab smoothing in weighted Holder}
    Let $(M^n, g_0)$ be a Ricci-flat ALE manifold, $\delta \in \R$, and $\gamma \in (0,1)$.
    For any $r_0 > 0$ and $\Lambda \ge 1$, there exists $C = C(M^n, g_0, \gamma, \delta, r_0, \Lambda) > 0$ such that if $u, g$ are time-dependent families of symmetric 2-tensor fields on $M$ and $F,G$ are time-dependent families of tensor fields on $M$ that satisfy
    \begin{gather}
        \label{lem parab smoothing in weighted Holder, eqn 1}
        \partial_t u_{ij} = \Delta_{g, g_0} u_{ij} + F_{ij}^{cab} \nabla_c u_{ab} + G_{ij}^{ab} u_{ab} \qquad \text{on } B_{g_0}(x_0, 2r_0 \rho_0 ) \times (-4r_0^2\rho_0^2, 0), \\
        \label{lem parab smoothing in weighted Holder, eqn 2}
        \begin{split}
            \sup_{t \in (-4r_0^2\rho_0^2 , 0)} \|  g ( t) - g_0\|_{C^{0, \gamma}_{0}( B_{g_0}( x_0, 2 r_0 \rho_0), g_0 ) }+ \sup_{t \in (-4r_0^2\rho_0^2 , 0)} \|  F ( t)\|_{C^{0, \gamma}_{-1}( B_{g_0}( x_0, 2 r_0 \rho_0) , g_0) } \\+ \sup_{t \in (-4r_0^2\rho_0^2, 0)} \|  G( t) \|_{C^{0, \gamma}_{-2}( B_{g_0}( x_0, 2 r_0 \rho_0) , g_0 ) } 
            \le \Lambda,
        \end{split} \\
        \label{lem parab smoothing in weighted Holder, eqn 3^}
        \text{and } \Lambda^{-1} g_0 \le g \le \Lambda g_0 \qquad \text{on } B_{g_0}(x_0, 2r_0 \rho_0 ) \times (-4r_0^2\rho_0^2, 0) ,
    \end{gather}
    for some $x_0 \in M$ and $\rho_0 := \rho(x_0)$,
    then 
    \begin{equation}
         \sup_{t \in(-r_0^2 \rho_0^2 , 0)}  \| u ( t) \|_{C^{2, \gamma}_{-\delta}(B_{g_0}(x_0, r_0 \rho_0) , g_0  )} 
        \le C  \sup_{t \in (-4r_0^2 \rho_0^2 , 0) } \| u ( t) \|_{C^0_{-\delta}(B_{g_0}(x_0, 2r_0 \rho_0)  , g_0)} .
    \end{equation}
\end{lemma}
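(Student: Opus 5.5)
The plan is to reduce to a fixed-scale problem by parabolic rescaling and then invoke a Schauder estimate at unit scale. Set $s := r_0\rho_0$ and define the rescaled metric $\tilde g_0 := s^{-2} g_0$. Rescale the fields as $\tilde u(y,\tau) := u(y, s^2\tau)$ and $\tilde g(\tau) := s^{-2} g(s^2 \tau)$. The coefficients are rescaled so that the equation keeps its form: $\tilde F := s F$ (one derivative absorbs one power of $s$) and $\tilde G := s^2 G$. Then $\tilde u$ solves
$$\partial_\tau \tilde u = \Delta_{\tilde g,\tilde g_0}\tilde u + \tilde F \# \nabla^{\tilde g_0}\tilde u + \tilde G \# \tilde u$$
on $B_{\tilde g_0}(x_0,2)\times(-4,0)$. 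Note that Christoffel symbols are scale invariant, so $\nabla^{\tilde g_0}=\nabla^{g_0}$.

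First I check that the rescaled data are uniformly controlled in unweighted $C^{0,\gamma}$ on the unit-scale ball.
\begin{itemize}
\item \emph{Distance and weight.} On $B_{g_0}(x_0,2r_0\rho_0)$ the function $\rho$ is comparable to $\rho_0$, with constants depending only on $r_0$. This uses $|\rho(x)-\rho(x_0)|\le d(x,x_0)$; when $r_0$ is large the ball may contain $x_*$, so the comparability is only up to a factor depending on $r_0$. This is the reason $C$ depends on $r_0$.
\item \emph{Coefficients.} Consequently the weighted bounds on $F$ and $G$ in \eqref{lem parab smoothing in weighted Holder, eqn 2}, whose weights are $\rho^{1}$ and $\rho^{2}$ with Hölder parts carrying $\rho^{1+\gamma}$ and $\rho^{2+\gamma}$, become unweighted $C^{0,\gamma}$ bounds for $\tilde F$ and $\tilde G$ with respect to $\tilde g_0$.
\item \emph{Metric.} The bound $\Lambda^{-1}g_0\le g\le\Lambda g_0$ is scale invariant, and the $C^{0,\gamma}_0$ bound on $g-g_0$ likewise gives uniform Hölder control of the principal coefficients $\tilde g^{-1}$.
\item \emph{Background geometry.} By Lemma \ref{lemma: Rm est for ALE}, $|\nabla^k \Rm(\tilde g_0)|_{\tilde g_0}\lesssim s^{2+k}\rho^{-2-k}\lesssim C(r_0)$. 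The ALE structure also gives volume growth and injectivity radius bounds at scale $\rho$. Hence $(B_{\tilde g_0}(x_0,2),\tilde g_0)$ has uniformly bounded geometry, and we can use $\tilde g_0$-harmonic (or normal) coordinates with uniform $C^{k}$ control on the metric coefficients. If the ball meets the compact core, bounded geometry holds trivially there since $\rho\sim1$.
\end{itemize}

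Next, in these coordinates the equation becomes a linear parabolic system with diagonal principal part $\tilde g^{ab}\partial_a\partial_b$. The top-order coefficients are only Hölder in space and merely bounded measurable in time, since we assume no time regularity of $g$. This is exactly the situation of Brandt's interior Schauder estimate \cite{Brandt69}. It gives
$$\sup_{\tau\in(-1,0)}\|\tilde u(\tau)\|_{C^{2,\gamma}(B_1)}\le C\sup_{(-4,0)}\|\tilde u\|_{C^0(B_2)},$$
after covering by finitely many coordinate charts of controlled size, with constants depending only on $\Lambda,\gamma,r_0$ and the geometry. The lower-order terms $\tilde F\#\nabla\tilde u+\tilde G\tilde u$ are treated as part of the operator, since they have bounded $C^{0,\gamma}$ coefficients. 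If Brandt's estimate only handles the scalar case, I would use that the system is diagonal at top order and freeze coefficients in the standard perturbation argument.

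I expect this step to be the main obstacle: getting a Schauder estimate with only spatial Hölder continuity in the coefficients, and with interior control up to time $0$ from a sup-norm bound on a larger cylinder. I would rely on Brandt's result and, if necessary, interpolate to absorb the $C^1$ norms of $\tilde u$ on the right-hand side.

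Finally, I undo the scaling. On the ball, $\rho\sim\rho_0\sim s$, so multiplying by $\rho_0^{\delta}$ converts the $C^0$ norm of $\tilde u$ into $\|u\|_{C^0_{-\delta}}$. The unweighted $\tilde g_0$-norms $|\nabla^i\tilde u|_{\tilde g_0}=s^{i}|\nabla^i u|_{g_0}$ and the $\tilde g_0$-Hölder seminorm, which equals $s^{2+\gamma}[\nabla^2u]$, convert to the weights $\rho^{\delta+i}$ and $\rho^{\delta+2+\gamma}$. One last point needs care: the weighted seminorm in Definition \ref{defn weighted Holder} uses balls $B(x,\rho(x)/2)$ that may leave $B(x_0,r_0\rho_0)$. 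I read the norm on the subdomain as restricting to geodesics inside it. Where needed, the parallel-transport Hölder quotient is compared with the coordinate one using the bounded geometry. Altogether this yields the claimed inequality with $C=C(M^n,g_0,\gamma,\delta,r_0,\Lambda)$.
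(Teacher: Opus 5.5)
\textbf{Gap: the single rescaling at scale $s=r_0\rho_0$ works only for small $r_0$, but the lemma is claimed for every $r_0>0$.}

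The step that fails is your claim that $\rho$ is comparable to $\rho_0$ on $B_{g_0}(x_0,2r_0\rho_0)$ ``with constants depending only on $r_0$.'' Since $\rho$ is $1$-Lipschitz you do get $\rho\ge(1-2r_0)\rho_0$ there, but this says nothing once $r_0\ge\frac12$. Indeed, $d(x_0,x_*)<\rho_0$, so for every $r_0\ge\frac12$ the ball $B_{g_0}(x_0,2r_0\rho_0)$ contains $x_*$, where $\rho=1$, while $\rho_0$ can be arbitrarily large. Hence $\inf\rho/\rho_0$ over the ball tends to $0$ as $x_0\to\infty$.

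Everything built on this comparability then fails uniformly in $x_0$:
\begin{itemize}
\item Near the core (for non-flat $g_0$), $|\Rm(\tilde g_0)|_{\tilde g_0}=s^2|\Rm(g_0)|_{g_0}\sim r_0^2\rho_0^2$, and the injectivity radius of $\tilde g_0$ is $\sim 1/s$. So there are no coordinates of uniform size. Bounded geometry at the core holds trivially at scale $1$, not at scale $s$.
\item The hypotheses only give $|\tilde F|_{\tilde g_0}\lesssim\Lambda s/\rho$ and $|\tilde G|_{\tilde g_0}\lesssim\Lambda s^2/\rho^2$, which blow up where $\rho\sim1$.
\item The final conversion of unweighted norms of $\tilde u$ into $C^{k,\gamma}_{-\delta}$ norms, by multiplying by $\rho_0^{\delta}$, fails because $\rho^{\delta}$ varies by an unbounded factor across the ball.
\end{itemize}
So the constant you get from Brandt's estimate depends on $\rho_0$, not only on $(M,g_0,\gamma,\delta,r_0,\Lambda)$.

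\textbf{Missing idea: localize at each point's own scale $\rho(x)$, not at the single scale $r_0\rho_0$.} This is what the paper does.
\begin{itemize}
\item \emph{Small $r_0$.} It first proves the estimate for $r_0\le\frac1{100}$. For $x_0$ outside a compact set $K'$ it rescales by $\rho_0^{-2}\psi_{\rho_0}^*$ in the ALE chart, where $\rho\simeq\rho_0$ on the ball and the rescaled metric is uniformly $C^3$-close to Euclidean. For $x_0\in K'$ it applies Brandt directly, since $\rho\simeq1$ there.
\item \emph{General $r_0$, by covering.} Take $r_1=\min\{\frac1{100},\frac{r_0}{2(1+r_0)}\}$. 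For $x\in B_{g_0}(x_0,r_0\rho_0)$ one has $B_{g_0}(x,2r_1\rho(x))\subset B_{g_0}(x_0,2r_0\rho_0)$ and $4r_1^2\rho(x)^2\le r_0^2\rho_0^2$.
\item So the small-scale estimate applies on time-translated cylinders ending at any $t\in(-r_0^2\rho_0^2,0)$, and the hypotheses restrict to the smaller cylinders.
\item Hölder quotients for pairs at distance $\gtrsim r_1\rho$ are controlled by $\rho^{-\delta-2}\sup|\nabla^2u|$.
\end{itemize}

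Your rescaling argument is fine as the small-$r_0$ step. There $\rho\in[(1-2r_0)\rho_0,(1+2r_0)\rho_0]$, the curvature and injectivity radius of $\tilde g_0$ are uniformly controlled, and bounded-geometry coordinates can replace the paper's ALE chart. But you need the covering step to reach arbitrary $r_0$.

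A minor bookkeeping slip: if $\tilde u(\cdot,\tau)=u(\cdot,s^2\tau)$ as a $(0,2)$-tensor, then $|\nabla^i\tilde u|_{\tilde g_0}=s^{2+i}|\nabla^iu|_{g_0}$, not $s^i|\nabla^iu|_{g_0}$. This is harmless by linearity, or you can avoid it by setting $\tilde u=s^{-2}u$ as the paper does.
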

\begin{proof}
    We first show that the statement of the lemma holds for small scales $r_0$ and centers $x_0$ outside a large compact set.
    \begin{claim} \label{proof parab smoothing in weighted Holder, claim 1}
        There exists a compact subset $K' \subset M$ (depending only on $M^n, g_0$) such that,
        when $0 < r_0 \le \frac1{100}$, there exists $C = C(M^n, g_0, \gamma, r_0, \delta, \Lambda) > 0$ such that, for any $x_0 \in M \setminus K'$ and $\rho_0 = \rho(x_0)$,
        if 
        \begin{gather*}
            \partial_t u_{ij} = \Delta_{g, g_0} u_{ij} + F_{ij}^{cab} \nabla_c u_{ab} + G_{ij}^{ab} u_{ab} \qquad \text{on } B_{g_0}(x_0, 2r_0 \rho_0 ) \times (-4r_0^2\rho_0^2, 0), \\
            \begin{split}
                \sup_{t \in (-4r_0^2\rho_0^2 , 0)} \|  g ( t) - g_0\|_{C^{0, \gamma}_{0}( B_{g_0}( x_0, 2 r_0 \rho_0), g_0 ) }+ \sup_{t \in (-4r_0^2\rho_0^2 , 0)} \|  F ( t)\|_{C^{0, \gamma}_{-1}( B_{g_0}( x_0, 2 r_0 \rho_0) , g_0) } \\+ \sup_{t \in (-4r_0^2\rho_0^2, 0)} \|  G( t) \|_{C^{0, \gamma}_{-2}( B_{g_0}( x_0, 2 r_0 \rho_0) , g_0 ) } 
                \le \Lambda,
            \end{split} \\
            \text{and } \Lambda^{-1} g_0 \le g \le \Lambda g_0 \qquad \text{on } B_{g_0}(x_0, 2r_0 \rho_0 ) \times (-4r_0^2\rho_0^2, 0) ,
        \end{gather*}
        then
        \begin{equation*}
            \sup_{t \in  (-r_0^2 \rho_0^2 /4, 0) } \| u( t) \|_{C^{2, \gamma}_{-\delta}( B_{g_0}(x_0, r_0 \rho_0/2)) } 
            \le C  \sup_{t \in  (-4r_0^2 \rho_0^2, 0) } \| u ( t) \|_{C^0_{-\delta}( B_{g_0}(x_0, 2 r_0 \rho_0) ) }  .
        \end{equation*}
    \end{claim}
    \begin{claimproof}
        Since $(M, g_0)$ is ALE, there is a coordinate system at infinity
            $$\Psi : \mathcal C_R = (R, \infty) \times (\mathbb{S}^{n-1}/ \Gamma) \to M \setminus K $$
        for some $K \subset M$ compact and $R > 0$ 
        such that
        \begin{equation} \label{proof parab smoothing in weighted Holder, eqn 2}
            \lambda^{-2} \phi_{\lambda}^* \Psi^* g_0 \xrightarrow[\lambda \to \infty]{C^\infty_{loc}(\mathcal C, g_{Euc})} g_{Euc}
        \end{equation}
        where $\mathcal C = \mathcal C_0 = (0, \infty) \times ( \mathbb S^{n-1} / \Gamma)$ and $\phi_\lambda : \mathcal C_{R / \lambda} \to \mathcal C_R$ is the dilation map $\phi_\lambda (r, \theta) = (\lambda r, \theta)$.
        Denote $\psi_\lambda = \Psi \circ \phi_\lambda$.
    
        By enlarging $K \subset M$ and $R > 0$ and using that $(M, g_0)$ is ALE, we can assume without loss of generality that $\rho \circ \Psi$ is comparable to the coordinate $r \in (0, \infty)$ on $\mathcal C$, namely,
        \begin{equation} \label{proof parab smoothing in weighted Holder, eqn 3.0}
            1 \le \frac12 r \le \rho \circ \Psi \le 2 r \qquad \forall (r, \theta) \in \mathcal C_R.
        \end{equation}
        Observe then that $r ( \psi_{\rho(x)}^{-1}(x) \in [\frac 12, 2]$ for all $x \in M \setminus K$.
        Additionally, there exists a larger compact subset $K'$ with $K \subset K' \subset M$ such that for all $x \in M \setminus K'$ we have
        \begin{gather}
            \label{proof parab smoothing in weighted Holder, eqn 3.15}
            B_{Euc}\left( \Psi^{-1} (x) , \frac14 \rho(x) \right) \subset B_{Euc} \left( \Psi^{-1} (x) , \frac 12 r( \Psi^{-1} (x) ) \right) \subset \mathcal C_R  \text{ and}\\
             \label{proof parab smoothing in weighted Holder, eqn 3.2}
            \frac12 g_{Euc} \le \rho(x)^{-2} \psi_{\rho(x)}^* g_0 \le 2 g_{Euc} \qquad \text{on } B_{Euc}\left(\psi_{\rho(x)}^{-1} (x) , \frac14 \right).
        \end{gather}
        
        Let $0 < r_0 \le \frac1{100}$ and $x_0 \in M \setminus K'$.
        Set $\rho_0 :=\rho(x_0) $.
        Consider the tensors
        \begin{gather*}
            \hat g_0 := \rho_0^{-2} \psi_{\rho_0}^*  g_0 , \qquad  
            \hat g(\tau) := \rho_0^{-2} \psi_{\rho_0}^*  g(\rho_0^2 \tau) , \qquad  
            \hat u(\tau) := \rho_0^{-2} \psi_{\rho_0}^* u ( \rho_0^2 \tau), \\ 
            \hat F(\tau) := \rho_0^2 \psi_{\rho_0}^* F(\rho_0^2 \tau) , \qquad 
            \hat G (\tau):= \rho_0^2 \psi_{\rho_0}^* G (\rho_0^2 \tau).
        \end{gather*}
            
        If
        $$\partial_t u_{ij} = \Delta_{g, g_0} u_{ij} + F_{ij}^{cab} \nabla_c u_{ab} + G_{ij}^{ab} u_{ab} \qquad \text{on } B_{g_0}(x_0, 2 r_0 \rho_0 ) \times (-4r_0^2\rho_0^2, 0),$$
        then it follows that
        \begin{multline}
            \partial_\tau \hat u 
            = \rho_0^{-2} \psi_{\rho_0}^* \partial_t u \cdot \rho_0^2 
            = \psi_{\rho_0}^* \left( \Delta_{g, g_0} u + F * \nabla u + G* u  \right) 
            = \Delta_{\hat g, \hat g_0} \hat u + \hat F_{ij}^{cab} \hat \nabla_c\hat u_{ab} + \hat G_{ij}^{ab} \hat u_{ab} \\
            \text{on } B_{\hat g_0} (\psi_{\rho_0}^{-1}(x_0), 2r_0) \times (-4r_0^2, 0),
        \end{multline} 
        where $\hat \nabla = \nabla_{\hat g_0}$.
        Observe \eqref{proof parab smoothing in weighted Holder, eqn 3.2} implies
        \begin{equation} \label{proof parab smoothing in weighted Holder, eqn 3.4}
            B_{Euc}\left(\psi_{\rho_0}^{-1}(x_0), \sqrt 2 r_0 \right) \subset B_{\hat g_0} (\psi_{\rho_0}^{-1}(x_0), 2r_0)  \subset B_{Euc}\left(\psi_{\rho_0}^{-1}(x_0), 2\sqrt 2 r_0 \right) .  
        \end{equation}

        Recall $\frac12 \le r ( \psi_{\rho_0}^{-1} (x_0) ) \le 2$, and hence
        \begin{equation} \label{proof parab smoothing in weighted Holder, eqn 3.5}
            B_{Euc} ( \psi_{\rho_0}^{-1} (x_0) , \sqrt 2 r_0) \subset \left( \frac12 - \frac{\sqrt 2}{100}, 2 + \frac{\sqrt 2}{100} \right) \times \mathbb S^{n-1} / \Gamma.
        \end{equation}
        By \eqref{proof parab smoothing in weighted Holder, eqn 2}, $\hat g_0 = \rho_0^{-2} \psi_{\rho_0}^* g_0$ therefore remains in a fixed $C^3(B_{Euc} ( \psi_{\rho_0}^{-1} (x_0) , \sqrt 2 r_0), g_{Euc})$-neighborhood of $g_{Euc}$ as $\rho_0 \to \infty$.
        Next, note that
        \begin{equation} \label{proof parab smoothing in weighted Holder, eqn 3.6}
            | \hat G |_{\hat g_0 } (y, \tau) = \sqrt{ (\hat g_0)_{a_1 a_2} (\hat g_0)_{b_1 b_2} (\hat g_0)^{i_1 i_2} (\hat g_0)^{j_1 j_2} \hat G_{i_1 j_1}^{a_1 b_1} \hat G_{i_2 j_2}^{a_2 b_2}  } = \rho_0^2 |G|_{g_0} ( \psi_{\rho_0}(y), \rho_0^2 \tau)  
        \end{equation}
        and similarly $|\hat F |_{\hat g_0}(y, \tau) = \rho_0 |F |_{g_0} ( \psi_{\rho_0}(y), \rho_0^2 \tau)$ and $|\hat g^{-1} |_{\hat g_0}(y, \tau) = |g^{-1}|_{g_0} ( \psi_{\rho_0}(y), \rho_0^2 \tau)$.
        Additionally, equations \eqref{proof parab smoothing in weighted Holder, eqn 3.0}--\eqref{proof parab smoothing in weighted Holder, eqn 3.2} imply $\rho$ is comparable to $\rho(x_0)$ on $ B_{g_0}(x_0, 2 r_0 \rho_0)$, that is,
        \begin{equation} \label{proof parab smoothing in weighted Holder, eqn 3.65}
            \frac12 \left( \frac12 - \frac{2 \sqrt 2}{100} \right) \rho(x_0) \le \rho(x) \le 2 \left( 2 + \frac{2 \sqrt 2}{100} \right) \rho(x_0) \qquad 
            \forall x \in B_{g_0}(x_0, 2 r_0 \rho_0).
        \end{equation}
        Combining \eqref{proof parab smoothing in weighted Holder, eqn 3.4}--\eqref{proof parab smoothing in weighted Holder, eqn 3.65},
        it follows that
        \begin{align*}
            & \sup_{\tau \in (-4r_0^2, 0)} \| \hat g - \hat g_0 \|_{C^{0, \gamma}_*( B_{Euc}( \psi_{\rho_0}^{-1}(x_0), \sqrt 2 r_0) ,  g_{Euc}) } + \sup_{\tau \in (-4r_0^2, 0)} \| \hat F \|_{C^{0, \gamma}_*( B_{Euc}( \psi_{\rho_0}^{-1}(x_0), \sqrt 2 r_0) ,  g_{Euc}) } \\
            &+ \sup_{\tau \in (-4r_0^2, 0)} \| \hat G \|_{C^{0, \gamma}_*( B_{Euc}( \psi_{\rho_0}^{-1}(x_0), \sqrt 2 r_0) ,  g_{Euc}) }
            \\
            \le C & \left(  \sup_{\tau \in (-4r_0^2, 0)} \| \hat g - \hat g_0 \|_{C^{0, \gamma}_*( B_{\hat g_0}( \psi_{\rho_0}^{-1}(x_0),  2 r_0) ,  \hat g_0) } + \sup_{\tau \in (-4r_0^2, 0)} \| \hat F \|_{C^{0, \gamma}_*( B_{\hat g_0}( \psi_{\rho_0}^{-1}(x_0),  2 r_0) , \hat g_0) } \right.\\
            & \left. + \sup_{\tau \in (-4r_0^2, 0)} \| \hat G \|_{C^{0, \gamma}_*( B_{\hat g_0}( \psi_{\rho_0}^{-1}(x_0),  2 r_0)  , \hat g_0) } \right)
            \\
            \le  C &\left(  \sup_{\tau \in (-4r_0^2, 0)} \|  g -  g_0 \|_{C^{0, \gamma}_0( B_{g_0}( \psi_{\rho_0}^{-1}(x_0), 2 r_0 \rho_0) ,  g_0) } + \sup_{\tau \in (-4r_0^2\rho_0^2 , 0)} \|  F \|_{C^{0, \gamma}_{-1}( B_{g_0}( x_0, 2 r_0 \rho_0) ,  g_0) } \right. \\
            &\left.+ \sup_{\tau \in (-4r_0^2\rho_0^2, 0)} \|  G \|_{C^{0, \gamma}_{-2}( B_{g_0}( x_0, 2 r_0 \rho_0)  ,  g_0) } \right) \\
            \le C & \Lambda,
        \end{align*}
        for some constant $C = C(M^n, g_0, \gamma, r_0) >0 $ which may vary from line to line but is always independent of $x_0 \in M \setminus K'$.
        Using also $C^3_{loc}$-closeness of $\hat g_0$ to $g_{Euc}$, it follows that, in Euclidean coordinates, the coefficients of the linear operator
            $$\hat u \mapsto  \Delta_{\hat g, \hat g_0} \hat u_{ij}+ \hat F^{cab}_{ij} \hat \nabla_c \hat u_{ab} + \hat G^{ab}_{ij} \hat u_{ab}$$
        have $\sup_\tau \| \cdot \|_{C^{0, \gamma}_*}$ bounds on $B_{Euc}(\psi_{\rho_0}^{-1} (x_0) , \sqrt 2 r_0) \times (-4r_0^2, 0)$ that are independent of $x_0 \in M \setminus K'$.
        Moreover, assumption \eqref{lem parab smoothing in weighted Holder, eqn 3^} implies the equation is uniformly elliptic on the region $B_{Euc}(\psi_{\rho_0}^{-1} (x_0) , \sqrt 2 r_0) \times (-4r_0^2, 0)$ with an ellipticity constant independent of $x_0 \in M \setminus K'$.
        Brandt's non-standard interior Schauder estimates \cite{Brandt69} (see also  \cite[Theorem 3.6]{ChodoshSchulze21} and references therein)
        therefore imply
        \begin{equation}  \label{proof parab smoothing in weighted Holder, eqn 4}
            \sup_{\tau \in (-r_0^2, 0)} \| \hat u \|_{C^{2, \gamma}_{ *} ( B_{Euc} ( \psi_{\rho_0}^{-1}(x_0) , r_0) , g_{Euc}) } 
            \le C \sup_{\tau \in (-4r_0^2, 0)}  \| \hat u \|_{C^0_{ *} (B_{Euc} ( \psi_{\rho_0}^{-1}(x_0) , \sqrt 2 r_0) , g_{Euc})} .
        \end{equation}
        where $C = C(M^n, g_0, \gamma, r_0, \Lambda) > 0$.
        Using \eqref{proof parab smoothing in weighted Holder, eqn 2} and \eqref{proof parab smoothing in weighted Holder, eqn 3.2}, we deduce
        \begin{equation} \label{proof parab smoothing in weighted Holder, eqn 5}
            \sup_{\tau \in \left( -\frac14 r_0^2, 0 \right) } \| \hat u \|_{C^{2, \gamma}_{ *} ( B_{\hat g_0} ( \psi_{\rho_0}^{-1}(x_0) , \frac1{ 2}r_0), \hat g_0) } \\
            \le C \sup_{\tau \in (-4r_0^2, 0)} \| \hat u \|_{C^0_{ *} (B_{\hat g_0} ( \psi_{\rho_0}^{-1}(x_0) , 2 r_0), \hat g_0)} 
        \end{equation}
        where $C = C(M^n, g_0, \gamma, r_0, \Lambda) > 0$ depends on $M^n, g_0, \gamma, r_0, \Lambda$ but is independent of $x_0 \in M \setminus K'$.
        By similar logic as in \eqref{proof parab smoothing in weighted Holder, eqn 3.6}, we have 
        \begin{equation*}
            | \hat \nabla \hat \nabla \hat u |_{\hat g_0}(y, \tau) 
            = \sqrt{ (\hat g_0)^{i_1 i_2} (\hat g_0)^{j_1 j_2} (\hat g_0)^{k_1 k_2} (\hat g_0)^{l_1 l_2}    \hat \nabla_{i_1} \hat \nabla_{j_1} \hat u_{k_1 l_1}\hat \nabla_{i_2} \hat \nabla_{j_2} \hat u_{k_2 l_2}} 
            = \rho_0^2 | \nabla \nabla u |_{g_0} ( \psi(y), \rho_0^2 \tau) ,
        \end{equation*}
        $|\hat \nabla \hat u |_{\hat g_0 } = \rho_0 |\nabla u |_{g_0}$, and $|\hat u |_{\hat g_0} = |u |_{g_0}$.
        After undoing these rescalings in \eqref{proof parab smoothing in weighted Holder, eqn 5}, multiplying by $\rho_0^\delta$, and using that $\rho$ is comparable to $\rho_0$ on $B_{g_0} ( x_0, 2 r_0 \rho_0)$ \eqref{proof parab smoothing in weighted Holder, eqn 3.65}, the estimate \eqref{proof parab smoothing in weighted Holder, eqn 5} then becomes
        \begin{equation*}
            \sup_{t \in (-r_0^2 \rho_0^2 /4, 0)} \| u \|_{C^{2, \gamma}_{-\delta}( B_{g_0}(x_0, r_0 \rho_0/2), g_0) } \\
            \le C \sup_{t \in (-4r_0^2 \rho_0^2, 0)} \| u \|_{C^0_{-\delta}( B_{g_0}(x_0, 2 r_0 \rho_0) , g_0) }  .
        \end{equation*}
        where $C = C(M^n, g_0, \gamma, r_0, \delta, \Lambda) > 0$.
        This completes the proof of the claim.
    \end{claimproof}

    On the other hand,  $1 \le \inf_{x \in K'} \rho(x) \le \sup_{x \in K'} \rho(x) < \infty$ since $K'$ is compact.
    Thus, Brandt's interior Schauder estimates \cite{Brandt69} directly apply to give 
    \begin{equation} \label{proof parab smoothing in weighted Holder, eqn 8}
        \sup_{t \in (-r_0^2 \rho_0^2 /4, 0)} \| u( t) \|_{C^{2, \gamma}_{-\delta}( B_{g_0}(x_0, r_0 \rho_0/2) , g_0) } 
        \le C \sup_{t \in (-4r_0^2 \rho_0^2, 0)} \| u(t) \|_{C^0_{-\delta}( B_{g_0}(x_0, 2 r_0 \rho_0) , g_0 ) } 
    \end{equation}
    for some constant $C = C(M^n, g_0, \gamma, \delta, r_0, \Lambda)>0$, whenever $0 < r_0 \le \frac1{100}$, $x_0 \in K'$, $\rho_0 = \rho(x_0)$, and 
    \begin{gather*}
        \partial_t u_{ij} = \Delta_{g, g_0} u_{ij} + F_{ij}^{cab} \nabla_c u_{ab} + G_{ij}^{ab} u_{ab} \qquad \text{on } B_{g_0}(x_0, 2r_0 \rho_0 ) \times (-4r_0^2\rho_0^2, 0), \\
        \begin{split}
            \sup_{t \in (-4r_0^2\rho_0^2 , 0)} \|  g ( t) - g_0\|_{C^{0, \gamma}_{0}( B_{g_0}( x_0, 2 r_0 \rho_0), g_0 ) }+ \sup_{t \in (-4r_0^2\rho_0^2 , 0)} \|  F ( t)\|_{C^{0, \gamma}_{-1}( B_{g_0}( x_0, 2 r_0 \rho_0) , g_0) } \\+ \sup_{t \in (-4r_0^2\rho_0^2, 0)} \|  G( t) \|_{C^{0, \gamma}_{-2}( B_{g_0}( x_0, 2 r_0 \rho_0) , g_0 ) } 
            \le \Lambda,
        \end{split} \\
        \label{lem parab smoothing in weighted Holder, eqn 3}
        \text{and } \Lambda^{-1} g_0 \le g \le \Lambda g_0 \qquad \text{on } B_{g_0}(x_0, 2r_0 \rho_0 ) \times (-4r_0^2\rho_0^2, 0) ,
    \end{gather*}
    A covering argument using \eqref{proof parab smoothing in weighted Holder, eqn 8} and Claim \ref{proof parab smoothing in weighted Holder, claim 1} now proves the statement of the lemma for general $r_0 > 0$ and $x_0 \in M$.
\end{proof}



We shall make use of the following weighted H{\"o}lder estimate for Ricci-DeTurck flows.
The proof uses a rescaling argument similar to the proof of Lemma \ref{lem parab smoothing in weighted Holder}, but with different interior estimates on the model Euclidean space.

\begin{lemma}[Local $C^{2, \gamma}_{-\delta}$ Estimate]\label{lemma:localC2gamma-deltaestimate}
    Let $(M^n, g_0)$ be a Ricci-flat ALE manifold.
    For any $\delta \geq 0$, $\gamma \in (0,1)$, and $r_0 > 0$,
    there exists small $0 < \eps_0$ and large $C>1$ 
    both depending only on $M^n, g_0, \gamma, \delta, r_0$ such that following implications hold for any $x_0 \in M$ and $\rho_0 := \rho(x_0)$:
    \begin{enumerate}
        \item \label{lem local C2gamma-delta est, item global in time}
        If $g(t)$ is a smooth Ricci-DeTurck flow (with respect to $g_0$) on $B_{g_0}(x_0, 2r_0 \rho_0) \times [0 , T]$, then 
        \begin{multline*}
            \| g(0) - g_0 \|_{C^{2, \gamma}_{-\delta}(B_{g_0}(x_0, 2 r_0 \rho_0), g_0) } + \sup_{t \in [0, T]} \| g(t) - g_0 \|_{C^0_{-\delta}(B_{g_0}(x_0, 2 r_0 \rho_0), g_0)} \le \eps \leq \eps_0 \\
            \implies 
            \sup_{t \in [0,T]} \| g(t) - g_0 \|_{C^{2, \gamma}_{-\delta}(B_{g_0}(x_0,  r_0 \rho_0), g_0)}
            \le C \eps. 
        \end{multline*}
        
        \item \label{lem local C2gamma-delta est, item local in time}
        If $g(t)$ is a smooth Ricci-DeTurck flow (with respect to $g_0$) on $B_{g_0}(x_0, 2r_0 \rho_0) \times [0 , T]$ and $T \ge 2 r_0^2 \rho_0^2$, then 
        \begin{multline*}
            \sup_{t \in [0, T]} \| g(t) - g_0 \|_{C^0_{-\delta}(B_{g_0}(x_0, 2 r_0 \rho_0), g_0)} \le \eps \leq \eps_0 \\
            \implies 
            \sup_{t \in [r_0^2 \rho_0^2,T]} \| g(t) - g_0 \|_{C^{2, \gamma}_{-\delta}(B_{g_0}(x_0,  r_0 \rho_0), g_0)}
            \le C \eps. 
        \end{multline*}
    \end{enumerate}
\end{lemma}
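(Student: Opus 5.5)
We prove the lemma by rescaling, as in the proof of Lemma \ref{lem parab smoothing in weighted Holder}. The difference is that the equation is now the nonlinear Ricci-DeTurck equation for $h = g - g_0$, and in item (\ref{lem local C2gamma-delta est, item global in time}) we also need estimates up to $t=0$. First reduce to the case $x_0 \in M \setminus K'$ with $r_0 \le \frac1{100}$. Points in the compact set $K'$ are handled directly by standard (unrescaled) estimates, and a covering argument handles general $r_0$, exactly as before.

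For $x_0 \notin K'$, set $\hat g_0 = \rho_0^{-2}\psi_{\rho_0}^* g_0$ and $\hat g(\tau) = \rho_0^{-2}\psi_{\rho_0}^* g(\rho_0^2\tau)$. By diffeomorphism and scaling invariance, $\hat g$ solves the Ricci-DeTurck flow with respect to $\hat g_0$, which is $C^3$-close to Euclidean on $B_{Euc}(\psi_{\rho_0}^{-1}(x_0),\sqrt2 r_0)$. Set $\hat h = \hat g - \hat g_0$. Since $\rho \sim \rho_0$ on the ball by \eqref{proof parab smoothing in weighted Holder, eqn 3.65}, the hypothesis becomes $\sup|\hat h| \le C\eps\rho_0^{-\delta}$. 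The expression \eqref{calMfullexpression} then shows that $\hat h$ solves a quasilinear equation
$$\partial_\tau \hat h = \hat g^{ab}\hat\nabla^2_{ab}\hat h + \widehat{\Rm}\ast \hat h\ast(\ldots) + \hat g^{-1}\ast\hat g^{-1}\ast\hat\nabla\hat h\ast\hat\nabla\hat h.$$
Here $|\widehat{\Rm}| = \rho_0^2|\Rm(g_0)|$ is bounded by Lemma \ref{lemma: Rm est for ALE}. The estimates are uniform in $x_0$, and the goal is
$$\|\hat h\|_{C^{2,\gamma}} \le C\sup|\hat h| \quad (\text{plus } C\|\hat h(0)\|_{C^{2,\gamma}} \text{ in case (1)}).$$
Undoing the scaling as in the proof of Lemma \ref{lem parab smoothing in weighted Holder} and multiplying by $\rho_0^{\delta}$ then gives the weighted bound $C\eps$.

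The main obstacle is obtaining this $C^{2,\gamma}$ bound from only a $C^0$ smallness bound, because the equation is quadratic in $\hat\nabla \hat h$. I would first get a derivative bound. Since $\sup|\hat h|\le C\eps_0$ is small, Shi-type local derivative estimates for Ricci-DeTurck flow close to the background apply. An alternative is a Bernstein-type maximum-principle argument on $|\hat\nabla\hat h|^2 + A|\hat h|^2$ with a cutoff, which gives $|\hat\nabla\hat h|\le C\sup|\hat h|$ on the half-ball for times $\tau \ge r_0^2$ in case (2). In case (1) the same argument gives the bound up to $\tau=0$, with the initial $C^1$ norm added to the right-hand side. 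Once $\hat\nabla\hat h$ is bounded, we freeze coefficients. Treat the equation as linear in $\hat h$, with $F = \hat g^{-1}\ast\hat\nabla \hat h$ and $G = \widehat\Rm\ast(\ldots)$. Parabolic De Giorgi--Nash or Krylov--Safonov estimates applied to derivatives, or interior $C^{1,\gamma}$ estimates for quasilinear equations with bounded gradient, give a Hölder bound on $\hat\nabla\hat h$. This makes the coefficients $C^{0,\gamma}$ with uniform bounds.

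Then apply Brandt's interior estimate \cite{Brandt69}, as in \eqref{proof parab smoothing in weighted Holder, eqn 4}, to obtain $\|\hat h\|_{C^{2,\gamma}}\le C\sup|\hat h|$ for positive times, which proves item (\ref{lem local C2gamma-delta est, item local in time}); the condition $T \ge 2r_0^2\rho_0^2$ leaves enough backward time. For item (\ref{lem local C2gamma-delta est, item global in time}), use instead the boundary-in-time (initial-value) Schauder estimates on half-balls $\times[0,r_0^2]$, with initial data bounded in $C^{2,\gamma}$ by $C\eps\rho_0^{-\delta}$. For later time windows, apply the interior estimate from item (\ref{lem local C2gamma-delta est, item local in time}) at scale $r_0$. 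Smallness of $\eps_0$ is used only to keep $\hat g$ uniformly equivalent to $\hat g_0$ and to keep the Bernstein argument absorbable.
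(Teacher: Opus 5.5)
Your proposal is correct and follows the paper's proof. You rescale around $x_0 \notin K'$ at scale $\rho_0$ and write the Ricci--DeTurck equation for $\hat h$ as a quasilinear system with diagonal principal part, a curvature term and a quadratic gradient term. You then apply uniform model estimates bounding $\|\hat h\|_{C^{2,\gamma}}$ by $\sup|\hat h|$ (plus the initial $C^{2,\gamma}$ norm in item (1)), undo the scaling, and treat $K'$ and general $r_0$ by unrescaled estimates and covering. The only difference is that the paper cites these model estimates from \cite[Propositions C.3, C.4]{Stolarski25}, applied in item (1) once on the whole window from $t=0$, whereas you sketch their proof via a Shi/Bernstein gradient bound, a H\"older bound on $\hat\nabla\hat h$, and Brandt's estimate. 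For the H\"older step, use divergence-form De Giorgi--Nash or the quasilinear/$W^{2,1}_p$ theory rather than Krylov--Safonov: applied to $\hat\nabla\hat h$, Krylov--Safonov would face the unbounded right-hand side $\hat\nabla\hat h\ast\hat\nabla^2\hat h$.
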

\begin{proof}
    We first prove that statement \eqref{lem local C2gamma-delta est, item global in time} of the lemma holds for $x_0$ outside a large compact set and for suitably small scales $r_0 > 0$.

    \begin{claim} \label{proof local higher order est, claim 1}
        There exists a compact subset $K' \subset M$ (depending only on $M^n, g_0)$ such that, when $0 < r_0 < \frac1{100}$, there exists $\epsilon, C > 0$ (both depending only on $M^n, g_0, \gamma, \delta, r_0 $) such that, 
        for all $x_0 \in M \setminus K'$ and $\rho_0 = \rho(x_0)$, if
        $g(t)$ is a smooth Ricci-DeTurck flow (with respect to $g_0$) on $B_{g_0}(x_0, 2 r_0 \rho_0) \times [0, T]$, and 
            $$\| g(0) - g_0 \|_{C^{2, \gamma}_{-\delta}(B_{g_0}(x_0, 2 r_0 \rho_0), g_0) } + \sup_{t \in [0, T]} \| g(t) - g_0 \|_{C^0_{-\delta}(B_{g_0}(x_0, 2 r_0 \rho_0), g_0)} \le \eps \leq \eps_0 ,$$
        then
            $$\sup_{t \in [0,T]} \| g(t) - g_0 \|_{C^{2, \gamma}_{-\delta} \left(B_{g_0}\left(x_0, \frac1{\sqrt 2}  r_0 \rho_0 \right), g_0 \right)}
        \le C \eps.$$ 
    \end{claim}
    \begin{claimproof}
        Because $(M, g_0)$ is ALE, there exists $K \subset M$ compact, $R > 0$, and a diffeomorphism
            $$\Psi :  \mathcal C_R = (R, \infty) \times (\mathbb{S}^{n-1}/ \Gamma) \to M \setminus K $$
        such that
        \begin{equation} \label{proof local higher order est, eqn 2}
            \lambda^{-2} \phi_{\lambda}^* \Psi^* g_0 \xrightarrow[\lambda \to \infty]{C^\infty_{loc}(\mathcal C, g_{Euc})} g_{Euc}
        \end{equation}
        where $\mathcal C  = (0, \infty) \times ( \mathbb S^{n-1} / \Gamma)$ and $\phi_\lambda : \mathcal C_{R / \lambda} \to \mathcal C_R$ is the dilation map $\phi_\lambda (r, \theta) = (\lambda r, \theta)$.
        Denote also $\psi_\lambda = \Psi \circ \phi_\lambda$.
    
        By enlarging $R > 0$ and $K \subset M$ and using that $(M, g_0)$ is ALE, we can further assume without loss of generality that $\rho \circ \Psi$ and the coordinate $r \in (0, \infty)$ on $\mathcal C_R $ are comparable,
        namely,
        \begin{gather}
            \label{proof local higher order est, eqn 2.1}
            1 \le \frac12 r \le \rho \circ \Psi \le 2 r \qquad \forall (r, \theta) \in \mathcal C_R. 
        \end{gather}
        In particular, $r( \psi_{\rho(x)}^{-1} (x) ) \in \left[ \frac12 , 2 \right]$ for all $x \in M \setminus K$.
        Additionally, there exists a larger compact subset $K'$ with $K \subset K' \subset M$ such that for all $x \in M \setminus K'$ we have
        \begin{gather}
            \label{proof local higher order est, eqn 4.15}
            B_{Euc} \left( \Psi^{-1} (x) , \frac14 \rho( x)  \right) \subset B_{Euc} \left( \Psi^{-1} (x)  , \frac12 r( \Psi^{-1}(x)) \right) \subset \mathcal C_R \text{ and}  \\
             \label{proof higher order est, eqn 4.2}
            \frac12 g_{Euc} \le \rho(x)^{-2} \psi_{\rho(x)}^* g_0 \le 2 g_{Euc} \qquad \text{on }  B_{Euc} \left( \psi_{\rho(x)}^{-1} (x) , \frac14  \right) .
        \end{gather}

        Next, let $(x_0, t_0) \in M \times [0, T]$, set $\rho_0 := \rho(x_0)$, and
        let $\tilde g(t) = g(t)$ denote the Ricci-DeTurck flow (with reference metric $g_0$) on $B_{g_0}(x_0, 2 r_0 \rho_0) \times [0,T]$ as in the statement of the claim.
        Then $h = \tilde  g(t) - g_0$ satisfies $\partial_t h = \calM(g_0 + h)$ on $B_{g_0}(x_0, 2 r_0 \rho_0)$, where $\calM$ is given as in \eqref{calMfullexpression}. 
        We can rewrite this evolution equation as
        \begin{equation} \label{proof local higher order est, eqn 1}
            \partial_t h = \tilde g^{ab} \nabla_a \nabla_b h + \tilde g^{-1} \star \tilde g \star g_0^{-1} \star Rm \star h + \tilde g^{-1} \star \tilde g^{-1} \star \nabla h \star \nabla h
            \quad \text{on } B_{g_0}(x_0, 2r_0 \rho_0) \times [0, T],
        \end{equation}
        where $\star$ denotes a linear combination of endomorphism traces \emph{without} raising or lowering indices, 
        $Rm= Rm_{g_0}$ is a (3,1)-tensor, and $\nabla = \nabla_{g_0}$.
        
        Consider the families of symmetric 2-tensor fields
            $$\hat g := \rho_0^{-2} \psi_{\rho_0}^*  g_0 , \qquad 
            G(s) := \rho_0^{-2} \psi_{\rho_0}^* \tilde g(t_0 + \rho_0^2 s) , \qquad 
            H (s) := \rho_0^{-2} \psi_{\rho_0}^*  h ( t_0 + \rho_0^2 s) = G(s) - \hat g.$$
        From \eqref{proof local higher order est, eqn 1}, it follows that
        \begin{gather} \label{proof local higher order est, eqn 3} \begin{aligned}
            \partial_s H 
            ={}& \rho_0^{-2} \psi_{\rho_0}^* (\partial_t h ) \rho_0^2 \\
            ={}& \psi_{\rho_0}^* \left\{ \tilde g^{ab} \nabla_a \nabla_b h + \tilde g^{-1} \star \tilde g \star g_0^{-1} \star Rm \star h + \tilde g^{-1} \star \tilde g^{-1} \star \nabla h \star \nabla h \right\} \\
            ={}& G^{ab} \hat \nabla_a \hat \nabla_b H + G^{-1} \star G \star \hat g \star \hat {Rm} \star H + G^{-1} \star G^{-1} \star \hat \nabla H \star \hat \nabla H \\
            ={}& \hat g^{ab} \hat \nabla_a \hat \nabla_b H + \hat g^{-1} \star \hat g \star \hat g \star \hat {Rm} \star H + G^{-1} \star G^{-1} \star \hat \nabla H \star \hat \nabla H \\
            &+ ( G^{-1} - \hat g^{-1} ) \star \hat g \star \hat g \star \hat {Rm} \star H
            + G^{-1} \star H \star \hat g \star \hat {Rm} \star H
            + ( G^{-1} - \hat g^{-1} )\star  \hat \nabla \hat \nabla H\\
        \end{aligned} 
        \end{gather}
        on the region $B_{\hat g} (\psi_{\rho_0}^{-1} (x_0), 2 r_0) \times \left[ \rho_0^{-2}(-t_0) , \rho_0^{-2} ( T - t_0) \right]$.

        We now restrict to the case that $x_0 \in M \setminus K'$ and $0 < r_0 < \frac1{100}$.
        Observe, by \eqref{proof higher order est, eqn 4.2}, we have
            $$B_{Euc} \left(\psi_{\rho_0}^{-1} (x_0) , \sqrt 2 r_0 \right) \subset 
            B_{\hat g } ( \psi_{\rho_0}^{-1} (x_0) , 2 r_0) \subset B_{Euc} ( \psi_{\rho_0}^{-1} (x_0), 2 \sqrt 2 r_0) .$$
        It follows that for all $s \in \left[ \frac{-t_0}{\rho_0^2} , \frac{T- t_0}{\rho_0^2} \right]$,
        \begin{align*}
             \sup_{B_{Euc}( \psi^{-1}_{\rho_0}(x_0) , \sqrt 2 r_0)} | G(s) - \hat g |_{g_{Euc}} 
             \le{}& C \sup_{ B_{\hat g} (\psi_{\rho_0}^{-1} (x_0) , 2r_0) }  | G(s) - \hat g |_{\hat g } \\
             \le{}& C\sup_{B_{g_0}( x_0, 2  r_0 \rho_0) } | \tilde g(t_0 + \rho_0^2 s) - g_0 |_{g_0} \\
             \le{}& C    \| \tilde g(t_0 + \rho_0^2 s) - g_0 \|_{C^0_{-\delta}( B_{g_0}(x_0, 2 r_0 \rho_0), g_0) }
             && (\text{since $\delta \ge 0$})
        \end{align*}
        where $C = C(n) > 0$ is a dimensional constant that may change from line to line.
        Taking a supremum in $s$ gives
        \begin{equation} \label{proof local higher order est, eqn 4}
            \sup_{s \in \left[ \frac{-t_0}{\rho_0^2} , \frac{T- t_0}{\rho_0^2} \right]}  \| H(s) \|_{C^0 ( B_{Euc}( \psi_{\rho_0}^{-1} (x_0), \sqrt 2 r_0) , g_{Euc}) } 
            \le C(n)   \sup_{t \in [0, T]} \| h(t) \|_{C^0_{-\delta}( B_{g_0}(x_0, 2 r_0 \rho_0), g_0) }.
        \end{equation}
        
        From \eqref{proof local higher order est, eqn 2.1}, $\frac12 \le r ( \psi_{\rho_0}^{-1}(x_0)) \le 2$, and so 
            $$B_{Euc}( \psi_{\rho_0}^{-1} (x_0) , \sqrt 2 r_0) \subset \left( \frac12 - \frac{\sqrt 2}{100}, 2 + \frac{\sqrt 2}{100}\right) \times \mathbb S^{n-1} / \Gamma .$$ 
        From \eqref{proof local higher order est, eqn 2}, $\hat g = \rho_0^{-2} \psi_{\rho_0}^* g_0$ remains in a fixed $C^3_{loc}$-neighborhood of $g_{Euc}$ as $\rho_0 \to \infty$.
        Therefore, the coefficients of the linear operator
            $$L = \hat g^{ab} \hat \nabla_a \hat \nabla_b + \hat g^{-1} \star \hat g \star \hat g \star \hat {Rm} \star  $$
        have $C^1(g_{Euc})$-bounds on $B_{Euc}( \psi_{\rho_0}^{-1} (x_0) , \sqrt 2 r_0) $ that are independent of $x_0 \in M \setminus K'$.
        Interior estimates \cite[Propositions C.3, C.4]{Stolarski25} now apply to \eqref{proof local higher order est, eqn 3} to give the following:
        there exist $\epsilon \in (0,1) $ and $ C > 1$ (both depending on $M^n , g_0, \gamma, r_0$ but independent of $\rho_0, x_0, t_0$) such that 
        for 
        $\Omega := B_{\hat g}( \psi_{\rho_0}^{-1} (x_0) , r_0/ \sqrt 2) $ and 
        $\Omega' := B_{\hat g}( \psi_{\rho_0}^{-1} (x_0) ,  2 r_0)$
        \begin{multline} \label{proof higher order est, eqn model interior est}
            \| H \|_{C^0 \left( \Omega' \times \left[ -\frac{t_0}{\rho_0^2}, 0\right] ,  \hat g \right)} +  \| H(\cdot, -t_0/\rho_0^2) \|_{C^{2, \gamma}_*( \Omega' , \hat g )} < \epsilon \\
            \implies \| H \|_{C^{2, \gamma}_* \left( \Omega \times \left[ -\frac{t_0}{\rho_0^2}, 0\right] ,  \hat g \right) }
            \le C \left(  \| H \|_{C^0 \left( \Omega' \times \left[ -\frac{t_0}{\rho_0^2}, 0\right] ,  \hat g \right) } + \| H(\cdot, -t_0/\rho_0^2) \|_{C^{2, \gamma}_*(\Omega')} \right).
        \end{multline}
    
        Undoing the scaling $\rho_0^{-2}$ and diffeomorphisms $\psi_{\rho_0}$, it follows that
        \begin{gather} 
        \label{proof higher order est, eqn 3.5}
            \sup_{t \in [0, T]} \| h(t) \|_{C^{0}_{-\delta} (\Omega_0' , g_0) } + \| h ( 0)\|_{C^{2, \gamma}_{-\delta}(\Omega', g_0)} <  \epsilon' \\
            \nonumber
            \implies \\
            \label{proof higher order est, eqn 3.6}
            \begin{split}
            \rho(x_0) \sup_{x \in \Omega_0} | \nabla h (x, t_0) |_{g_0} + \rho(x_0)^2 \sup_{x \in \Omega_0} | \nabla^2 h(x, t_0) |_{g_0} + \rho(x_0)^{2 + \gamma} [ \nabla^2 h (\cdot , t_0) ]_{C^{2, \gamma}_* (\Omega_0, g_0 ) } \\
            \le C'\rho(x_0)^{-\delta} \left( \sup_{t \in [0, T]} \| h(t) \|_{C^{0}_{-\delta} (\Omega_0', g_0)} + \| h ( 0)\|_{C^{2, \gamma}_{-\delta} (\Omega_0' , g_0)} \right) 
            \end{split}
        \end{gather}
        where 
        $\Omega_0 := B_{ g_0} \left(  x_0 , \frac1{\sqrt2} r_0 \rho_0 \right)$,
        $\Omega_0' := B_{ g_0}( x_0 , 2 r_0 \rho_0)$, and
        $\epsilon', C'>0$ depend on $M^n, g_0, \gamma, \delta, r_0$ but are independent of $(x_0, t_0) \in M \setminus K' \times [0, T]$.
        Indeed,
        \begin{multline*}
            |\hat \nabla H (\psi_{\rho_0}^{-1} (x) , 0) |_{\hat g}
            = \sqrt{\hat g^{ab} \hat g^{cd} \hat g^{ij} \hat \nabla_i H_{ac} \hat \nabla_j H_{bd} (\psi_{\rho_0}^{-1}(x), 0)} 
            = \sqrt{\rho_0^2 g_0^{ab} g_0^{cd} g_0^{ij} \nabla_i h _{ac} \nabla_j h_{bd}(x, t_0)} \\
            = \rho_0 | \nabla h(x, t_0) |_{g_0}
        \end{multline*}
        and similarly $| \hat \nabla^2 H (\psi_{\rho_0}^{-1}(x), 0) |_{\hat g} = \rho_0^2 | \nabla^2 h (x, t_0) |_{g_0}$ for all $x \in \Omega_0$.
        Multiplying both sides of \eqref{proof higher order est, eqn 3.6} by $\rho(x_0)^{-\delta}$,
        using that $\rho(x)$ is comparable to $\rho(x_0)$ for all $x \in \Omega_0$ (which follows from \eqref{proof local higher order est, eqn 2.1} and \eqref{proof local higher order est, eqn 4.15}), and taking a supremum in $t_0 \in [0, T]$ then gives 
        \begin{multline} \label{proof local higher order est, eqn 4.2}
            \sup_{t \in [0, T]} \| h \|_{C^{0}_{-\delta} (\Omega_0' , g_0) } + \| h ( 0)\|_{C^{2, \gamma}_{-\delta}(\Omega_0', g_0)} <  \epsilon' \\
            \implies 
            \sup_{t \in [0, T]} \| h(t) \|_{C^{2, \gamma}_{-\delta} ( \Omega_0 , g_0) } 
            \le C(M^n, g_0, \gamma,\delta, r_0) \cdot \left( \sup_{t \in [0, T]} \| h(t) \|_{C^{0}_{-\delta} (\Omega_0', g_0)} + \| h(0) \|_{C^{2, \gamma}_{-\delta} (\Omega_0', g_0)} \right) .
        \end{multline}
        This completes the proof of the claim.
    \end{claimproof}

    On the compact set $K' \subset M$, the weighted H{\"o}lder norms are comparable to the standard H{\"o}lder norms since $1 \le \inf_{x \in K'} \rho(x) \le \sup_{x \in K'} \rho(x) < \infty$.
    Therefore, standard interior estimates (see e.g. \cite[Propositions C.3, C.4]{Stolarski25}) can be applied directly to the PDE \eqref{proof local higher order est, eqn 1} to obtain an estimate identical to \eqref{proof local higher order est, eqn 4.2} when $x_0 \in K'$ and $0 < r_0 < \frac1{100}$.
    Such an estimate together with Claim \ref{proof local higher order est, claim 1} completes the proof of statement \eqref{lem local C2gamma-delta est, item global in time} from the lemma when $0 < r_0 < \frac1{100}$. 

    The proof of statement \eqref{lem local C2gamma-delta est, item local in time} from the lemma when $0 < r_0 < \frac1{100}$ follows by similar logic as the argument above, except that \cite[Proposition C.3]{Stolarski25} is used to replace \eqref{proof higher order est, eqn model interior est} above with
    \begin{equation} \tag{\ref{proof higher order est, eqn model interior est}'}
        \| H \|_{C^0 \left( \Omega' \times \left[ -4 r_0^2, 0\right] ,  \hat g \right)}  < \epsilon \quad 
        \implies 
        \quad \| H \|_{C^{2, \gamma}_* \left( \Omega \times \left[ -\frac{r_0^2}{2}, 0\right] ,  \hat g \right) }
        \le C  \| H \|_{C^0 \left( \Omega' \times \left[ - 4 r_0^2, 0\right] ,  \hat g \right) } 
    \end{equation}
    and remove the initial data terms from the remaining statements.

    Finally, a covering argument then proves the lemma for general $r_0 > 0$.
\end{proof}

An analogous global $C^{2, \gamma}_{-\delta}$ estimate also holds.

\begin{lemma}[Global $C^{2, \gamma}_{-\delta}$ Estimate]\label{lemma:globalC2gamma-deltaestimate}
    Let $(M^n, g_0)$ be a Ricci-flat ALE manifold.
    For any $\delta \geq 0$ and $\gamma \in (0,1)$,
    there exists small $0 < \eps_0$ and large $C>1$ 
    both depending only on $M^n , g_0, \gamma, \delta$ such that the following holds:

    If $g(t)$ is a smooth, complete Ricci-DeTurck flow with respect to $g_0$ on $M$ for $t \in [0,T]$ and 
        $$\| g(0) - g_0 \|_{C^{2, \gamma}_{-\delta}(M, g_0) } + \sup_{t \in [0, T]} \| g(t) - g_0 \|_{C^0_{-\delta}(M, g_0)} \le \eps \leq \eps_0 ,$$
    then
        $$\sup_{t \in [0,T]} \| g(t) - g_0 \|_{C^{2, \gamma}_{-\delta}(M, g_0)}
        \le C \eps.$$ 
\end{lemma}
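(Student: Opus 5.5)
The plan is to deduce this global estimate from the local estimate, Lemma \ref{lemma:localC2gamma-deltaestimate}\eqref{lem local C2gamma-delta est, item global in time}, applied at every point of $M$ with one fixed scale $r_0$. The essential observation is that the weighted norm $\|\cdot\|_{C^{2,\gamma}_{-\delta}(M,g_0)}$ is defined pointwise. Its $C^0$, $C^1$ and $C^2$ parts are suprema over $x\in M$ of $\rho(x)^{\delta+i}|\nabla^i h|(x)$. Its H\"older part is a supremum over $x$ of the seminorm on $B(x,\rho(x)/2)$. So it is enough to bound each of these contributions near an arbitrary $x_0$, with constants that do not depend on $x_0$.

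Concretely, I fix $r_0 = 1$, or any fixed $r_0 \ge \tfrac12$, so that $B_{g_0}(x_0, r_0\rho_0) \supseteq B(x_0,\rho_0/2)$. The Lemma then supplies $\eps_0$ and $C$, depending only on $M^n, g_0, \gamma, \delta$.

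For each $x_0\in M$, restrict the complete flow to $B_{g_0}(x_0,2r_0\rho_0)\times[0,T]$. Weighted norms over a subset are dominated by the global ones, and the weight $\rho$ is the same function in both, since the local norms in Lemma \ref{lemma:localC2gamma-deltaestimate} use the same $\rho$. Hence
\[
\|g(0)-g_0\|_{C^{2,\gamma}_{-\delta}(B(x_0,2r_0\rho_0))}+\sup_{t}\|g(t)-g_0\|_{C^0_{-\delta}(B(x_0,2r_0\rho_0))}\le \eps\le\eps_0 .
\]
The local lemma then gives $\sup_t\|g(t)-g_0\|_{C^{2,\gamma}_{-\delta}(B(x_0,r_0\rho_0))}\le C\eps$.

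This yields three bounds. First, it gives $\rho(x_0)^{\delta+i}|\nabla^i h(x_0,t)|\le C\eps$ for $i=0,1,2$. Second, because the local H\"older seminorm on $B(x_0,r_0\rho_0)$ involves the seminorms over $B(x,\rho(x)/2)$ for $x$ in that ball, it controls $\rho(x_0)^{\delta+2+\gamma}[\nabla^2 h]_{C^{0,\gamma}_*(B(x_0,\rho_0/2))}$. If instead that local seminorm is defined only via geodesics inside the ball, the containment $B(x_0,\rho_0/2)\subseteq B(x_0,r_0\rho_0)$ still gives the same bound. Third, taking the supremum over $x_0$ gives the global bound $C\eps$.

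The main obstacle is bookkeeping rather than analysis: checking that the constants in Lemma \ref{lemma:localC2gamma-deltaestimate} are truly uniform in $x_0$ and that the H\"older parts of the local and global norms are compatible. If the local lemma is available only for $r_0<\tfrac1{100}$ (its claims prove that case first), then $B(x_0,\rho_0/2)$ must be covered by finitely many balls $B(y_j,r_0\rho(y_j))$, with $\rho(y_j)\sim\rho_0$. For the H\"older seminorm across different balls, long geodesics of length $l\gtrsim r_0\rho_0$ are handled trivially by $2\sup|\nabla^2h|/l^\gamma\le C\rho_0^{-\delta-2-\gamma}\eps$. Short geodesics lie inside a single covering ball. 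This is the same covering argument that the paper uses to conclude Lemma \ref{lem parab smoothing in weighted Holder}.

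Completeness of the flow is used only to guarantee that the flow is defined on every such ball for the whole interval $[0,T]$.
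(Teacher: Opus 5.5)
Your proposal is correct and is essentially the paper's argument: the paper's one-line proof also covers $M$ by balls $B_{g_0}(x,\rho(x))$ and applies the local estimate of Lemma \ref{lemma:localC2gamma-deltaestimate}\eqref{lem local C2gamma-delta est, item global in time} with the fixed scale $r_0=\frac12$, relying on the uniformity of its constants in $x_0$. Your additional bookkeeping on the H\"older seminorm and on passing from small $r_0$ to $r_0\ge\frac12$ just spells out steps the paper leaves implicit.
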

\begin{proof}
    This follows from covering $M$ by balls of the form $B_{g_0}( x, \rho(x) )$ and applying the local $C^{2, \gamma}_{-\delta}$-estimate from Lemma \ref{lemma:localC2gamma-deltaestimate} \eqref{lem local C2gamma-delta est, item global in time} with $r_0 = \frac12$.
\end{proof}

\vspace{10mm}

\section{$L^p$-estimates for the scalar heat kernel on an ALE space}\label{sec:proofofLp}


\begin{lemma}\label{lemma:heatkernelLpests}
    Let $a \geq 1$ and $\alpha, \beta \geq 0$ with $a < \frac{n}{\beta}.$ 
    We have
        \begin{equation}\label{HLarhobnorm}
        \left\|  \frac{d(x,\cdot)^{2\alpha} H(x,\cdot, t) }{t^\alpha \rho(\cdot)^\beta} \right\|_{L^{a}} \leq \frac{C_{g_0,a,\alpha, \beta} }{t^{\frac{n(a - 1)}{2a}} \left( \rho(x)^{2} + t \right)^{\frac{\beta}{2}} }.
        \end{equation}
\end{lemma}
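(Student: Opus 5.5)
The plan is to insert the Gaussian upper bound (\ref{heatkernelest}) of Lemma \ref{lemma:heatkernelest} and absorb the polynomial factor into the exponential. Since $s^{2\alpha}e^{-cs^2}\le C_\alpha e^{-cs^2/2}$ with $s = d(x,y)/\sqrt t$, we get
\begin{equation*}
\frac{d(x,y)^{2\alpha} H(x,y,t)}{t^\alpha} \le \frac{C e^{-c' d(x,y)^2/t}}{t^{n/2}}.
\end{equation*}
This reduces the lemma to the case $\alpha = 0$ with a Gaussian kernel. It then suffices to bound
\begin{equation*}
I := \int_M \frac{e^{-a c' d(x,y)^2/t}}{t^{an/2}\rho(y)^{a\beta}}\, dV_y
\end{equation*}
by $C\, t^{-\frac{n(a-1)}{2}} (\rho(x)^2+t)^{-\frac{a\beta}{2}}$, and take the $a$-th root.

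To estimate $I$, I split $M$ into the region $A = \{ d(x,y) \le \frac12 \max(\rho(x),\sqrt t)\}$ and its complement.

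\textbf{Case $\rho(x)^2 \ge t$.} On $A$ we have $\rho(y)\gtrsim \rho(x)$ by the triangle inequality and the fact that $\rho$ is $1$-Lipschitz. So the contribution of $A$ is at most
\begin{equation*}
\rho(x)^{-a\beta}\, t^{-an/2}\int_M e^{-ac'd(x,y)^2/t}\,dV_y \lesssim \rho(x)^{-a\beta}\, t^{-an/2+n/2}.
\end{equation*}
Here the volume integral is bounded using Bishop--Gromov centered at $x$, giving $\int_0^\infty e^{-cr^2/t}r^{n-1}\,dr\sim t^{n/2}$. On $A^c$ we have $d(x,y)\ge \rho(x)/2$, so $e^{-ac'd^2/t}\le e^{-c''\rho(x)^2/t}e^{-c''d^2/t}$. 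Since $(\rho(x)^2/t)^{a\beta/2}e^{-c''\rho(x)^2/t}$ is bounded, we may trade the exponential factor for $(t/\rho(x)^2)^{a\beta/2}$. What remains is $t^{-a\beta/2}\, t^{-an/2}\int_M e^{-c''d^2/t}\rho(y)^{-a\beta}\,dV_y$, and this is handled by the same bound as in the next case.

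\textbf{Case $\rho(x)^2\le t$.} We need $\int_M e^{-cd(x,y)^2/t}\rho(y)^{-a\beta}\,dV_y\lesssim t^{n/2-a\beta/2}$. Split into $B_{2\sqrt t}(x_*)$ and its complement.
\begin{itemize}
\item On $B_{2\sqrt t}(x_*)$, drop the exponential. Using $\rho\ge1$, $a\beta<n$, and the integral form of Bishop--Gromov centered at $x_*$, the integral is at most $\int_0^{2\sqrt t}(1+r)^{-a\beta}r^{n-1}\,dr\lesssim t^{(n-a\beta)/2}$. Integrability at infinity is not needed here, but the hypothesis $a\beta<n$ is exactly what makes this power growth come out right; it also ensures the ball integral does not just produce a constant plus a logarithm for small $t$.
\item On the complement, $\rho(y)\gtrsim\sqrt t$, so $\rho(y)^{-a\beta}\le t^{-a\beta/2}$, and the Gaussian integral contributes $t^{n/2}$.
\end{itemize}
Combining the cases gives the bound with $(\rho(x)^2+t)^{-a\beta/2}$, since in each case the relevant max is comparable to $\rho(x)^2+t$.

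For small $t$ (say $t\le1$) the estimates are uniform, since $\rho\ge1$ and the stated bound then behaves like $t^{-n(a-1)/2}\rho(x)^{-\beta a}$. Bishop--Gromov applies because $g_0$ is Ricci-flat.

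I expect the main obstacle to be bookkeeping rather than anything conceptual. The key points are that $\rho$ is $1$-Lipschitz, that Bishop--Gromov volume bounds can be centered at either $x$ or $x_*$, and that $a\beta<n$ is used precisely to integrate $\rho^{-a\beta}$ over balls of radius $\sqrt t$ with the correct power.
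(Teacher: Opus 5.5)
Your argument is correct, but it is organized differently from the paper's.

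\textbf{The paper's route.} It keeps the factor $d(x,y)^{2\alpha}/t^{\alpha}$ throughout and splits $M$ into $\{\rho(y)\le\rho(x)/2\}$ and $\{\rho(y)\ge\rho(x)/2\}$.
\begin{itemize}
\item On the first piece, $d(x,y)\sim\rho(x)$ produces the factor $e^{-c\rho(x)^2/t}(\rho(x)^2/t)^{n/2+a\alpha}\rho(x)^{-a\beta}$. This is where $a\beta<n$ enters, to integrate $\rho(y)^{-a\beta}$ over the sublevel set.
\item On the second piece, it uses $\rho(y)\gtrsim d(x,y)+\rho(x)$ and polar coordinates about $x$.
\end{itemize}
It ends with one formula, $\rho(x)^{-a\beta}\frac{(\rho(x)/\sqrt t)^{a\beta}}{(\rho(x)/\sqrt t)^{a\beta}+1}$, covering both regimes at once.

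\textbf{Your route.} You absorb $d^{2\alpha}/t^\alpha$ into the Gaussian at the start, so $\alpha$ plays no further role. You then split according to whether $\rho(x)^2\ge t$ or $\rho(x)^2\le t$. Everything reduces to the single estimate $\int_M e^{-cd(x,y)^2/t}\rho(y)^{-a\beta}\,dV_y\lesssim t^{(n-a\beta)/2}$, which you prove by splitting around $x_*$ at scale $\sqrt t$.

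\textbf{Comparison.} Your version shows more transparently where $a\beta<n$ is used and avoids carrying $\alpha$ through the polar-coordinate integrals. The paper's version gives an explicit unified expression without a case split.

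\textbf{Small points to tidy.}
\begin{itemize}
\item The key estimate is uniform in $x$. Nothing in its proof uses $\rho(x)^2\le t$, and for $t\le 1$ it follows directly from $\rho\ge 1$. You should state this, since it is what justifies reusing the estimate on $A^c$ in the first case.
\item On $A^c$, the expression you call ``what remains'' should carry the prefactor $t^{a\beta/2}\rho(x)^{-a\beta}$, not $t^{-a\beta/2}$. With the correct prefactor, the key estimate gives $t^{n/2}\rho(x)^{-a\beta}$, as needed.
\item Without $a\beta<n$, the ball integral fails to grow like $t^{(n-a\beta)/2}$ for \emph{large} $t$, where it would be bounded or logarithmic. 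For small $t$ it is harmless.
\end{itemize}
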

\begin{proof}
    Applying (\ref{heatkernelest}), we may write
    \begin{equation}\label{Laest}
    \begin{split}
        \left\|  \frac{d(x,\cdot)^{2\alpha} H(x,\cdot, t) }{t^\alpha \rho(\cdot)^\beta} \right\|_{L^{a}}^a & \leq C \int_M \frac{d(x,y)^{2a\alpha} e^{-a c d(x,y)^2/t} }{t^{a \left( \frac{n}{2} + \alpha \right)} \rho(y)^{a\beta} } \, dV_y \\
        & = \frac{C}{t^{\frac{n(a - 1)}{2}}} \int_M \frac{d(x,y)^{2a\alpha} e^{-a d(x,y)^2/4t} }{t^{\frac{n}{2} + a\alpha } \rho(y)^{a\beta} } \, dV_y
        \end{split}
        \end{equation}
    We split the last integral into two pieces: 
    \begin{equation}\label{I-IIdefinition}
    \begin{split}
    \int_M \frac{d(x,y)^{2a\alpha} e^{-a cd(x,y)^2/t} }{t^{\frac{n}{2} + a\alpha } \rho(y)^{a\beta} } \, dV_y & = \left( \int_{\left\{ \rho(y) \leq \frac{\rho(x)}{2} \right\} } + \int_{\left\{ \rho(y) \geq \frac{\rho(x)}{2} \right\} } \right)  \frac{d(x,y)^{2a\alpha} e^{-a cd(x,y)^2/t} }{t^{\frac{n}{2} + a\alpha } \rho(y)^{a\beta} } \, dV_y \\
    & =: I + II.
    \end{split}
    \end{equation}
    For $\rho(y) \leq \frac{\rho(x)}{2},$ we have
    $$\rho(x) \leq d(x,y) + \rho(y) \leq d(x,y) + \frac{\rho(x)}{2}$$
    which gives
    $$\frac12 \rho(x) \leq d(x,y) \leq \frac32 \rho(x).$$
    This gives
    \begin{equation}\label{Ifinalest}
    \begin{split}
    I & \leq C e^{-a \rho(x)^2/16t} \left( \frac{\rho(x)^{2a\alpha}}{t^{\frac{n}{2} + a\alpha}} \right) \int_{\left\{ \rho(y) \leq \frac{\rho(x)}{2} \right\} } \frac{dV_y}{\rho(y)^{a\beta}} \\
    & \leq C e^{-a \rho(x)^2/16t} \left( \frac{\rho(x)^2}{t} \right)^{\frac{n}{2} + a\alpha}  \rho(x)^{- a\beta},
    \end{split}
    \end{equation}
    where we have used $n -1 - a\beta > -1.$ 
    
    For $\rho(y) \geq \frac{\rho(x)}{2},$ we have
    $$d(x,y) \leq \rho(x) + \rho(y) \leq 3 \rho(y)$$
    and
    $$ \frac{1}{\rho(y)}\leq \frac{5}{d(x,y) + \rho(x)}.$$
    This gives
    \begin{equation*}
    \begin{split}
    II & \leq C\int_{\left\{ \rho(y) \geq \frac{\rho(x)}{2} \right\} } \frac{d(x,y)^{2a\alpha} e^{-a c d(x,y)^2/t} }{t^{\frac{n}{2} + a\alpha } \left( d(x,y) + \rho(x) \right)^{a\beta} } \, dV_y.
    \end{split}
    \end{equation*}
    Using polar coordinates $y = \exp_x(r \theta) $ centered at $x$ and Bishop-Gromov volume comparison, we can estimate this from above
    \begin{align*}
        II \le{}& C \int_0^\infty \frac{r^{2 a \alpha} e^{-ca r^2/t}}{t^{\frac n2 + a \alpha} ( r + \rho(x))^{a \beta}} r^{n-1} dr 
        \\
        \le{}& \frac C{\rho(x)^{a\beta}} \int_0^{\rho(x)} \frac{r^{2 a \alpha} e^{-ca r^2/t}}{t^{\frac n2 + a \alpha} } r^{n-1} dr 
        + \frac C{t^{a \beta/2}} \int_{\rho(x)}^\infty \frac{r^{2 a \alpha} e^{-ca r^2/t}}{t^{\frac n2 + a \alpha} ( r /\sqrt t)^{a \beta}} r^{n-1} dr \\
        ={}& \frac C{\rho(x)^{a\beta}} \int_0^{\rho(x)/\sqrt{t}} u^{2 a \alpha + n-1} e^{-ca u^2} du 
        + \frac C{t^{a \beta/2}} \int_{\rho(x)/\sqrt t}^\infty u^{2 a \alpha - a \beta + n-1} e^{- ca u^2 } du
        && ( u = r/ \sqrt t) \\
        \le{}& \frac C{\rho(x)^{a \beta}} \frac{(\rho(x)/\sqrt t)^{2 a \alpha + n}}{ (\rho(x)/\sqrt t)^{2 a \alpha + n} + 1  }
        + \frac C{t^{a \beta/2}} e^{- c a \rho(x)^2 / t} ,
    \end{align*}
    where we have used $2 a \alpha - a \beta + n - 1 > -1$ and $c$ may have decreased in the last line.

    This simplifies to 
    \begin{equation} \label{IIfinalest2}
        II \le
        \frac C{\rho(x)^{a \beta} } \left( \frac{(\rho(x)/\sqrt t)^{2 a \alpha + n}}{(\rho(x)/\sqrt t)^{2 a \alpha+n} + 1}
        +  \left( \frac {\rho(x)} {\sqrt t}\right)^{a \beta}  e^{- c a \left(\frac{\rho(x)}{\sqrt t} \right)^2} \right) .
    \end{equation}
    Combining \eqref{Ifinalest} and \eqref{IIfinalest2} and using that $a \beta \le 2 a \alpha + n$ gives
    \begin{multline}
        I + II \\
        \le C \rho(x)^{- a\beta} \left[  e^{-c a \left( \frac {\rho(x)}{\sqrt t} \right)^2 } \left( \frac{\rho(x)}{\sqrt t} \right)^{2 a \alpha + n} +\frac{(\rho(x)/\sqrt t)^{2 a \alpha + n}}{(\rho(x)/\sqrt t)^{2 a \alpha+n} + 1}
        +  \left( \frac {\rho(x)} {\sqrt t}\right)^{a \beta}  e^{- c a \left(\frac{\rho(x)}{\sqrt t} \right)^2}  \right] \\
        \le C \rho(x)^{- a\beta} \frac{ ( \rho(x)/ \sqrt t )^{a \beta} }{(\rho(x)/\sqrt t)^{a \beta} +1}.
    \end{multline}
    Substituting this inequality back into (\ref{Laest}), we obtain
    \begin{equation*}
    \begin{split}
        \left\|  \frac{d(x,\cdot)^{2\alpha} H(x,\cdot, t) }{t^\alpha \rho(\cdot)^\beta} \right\|_{L^{a}} & \leq \frac{C}{t^{\frac{n(a - 1)}{2a}} \rho(x)^\beta} \left( \frac{ \rho(x)^{2} }{ \rho(x)^{2} + t} \right)^{\frac{\beta}{2}} \\
        & \leq \frac{C}{t^{\frac{n(a - 1)}{2a}} \left( \rho(x)^{2} + t \right)^{\frac{\beta}{2}} },
        \end{split}
    \end{equation*}
    which is (\ref{HLarhobnorm}).
\end{proof}

\begin{lemma}\label{lemma:youngsconvolution} 
(a) Let $a,b \geq 1$ with $\frac{1}{a} + \frac{1}{b} = 1$ and $\alpha, \beta \ge 0$ with $\beta < \frac{n}{a}.$ For $f \in L^b(M),$ we have
\begin{equation*}
       \begin{split}
        \left| \int_M \frac{d(x, y)^{2 \alpha} H(x,y,t) f(y) }{t^\alpha \rho(y)^\beta} \, dV_y \right| & \leq \frac{C_{g_0,a,b, \alpha, \beta} }{t^{\frac{n(a - 1)}{2a} } \left(\rho(x)^2 + t \right)^{\frac{\beta}{2}} } \| f \|_{L^b(M)}.
        \end{split}
        \end{equation*}

        \vspace{2mm}
        
\noindent (b) Let $a,b,c \geq 1$ with $\frac{1}{a} + \frac{1}{b} = 1 + \frac{1}{c}$ and $\alpha, \beta \ge 0$ with $\beta < \frac{n}{a}.$ For $f \in L^b(M)$ and $R \geq R_0,$ we have
\begin{equation*}
       \begin{split}
        \left\| \int_M \frac{d(\cdot, y)^{2 \alpha} H(\cdot,y,t) f(y) }{t^\alpha \rho(y)^\beta} \, dV_y \right\|_{L^c(M \setminus B_R)} & \leq \frac{C_{g_0,a,b, \alpha, \beta} }{t^{\frac{n(a - 1)}{2a} } \left(R^2 + t \right)^{\frac{(c - a)\beta}{2c}} } \| f \|_{L^b(M)}.
        \end{split}
        \end{equation*}

        \vspace{2mm}

\noindent (c) For $b \geq 1$ and $1 \leq R \leq \infty,$ we have
\begin{equation*}
       \begin{split}
        \left\| \int_{B_R} H(\cdot,y,t) f(y) \, dV_y \right\|_{L^b(M)} & \leq \min \left\{ 1, \left( \frac{C_{g_0,b} R^2 }{t} \right)^{\frac{n(b - 1)}{2b}} \right\} \| f \|_{L^b(M)}.
        \end{split}
        \end{equation*}
\end{lemma}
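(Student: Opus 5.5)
Parts (a) and (b) are Hölder and Young-type inequalities built on the kernel norm bound of Lemma \ref{lemma:heatkernelLpests}; part (c) comes from combining the trivial $L^b$ contraction with an $L^1\to L^\infty$ smoothing bound.

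For (a), fix $x$ and apply Hölder with exponents $a,b$ to the integrand, viewed as $\bigl(d(x,y)^{2\alpha}H(x,y,t)/(t^\alpha\rho(y)^\beta)\bigr)\cdot f(y)$. This bounds the left side by the $L^a$ norm in $y$ of the kernel factor times $\|f\|_{L^b}$. The hypothesis $\beta<n/a$ is exactly the condition $a<n/\beta$ of Lemma \ref{lemma:heatkernelLpests}, and that lemma gives precisely the stated right-hand side.

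For (b), use the standard proof of Young's inequality through a three-factor Hölder split. Write $K(x,y)=d(x,y)^{2\alpha}H(x,y,t)t^{-\alpha}\rho(y)^{-\beta}$ and decompose
\[
|K f| = \bigl(K^a|f|^b\bigr)^{1/c}\,\bigl(K^{a}\bigr)^{\frac1a-\frac1c}\,|f|^{b(\frac1b-\frac1c)}.
\]
Apply Hölder in $y$ with exponents $c$, $\frac{ac}{c-a}$ and $\frac{bc}{c-b}$. This gives
\[
|Tf(x)|^c\le \Bigl(\int K^a|f|^b\,dV_y\Bigr)\,\|K(x,\cdot)\|_{L^a}^{c-a}\,\|f\|_{L^b}^{c-b}.
\]
Estimate the middle factor by Lemma \ref{lemma:heatkernelLpests}:
\[
\|K(x,\cdot)\|_{L^a}^{c-a}\lesssim t^{-\frac{n(a-1)}{2a}(c-a)}\,(\rho(x)^2+t)^{-\frac{\beta(c-a)}{2}},
\]
and bound this by $(R^2+t)^{-\beta(c-a)/2}$ for $x\notin B_R$, using $\rho(x)\gtrsim R$ once $R\ge R_0$. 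Then integrate over $x\in M\setminus B_R$ and use Fubini. The remaining quantity is $\int\!\int K^a|f|^b$, and for it one needs $\sup_y\int_x K(x,y)^a\,dV_x$. The main obstacle is that Lemma \ref{lemma:heatkernelLpests} integrates in $y$, not $x$, and the weight $\rho(y)^{-\beta}$ sits in $y$.

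To handle the $x$-integral, drop the weight, since $\rho\ge1$. Then use the symmetry $H(x,y,t)=H(y,x,t)$ together with the case $\beta=0$ of Lemma \ref{lemma:heatkernelLpests}, which gives $\int_x K^a\,dV_x\lesssim t^{-n(a-1)/2}$. Taking $c$-th roots,
\[
\|Tf\|_{L^c(M\setminus B_R)}\lesssim t^{-\frac{n(a-1)}{2a}}\,(R^2+t)^{-\frac{(c-a)\beta}{2c}}\,\|f\|_{L^b},
\]
where the power of $t$ is $\frac{n(a-1)}{2a}\bigl(\frac1c+\frac{c-a}{c}\bigr)$. If this exponent does not simplify to the stated one, I would adjust which factor carries the weight, for instance by keeping part of $\rho(y)^{-\beta}$ inside the Fubini integral. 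This exponent bookkeeping is the step I expect to need the most care.

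For (c), the bound by $1$ holds because the heat semigroup is a contraction on $L^b$: $\int H\,dV\le1$ and Jensen. For the second bound, apply Hölder on $B_R$: $\|f\|_{L^1(B_R)}\le \mathrm{Vol}(B_R)^{1-1/b}\|f\|_{L^b}\lesssim R^{n(b-1)/b}\|f\|_{L^b}$, using Bishop–Gromov. Then use $L^1\to L^b$ smoothing, $\|H(\cdot,y,t)\|_{L^b}\lesssim t^{-\frac{n(b-1)}{2b}}$, which follows from the case $\alpha=\beta=0$ of Lemma \ref{lemma:heatkernelLpests} and Minkowski's integral inequality. Multiplying the two factors gives $(CR^2/t)^{n(b-1)/2b}$, and taking the minimum with the contraction bound finishes (c).
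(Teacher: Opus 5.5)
Your approach to (a) and (b) is the paper's. The paper cites the generalized Young inequality \cite[Proposition 3.64]{AubinBook}, which is exactly your three-factor H\"older split. In it, $\sup_{x}\|K(x,\cdot)\|_{L^a}$ is controlled by Lemma \ref{lemma:heatkernelLpests}, and $\sup_y\|K(\cdot,y)\|_{L^a}$ is controlled by dropping $\rho(y)^{-\beta}\le 1$ and using the symmetry of $H$.

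The one thing to fix is your final exponent bookkeeping in (b), which is what made you doubt the outcome. The $x$-integral contributes $t^{-n(a-1)/2}$. Its $c$-th root is $t^{-\frac{n(a-1)}{2a}\cdot\frac{a}{c}}$, not $t^{-\frac{n(a-1)}{2a}\cdot\frac{1}{c}}$. The total power is therefore $\frac{n(a-1)}{2a}\bigl(\frac{a}{c}+\frac{c-a}{c}\bigr)=\frac{n(a-1)}{2a}$ exactly, and no redistribution of the weight is needed. Also, $\rho(x)\ge d(x,x_*)\ge R$ for every $x\notin B_R$, so the restriction $R\ge R_0$ plays no role in that step.

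For (c) your route differs slightly from the paper's. You use H\"older on $B_R$ together with the $L^1\to L^b$ bound $\|H(\cdot,y,t)\|_{L^b}\lesssim t^{-\frac{n(b-1)}{2b}}$ via Minkowski. The paper instead applies the same Young inequality with $a=1$ and $\Omega_2=B_R$, using $\|H(\cdot,y,t)\|_{L^1}\le 1$ and $\|H(x,\cdot,t)\|_{L^1(B_R)}\le CR^nt^{-n/2}$. Both rest on the Gaussian upper bound and Bishop--Gromov volume growth, and both give the same factor $\min\{1,(CR^2/t)^{\frac{n(b-1)}{2b}}\}$.
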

\begin{proof}
The estimate of ($a$) follows by applying H\"older's inequality and (\ref{HLarhobnorm}) to the expression.

Next, note that given $\Omega_1, \Omega_2 \subset M,$ \cite[Proposition 3.64]{AubinBook} gives
       \begin{multline}\label{youngsconvolution:eq}
           \left\| \int_{\Omega_2} F(\cdot,y) f(y) \, dV_y \right\|_{L^c(\Omega_1)} \\ 
           \leq \left( \sup_{x \in \Omega_1} \|F(x, \cdot) \|_{L^a(\Omega_2)} \right)^{\frac{c - a}{c}} \left( \sup_{y \in \Omega_2} \|F(\cdot, y) \|_{L^a(\Omega_1)} \right)^{\frac{a}{c}} \| f \|_{L^b(\Omega_2)}.
       \end{multline}
        For ($b$), take $\Omega_1 = M \setminus B_R,$ $\Omega_2 = M,$ and $F(x,y) = \frac{d(x, y)^{2 \alpha} H(x,y,t) }{t^\alpha \rho(y)^\beta}.$ Then (\ref{HLarhobnorm}) gives the desired result.

        For ($c$), take $a = 1,$ $b = c,$ $\Omega_1 = M,$ and $\Omega_2 = B_R.$ Note that $\|H(x, \cdot, t)\|_{L^1(B_R)} \leq \|H(x, \cdot, t)\|_{L^1(M)} \equiv 1.$ For $t \geq R^2,$ note that
        $$\|H(x, \cdot, t)\|_{L^1(B_R)} \leq \frac{C R^n}{t^{\frac{n}{2} } }.$$
        Therefore (\ref{youngsconvolution:eq}) gives the result.        
\end{proof}

\vspace{10mm}

\section{$L^p$-estimates along Ricci-DeTurck flow}

\subsection{Estimate with $\frac{n}{n-2} < p < n$}\label{ss:proofofLpprop}

\begin{prop}\label{prop:Lpbootstrap} 
Let $(M, g_0)$ be a Ricci-flat ALE space of order $\tau \geq \tau_0,$ where $0 < \tau_0 < n-2.$ Fix $\frac{n}{n-2} < p \leq \infty$ and suppose that there exists $1 < q < \min\{p, \frac{n}{2} \}$ such that
\begin{equation}\label{Lpbootstrap:tauassumptions}
\begin{split}
\left( 1 - \frac{q}{p} \right)\left(2 + \tau_0 \right) & > 2.
\end{split}
\end{equation}
Also let $\ell > 0$ and $0 < r < \min \{p-q, 2\}$ be such that 
\begin{equation}\label{qlplusonelessthann}
q \ell + 1 < n
\end{equation}
and
\begin{equation}\label{2-relllessthan...}
\ell < \frac{n(p-r)}{(2 - r) p}.
\end{equation}

Define $v(x,t)$ as in (\ref{v(x,t)def}). Let $t_1 \geq 1$ and $0 < \eps < \lambda - 1,$ where $\lambda$ is sufficiently close to one as in Lemma \ref{lemma:hpevolution}. Suppose that $h$ solves the Ricci-DeTurck flow with
\begin{equation}\label{Lpbootstrap:Lpetaassumption}
    \sup_{0 \leq t \leq t_1} \|h(\cdot,t) \|_{L^p} \leq \eps,
\end{equation}
\begin{equation}\label{Lpbootstrap:gradassumption}
    \sup_{t_1 - 1 \leq t \leq t_1} | \nabla h(x,t) | \leq \eps v(x,t_1), 
\end{equation}
for $x \in M,$ and
\begin{equation}\label{Lpbootstrap:hpointwiseassumption}
    | h(x,t) | \leq \eps v(x,t)^\ell
\end{equation}
for $x\in M$ and $0 \leq t \leq t_1.$
We then have
\begin{equation}\label{Lpbootstrap:Linftybound}
\begin{split}
    |h|(x,t_1) & \leq C \left( t_1^{-\frac{n}{2p}} \| h(0) \|_{L^p} + \eps \rho(x)^{- \min \left\{ \frac{2}{q^2} + \frac{\tau}{q}, \frac{n}{p} + \frac{\tau}{q}, \frac{n}{q^2} - \frac{2}{q} \right\} } \right) \\
    & \qquad + C \left( \eps^{1 + \frac{1}{q}} v(x,t_1)^{\ell + \frac{1}{q}} + \eps^{1 + \frac{2}{q} } v(x,t_1)^{\min\left\{ \ell \left( 1 + \frac{1}{q} \right), \frac{2}{q} + \frac{n}{pq} \right\} } \right),
    \end{split}
\end{equation}
and, for $R \geq 1,$
        \begin{equation}\label{Lpbootstrap:Lpbound}
        \begin{split}
        \|h(t_1) \|_{L^p(M \setminus B_R)} & \leq \|h(0) \|_{L^p ( M \setminus B_R )} + \min \left\{ 1, \left( \frac{C R^2}{t_1 } \right)^{\frac{n}{2}\left( \frac{1}{q} - \frac{1}{p} \right)} \right\} \|h(0)\|_{L^p(B_R)} \\
        & + C \eps \left( \frac{1}{R^{\left( \frac{1}{q} - \frac{1}{p} \right) \left(2 + \tau_0 \right) - \frac{2}{q} } } + \eps^{\frac{1}{q}} \left( \frac{1}{R^{\frac{1}{q} - \frac{1}{p} }} + \frac{1}{t_1^{\frac{1}{2q}} } \right) + \eps^{\frac{2}{q}} \left( \frac{1}{R^{ \frac{ \left( p - q - r \right) \ell}{q(p-r)} }} + \frac{1}{t_1^{\frac{\min\left\{1 + \frac{nr}{2p} , \frac{(2 - r)\ell}{2} \right\} }{q} } } \right)\right).
        \end{split}
    \end{equation}
\end{prop}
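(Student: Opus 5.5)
The plan is to apply the Duhamel inequality of Lemma \ref{lemma:Duhamel} to $|h|^q$ at time $t_1$, and then estimate each term with the heat kernel bounds of Appendix \ref{sec:proofofLp}. Afterwards we take $q$-th roots; this is where the exponents $\frac1q,\frac2q$ in the conclusion come from. The hypothesis $\eps<\lambda-1$, together with the smallness of $|h|$ from (\ref{Lpbootstrap:hpointwiseassumption}), gives (\ref{gg0lambdaassumption}), so Lemma \ref{lemma:Duhamel} applies. We obtain
\[
|h|^q(x,t_1)\le \int_M H(x,y,t_1)|h|^q(y,0)\,dV_y + C\,\mathrm{I} + C\,\mathrm{II} - c\,\mathrm{III},
\]
where $\mathrm{I}$ is the curvature term, $\mathrm{II}$ is the gradient term carrying the factor $(t-s)^{-1/2}+d(x,y)/(t-s)$, and $\mathrm{III}$ is the good term $\int\!\!\int H|\nabla|h|^{q/2}|^2$.

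\emph{Initial-data term.} We treat $|h|^q(\cdot,0)\in L^{p/q}$ with Lemma \ref{lemma:youngsconvolution}(a), taking $\beta=0$ and $b=p/q$. This gives $t_1^{-nq/(2p)}\|h(0)\|_{L^p}^q$, which becomes the first term of (\ref{Lpbootstrap:Linftybound}). For (\ref{Lpbootstrap:Lpbound}) we instead split $h(0)=h(0)\mathbf 1_{M\setminus B_R}+h(0)\mathbf 1_{B_R}$ and apply Lemma \ref{lemma:youngsconvolution}(c) in $L^{p/q}$. This yields the factor $\min\{1,(CR^2/t_1)^{\frac n2(\frac1q-\frac1p)}\}$.

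\emph{Curvature term.} Lemma \ref{lemma: Rm est for ALE} gives $|\Rm(g_0)|\le C\rho^{-2-\tau}$. We use H\"older in $y$: one factor is $|h|^q\in L^{p/q}$, bounded by $\eps^q$ via (\ref{Lpbootstrap:Lpetaassumption}); the other is $H\rho^{-2-\tau}$ in the dual exponent, which Lemma \ref{lemma:youngsconvolution}(a) controls. Integrating in $s$ converges at $s=t$ because $(1-\frac qp)n/2<1$ fails only in the harmless direction. The integral at infinity converges and produces the $(\rho^2+t)^{-(2+\tau)/2}$ decay exactly because of (\ref{Lpbootstrap:tauassumptions}). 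If the H\"older exponent is bad near $s=t$, we interpolate with the pointwise bound (\ref{Lpbootstrap:hpointwiseassumption}). This yields the $\eps\rho^{-\min\{\cdots\}}$ term pointwise. Using Lemma \ref{lemma:youngsconvolution}(b) on $M\setminus B_R$ gives the $R^{-(\frac1q-\frac1p)(2+\tau_0)+\frac2q}$ term.

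\emph{Gradient term.} This is the main obstacle. We apply Young's inequality:
\[
(\cdot)\,|h|^{\frac q2+1}|\nabla|h|^{\frac q2}| \le \tfrac c2|\nabla|h|^{\frac q2}|^2 + C\Big(\tfrac1{t-s}+\tfrac{d^2}{(t-s)^2}\Big)|h|^{q+2}.
\]
The first piece is absorbed by $-c\,\mathrm{III}$. The remaining piece $|h|^{q+2}$ is cubic-type and must supply the $\eps^{1+2/q}$ contributions, and the difficulty is that $1/(t-s)$ is not integrable at $s=t$. To handle it, we split the time integral at $s=t_1-1$. On $[t_1-1,t_1]$ we avoid Young and instead use the gradient bound (\ref{Lpbootstrap:gradassumption}) directly: $|\nabla|h|^{q/2}|\lesssim|h|^{q/2-1}\eps v$. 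The term then becomes $\int\!\!\int(t-s)^{-1/2}(1+d/\sqrt{t-s})H\,|h|^q\eps v$, which is integrable in $s$. Bounding $|h|^q$ by $\eps^q v^{q\ell}$ gives the $\eps^{q+1}v^{q\ell+1}$ contribution, hence $\eps^{1+1/q}v^{\ell+1/q}$ after the root; the $L^p$ version gives $R^{-(1/q-1/p)}+t_1^{-1/(2q)}$. On $[0,t_1-1]$, where $t-s\ge 1$, we write $|h|^{q+2}=|h|^{q+r}|h|^{2-r}$. The factor $|h|^{q+r}$ goes in $L^{p/(q+r)}$ via (\ref{Lpbootstrap:Lpetaassumption}), using $r<p-q$. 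The factor $|h|^{2-r}$ is bounded by $\eps^{2-r}v^{(2-r)\ell}$, using $r<2$. The weights $d^{2\alpha}/t^\alpha$ with $\alpha\in\{0,1\}$ are exactly those handled by Lemmas \ref{lemma:heatkernelLpests} and \ref{lemma:youngsconvolution}, with $\beta=(2-r)\ell$. The admissibility condition $\beta<n/a$ is precisely (\ref{2-relllessthan...}), and (\ref{qlplusonelessthann}) ensures the time integrals converge. This gives the $\eps^{1+2/q}$ terms with the exponents $\min\{\ell(1+\frac1q),\frac2q+\frac n{pq}\}$ pointwise, and $R^{-(p-q-r)\ell/(q(p-r))}$ and $t_1^{-\min\{1+\frac{nr}{2p},\frac{(2-r)\ell}2\}/q}$ in $L^p$.

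Finally, we take $q$-th roots using $(a+b)^{1/q}\le a^{1/q}+b^{1/q}$. For the $L^p$ bound we apply Minkowski's inequality in $L^{p/q}(M\setminus B_R)$ to $|h|^q$. The main bookkeeping risk is tracking the exponents across the time splitting, which we would verify case by case.
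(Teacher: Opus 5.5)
Your architecture is the paper's. You use the same four-way split of the Duhamel inequality, \eqref{Lpbootstrap:gradassumption} on $[t_1-1,t_1]$, and Peter--Paul absorption on $[0,t_1-1]$. For the $L^p(M\setminus B_R)$ bound of that last piece you also use the same splitting $|h|^{q+2}\le\eps^{2-r}v^{(2-r)\ell}|h|^{q+r}$, with Lemma \ref{lemma:youngsconvolution}(b) for $a=\frac{p}{p-r}$, $b=\frac{p}{q+r}$, $c=\frac pq$; there \eqref{2-relllessthan...} is exactly $\beta<\frac na$. The gap is that you reuse this splitting for the \emph{pointwise} bound on $[0,t_1-1]$, and it cannot give what you claim. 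In that estimate $|h|^{q+r}$ is only controlled in $L^{p/(q+r)}$, so all spatial decay must come from $v^{(2-r)\ell}$. The pairing in Lemma \ref{lemma:youngsconvolution}(a) then has $a=\frac{p}{p-q-r}$, so admissibility reads $\beta<\frac{n(p-q-r)}{p}$, not \eqref{2-relllessthan...}. At best you get $u_4\lesssim\eps^{1+2/q}\big(\rho^{-\beta/q}+t_1^{-(2-r)\ell/(2q)}\big)$ with $\beta\le(2-r)\ell$. Since $2-r<q+1$, this falls short of $\ell(1+\frac1q)$. Worse, the spatial rate drops below $\rho^{-\ell}$ whenever $q>2-r$ or $q\ell\ge\frac{n(p-q-r)}{p}$; the latter happens, e.g., for $p$ near $\frac{n}{n-2}$, where $\frac{n(p-q-r)}{p}$ is close to or below $2$. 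A term $\eps^{1+2/q}\rho^{-\beta/q}$ with $\beta/q<\ell$ cannot be absorbed into $\frac\eps2 v^{\ell}$ at large $\rho$, so the bootstrap in Theorem \ref{thm:Lqstability} would fail.

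The missing idea is to load almost all of the $q+2$ powers onto the pointwise hypothesis in the pointwise estimate:
\begin{itemize}
\item Write $|h|^{q+2}\le\eps^{q+1}v^{\ell(q+1)}|h|$.
\item Bound the single remaining factor in $L^{p'}$ for large $p'$; it is $\lesssim\eps$ by interpolating \eqref{Lpbootstrap:Lpetaassumption} with the $L^\infty$ bound from \eqref{Lpbootstrap:hpointwiseassumption}.
\item Apply Lemma \ref{lemma:youngsconvolution}(a) with $a=\frac{p'}{p'-1}$ and $\beta=q\ell+\min\{\ell,1\}$. This $\beta$ is admissible for large $p'$ precisely because of \eqref{qlplusonelessthann}.
\item Pair the time part $(1+t)^{-\ell(q+1)/2}$ with $\beta=0$, $b=p$. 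Integrating against $\frac{dt}{t_1-t}$ over $[0,t_1-1]$ gives $t_1^{-\min\{1+\frac n{2p},\frac{\ell(q+1)}2\}}$ for $u_4^q$.
\end{itemize}
Spatially this yields $\rho^{-\ell-\min\{\ell,1\}/q}$ for $u_4$, which is what \eqref{qlplusonelessthann} supports. Keep the $r$-splitting only for the $L^p$ tail.

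So \eqref{qlplusonelessthann} is the admissibility condition for these heavy pointwise weights, here and in your third-term estimate (Lemma \ref{lemma:heatkernelLpests} with $a=1$, $\beta=q\ell+1<n$). It is not a condition for convergence of the time integrals, as you suggest. Similarly, in the curvature term the pointwise estimate rests on $\frac{nq}{2p}<1$ (otherwise interpolate up to $p'=\frac{nq^2}{2}$) and on $\frac{nq}{2p}+\frac\beta2>1$. By contrast, \eqref{Lpbootstrap:tauassumptions} is what gives $\int_0^{t_1}(R^2+t_1-t)^{-(1-\frac qp)(1+\frac{\tau_0}2)}\,dt\lesssim R^{2-(1-\frac qp)(2+\tau_0)}$ in the $L^p$-tail estimate.
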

\begin{proof}

In the formula of Lemma \ref{lemma:Duhamel}, we break up the right-hand side as follows:
\begin{equation}\label{Lpbootstrap:RHSsplitting}
    \begin{split}
        u_1^q(x) & := \int_M H(x,y,t_1) |h|^q(y,0) \, dV_y \\
        u_2^q(x) & := \int_0^{t_1} \!\!\!\! \int_M H(x,y,t_1 - t) |\Rm(g_0)| |h(y,t)|^q \, dV_y dt \\
        u_3^q(x) & := \int_{t_1 - 1}^{t_1} \!\! \int_M H(x,y,t_1 - t) \left( \frac{1}{\sqrt{t_1 - t}} + \frac{d(x,y)}{t_1-t} \right) |h|^{\frac{q}{2} + 1} |\nabla |h|^{\frac{q}{2}} | \, dV_y dt \\
        u_4^q(x) & := \int_0^{t_1 - 1} \!\!\!\! \int_M H(x,y,t_1 - t) \left( \left( \frac{1}{\sqrt{t_1 - t}} + \frac{d(x,y)}{t_1 - t} \right) |h|^{\frac{q}{2} + 1} |\nabla |h|^{\frac{q}{2}} | - c_q |\nabla |h|^{\frac{q}{2}} |^2 \right) \, dV_y dt.
    \end{split}
\end{equation}
According to Lemma \ref{lemma:Duhamel}, we have
\begin{equation}\label{hlessthanu1tou4}
    |h|(x,t_1) \leq u_1(x) + C \left( u_2(x) + u_3(x) + u_4(x) \right).
    \end{equation}
    We now use Lemma \ref{lemma:youngsconvolution} to estimate $u_i$ pointwise and in $L^{p}(M \setminus B_R)$ for $i = 1, \ldots, 4.$

       \vspace{2mm}
       
       \noindent {\bf First term.} \emph{Pointwise estimate.} Taking $\alpha = \beta = 0,$ $b = \frac{p}{q}$ and $a = \frac{1}{1 - \frac{q}{p}} = \frac{p}{p - q}$ in Lemma \ref{lemma:youngsconvolution}$a,$ we have
       $$u^q_1(x) \leq \frac{C}{t_1^{\frac{nq}{2p}}} \||h(0)|^q\|_{L^{p/q}} = \frac{C}{t_1^{\frac{nq}{2p}}} \||h(0)|\|^q_{L^{p}}.$$
       This gives
       \begin{equation}\label{u1xest}
          u_1(x) \leq \frac{C}{t_1^{\frac{n}{2p}}} \|h(0)\|_{L^{p}}.
          \end{equation}
          
\noindent \emph{$L^p$-estimate.} Taking $b = \frac{p}{q}$ in Lemma \ref{lemma:youngsconvolution}$c$, we have
     \begin{equation}\label{u1Lpest}
        \| u_1 \|_{L^p{(M)}} \leq \|h(0)\|_{L^{p}{(M)}}.
          \end{equation}
If $t_1 \geq R^2,$ we further break up this term as
$$u_1(x) \leq u_1'(x) + u_1''(x),$$
where
$$u_1'(x) := \sqrt[q]{\int_{M \setminus B_R} H(x,y,t_1) |h|^q(y,0) \, dV_y}$$
and
$$u_1''(x) := \sqrt[q]{\int_{B_R} H(x,y,t_1) |h|^q(y,0) \, dV_y}.$$
To estimate $u_1'(x),$ by the same argument as for $u_1$, we have
    $$\| u_1' \|_{L^p(M)} \leq \|h(0)\|_{L^{p}(M \setminus B_R)}.$$
For $u_1'',$ Lemma \ref{lemma:youngsconvolution}$c$ gives
$$ \|u_1''\|_{L^p(M)}^q \leq \min \left\{ 1, \left( \frac{C R^2}{t_1 } \right)^{\frac{n}{2}\left( 1- \frac{q}{p} \right)} \right\} \|h(0) \|_{L^p{(B_R)}}^q.$$
Taking $q$-th roots, we have
\begin{equation}
\|u_1''\|_{L^p(M)} \leq \min \left\{ 1, \left( \frac{C R^2}{t_1 } \right)^{\frac{n}{2}\left( \frac{1}{q} - \frac{1}{p} \right)} \right\} \|h(0) \|_{L^p{(B_R)}}.
\end{equation}

       \vspace{2mm}

     \noindent {\bf Second term.}   \emph{Pointwise estimate.}  Note that since $(M,g_0)$ is Ricci-flat ALE of order $\tau > 0,$ Lemma \ref{lemma: Rm est for ALE} implies
    \begin{equation*}
        u^q_2(x) \leq C \int_0^{t_1} \!\!\!\! \int_M \frac{H(x,y,t_1 - t)}{ \rho(y)^{2 + \tau} } |h(y,t)|^q \, dV_y dt.
    \end{equation*}
     Let
     $$f_2(x,t) = \int_M \frac{H(x,y,t_1 - t) }{ \rho(y)^{2 + \tau} } |h(y,t)|^q \, dV_y.$$

 \noindent \underline{{\bf Case 1:} $\frac{nq}{2} < p.$} Applying Lemma \ref{lemma:youngsconvolution}$a$ with $b = \frac{p}{q}$ and $a = \frac{p}{p - q},$ and
 \begin{equation}\label{Secondtermbetarestriction}
     \beta = \min \left\{ 2 + \tau, \frac{n (p - q)}{qp} \right\} < \frac{n}{a},
 \end{equation}
 we have
        \begin{equation}
            |f_2(x,t)| \leq \frac{C \|h(t) \|_{L^p}^q }{ \left( t_1 - t \right)^{\frac{nq}{2p}} \left( \rho(x)^2 + t_1 - t \right)^{\frac{\beta}{2}} }.
        \end{equation}
        We have
        \begin{equation}\label{dumbbetacalc}
            \frac{nq}{2p} + \frac{\beta}{2} = \frac{n}{2p}\left( \frac{p}{q} + q - 1 \right) > \frac{n}{2q} > 1 \qquad \text{when } \beta = \frac{n (p - q)}{qp}
        \end{equation}
        and 
        \begin{equation}
            \frac{nq}{2p} + \frac \beta 2 = \frac{nq}{2p} + 1+ \frac \tau2 > 1 \qquad \text{when } \beta = 2+ \tau.
        \end{equation}
     In either case, using this inside the time integral, we have
     \begin{equation*}
     \begin{split}
         u_2^q(x) \leq \int_0^{t_1} |f_2(x,t)| \, dt 
         & \leq C \sup_{0 \leq t \leq t_1} \|h(t) \|_{L^p}^q \int_0^{t_1} \frac{1}{ \left( t_1 - t \right)^{\frac{nq}{2p}} \left( \rho(x)^2 + t_1 - t \right)^{\frac{\beta}{2} }} \, dt \\
         & \leq C \sup_{0 \leq t \leq t_1} \|h(t) \|_{L^p}^q \left( \frac{1}{\rho(x)^{ \beta} } \int_0^{\rho(x)^2}  u^{-\frac{nq}{2p}} \, du + \int_{\rho(x)^2}^\infty u^{-\left( \frac{nq}{2p} + \frac{\beta}{2} \right) } \, du \right) \\
         & \leq \frac{C}{\rho(x)^{\frac{n q}{p} + \beta - 2} } \sup_{0 \leq t \leq t_1} \|h(t) \|_{L^p}^q.
         \end{split}
     \end{equation*}
     Here we have used $\frac{nq}{2p} < 1$ and $\frac{nq}{2p} + \frac{\beta}{2} > 1.$
     Taking $q$-th roots, we get
          \begin{equation}\label{u2xest}
     \begin{split}
         u_2(x) \leq \frac{C}{\rho(x)^{\frac{n}{p} + \frac{\beta - 2}{q}} } \sup_{0 \leq t \leq t_1} \|h(t) \|_{L^p}.
         \end{split}
     \end{equation}
     By (\ref{dumbbetacalc}), we have
     $$\frac{nq}{p} + \beta - 2 > \frac{n}{q} - 2 \qquad \text{if } \beta = \frac{n (p-q)}{qp}.$$
     On the other hand,
     $$\frac{nq}{p} + \beta - 2 = \frac{nq}{p} + \tau \qquad \text{if } \beta = 2+\tau.$$
     Therefore (\ref{u2xest}) simplifies to 
\begin{equation}\label{u2xcase1est}
     \begin{split}
         u_2(x) \leq \frac{C}{\rho(x)^{\min\{ \frac{n}{p} + \frac{\tau}{q}, \frac{n}{q^2} - \frac{2}{q} \} } } \sup_{0 \leq t \leq t_1} \|h(t) \|_{L^p}.
         \end{split}
     \end{equation}
     
     \noindent \underline{{\bf Case 2:} $\frac{nq}{2} \geq p.$} Let $p' := \frac{nq^2}{2} > \frac{nq}{2} \geq p.$ Note that by (\ref{Lpbootstrap:hpointwiseassumption}), we have
     $$\sup_{0 \leq t \leq t_1} \|h\|_{L^\infty(M) } \leq C \eps.$$
     By interpolation with {\eqref{Lpbootstrap:Lpetaassumption}}, we have
     $$\sup_{0 \leq t \leq t_1} \|h\|_{L^{p'}(M) } \leq C \eps.$$
     Running the argument of Case 1 with $p'$ in place of $p,$ we get
      \begin{equation}\label{u2xcase2est}
     \begin{split}
         u_2(x) \leq \frac{C}{\rho(x)^{\min \left\{ \frac{2}{q^2} + \frac{\tau}{q}, \frac{n}{q^2} - \frac{2}{q} \right\} } } \sup_{0 \leq t \leq t_1} \|h(t) \|_{L^{p'}}.
         \end{split}
     \end{equation}  
\noindent \emph{$L^p$-estimate.} We first apply Minkowski's integral inequality:
     \begin{equation}\label{u2Lpfirstest}
     \begin{split}
        \| u_2\|^q_{L^{p}(M \setminus B_R)} = \| u_2^q(x) \|_{L^{\frac{p}{q}}(M \setminus B_R)} & \leq C \left( \int_{M \setminus B_R} \left( \int_0^{t_1} f_2(x,t) dt \right)^{\frac{p}{q}} \, dV_x \right)^{\frac{q}{p}} \\
        & \leq C \int_0^{t_1} \|f_2(\cdot, t) \|_{L^{\frac{p}{q}}(M \setminus B_R)} \, dt.
        \end{split}
    \end{equation}
    Applying Lemma \ref{lemma:youngsconvolution}$b$, with $b = c = \frac{p}{q},$ $a = 1,$ and $\beta = 2 + \tau_0 < n = \frac{n}{a},$ 
    we get
    $$\|f_2(\cdot, t) \|_{L^{\frac{p}{q}}(M \setminus B_R)} \leq \frac{C \|h(t) \|_{L^p(M)}^q }{ \left( R^2 + t_1 -t \right)^{ \left( 1 - \frac{q}{p} \right)\left(1 + \frac{\tau_0}{2} \right)} } .$$
    Returning to (\ref{u2Lpfirstest}), we get
   \begin{equation}\label{u2Lpest}
       \begin{split}
           \| u_2 \|_{L^{p}(M \setminus B_R)} \leq \frac{C \sup_{0 \leq t \leq t_1} \|h(t) \|_{L^p(M)} }{ R^{ \frac{ \left( 1 - \frac{q}{p} \right)\left(2 + \tau_0 \right) - 2 }{q} } },
       \end{split}
   \end{equation}
   where we used (\ref{Lpbootstrap:tauassumptions}). 

       \vspace{2mm}
       
     \noindent {\bf Third term.} \emph{Pointwise estimate.} Applying (\ref{Lpbootstrap:gradassumption}), we have
    $$| \nabla |h|^{\frac{q}{2}} | \leq \frac{q}{2} |h|^{\frac{q}{2} - 1} | \nabla h| \leq \frac{q}{2}\eps |h|^{\frac{q}{2} - 1} \left( \frac{1}{\rho} + \frac{1}{\sqrt{t_1}} \right)$$
    for $t_1 - 1 \leq t \leq t_1.$
    This gives
    \begin{equation}\label{u3firstbound}
       u^q_3(x) \leq C \eps \int_{t_1 - 1}^{t_1} \!\! \int_M \left( \frac{1}{\sqrt{t_1 - t}} + \frac{d(x,y) }{ t_1 - t } \right)H(x,y,t_1 - t) \left( \frac{1}{\rho(y)} + \frac{1}{\sqrt{t_1}} \right) |h(y,t)|^{q} \, dV_y dt.
       \end{equation}
    To estimate $u_3(x)$ pointwise, we use (\ref{Lpbootstrap:hpointwiseassumption}) to give
    \begin{equation*}
    \begin{split}
       u^q_3(x) & \leq C \eps^{q + 1} \int_{t_1 - 1}^{t_1} \!\! \int_M \left( \frac{1}{\sqrt{t_1 - t}} + \frac{d(x,y) }{ t_1 - t } \right)H(x,y,t_1 - t) \left( \frac{1}{\rho(y)} + \frac{1}{\sqrt{t_1}} \right)^{q\ell + 1}\, dV_y dt \\
       & \leq C \eps^{q + 1} \int_{t_1 - 1}^{t_1} \!\! \int_M \left( \frac{1}{\sqrt{t_1 - t}} + \frac{d(x,y) }{ t_1 - t } \right)H(x,y,t_1 - t) \left( \frac{1}{\rho(y)^{q\ell + 1}} + \frac{1}{t_1^{\frac{q\ell + 1}{2}} } \right) \, dV_y dt \\
        & \leq C \eps^{q + 1} \int_{t_1 - 1}^{t_1} \!\! \left( \frac{1}{\sqrt{t_1 - t} \left( \rho(x)^2 + t_1 - t \right)^{\frac{q \ell + 1}{2} } } + \frac{1}{t_1^{\frac{q\ell + 1}{2}} \sqrt{t_1 - t} } \right) \, dt \\
        & \leq C \eps^{q + 1} \left( \frac{1}{\rho(x)} + \frac{1}{\sqrt{t_1}} \right)^{q \ell + 1},
       \end{split}
       \end{equation*}
    where we have used Lemma \ref{lemma:heatkernelLpests} with $a = 1$ and $\beta = q \ell + 1 < n.$ Taking $q$-th roots, we get
    \begin{equation}\label{u3xest}
        u_3(x) \leq C \eps^{\frac{1}{q}} \eps \left( \frac{1}{\rho(x)} + \frac{1}{\sqrt{t_1}} \right)^{\ell + \frac{1}{q} }.
    \end{equation}
    
    \noindent \emph{$L^p$-estimate.} Let $f_3$ be the integrand of (\ref{u3firstbound}).
     We first apply Minkowski's integral inequality:
     \begin{equation}\label{u3Lpfirstest}
     \begin{split}
        \| u_3\|^q_{L^{p}(M \setminus B_R)} 
        &= \| u_3^q(x) \|_{L^{\frac{p}{q}}(M \setminus B_R)} \\
        & \leq C \eps \left( \int_{M \setminus B_R} \left( \int_{t_1 - 1}^{t_1} f_3(x,t) dt \right)^{\frac{p}{q}} \, dV_x \right)^{\frac{q}{p}} \\
        & \leq C \eps \int_{t_1 - 1}^{t_1} \|f_3(\cdot, t) \|_{L^{\frac{p}{q}}(M \setminus B_R)} \, dt \\
        & \leq C \eps \int_{t_1 - 1}^{t_1} \left( \|f_3'(\cdot, t) \|_{L^{\frac{p}{q}}(M \setminus B_R)} + \|f_3''(\cdot, t) \|_{L^{\frac{p}{q}}(M \setminus B_R)} \right) \, \frac{dt}{\sqrt{t_1 - t} },
        \end{split}
    \end{equation}
    where
         $$f'_3(x,t) =  \int_M \left( 1 + \frac{d(x,y) }{ \sqrt{t_1 - t} } \right)\frac{H(x,y,t_1 - t) }{ \rho(y) } |h(y,t)|^q \, dV_y$$
     and
     $$f''_3(x,t) = \frac{1}{\sqrt{t_1}} \int_M \left( 1 + \frac{d(x,y) }{ \sqrt{t_1 - t} } \right) H(x,y,t_1 - t)  |h(y,t)|^q \, dV_y.$$
    For $f_3',$ we apply Lemma \ref{lemma:youngsconvolution}$b$ with $\alpha = \frac12, \beta = 1,$ $b = c = \frac{p}{q},$ and $a = 1,$ to get
    $$\|f_3'(\cdot, t) \|_{L^{\frac{p}{q}}(M \setminus B_R)} \leq \frac{C \|h(t) \|_{L^p(M)}^q }{ \left( R^2 + t_1 -t \right)^{ \frac{1 - \frac{q}{p}}{2} } } \leq  \frac{C \|h(t) \|_{L^p(M)}^q }{ R^{1 - \frac{q}{p} } } \qquad {\text{for } R \ge R_0}.$$
    For $f_3'',$ we apply Lemma \ref{lemma:youngsconvolution}$b$, $\alpha = \frac12, \beta = 0,$ $b = c = \frac{p}{q},$ and $a = 1,$ to get
    $$\|f_3''(\cdot, t) \|_{L^{\frac{p}{q}}(M \setminus B_R)} \leq \frac{C \|h(t) \|_{L^p(M)}^q }{ \sqrt{t_1} } \qquad {\text{for } R \ge R_0}.$$
    Returning to (\ref{u3Lpfirstest}), we get
   \begin{equation}\label{u3Lpest}
       \begin{split}
           \| u_3 \|_{L^{p}(M \setminus B_R)} \leq C \eps \sup_{t_1 - 1 \leq t \leq t_1} \|h(t) \|_{L^p(M)} \left( \frac{1}{R^{\frac{1}{q} - \frac{1}{p} }} + \frac{1}{t_1^{\frac{1}{2q}} } \right).
       \end{split}
   \end{equation}

       \vspace{2mm}
       
     \noindent {\bf Fourth term.} \emph{Pointwise estimate.} Applying the Peter-Paul inequality on $u_4,$ we have
       \begin{equation}\label{u4firstpointwiseexpression}
       \begin{split}
           u^q_4(x) & \leq C \int_0^{t_1 - 1} \!\!\!\! \int_M \left( \frac{1}{t_1 - t} + \frac{d(x,y)^2  }{(t_1 - t)^{2} } \right) H(x,y,t_1 - t) |h(y,t)|^{q + 2}  \, dV_y dt \\
           & = C \int_0^{t_1 - 1} \!\!\!\! f_4(x,t) \frac{dt}{t_1 - t},
           \end{split}
       \end{equation}
       where
       $$f_4(x,t) = \int_M \left( 1 + \frac{d(x,y)^2  }{t_1 - t } \right) H(x,y,t_1 - t) |h(y,t)|^{q + 2}  \, dV_y.$$
       For the pointwise estimate, we write
       \begin{equation*}
       \begin{split}
           f_4(x,t) & \leq \eps^{q + 1} \int_M \left( 1 + \frac{d(x,y)^2  }{t_1 - t } \right) H(x,y,t_1 - t) \left( \frac{1}{\rho(y)} + \frac{1}{\sqrt{1 + t}} \right)^{\ell(q + 1)} |h(y,t)| \, dV_y \\
           & \leq C \eps^{q + 1} \int_M \left( 1 + \frac{d(x,y)^2  }{t_1 - t } \right) H(x,y,t_1 - t) \left( \frac{1}{\rho(y)^{\ell(q + 1)}} + \frac{1}{(1 + t)^{\frac{\ell}{2} \left( q + 1 \right)}} \right) |h(y,t)| \, dV_y \\
           & =: I + II.
           \end{split}
           \end{equation*}
    To estimate $I,$ choose $p'$ large-enough that $q\ell + 1 < \frac{n(p' - 1)}{p'}.$ We may apply Lemma \ref{lemma:youngsconvolution}$a$ with $a = \frac{p'}{p' - 1},$ $b = p',$ and $\beta = q\ell + \min\{\ell, 1\},$ to get
    \begin{equation*}
        I \leq \frac{C \eps^{q + 2} }{(t_1 - t)^{\frac{n}{2p'}}(\rho(x) + t_1 - t)^{q \ell + \min \{ \ell, 1 \} } }.
        \end{equation*}
    For $II,$ applying Lemma \ref{lemma:heatkernelLpests} with $a = \frac{p}{p - 1},$ $b = p$ and $\beta = 0,$ we have
    \begin{equation*}
       \begin{split}
           II & \leq \frac{C \eps^{q + 2} }{(t_1 - t)^{\frac{n}{2p}} (1 + t)^{\frac{\ell}{2} \left( q + 1 \right)}}.
           \end{split}
           \end{equation*}
Inserting into (\ref{u4firstpointwiseexpression}) and integrating, we get
    \begin{equation}\label{u4secondpointwiseexpression}
        u_4(x)^q \leq C \eps^{q + 2}  \left( \frac{1}{\rho(x)^{q\ell + \min \{ \ell, 1 \} } } + \int_0^{t_1 - 1}\frac{dt}{(t_1 - t)^{1 + \frac{n}{2p} }(1 + t)^{\frac{\ell}{2} \left( q + 1 \right)}}  \right).
    \end{equation}
    We have
    \begin{equation}\label{u4integraldumbcalc}
    \begin{split}
    \int_0^{t_1 - 1}\frac{dt}{(t_1 - t)^{1 + \frac{n}{2p} }(1 + t)^{\frac{\ell}{2} \left( q + 1 \right)}} & \leq \frac{2^{1 + \frac{n}{2p} }}{(1 + t_1)^{1 + \frac{n}{2p} }}\int_0^{\frac{t_1-1}{2}}\frac{dt}{(1 + t)^{\frac{\ell}{2} \left( q + 1 \right)}} \\
    & \qquad \qquad + \frac{2^{\frac{\ell}{2} \left( q + 1 \right)} }{(1 + t_1)^{\frac{\ell}{2} \left( q + 1 \right)}} \int_{\frac{t_1 - 1}{2}}^{t_1 - 1}\frac{dt}{(t_1 - t)^{1 + \frac{n}{2p} }} \\
    & \leq \frac{C}{t_1^{\min\{1 + \frac{n}{2p} ,\frac{\ell}{2} \left( q + 1 \right)\}} }.
    \end{split}
    \end{equation}
    Applying this in (\ref{u4secondpointwiseexpression}) and taking $q$-th roots, we get
        \begin{equation}\label{u4xest}
        u_4(x) \leq C \eps^{1 + \frac{2}{q} } \left( \frac{1}{\rho(x)} + \frac{1}{\sqrt{t_1}} \right)^{\min\left\{ \ell \left( 1 + \frac{1}{q} \right), \frac{2}{q} + \frac{n}{pq} \right\} }.
    \end{equation}
    
   \noindent \emph{$L^p$-estimate.} To estimate the $L^p$-norm, we instead write
    \begin{equation*}
       \begin{split}
           f_4(x,t) & \leq \eps^{2 - r} \int_M \left( 1 + \frac{d(x,y)^2  }{t_1 - t } \right) H(x,y,t_1 - t) \left( \frac{1}{\rho(y)} + \frac{1}{\sqrt{1 + t}} \right)^{(2 - r)\ell} |h(y,t)|^{q + r} \, dV_y \\
           & \leq C \eps^{2 - r} \int_M \left( 1 + \frac{d(x,y)^2  }{t_1 - t } \right) H(x,y,t_1 - t) \left( \frac{1}{\rho(y)^{(2 - r) \ell} } + \frac{1}{(1 + t)^{\frac{(2 - r)\ell}{2}}} \right) |h(y,t)|^{q + r} \, dV_y.
           \end{split}
           \end{equation*}
    Applying Minkowski's inequality, we have
     \begin{equation}\label{u4Lpfirstest}
     \begin{split}
        \| u_4\|^q_{L^{p}(M \setminus B_R)} = \| u_4^q(x) \|_{L^{\frac{p}{q}}(M \setminus B_R)} & \leq \left( \int_{M \setminus B_R} \left( \int_{0}^{t_1 - 1} \frac{f_4(x,t)}{t_1 - t} dt \right)^{\frac{p}{q}} \, dV_x \right)^{\frac{q}{p}} \\
        & \leq \int_{0}^{t_1 - 1} \|f_4(\cdot, t) \|_{L^{\frac{p}{q}}(M \setminus B_R)} \, \frac{dt}{t_1 - t} \\
        & \leq C \eps^{2 - r} \int_{0}^{t_1 - 1} \left( \|f_4'(\cdot, t) \|_{L^{\frac{p}{q}}(M \setminus B_R)} + \|f_4''(\cdot, t) \|_{L^{\frac{p}{q}}(M \setminus B_R)} \right) \, \frac{dt}{t_1 - t },
        \end{split}
    \end{equation}
    where
         $$f'_4(x,t) =  \int_M \left( 1 + \frac{d(x,y)^2  }{t_1 - t } \right) \frac{H(x,y,t_1 - t) }{ \rho(y)^{(2 - r)\ell } } |h(y,t)|^{q + r} \, dV_y$$
     and
     $$f''_4(x,t) = \frac{1}{(1 + t)^{\frac{\ell}{2}} } \int_M \left( 1 + \frac{d(x,y)^2  }{t_1 - t } \right) H(x,y,t_1 - t)  |h(y,t)|^{q + r} \, dV_y.$$
          For $f_4',$ we apply Lemma \ref{lemma:youngsconvolution}$b$ with $\alpha = 1, \beta = \ell,$ $c = \frac{p}{q},$ $b = \frac{p}{q + r},$ and $a = \frac{p}{p-r},$ to get
    $$\|f_4'(\cdot, t) \|_{L^{\frac{p}{q}}(M \setminus B_R)} \leq \frac{C \|h(t) \|_{L^p(M)}^{q + r} }{ (t_1 - t)^{\frac{n r}{2p} } \left( R^2 + t_1 -t \right)^{ \frac{ \left( p - q - r \right) \ell}{2(p-r)} } }.$$
    Here (\ref{2-relllessthan...}) guarantees $\beta < \frac{n}{a},$ which allows us to apply Lemma \ref{lemma:youngsconvolution}$b$.
    
    For $f_4'',$ we apply Lemma \ref{lemma:youngsconvolution}$b$ with $\alpha = 1, \beta = 0,$ and the same $a,b,c,$ to get
    $$\|f_3''(\cdot, t) \|_{L^{\frac{p}{q}}(M \setminus B_R)} \leq \frac{C \|h(t) \|_{L^p(M)}^{q + r} }{ (1 + t)^{\frac{(2 - r)\ell}{2}} (t_1 - t)^{\frac{n r}{2p} }  } .$$
    We can estimate
    $$\int_{0}^{t_1 - 1} \|f_4'(\cdot, t) \|_{L^{\frac{p}{q}}(M \setminus B_R)}   \, \frac{dt}{t_1 - t } \leq \frac{C \sup_{0 \leq t \leq t_1 - 1} \|h(t) \|_{L^p(M)}^{q + r} }{ R^{ \frac{ \left( p - q - r \right) \ell}{(p-r)} } }$$
    and, as in (\ref{u4secondpointwiseexpression}-\ref{u4integraldumbcalc}),
    \begin{equation*}
        \begin{split}\int_{0}^{t_1 - 1} \|f_4''(\cdot, t) \|_{L^{\frac{p}{q}}(M \setminus B_R)} \, \frac{dt}{t_1 - t } & \leq C \sup_{0 \leq t \leq t_1 - 1} \|h(t) \|_{L^p(M)}^{q + r} \int_0^{t_1 - 1} \frac{dt}{ (1 + t)^{\frac{(2 - r)\ell}{2}} (t_1 - t)^{1 + \frac{n r}{2p} }  } \\
        & \leq \frac{C \sup_{0 \leq t \leq t_1 - 1} \|h(t) \|_{L^p(M)}^{q + r} }{t_1^{\min\{1 + \frac{n r}{2p} , \frac{(2 - r)\ell}{2} \}}}.
        \end{split}
        \end{equation*}
        Returning to (\ref{u4Lpfirstest}) and taking $q$-th roots, we get
        \begin{equation}\label{u4Lpest}
     \begin{split}
        \| u_4\|_{L^{p}(M \setminus B_R)} & \leq C \eps^{1 + \frac{2}{q}} \left( \frac{1}{R^{ \frac{ \left( p - q - r \right) \ell}{q(p-r)} }} + \frac{1}{t_1^{\frac{\min\{1 + \frac{nr}{2p} , \frac{(2 - r)\ell}{2} \} }{q} } } \right).
        \end{split}
    \end{equation}
Finally, by (\ref{hlessthanu1tou4}), the pointwise estimate (\ref{Lpbootstrap:Linftybound}) follows by combining (\ref{u1xest}), (\ref{u2xest}), (\ref{u3xest}), and (\ref{u4xest}). The $L^p$-estimate (\ref{Lpbootstrap:Lpbound}) follows by combining (\ref{u1Lpest}), (\ref{u2Lpest}), (\ref{u3Lpest}), and (\ref{u4Lpest}).
\end{proof}

\vspace{2mm}

\subsection{Estimate with $p > 1$ for a convergent solution}\label{ss:smallpproofofLpprop}

We need the following variant of Lemma \ref{lemma:Duhamel}.

\begin{lemma}\label{lemma:smallpDuhamel} 
{Let $(M^n, g_0)$ be a Ricci-flat ALE manifold} and let $g_\infty \in \mathcal{F}$ be a metric with $\calM(g_\infty) = 0$ and 
\begin{equation}\label{gaugedricciflatclose}
\|g_\infty - g_0 \|_{C^{1,0}_{-n}} < \eps.
\end{equation}
Fix $q > 1.$ Let $g(t)$ solve Ricci-DeTurck flow with respect to $g_0,$ and write
$$\tilde{h}(t) = g(t) - g_\infty.$$
Also write
$|\tilde{h}| = |\tilde{h}|_{g_\infty}$ and $\nabla = \nabla^{g_\infty}.$ Let $\tilde{H}(x,y,t)$ be the heat kernel with respect to $g_\infty.$ Suppose that (\ref{gg0lambdaassumption}) is satisfied, where $\lambda$ is sufficiently close to one {(depending only on $n, q$)}.
Then
        \begin{equation*}
        \begin{split}
        |\tilde{h}|^q(x,t) & \leq \int_M \tilde{H}(x,y,t) |\tilde{h}|^q(y,0) \, dV_y \\
        & + C_{\ref{lemma:smallpDuhamel}} \int_0^t \!\!\!\! \int_M \tilde{H}(x,y,t - s) \left( \rho(x)^{-2 - \tau} |\tilde{h}|^q + \left( \frac{1}{\sqrt{t - s}} + \frac{d(x,y)}{t-s} \right) |\tilde{h}|^{\frac{q}{2} + 1} |\nabla |\tilde{h}|^{\frac{q}{2}} | - c_{\ref{lemma:smallpDuhamel}} |\nabla |\tilde{h}|^{\frac{q}{2}} |^2 \right) \, dV_y ds.
        \end{split}
    \end{equation*}
    Here $C_{\ref{lemma:smallpDuhamel}}, c_{\ref{lemma:smallpDuhamel}}$ are constants that can be taken to depend only on $M, g_0, q$.
\end{lemma}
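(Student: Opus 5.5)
We follow the proof of Lemma \ref{lemma:Duhamel}, replacing the background $g_0$ by $g_\infty$ throughout. The first step is an evolution inequality for $\tilde h = g - g_\infty$ analogous to (\ref{g0evolutionof|h|^q}). Since $\calM(g_\infty)=0$, we have
$$\partial_t \tilde h = \calM(g) - \calM(g_\infty) = \int_0^1 d\calM_{g_\infty + s\tilde h}(\tilde h)\,ds.$$
We rewrite $\calM$ using $\nabla^{g_\infty}$ instead of $\nabla^{g_0}$. Concretely, $\calM(g) = -2\Ric(g) + \scrL_{V(g,g_0)} g$, and $V(g,g_0) = V(g,g_\infty) + g^{-1}\#(\Gamma(g_\infty)-\Gamma(g_0))$ because $V(g_\infty,g_0)=0$. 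The part $-2\Ric(g)+\scrL_{V(g,g_\infty)}g$ is the Ricci--DeTurck operator relative to the Ricci-flat metric $g_\infty$. Its expansion (\ref{calMfullexpression}) with $g_\infty$ in place of $g_0$ therefore gives
$$\Delta_{g,g_\infty}\tilde h + \Rm(g_\infty)\#\tilde h + g^{-1}\# g^{-1}\#\nabla\tilde h\#\nabla\tilde h.$$
The leftover term is $\scrL$ of $W := g^{-1}\#(\Gamma(g_\infty)-\Gamma(g_0))$, minus the same expression evaluated at $g_\infty$. This difference is linear in $\tilde h$ to leading order and has coefficients controlled by $\nabla(g_\infty-g_0)$ and $\nabla^2(g_\infty-g_0)$.

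The second step is to repeat the computation of Lemma \ref{lemma:hpevolution} to get
$$(\partial_t - \Delta_{g_\infty})|\tilde h|^q + c|\nabla|\tilde h|^{q/2}|^2 \le C\big(|\Rm(g_\infty)| + E\big)|\tilde h|^q + \nabla_i\big((g^{ij}-g_\infty^{ij})\nabla_j|\tilde h|^q\big),$$
where $E$ collects the error terms coming from $W$. Two estimates control the zeroth-order coefficients. First, $|\Rm(g_\infty)| \le |\Rm(g_0)| + C|\nabla^2(g_\infty-g_0)| + \dots \lesssim \rho^{-2-\tau}$; this uses (\ref{gaugedricciflatclose}) together with the smoothness and decay of gauged Ricci-flat metrics, which one obtains from elliptic regularity for $\calM(g_\infty)=0$ as in Lemma \ref{lemma:globalC2gamma-deltaestimate}. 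Second, the terms of the form $\nabla(g_\infty-g_0)\#\nabla\tilde h\#\tilde h$ are handled by Young's inequality: we absorb $\frac{c}{2}|\tilde h|^{q-2}|\nabla\tilde h|^2$ into the good gradient term and are left with $\eps^2\rho^{-2n-2}|\tilde h|^q \le \rho^{-2-\tau}|\tilde h|^q$. The terms with $\nabla^2(g_\infty-g_0)\,\tilde h$ are similarly bounded by $\eps\rho^{-n-2}|\tilde h|$, which is again of size $\rho^{-2-\tau}$. Near $\tilde h=0$ we regularize as usual, working with $(|\tilde h|^2+\eta)^{q/2}$ and letting $\eta\to0$.

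The third step is Duhamel's principle with the scalar heat kernel $\tilde H$ of $g_\infty$. After integrating the divergence term by parts, it becomes $|\nabla\tilde H|\,|g^{-1}-g_\infty^{-1}|\,|\nabla|\tilde h|^q|$. We bound this by $|\nabla \tilde H|\,|\tilde h|^{q/2+1}|\nabla|\tilde h|^{q/2}|$ using $|g^{-1}-g_\infty^{-1}|\lesssim|\tilde h|$. Finally, Lemma \ref{lemma:heatkernelest} applies to $\tilde H$ with constants depending only on $M$, $g_0$ and $q$: $g_\infty$ is Ricci-flat, hence has $\Ric\ge0$, and it is uniformly bi-Lipschitz to $g_0$, so the Li--Yau and Kotschwar bounds transfer. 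This yields the stated formula.

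The main obstacle is the second step: isolating the gauge-change term $W$ cleanly and checking that every error it produces is either absorbable into the gradient term or decays at least like $\rho^{-2-\tau}|\tilde h|^q$. This is exactly where the $C^{1,0}_{-n}$-smallness (\ref{gaugedricciflatclose}) is needed, supplemented by the $C^2$ decay of $g_\infty-g_0$ obtained from elliptic regularity.
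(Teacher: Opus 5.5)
Your proposal is correct and follows essentially the same route as the paper. You split $\calM(g)=\calM_{g_\infty}(g)+\scrL_{V(g,g_0)-V(g,g_\infty)}g$, use $V(g_\infty,g_0)=0$ to see that the gauge-change field is $\tilde h\#(\Gamma(g_0)-\Gamma(g_\infty))=O(\eps\rho^{-n-1})\#\tilde h$, absorb the resulting cross term $\eps\rho^{-1-\tau}|\tilde h||\nabla\tilde h|$ by Young's inequality, and then redo Lemma \ref{lemma:hpevolution} and the Duhamel argument of Lemma \ref{lemma:Duhamel} with $g_\infty$ in place of $g_0$. The points you make explicit are supplements the paper leaves tacit, not a different approach: the second-derivative decay of $g_\infty-g_0$ from elliptic regularity (implicit in the paper's $\eps O(\rho^{-\tau-2})\#\tilde h$), the regularization near $\tilde h=0$, and the transfer of the heat kernel bounds to $g_\infty$.
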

\begin{proof}
    Observe that
    \begin{equation}
    \begin{split}
    V(g,g_\infty) - V(g,g_0) & = g^{-1} \left( \Gamma(g_0) - \Gamma(g_\infty) \right) \\
    & = \cancelto{0}{g_\infty^{-1} \left( \Gamma(g_0) - \Gamma(g_\infty) \right)} + \tilde{h} \# \left( \Gamma(g_0) - \Gamma(g_\infty) \right) \\
    & = \eps O(\rho(x)^{-\tau-1} ) \# \tilde{h}.
    \end{split}
    \end{equation}
    Here we have used the fact that $V(g_0,g_\infty) = 0.$
    We therefore have
    \begin{equation*}
    \begin{split}
   \frac{\p \tilde{h}}{\p t} = \frac{\p g}{\p t} & = \mathcal{M}(g) \\
   & = \calM_{g_\infty}(g) + \scrL_{V(g,g_0) - V(g,g_\infty)} g \\
   & = \calM_{g_\infty}(g) + \eps \left( O(\rho(x)^{-\tau-2} ) \# \tilde{h} + O(\rho(x)^{-\tau-1} ) \# \nabla \tilde{h} \right).
   \end{split}
    \end{equation*}
    Arguing as in (\ref{evolutionof|h|^2}), we have
         \begin{equation}\label{firstevolutionoftildeh2}
       \begin{split}
           \left( \frac{\p}{\p t} - \Delta_{g, g_\infty} \right) |\tilde{h}|_{g_\infty}^2 & \leq - 2 |\nabla \tilde{h}|_{g,g_\infty}^2 + 4 \lambda^4 \left( (1 + \eps ) O(\rho(x)^{-2 -\tau}) \right) |\tilde{h}|^2 \\
           & \qquad + \frac{9}{2} \lambda^8 |\tilde{h}| |\nabla \tilde{h}|_{g,g_infty}^2 + \eps O(\rho(x)^{-1 - \tau}) |\tilde{h} | | \nabla \tilde{h} |.
       \end{split}
   \end{equation}
   Applying Young's inequality, we have
            \begin{equation}
       \begin{split}
          \eps O(\rho(x)^{-1 - \tau}) |\tilde{h} | | \nabla \tilde{h} | \leq O(\rho(x)^{-2 - \tau}) + \eps^2 |\nabla \tilde{h} |^2.
       \end{split}
   \end{equation}
   Returning to (\ref{firstevolutionoftildeh2}), we have
            \begin{equation}
       \begin{split}
           \left( \frac{\p}{\p t} - \Delta_{g, g_\infty} \right) |\tilde{h}|_{g_\infty}^2 & \leq 
            \left( - 2 +  \frac{9}{2} \lambda^8 |\tilde{h}| + \eps^2 \right) |\nabla \tilde{h}|_{g,g_0}^2 + 4 \lambda^4 O(\rho(x)^{-2-\tau}) |\tilde{h}|^2,
       \end{split}
   \end{equation}
    The formula follows by redoing the argument of Lemma \ref{lemma:hpevolution} and using Duhamel's principle as in Lemma \ref{Duhamel}, with $g_\infty$ in place of $g_0.$
\end{proof}

\begin{prop}\label{prop:smallpLpbootstrap} 
Let $(M, g_0)$ be a Ricci-flat ALE space of dimension and order $\tau = n \geq 4.$ Let $1 < q < p$ and $\frac{2}{q} < m < n - 2.$ Let $t_1 \geq 1$ and $0 < \eps < \lambda - 1,$ where $\lambda$ is sufficiently close to one as in Lemma \ref{lemma:hpevolution}. 

Assume that $g(t)$ solves the Ricci-DeTurck flow with respect to $g_0.$ Suppose further that there exists a gauged Ricci-flat metric $g_\infty$ satisfying (\ref{gaugedricciflatclose}) such that, writing
$$\tilde{h}(t) = g(t) - g_\infty,$$
we have
      \begin{equation}\label{smallpLpbootstrap:hLpestimate}
        \sup_{0 \leq t < t_1} \| \tilde{h}(t) \|_{L^p(M)} \leq \eps
    \end{equation}
     and
\begin{equation}\label{smallpLpbootstrap:hLinftyassn}
        | \tilde{h}(t) | \leq \frac{ \eps }{ (1 + t)^{\frac{m}{2}} }.
    \end{equation}
For any $2 < \beta < n,$ we then have
\begin{equation}\label{smallpLpbootstrap:Linftybound}
\begin{split}
    |\tilde{h}|(x,t_1) & \leq C \left( t_1^{-\frac{n}{2p}} \| \tilde{h}(0) \|_{L^p} + \eps \left( \frac{1}{ \left( \rho(x)^2 + t_1 \right)^{\frac{\beta}{2q}} }  + \frac{1}{t_1^{\frac{m}{2}} \rho(x)^{\frac{\beta - 2}{q}}} \right) \right) \\
    & \qquad + C \left( \frac{\eps^{1 + \frac{1}{q}} }{t_1^{\frac{(q + 1)m + 1}{2q}} }  + \frac{ \eps^{1 + \frac{2}{q} }}{t_1^{\min\{ \frac{m}{q}, \frac{n}{2p} + \frac{1}{q} \}}} \right),
    \end{split}
\end{equation}
and, for $R \geq 1,$
        \begin{equation}\label{smallpLpbootstrap:Lpbound}
        \begin{split}
        \|\tilde{h}(t_1) \|_{L^p(M \setminus B_R)} & \leq \|\tilde{h}(0) \|_{L^p ( M \setminus B_R )} + \min \left\{ 1, \left( \frac{C R^2}{t_1 } \right)^{\frac{n}{2}\left( \frac{1}{q} - \frac{1}{p} \right)} \right\} \|\tilde{h}(0)\|_{L^p(B_R)} \\
        & \qquad + C \eps \left( \frac{1}{R^{\left( \frac{1}{q} - \frac{1}{p} \right) \beta } } + \frac{\eps^{\frac{1}{q}}}{ t_1^{\frac{m + 1}{2q}} } + \frac{\eps^{\frac{2}{q}} }{t_1} \right).
        \end{split}
    \end{equation}
\end{prop}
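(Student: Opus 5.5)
The plan is to mirror the proof of Proposition \ref{prop:Lpbootstrap}, now starting from the Duhamel inequality of Lemma \ref{lemma:smallpDuhamel} for $\tilde h = g - g_\infty$ with the heat kernel $\tilde H$ of $g_\infty$. Since $g_\infty$ is Ricci-flat ALE and $C^{1,0}_{-n}$-close to $g_0$, the estimates (\ref{heatkernelest})--(\ref{Kotschwar}) hold for $\tilde H$ with uniform constants. Hence Lemmas \ref{lemma:heatkernelLpests} and \ref{lemma:youngsconvolution} transfer verbatim. As in (\ref{Lpbootstrap:RHSsplitting}), we split the right-hand side into four pieces:
\begin{itemize}
\item $u_1$, the free evolution of $|\tilde h(0)|^q$;
\item $u_2$, the potential term with weight $\rho^{-2-\tau}$;
\item $u_3$, the gradient term on $[t_1-1,t_1]$;
\item $u_4$, the gradient term on $[0,t_1-1]$, where the good term $-c|\nabla|\tilde h|^{q/2}|^2$ is used via Peter--Paul.
\end{itemize}
Then $|\tilde h|(x,t_1)\le u_1+C(u_2+u_3+u_4)$.

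The term $u_1$ is handled exactly as before: Lemma \ref{lemma:youngsconvolution}(a) gives $t_1^{-n/2p}\|\tilde h(0)\|_{L^p}$, and Lemma \ref{lemma:youngsconvolution}(c) gives the $L^p(M\setminus B_R)$ bound with the $\min\{1,(CR^2/t_1)^{\cdots}\}$ factor.

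For $u_2$ the new input is the decay hypothesis (\ref{smallpLpbootstrap:hLinftyassn}), which replaces the weaker $v^\ell$ bound. Since $\tau=n$, we may bound $\rho^{-2-\tau}\le\rho^{-\beta}$ for any $2<\beta<n$. We split $|\tilde h|^q\le|\tilde h|\cdot\eps^{q-1}(1+t)^{-m(q-1)/2}$, or alternatively use the $L^\infty$ bound directly, and apply Lemma \ref{lemma:heatkernelLpests} with $a=1$. This gives $\int_0^{t_1}\eps^q(1+t)^{-mq/2}(\rho(x)^2+t_1-t)^{-\beta/2}\,dt$. Splitting at $t_1/2$ produces the two pieces $(\rho^2+t_1)^{-\beta/2}$ (using $mq>2$, so the time integral converges) and $t_1^{-mq/2}\rho^{2-\beta}$ (using $\beta>2$). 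Taking $q$-th roots gives the second group of terms in (\ref{smallpLpbootstrap:Linftybound}). For the $L^p(M\setminus B_R)$ bound, we use Minkowski together with Lemma \ref{lemma:youngsconvolution}(b) with $a=1$ and weight $\rho^{-\beta}$, as in (\ref{u2Lpest}). This gives $R^{-(1/q-1/p)\beta}$, and the sup-norm decay controls the time integral.

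For $u_3$ we need a gradient bound on $[t_1-1,t_1]$. We would derive $|\nabla\tilde h|\lesssim\eps(1+t)^{-m/2}$ from (\ref{smallpLpbootstrap:hLinftyassn}) by interior parabolic estimates (Lemma \ref{lemma:localC2gamma-deltaestimate} at unit scale). We then insert $|\tilde h|^{q+1}\le\eps^{q+1}t_1^{-(q+1)m/2}$ and integrate $(t_1-t)^{-1/2}$ to get $\eps^{1+1/q}t_1^{-((q+1)m+1)/(2q)}$. For the $L^p$ bound we instead keep $|\tilde h|^q$ in $L^{p/q}$ and obtain $\eps^{1/q}t_1^{-(m+1)/2q}$.

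For $u_4$, Peter--Paul gives $\int_0^{t_1-1}f_4\,dt/(t_1-t)$ with $f_4$ built from $|\tilde h|^{q+2}$. Pointwise, we bound $|\tilde h|^{q+1}$ by the decay and keep $|\tilde h|$ in $L^p$. This produces $\int(1+t)^{-m(q+1)/2}(t_1-t)^{-1-n/2p}\,dt\lesssim t_1^{-\min\{\cdots\}}$ as in (\ref{u4integraldumbcalc}), giving the $t_1^{-\min\{m/q,\,n/2p+1/q\}}$ term. In $L^{p/q}$ we use $|\tilde h|^{q+2}\le\eps^2(1+t)^{-m}|\tilde h|^q$, which gives $\eps^{2/q}t_1^{-1}$ (up to the $q$-th root) provided $m>2/q$, that is, $mq/2>1$.

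I expect the main obstacle to be the gradient bound needed for $u_3$, together with making sure every time integral converges exactly under $2/q<m<n-2$ and $2<\beta<n$. The split at $t_1/2$ and the exponent bookkeeping are where errors could creep in; I would verify each of the four time integrals separately before assembling (\ref{smallpLpbootstrap:Linftybound}) and (\ref{smallpLpbootstrap:Lpbound}).
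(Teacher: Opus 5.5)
Your outline coincides with the paper's: Lemma \ref{lemma:smallpDuhamel}, the same four-term split, and the same treatment of $u_1$ and of the pointwise bound for $u_2$, including the split at $t_1/2$. However, two of your term estimates do not give the exponents you attribute to them.

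For $u_4$ pointwise, putting $|\tilde h|^{q+1}$ in $L^\infty$ and $|\tilde h|$ in $L^p$ yields $f_4\lesssim\eps^{q+2}(1+t)^{-\frac{(q+1)m}{2}}(t_1-t)^{-\frac{n}{2p}}$. Hence $u_4\lesssim\eps^{1+\frac2q}t_1^{-\min\{\frac{(q+1)m}{2q},\,\frac1q+\frac{n}{2pq}\}}$, the analogue of the exponent in Proposition \ref{prop:Lpbootstrap}. Since $\frac1q+\frac{n}{2pq}<\frac1q+\frac{n}{2p}$, this is strictly weaker than the claimed $t_1^{-\min\{\frac mq,\,\frac{n}{2p}+\frac1q\}}$ whenever that minimum equals $\frac{n}{2p}+\frac1q$. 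The paper splits instead as $|\tilde h|^{q+2}\le\eps^2(1+t)^{-m}|\tilde h|^q$ and applies Lemma \ref{lemma:youngsconvolution}(a) with $a=\frac{p}{p-q}$, $b=\frac pq$. This gives $f_4\lesssim\eps^{q+2}(1+t)^{-m}(t_1-t)^{-\frac{nq}{2p}}$ and $\int_0^{t_1-1}(1+t)^{-m}(t_1-t)^{-1-\frac{nq}{2p}}\,dt\lesssim t_1^{-\min\{m,\,1+\frac{nq}{2p}\}}$, which is exactly the stated rate after the $q$-th root.

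For $u_3$, your bound $|\nabla\tilde h|\lesssim\eps t_1^{-m/2}$ on $[t_1-1,t_1]$ makes the integrand $\lesssim\eps^{q+1}t_1^{-\frac{(q+1)m}{2}}$, and $\int_{t_1-1}^{t_1}(t_1-t)^{-1/2}\,dt=2$ contributes no power of $t_1$. So you get $u_3\lesssim\eps^{1+\frac1q}t_1^{-\frac{(q+1)m}{2q}}$ and $\|u_3\|_{L^p(M\setminus B_R)}\lesssim\eps^{1+\frac1q}t_1^{-\frac{m}{2q}}$, each short of the statement by a factor $t_1^{-\frac{1}{2q}}$. The stated exponents correspond to $|\nabla\tilde h(t)|\le C\eps\,t_1^{-\frac{m+1}{2}}$ on $[t_1-1,t_1]$, which the paper asserts follows from \eqref{smallpLpbootstrap:hLinftyassn}. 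Your interior estimates cannot deliver this. On a cylinder of radius $r$ they give $|\nabla\tilde h|\lesssim r^{-1}\sup|\tilde h|$ only for $r\lesssim\rho(x)$, the scale on which $g_0$ is nearly Euclidean, so the gain is available only where $\rho(x)\gtrsim\sqrt{t_1}$. Near the core there is no evident mechanism for it: $L_{g_\infty}$ has bounded kernel elements with nonvanishing gradient, e.g.\ $\mathcal{L}_X g_\infty$ for harmonic, asymptotically linear $X$. So either supply that gain by a separate argument or state the weaker exponents.

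Two further steps fail as written, and the paper's proof makes the same moves. In the $L^p$ bound for $u_2$, write $f_2(\cdot,t)=\int_M\tilde H(\cdot,y,t_1-t)\rho(y)^{-2-n}|\tilde h(y,t)|^q\,dV_y$. Lemma \ref{lemma:youngsconvolution}(b) with $a=1$, $b=c=\frac pq$ gives $\|f_2(\cdot,t)\|_{L^{p/q}(M\setminus B_R)}\le C(R^2+t_1-t)^{-(1-\frac qp)\frac\beta2}\|\tilde h(t)\|_{L^p}^q$. But \eqref{smallpLpbootstrap:hLpestimate} supplies only $\|\tilde h(t)\|_{L^p}\le\eps$, with no decay in $t$. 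So the sup-norm decay does not enter at this step, and $\int_0^{t_1}(R^2+t_1-t)^{-(1-\frac qp)\frac\beta2}\,dt$ is bounded uniformly in $t_1$ only if $(1-\frac qp)\beta>2$. The paper's proof inserts a factor $(1+t)^{-qm/2}$ at exactly this point, which the hypotheses do not provide. To use \eqref{smallpLpbootstrap:hLinftyassn} you must interpolate before convolving, e.g.\ by H\"older against the weight $\rho^{-2-n}$, trading part of $|\tilde h|^q$ for a power of $\|\tilde h\|_{L^\infty}$. The decay in $R$ then has to be recovered by a separate near/far splitting. In the $L^p$ bound for $u_4$, boundedness of $\int_0^{t_1/2}(1+t)^{-m}\,dt$ requires $m>1$, not $mq>2$; for $m\le1$ you only get roughly $t_1^{-m}\log t_1$. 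Also, the $q$-th root turns $t_1^{-1}$ into $t_1^{-1/q}$.
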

\begin{proof}

In the formula of Lemma \ref{lemma:Duhamel}, we break up the right-hand side as follows:
\begin{equation}\label{smallpLpbootstrap:RHSsplitting}
    \begin{split}
        u_1^q(x) & := \int_M \tilde{H}(x,y,t_1) |\tilde{h}|^q(y,0) \, dV_y \\
        u_2^q(x) & := \int_0^{t_1} \!\!\!\! \int_M \tilde{H}(x,y,t_1 - t) |\Rm(g_0)| |\tilde{h}(y,t)|^q \, dV_y dt \\
        u_3^q(x) & := \int_{t_1 - 1}^{t_1} \!\! \int_M \tilde{H}(x,y,t_1 - t) \left( \frac{1}{\sqrt{t_1-t}} + \frac{d(x,y)}{t_1-t} \right) |\tilde{h}|^{\frac{q}{2} + 1} |\nabla |\tilde{h}|^{\frac{q}{2}} | \, dV_y dt \\
        u_4^q(x) & := \int_0^{t_1 - 1} \!\!\!\! \int_M \tilde{H}(x,y,t_1 - t) \left( \left( \frac{1}{\sqrt{t_1-t}} + \frac{d(x,y)}{t_1 - t}  \right) |\tilde{h}|^{\frac{q}{2} + 1} |\nabla |\tilde{h}|^{\frac{q}{2}} | - c_q |\nabla |\tilde{h}|^{\frac{q}{2}} |^2 \right) \, dV_y dt. 
    \end{split}
\end{equation}
According to Lemma \ref{lemma:Duhamel}, we have
\begin{equation}\label{smallphlessthanu1tou4}
    |\tilde{h}|(x,t_1) \leq u_1(x) + C \left( u_2(x) + u_3(x) + u_4(x) \right).
    \end{equation}
    We now use Lemma \ref{lemma:youngsconvolution} to estimate $u_i$ pointwise and in $L^{p}(M \setminus B_R),$ for $i = 1, \ldots, 4.$

       \vspace{2mm}
       
       \noindent {\bf First term.} Same as Proposition \ref{prop:Lpbootstrap}.

       \vspace{2mm}

     \noindent {\bf Second term.}   \emph{Pointwise estimate.}  Note that since $(M,g_0)$ is Ricci-flat ALE of order $\tau > 0,$ Lemma \ref{lemma: Rm est for ALE} implies
    \begin{equation*}
        u^q_2(x) \leq C \int_0^{t_1} \!\!\!\! \int_M \frac{\tilde{H}(x,y,t_1 - t)}{ \rho(y)^{2 + n} } |\tilde{h}(y,t)|^q \, dV_y dt.
    \end{equation*}
     Let
     $$f_2(x,t) = \int_M \frac{\tilde{H}(x,y,t_1 - t) }{ \rho(y)^{2 + n} } |\tilde{h}(y,t)|^q \, dV_y.$$
Applying (\ref{smallpLpbootstrap:hLinftyassn}) and Lemma \ref{lemma:heatkernelLpests}, we have
$$|f_2(x,t)| \leq \frac{C \eps^q }{(1 + t)^{\frac{qm}{2}} \left( \rho(x)^2 + t_1 - t \right)^{\frac{\beta}{2}} }.$$
Since $\frac{qm}{2} > 1,$ this gives
\begin{equation}
\begin{split}
    u_2^q 
    & \leq C\eps^q \left( \int_0^{t_1/2} + \int_{t_1/2}^{t_1}\right)\frac{dt}{(1 + t)^{\frac{qm}{2}} \left( \rho(x)^2 + t_1 - t \right)^{\frac{\beta}{2}} } \\
    & \leq \frac{C\eps^q }{ \left( \rho(x)^2 + t_1 \right)^{\beta/2}} \int_0^{t_1/2} \frac{dt}{(1 + t)^{\frac{qm}{2}}} + \frac{C \eps^q }{t_1^{\frac{qm}{2}}} \int_{0}^{t_1/2} \frac{d\tau}{\left( \rho(x)^2 + \tau \right)^{\frac{\beta}{2}} } \\
    & \leq \frac{C\eps^q}{ \left( \rho(x)^2 + t_1 \right)^{\beta/2}}  + \frac{C \eps^q}{t_1^{\frac{qm}{2}}} \frac{\min\{ t_1, \rho(x)^2 \}}{\rho(x)^\beta} \\
    & \leq \frac{C\eps^q}{ \left( \rho(x)^2 + t_1 \right)^{\beta/2}}  + \frac{C \eps^q}{t_1^{\frac{qm}{2}} \rho(x)^{\beta - 2}} \frac{t_1}{\rho(x)^2 + t_1}.
    \end{split}
\end{equation}

\noindent \emph{$L^p$-estimate.} We first apply Minkowski's integral inequality:
     \begin{equation}\label{smallpu2Lpfirstest}
     \begin{split}
        \| u_2\|^q_{L^{p}(M \setminus B_R)} = \| u_2^q(x) \|_{L^{\frac{p}{q}}(M \setminus B_R)} & \leq C \left( \int_{M \setminus B_R} \left( \int_0^{t_1} f_2(x,t) dt \right)^{\frac{p}{q}} \, dV_x \right)^{\frac{q}{p}} \\
        & \leq C \int_0^{t_1} \|f_2(\cdot, t) \|_{L^{\frac{p}{q}}(M \setminus B_R)} \, dt.
        \end{split}
    \end{equation}
    Applying Lemma \ref{lemma:youngsconvolution}$b$, with $b = c = \frac{p}{q},$ $a = 1,$ and $\beta < n = \frac{n}{a}$ as given, 
    we get
    \begin{equation}
    \begin{split}
    \|f_2(\cdot, t) \|_{L^{\frac{p}{q}}(M \setminus B_R)} & \leq \frac{C \|\tilde{h}(t) \|_{L^p(M)}^q }{ \left( R^2 + t_1 -t \right)^{ \frac{ \left( 1 - \frac{q}{p} \right) \beta}{2} } } \\
    & \leq \frac{C \eps^q }{ (1 + t)^{\frac{qm}{2}} \left( R^2 + t_1 -t \right)^{ \frac{ \left( 1 - \frac{q}{p} \right) \beta}{2} } } \\
    & \leq \frac{C \eps^q }{ (1 + t)^{\frac{qm}{2}} R^{\left( 1 - \frac{q}{p} \right) \beta } }.
    \end{split}
    \end{equation}
    Integrating in (\ref{smallpu2Lpfirstest}) and taking $q$-th roots, we get
   \begin{equation}\label{smallpu2Lpest}
       \begin{split}
           \| u_2 \|_{L^{p}(M \setminus B_R)} \leq \frac{C \eps }{ R^{ \left( \frac{1}{q} - \frac{1}{p} \right)\beta } }.
       \end{split}
   \end{equation} 

       \vspace{2mm}
       
     \noindent {\bf Third term.} \emph{Pointwise estimate.} Note that (\ref{smallpLpbootstrap:hLinftyassn}) implies
     \begin{equation}\label{smallpLpbootstrap:gradassumption}
     |\nabla \tilde{h}(t)| \leq \frac{C \eps}{t_1^{\frac{m + 1}{2}} }
     \end{equation}
    for $t_1 - 1 \leq t \leq t_1.$ We have
    $$|\tilde{h}|^{\frac{q}{2} + 1} |\nabla |\tilde{h}|^{\frac{q}{2}} | = |\tilde{h}|^{q}  | \nabla \tilde{h}| \leq \frac{C \eps^{q + 1} }{t_1^{\frac{(q + 1)m + 1}{2}} }.$$
    This gives
    \begin{equation}\label{smallpu3firstbound}
    \begin{split}
       u^q_3(x) & \leq \frac{C \eps^{q + 1} }{t_1^{\frac{(q + 1)m + 1}{2}} } \int_{t_1 - 1}^{t_1} \!\! \int_M \left( \frac{1}{\sqrt{t_1 - t}} + \frac{d(x,y)}{t_1 - t} \right) \tilde{H}(x,y,t_1 - t) dV_y dt \leq \frac{C \eps^{q + 1} }{t_1^{\frac{(q + 1)m + 1}{2}} }.
       \end{split}
       \end{equation}
Taking $q$-th roots, we get
    \begin{equation}\label{smallpu3xest}
        u_3(x) \leq \frac{C \eps^{1 + \frac{1}{q} } }{t_1^{\frac{(q + 1)m + 1}{2q}} }.
    \end{equation}
    
    \noindent \emph{$L^p$-estimate.} Arguing as in Proposition \ref{prop:Lpbootstrap}, we have only the term $f_3''(x,t).$ In view of (\ref{smallpLpbootstrap:gradassumption}), we have
     $$f''_3(x,t) = \frac{C \eps}{t_1^{\frac{m + 1}{2} } } \int_M \left( 1 + \frac{d(x,y)  }{ \sqrt{t_1 - t}  } \right) \tilde{H}(x,y,t_1 - t) |\tilde{h}(y,t)|^q \, dV_y$$
    for $t_1 -1 \leq t \leq t_1.$ We apply Lemma \ref{lemma:youngsconvolution}$b$, $\alpha = \frac12, \beta = 0,$ $b = c = \frac{p}{q},$ and $a = 1,$ to get
    $$\|f_3''(\cdot, t) \|_{L^{\frac{p}{q}}(M)} \leq \frac{C \eps^{q + 1} }{ t_1^{\frac{m + 1}{2} } }.$$
   Integrating in time, we get
   \begin{equation}\label{smallpu3Lpest}
       \begin{split}
           \| u_3 \|_{L^{p}(M \setminus B_R)} \leq C \frac{\eps^{1 + \frac{1}{q}}}{t_1^{\frac{m + 1}{2q} }}.
       \end{split}
   \end{equation}

       \vspace{2mm}
       
     \noindent {\bf Fourth term.} \emph{Pointwise estimate.} Applying the Peter-Paul inequality on $u_4,$ we have
       \begin{equation}\label{smallpu4firstpointwiseexpression}
       \begin{split}
           u^q_4(x) & \leq C \int_0^{t_1 - 1} \!\!\!\! \int_M \left( \frac{1}{t_1 - t} + \frac{d(x,y)^2  }{(t_1 - t)^{2} } \right) \tilde{H}(x,y,t_1 - t) |\tilde{h}(y,t)|^{q + 2}  \, dV_y dt \\
           & = C \int_0^{t_1 - 1} \!\!\!\! f_4(x,t) \frac{dt}{t_1 - t},
           \end{split}
       \end{equation}
       where
       \begin{equation*}
       \begin{split}
       f_4(x,t) & = \int_M \left( 1 + \frac{d(x,y)^2  }{t_1 - t } \right) \tilde{H}(x,y,t_1 - t) |\tilde{h}(y,t)|^{q + 2}  \, dV_y \\
           & \leq \frac{\eps^{2}}{(1 + t)^m} \int_M \left( 1 + \frac{d(x,y)^2 }{t_1 - t } \right) \tilde{H}(x,y,t_1 - t)  |\tilde{h}(y,t)|^q \, dV_y.
           \end{split}
           \end{equation*}
    Applying Lemma \ref{lemma:heatkernelLpests} with $a = \frac{p}{p - q}$ and $b = \frac{p}{q},$ we get
    \begin{equation*}
        \|f_4\|_{L^{\frac{p}{q}}} \leq \frac{C \eps^{2 + q}}{(1 + t)^m(t_1 - t)^{\frac{nq}{2p}} }.
        \end{equation*}
        Returning to (\ref{smallpu4firstpointwiseexpression}), we get
        \begin{equation}
            |u_4^q| \leq C \eps^{2 + q} \int_0^{t_1 - 1} \frac{dt}{(1 + t)^m (t_1 - t)^{\frac{nq}{2p} + 1} } \leq \frac{C \eps^{2 + q}}{t_1^{\min\{ m, \frac{nq}{2p} + 1 \} } }.
        \end{equation}
    
   \noindent \emph{$L^p$-estimate.} Applying Lemma \ref{lemma:youngsconvolution}$b$ with $a = 1$ and $b = c = \frac{p}{q},$ we have
   \begin{equation*}
       \|f_4 \|_{L^{\frac{p}{q}}} \leq \frac{C \eps^{2}}{(1 + t)^m} \|\tilde{h}(t) \|_{L^q}^q \leq \frac{C \eps^{2 + q}}{(1 + t)^m}.
   \end{equation*}
   Integrating, we get
   \begin{equation}
   \begin{split}
            \|u_4 \|_{L^p}^q & \leq C \eps^{2 + q} \int_0^{t_1 - 1} \frac{dt}{t^m (t_1 - t) } \\
            & \leq \frac{C \eps^{2 + q}}{t_1} \int_0^{t_1/2} \frac{dt}{(1 + t)^m} + \frac{C \eps^{2 + q}}{t_1^m} \int_{t_1/2}^{t_1 - t} \frac{dt}{t_1 - 1} \\
            & \leq \frac{C \eps^{2 + q}}{t_1} \left( 1 + \frac{\log t_1}{t_1^{m-1}} \right) \leq  \frac{C \eps^{2 + q}}{t_1}.
            \end{split}
        \end{equation}
Combining all of the above gives the claimed estimates.
\end{proof}

\bibliographystyle{alpha}
\bibliography{bibfile}

\end{document}